\chardef\bslash=`\\
\newtheorem{thm}{Theorem}[section]
\newtheorem{theorem}[thm]{Theorem}
\newtheorem{proposition}[thm]{Proposition}
\newtheorem{prop}[thm]{Proposition}
\newtheorem{cor}[thm]{Corollary}
\newtheorem{lem}[thm]{Lemma}
\newtheorem{lemma}[thm]{Lemma}
\newcommand{\dslash}{{\mkern0mu \sslash \mkern0mu}}
\newtheorem{introthm}{Theorem}
\newtheorem{defn}[thm]{Definition}
\theoremstyle{remark}
\newtheorem{remark}[thm]{Remark}
\newtheorem{rem}[thm]{Remark}
\newtheorem{example}[thm]{Example}
\numberwithin{equation}{section}
\newif\iffinalrun
  \newcommand{\need}[1]{}
  \newcommand{\mar}[1]{}
  \newcommand{\need}[1]{{\tiny *** #1}}
  \newcommand{\mar}[1]{\marginpar{\raggedright\tiny  #1}}\fi
\renewcommand\mathbb{\mathbf}
\newcommand{\Lie}{{\operatorname{Lie}\,}}
\newcommand{\frg}{{\mathfrak{g}}}
\newcommand{\rec}{{\operatorname{rec}}}
\newcommand{\rbar}{\overline{r}}
\newcommand{\wv}{{\widetilde{v}}}
\renewcommand{\ell}{l}
\def\PGL{\mathrm{PGL}}
\def\Iw{\mathrm{Iw}}
\newcommand{\ad}{\operatorname{ad}}
\newcommand{\diag}{\operatorname{diag}}
\newcommand{\tr}{\operatorname{tr}}
\newcommand{\A}{\mathbf{A}}
\newcommand{\bA}{\ensuremath{\mathbf{A}}}
\newcommand{\CC}{{\mathbb C}}
\newcommand{\bC}{\ensuremath{\mathbf{C}}}
\newcommand{\bG}{\ensuremath{\mathbf{G}}}
\newcommand{\bQ}{\ensuremath{\mathbf{Q}}}
\newcommand{\Q}{\QQ}
\newcommand{\QQ}{{\mathbb Q}}
\newcommand{\NN}{{\mathbb N}}
\newcommand{\bR}{\ensuremath{\mathbf{R}}}
\newcommand{\R}{\RR}
\newcommand{\RR}{{\mathbb R}}
\newcommand{\bT}{\ensuremath{\mathbf{T}}}
\newcommand{\TT}{{\mathbb T}}
\newcommand{\Z}{\ZZ}
\newcommand{\ZZ}{{\mathbb Z}}
\newcommand{\bZ}{\ensuremath{\mathbf{Z}}}
\newcommand{\ba}{\ensuremath{\mathbf{a}}}
\newcommand{\bbZ}{\ensuremath{\mathbf{Z}}}
\newcommand{\bbQ}{\ensuremath{\mathbf{Q}}}
\newcommand{\cC}{{\mathcal C}}
\newcommand{\cE}{{\mathcal E}}
\newcommand{\cF}{{\mathcal F}}
\newcommand{\cG}{{\mathcal G}}
\newcommand{\cH}{{\mathcal H}}
\newcommand{\cL}{{\mathcal L}}
\newcommand{\cO}{{\mathcal O}}
\newcommand{\cR}{{\mathcal R}}
\newcommand{\cS}{{\mathcal S}}
\newcommand{\cV}{{\mathcal V}}
\newcommand{\m}{\frakm}
\newcommand{\ffrm}{{\mathfrak m}}
\newcommand{\frakm}{\mathfrak{m}}
\newcommand{\frakq}{\mathfrak{q}}
\newcommand{\q}{\frakq}
\newcommand{\Qbar}{\overline{\Q}}
\newcommand{\Zp}{\Z_p}
\newcommand{\Qp}{\Q_p}
\newcommand{\Qpbar}{\Qbar_p}
\DeclareMathOperator{\End}{End}
\DeclareMathOperator{\Gal}{Gal}
\newcommand{\GL}{\mathrm{GL}}
\DeclareMathOperator{\Hom}{Hom}
\DeclareMathOperator{\im}{im}
\DeclareMathOperator{\Ind}{Ind}
\DeclareMathOperator{\ord}{ord}
\DeclareMathOperator{\SL}{SL}
\DeclareMathOperator{\Spec}{Spec}
\DeclareMathOperator{\Sym}{Sym}
\newcommand{\Frob}{\mathrm{Frob}}
\newcommand{\rhobar}{\overline{\rho}}
\newcommand{\Art}{{\operatorname{Art}}}
\newcommand{\Res}{\operatorname{Res}}
\newcommand{\doubleslash}{/\kern-0.2em{/}}
\newcommand{\prm}{\mathrm{p}}
\newcommand{\mrm}{\mathrm{m}}
\begin{document}

\author{James Newton and Jack A. Thorne}
\title[Adjoint Selmer groups of unitary type]{Adjoint Selmer groups of automorphic Galois representations of unitary type}
\begin{abstract} Let $\rho$ be the $p$-adic Galois representation attached to a cuspidal, regular algebraic automorphic representation of $\GL_n$ of unitary type. Under very mild hypotheses on $\rho$, we prove the vanishing of the (Bloch--Kato) adjoint Selmer group of $\rho$. We obtain definitive results for the adjoint Selmer groups associated to non-CM Hilbert modular forms and elliptic curves over totally real fields.
\end{abstract}
\maketitle
\setcounter{tocdepth}{1}
\tableofcontents

\section*{Introduction}

\textbf{Context.} Let $F$ be a CM number field, with maximal totally real 
subfield $F^+$. Fix an algebraic closure $\overline{F}$ of $F$ and a complex 
conjugation $c \in \Gal(\overline{F}/F^+)$. We say that a cuspidal automorphic 
representation $\pi$ of 
$\GL_n(\bA_F)$ is of unitary type if it is conjugate self-dual, i.e.\ if it 
satisfies the relation $\pi^c \cong \pi^\vee$. If $\pi$ is conjugate self-dual 
and 
moreover regular algebraic (a condition on $\pi_\infty$), then for any 
isomorphism $\iota : \overline{\bQ}_p \to \bC$ there is an associated $p$-adic 
Galois representation
\[ r_{\pi, \iota} : \Gal(\overline{F} / F) \to \GL_n(\overline{\bQ}_p), \]
characterized up to isomorphism by compatibility with the local Langlands 
correspondence at each finite place of $F$. The conjugate self-duality of $\pi$ 
implies the existence of an isomorphism $r_{\pi, \iota}^c \cong r_{\pi, 
\iota}^\vee \otimes \epsilon^{1-n}$, where $\epsilon$ is the $p$-adic 
cyclotomic character.

This paper concerns the adjoint Bloch--Kato Selmer group of such a representation. To define it, we note that if $V$ denotes the space on which $r_{\pi, \iota}$ acts, then the conjugate self-duality of $r_{\pi, \iota}$ is reflected in the existence of a perfect, symmetric, and Galois equivariant bilinear pairing
\[ \langle \cdot, \cdot \rangle : V^c \times V \otimes \epsilon^{n-1} \to \overline{\bQ}_p. \]
The existence of this pairing allows us to extend the adjoint action of 
$r_{\pi, \iota}$ on $\End(V)$ to an action of $\Gal(\overline{F} / F^+)$, where 
$c \in \Gal(\overline{F} / F^+)$ acts by the formula $c \cdot X = - X^\ast$ 
(and $X^\ast$ is the adjoint with respect to $\langle \cdot, \cdot \rangle$).

We are interested in is the Bloch--Kato Selmer group
 \begin{multline*} H^1_f(F^+, \End(V)) =\\ \left\{x \in H^1(F^+, \End(V)):x_v \in H^1_f(F^+_v, 
\End(V)) \textup{ for all finite places }v\right\} \end{multline*}   where $H^1_f(F^+_v, 
\End(V))$ is $\ker\left(H^1(F^+_v, \End(V))\to H^1(F^+_v, \End(V) 
\otimes_{\bQ_p} 
B_{crys})\right)$ for $v|p$ and  $H^1_{ur}(F^+_v, \End(V))$ 
for $v \nmid p$.

General conjectures predict the vanishing of this group (see the introduction 
of \cite{All16} for a detailed discussion of this in the present context). We are content here to note 
that this group parameterizes infinitesimal deformations of $r_{\pi, \iota}$ 
which are at the same time conjugate self-dual and geometric, in the sense of 
$p$-adic Hodge theory. 

\textbf{Our results.} The following is the main theorem of this paper.
\begin{introthm}\label{thm_intro_main_theorem}
	Let $F$ be a CM number field, and let $\pi$ be a regular algebraic, cuspidal automorphic representation of $\GL_n(\bA_F)$ of unitary type. Let $p$ be a prime, and let $\iota : \overline{\bQ}_p \to \bC$ be an isomorphism. Suppose that $r_{\pi, \iota}(G_{F(\zeta_{p^\infty})})$ is enormous, in the sense of Definition \ref{dfn:enormous}. Then $H^1_f(F^+, \ad r_{\pi, \iota}) = 0$.
\end{introthm}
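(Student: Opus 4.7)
The plan is to follow the by-now classical strategy pioneered by Flach and Wiles and developed in its modern automorphic form by Kisin, Thorne, and others: realise $H^1_f(F^+, \ad r_{\pi, \iota})$ as the $\overline{\bQ}_p$-tangent space to a Galois deformation ring $R$ of $\rbar = \overline{r_{\pi, \iota}}$ cut out by conjugate self-dual and geometric local conditions; apply an $R = \bT$-style automorphy lifting theorem to identify $R$ (after localisation and inversion of $p$) with a Hecke algebra $\bT$; and conclude using the rigidity of the classical point of $\bT$ corresponding to $\pi$.

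Concretely, I would first fix a stable lattice for $r_{\pi, \iota}$, reduce to the residual representation $\rbar$, and verify that a class in $H^1_f(F^+, \ad r_{\pi, \iota})$ is the same data as a first-order deformation of $r_{\pi, \iota}$ which is conjugate self-dual with respect to the pairing $\langle \cdot, \cdot \rangle$ and geometric (crystalline at $v \mid p$, unramified outside a finite set). Using that the enormous hypothesis is stable under finite solvable totally real base change of $F^+$, I would then replace $F$ by a soluble CM extension in order to simplify the ramification of $\pi$ (for instance, to arrange Steinberg or unramified principal series type at every finite ramified place) and to bring the Hodge--Tate weights at places above $p$ inside a Fontaine--Laffaille range.

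Next, I would invoke an automorphy lifting theorem tailored to the enormous image hypothesis to deduce that the natural surjection $R \twoheadrightarrow \bT$ for the appropriate universal deformation ring is an isomorphism after localisation at the maximal ideal $\frakp_\pi$ of $\bT$ attached to $\pi$. Since $\pi$ is a cuspidal automorphic representation of fixed tame level and infinitesimal character, $\bT[1/p]$ is étale over a finite set of classical points near $\frakp_\pi$, so its tangent space at $\frakp_\pi$ vanishes after tensoring with $\overline{\bQ}_p$. Combined with the $R = \bT$ isomorphism, this forces $H^1_f(F^+, \ad r_{\pi, \iota}) = 0$.

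The principal obstacle is securing the automorphy lifting input under the enormous hypothesis alone, which is substantially weaker than the familiar ``adequate'' or ``big image'' conditions. In particular, the Taylor--Wiles patching method must be adapted to produce enough auxiliary primes $q$ with the required regularity of $\rbar(\Frob_q)$; the enormous hypothesis is engineered to guarantee exactly the supply of such primes needed for the patching to close. A secondary technical difficulty is controlling the local deformation conditions at places above $p$: outside the Fontaine--Laffaille range these are not formally smooth, so either one reduces via further solvable base change, or one works directly with Kisin's local crystalline deformation rings and tracks their geometry through the patching argument.
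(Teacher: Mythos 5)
Your proposal outlines the classical Flach--Wiles--Kisin template, but the whole point of this paper is that that template does not apply here, and the gap lies exactly where you wave your hands. The key issue is that the ``enormous image'' hypothesis is a condition on the \emph{characteristic-zero} image $r_{\pi,\iota}(G_{F(\zeta_{p^\infty})})$; it gives no information whatsoever about the residual representation $\rbar$. Indeed, the paper is written precisely to allow $\rbar$ to be reducible, or even trivial with $p=2$. In that setting, the universal framed/unframed deformation ring of $\rbar$ that you propose to use is not the right object, and there is no automorphy lifting theorem ``tailored to the enormous image hypothesis'' waiting to be invoked --- constructing a substitute for one is the content of the paper. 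Concretely, the paper replaces deformation rings of $\rbar$ with pseudodeformation rings (in the sense of Chenevier/Lafforgue), uses the Wake--Wang-Erickson machinery to impose $p$-adic Hodge-theoretic conditions on pseudocharacters, and proves a new control theorem (Proposition \ref{prop_effectivity_of_pseudo_characters}) showing that deforming the absolutely irreducible characteristic-zero representation $\rho$ is the same as deforming its pseudocharacter, up to torsion bounded only in terms of $\rho(\Gamma)$. Your proposal contains no analogue of this step and no mechanism for turning tangent-space information about a pseudodeformation ring back into information about $H^1_f$.

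The second place your argument breaks is at the Taylor--Wiles primes. You assert that ``the enormous hypothesis is engineered to guarantee exactly the supply of [auxiliary primes $q$] with the required regularity of $\rbar(\Frob_q)$.'' That is false: enormousness produces elements $\sigma$ with $\rho(\sigma)$ having distinct eigenvalues in $\cO$, but these eigenvalues may well collapse modulo $\varpi$. When $\rbar(\Frob_q)$ is not regular, the standard ``independence of level'' step --- identifying spaces of automorphic forms at hyperspecial and Iwahori level at $q$ after localization at a $U_q$-eigenvalue --- fails, and the usual patching argument does not close. The paper circumvents this by adapting Pan's ultrafilter pre-patching technique: it first patches (without the regularity control), then localizes at the characteristic-zero prime $\q^\prm$ corresponding to $r_{\pi,\iota}$, and uses an explicit element $f_Q$ lying in the Bernstein centre (Proposition \ref{prop_uniform_pseudo-isomorphism} and Proposition \ref{prop:oldforms}) to control the kernel and cokernel of the oldform map $\eta_{Q,m}$. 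None of this appears in your proposal. Finally, the suggestion to base-change into the Fontaine--Laffaille range is unnecessary here --- the Wake--Wang-Erickson pseudodeformation conditions work directly with semistable lattices of arbitrary Hodge--Tate weights --- and the endgame in the paper identifies only the completed local ring at the single characteristic-zero point $\q_0$ with $E$ (via Brochard's freeness criterion), rather than establishing a full $R=\TT$ theorem in the sense you describe.
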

For some examples of enormous subgroups, see \S \ref{sec_examples_of_enormous}. 
For example, we note that our condition is satisfied for any $\pi$ such that 
for some finite place $v$ of $F$, $\pi_v$ is a twist of the Steinberg 
representation.

We compare Theorem \ref{thm_intro_main_theorem} with some other results in the literature that are proved using 
related techniques. Kisin \cite{Kis04a} proved the analogue of Theorem 
\ref{thm_intro_main_theorem} for the Galois representations attached to 
classical holomorphic modular forms under some mild conditions on the residual 
representation. Allen \cite{All16} proved a result similar to Theorem \ref{thm_intro_main_theorem}, but 
assuming a stronger condition on the residual representation 
$\rbar_{\pi,\iota}$, requiring in particular that it be irreducible (similar 
results were also obtained by Breuil--Hellmann--Schraen \cite{Bre17}). These 
works use variants of the Taylor--Wiles method, which is a powerful tool 
for studying the deformation theory of automorphic Galois representations.

Our main motivation for this work was to prove a result valid under very weak 
conditions on the residual representation. In particular, we allow 
the case $p = 2$ and $\rbar_{\pi,\iota}$ trivial, which is rather far from the 
cases allowed by \cite{All16}. For example, we obtain the following results for 
2-dimensional representations over totally real fields.
\begin{introthm}\label{thm:intro2}
	Let $F$ be a totally real number field, and let $p$ be a prime.
	\begin{enumerate}
		\item Let $\pi$ be a non-CM, regular algebraic automorphic representation of $\GL_2(\bA_F)$. Then for any isomorphism $\iota: \overline{\bQ}_p \to \bC$, $H^1_f(F, \ad r_{\pi, \iota}) = 0$.
		\item Let $E$ be a non-CM elliptic curve over $F$, and let $r_p(E) : G_F \to \GL_2(\bQ_p)$ denote the associated $p$-adic representation. Then $H^1_f(F, \ad r_p(E)) = 0$.
	\end{enumerate}
\end{introthm}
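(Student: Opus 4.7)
The plan is to deduce Theorem~\ref{thm:intro2} from Theorem~\ref{thm_intro_main_theorem} by a base change argument. For part~(1), I would pick an imaginary quadratic field $K$ linearly disjoint from the splitting field of $\barr_{\pi,\iota}$ and set $F' = F \cdot K$, so that $F'$ is a CM field with maximal totally real subfield $F$. By Arthur--Clozel solvable cyclic base change one obtains a cuspidal automorphic representation $\pi' = \pi_{F'}$ of $\GL_2(\bA_{F'})$, cuspidality being precisely the non-CM hypothesis. Since $\pi'^c \cong \pi'$, twisting $\pi'$ by a suitable Hecke character---an operation that does not affect the adjoint---yields a conjugate self-dual, regular algebraic, cuspidal automorphic representation to which Theorem~\ref{thm_intro_main_theorem} applies. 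The Galois representation $r_{\pi',\iota}$ equals (the same twist of) $r_{\pi,\iota}|_{G_{F'}}$, and the $G_F$-action on $\ad r_{\pi',\iota}$ induced by the conjugate self-duality of $\pi'$ coincides with the one coming from the ambient representation $r_{\pi,\iota}$; hence the conclusion of Theorem~\ref{thm_intro_main_theorem} reads directly as $H^1_f(F, \ad r_{\pi,\iota}) = 0$.

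The principal obstacle is verifying the enormous-image hypothesis for $r_{\pi',\iota}(G_{F'(\zeta_{p^\infty})})$, and this is where the non-CM hypothesis is used. By the results of Ribet, Momose, and Nekov\'a\v{r} on images of Galois representations attached to non-CM Hilbert modular forms, the image of $r_{\pi,\iota}|_{G_{F'}}$ contains an open subgroup of $\SL_2(\bZ_p)$ (after conjugation and possibly after passing to an open subgroup accounting for inner twists), and intersecting with $G_{F'(\zeta_{p^\infty})}$ preserves this openness. I would then appeal to the criteria developed in \S\ref{sec_examples_of_enormous} certifying that open subgroups of $\SL_2(\bZ_p)$ satisfy Definition~\ref{dfn:enormous}. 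The point is delicate precisely because the theorem is designed to include awkward regimes such as $p = 2$ and $\barr_{\pi,\iota}$ trivial, where stronger residual hypotheses employed in earlier work are unavailable.

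For part~(2), the plan is to reduce to part~(1) via (potential) modularity. Combining the modularity results of Wiles, Taylor, Breuil--Conrad--Diamond--Taylor, Freitas--Le Hung--Siksek, and Taylor's potential modularity theorems, there is a totally real Galois extension $F'/F$ such that $E_{F'}$ is modular, arising from a cuspidal regular algebraic $\pi'$ on $\GL_2(\bA_{F'})$ with $r_p(E)|_{G_{F'}} \cong r_{\pi',\iota}$; moreover $\pi'$ is non-CM because $E$, and hence $E_{F'}$, is non-CM. Applying part~(1) over $F'$ yields $H^1_f(F', \ad r_p(E)|_{G_{F'}}) = 0$. A standard inflation--restriction argument---using that $[F':F]$ is finite and $\ad r_p(E)$ is a $\bQ_p$-vector space, so that any torsion cohomology of a finite group with such coefficients vanishes---shows the restriction map $H^1_f(F, \ad r_p(E)) \to H^1_f(F', \ad r_p(E)|_{G_{F'}})$ is injective, giving the desired vanishing over $F$.
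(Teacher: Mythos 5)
Your overall plan -- base change to a CM quadratic extension, verify the enormous-image hypothesis, apply Theorem~\ref{thm_intro_main_theorem}, and for part~(2) reduce to part~(1) via potential modularity over a finite totally real extension -- is the same as the paper's, which proceeds via Theorem~\ref{thm_vanishing_over_CM_field}, Theorem~\ref{thm_vanishing_over_real_field} (the $\mathfrak{gs}$ similitude framework, which for $n=2$ gives $\mathfrak{gs}=\mathfrak{gl}_2$) and Theorem~\ref{thm_vanishing_for_HMF}.

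Where you diverge, and where some care is needed, is in the verification that $r_{\pi,\iota}(G_{F'(\zeta_{p^\infty})})$ is enormous. You invoke $p$-adic open image theorems (Ribet, Momose, Nekov\'a\v{r}) to conclude that the image contains an open subgroup of $\SL_2(\bZ_p)$ ``after passing to an open subgroup accounting for inner twists,'' and then appeal to a criterion about open subgroups of $\SL_2(\bZ_p)$. There is no such criterion stated in \S\ref{sec_examples_of_enormous}: Lemma~\ref{lem_alg_implies_enormous}(2) concerns the \emph{Zariski} closure of the compact subgroup, requiring only that $G^\circ$ contain regular semisimple elements and act absolutely irreducibly. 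This is precisely how Example~\ref{ex:noncmenormous} proceeds: for non-CM $\pi$, the Zariski closure of $r_{\pi,\iota}(G_F)$ contains $\SL_2$, and since passage to the derived subgroup respects Zariski closure, the Zariski closure of $r_{\pi,\iota}(G_{F(\zeta_{p^\infty})})$ still contains $\SL_2$. This route is more elementary, entirely self-contained within the paper, and -- importantly -- immune to the delicacy you correctly flag about inner twists: the field cut out by the inner twists need not lie in $F'(\zeta_{p^\infty})$, so ``passing to an open subgroup'' does not obviously let you conclude that the image of the specific subgroup $G_{F'(\zeta_{p^\infty})}$ is $p$-adically open in $\SL_2$. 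Your open-image argument could be made to work (open image does imply the Zariski condition), but you would need to produce a reference that addresses the precise subgroup $G_{F'(\zeta_{p^\infty})}$ rather than the inner-twist field, and this machinery is overkill for what Definition~\ref{dfn:enormous} asks. For part~(2), citing potential modularity over totally real fields in the style of Taylor (the paper cites \cite{tay-fm2}) is all that is needed; the unconditional modularity results you list are not required and do not cover general totally real $F$ anyway. Your inflation--restriction reduction is correct and standard, and is implicitly what the paper uses as well.
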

We emphasise that no additional conditions are required in either case in order to conclude the vanishing of the adjoint Selmer group.

There are three main innovations that allow us to prove a result like Theorem 
\ref{thm_intro_main_theorem}. The first is a control theorem for studying the 
pseudodeformation theory of a representation $\rho : \Gamma \to \GL_n(\bZ_p)$ 
of a profinite group $\Gamma$. We recall that $\rho$ has an associated 
pseudocharacter $\tr \rho$, which can be defined following either Chenevier 
\cite{chenevier_det} or Lafforgue \cite{Laf18} (the proof that these two 
notions are equivalent being due to Emerson \cite{emerson}). If the residual 
representation $\overline{\rho}$ is absolutely irreducible then it is known 
that deforming $\tr \rho$ is equivalent to deforming $\rho$. In general any deformation of $\rho$ gives rise to a deformation of $\tr \rho$, but the two notions are not equivalent.

Here we use Lafforgue's definition of pseudocharacter to show that that  if $\rho 
\otimes_{\bZ_p} \bQ_p$ is absolutely irreducible, then there is a reasonably 
strong link between deformations and pseudodeformations with coefficients in the ring $\bZ_p \oplus \epsilon \bZ_p / (p^N)$. 
Informally, deformations and pseudodeformations are ``the same'', up to bounded 
torsion which depends only on the image $\rho(\Gamma)$. See Proposition 
\ref{prop_effectivity_of_pseudo_characters} for a precise statement.

The second innovation is the formulation, by Wake and Wang-Erickson \cite{WWE_pseudo_def_cond}, of functors of pseudodeformations satisfying deformation conditions (e.g.\ conditions arising from $p$-adic Hodge theory). This is an indispensable tool for making an effective comparison between pseudodeformation rings and Hecke algebras acting on classical automorphic forms.

The third innovation is related to the use of Taylor--Wiles systems in our 
proof. To make use of Taylor--Wiles systems in the study of automorphic forms 
with integral coefficients, one needs to show that if $q$ is a Taylor--Wiles 
place, then the space of automorphic forms with unramified level at $q$  
is isomorphic to the space of automorphic forms with Iwahori level at $q$, 
after 
localization with respect to a suitable eigenvalue of the $U_q$ operator (see 
for example \cite[Lemma 5.8]{CG}). One can argue along these lines only if the 
residual representation $\overline{\rho}$, unramified at $q$ by hypothesis, has 
the property that $\overline{\rho}(\Frob_q)$ has distinct eigenvalues. This is 
the reason for condition (2) in the statement of \cite[Introduction, 
Theorem]{Kis04a} and of course excludes the case where $\rhobar$ is trivial. 
Without this ``independence of $q$'' statement, one does not 
have the finiteness conditions needed to carry out the Taylor--Wiles patching 
argument, at least as outlined in \cite{MR1440309}. 

In his thesis, Pan \cite{Lue} introduced a surprising technique to circumvent 
this issue. Building on Scholze's interpretation of the Taylor--Wiles patching 
argument using ultrafilters \cite{scholze-lt}, Pan constructs a huge 
``pre-patched module'', and 
then shows that using suitable Hecke operators it can be cut down to a size 
making it suitable for use in the Taylor--Wiles argument. We have adapted his 
arguments to our context (in some ways more elementary, since we work with 
fixed weight 
automorphic forms, whereas \cite{Lue} works with completed cohomology). 

\textbf{Applications.} Results such as Theorem \ref{thm_intro_main_theorem} 
have applications to the geometry of eigenvarieties, and this is one of the 
main motivations for proving them (as was already the case for Kisin 
\cite{Kis04a}). 
This is because one can embed (at least locally around an irreducible point) 
eigenvarieties inside deformation spaces of trianguline representations. In 
many cases, the 
vanishing of $H^1_f(F^+, \ad r_{\pi, \iota})$ can be used to prove that this 
embedding is in fact a local isomorphism.

For example, the vanishing of the adjoint Selmer group is a significant part of 
what it means for a $p$-refined Hilbert modular form to be decent, in the sense 
of \cite{Ber19}, and therefore to admit a $p$-adic L-function with good 
interpolation properties. Another application is that one can use an 
understanding of the geometry of the eigenvariety to prove modularity results 
for Galois representations. This possibility is already suggested in Kisin's work 
\cite[(11.13)]{kisin-ocfmc}. We will take this point of view in \cite{New19b}, 
where Theorem \ref{thm_intro_main_theorem} is one of the key inputs to prove 
the 
automorphy of the symmetric power liftings of level one Hecke eigenforms (for 
example, Ramanujan's modular form 
$\Delta$). 

\textbf{Organization of this paper.} In Section \ref{sec:pseudochar} we 
establish our control theorem relating pseudodeformations and deformations (up 
to bounded torsion), and set up the Galois theoretic ingredients for the 
Taylor--Wiles method. In the short Section \ref{sec:oldforms} we prove a 
simple representation-theoretic result which controls the difference between 
spaces of automorphic forms with hyperspecial and Iwahori level at 
Taylor--Wiles places.
In Section \ref{sec:patching} we carry out 
our variation 
on the 
Taylor--Wiles method (inspired by Pan's work) and prove a special case of 
Theorem 
\ref{thm_intro_main_theorem}. Finally, the general case of Theorem 
\ref{thm_intro_main_theorem}, together with Theorem \ref{thm:intro2} and some 
other applications are deduced in Section \ref{sec:applications} using base 
change and potential automorphy.  

\textbf{Acknowledgements.} J.T.'s work received funding from the European 
Research Council (ERC) under the European Union's Horizon 2020 research and 
innovation programme (grant agreement No 714405). J.N.~would like to thank 
Carl Wang-Erickson for helpful discussions about the work 
\cite{WWE_pseudo_def_cond}. We are grateful to the anonymous referee and to Florian Herzig for their detailed comments on this paper.

\section{Notation and preliminaries}\label{sec_notation}

If $F$ is a field of characteristic zero, we generally fix an algebraic closure $\overline{F} / F$ and write $G_F$ for the absolute Galois group of $F$ with respect to this choice. If $F$ is a number field, then we will also fix embeddings $\overline{F} \to \overline{F}_v$ extending the map $F\to F_v$ for each place $v$ of $F$; this choice determines a homomorphism $G_{F_v} \to G_F$. When $v$ is a finite place, we will write $\cO_{F_v} \subset F_v$ for the valuation ring, $\varpi_v \in \cO_{F_v}$ for a fixed choice of uniformizer, $\Frob_v \in G_{F_v}$ for a fixed choice of Frobenius lift, $k(v) = \cO_{F_v} / (\varpi_v)$ for the residue field, and $q_v = \# k(v)$ for the cardinality of the residue field. When $v$ is a real place, we write $c_v \in G_{F_v}$ for complex conjugation. If $S$ is a finite set of finite places of $F$ then we write $F_S / F$ for the maximal subextension of $\overline{F}$ unramified outside $S$ and $G_{F, S} = \Gal(F_S / F)$.

If $p$ is a prime, then we call a coefficient field a finite extension $E / \bbQ_p$ contained inside our fixed algebraic closure $\overline{\bbQ}_p$, and write $\cO$ for the valuation ring of $E$, $\varpi \in \cO$ for a fixed choice of uniformizer, and $k = \cO / (\varpi)$ for the residue field. We write $\cC_\cO$ for the category of complete Noetherian local $\cO$-algebras with residue field $k$. 

If $A$ is a ring and $\rho : \Gamma \to \GL_n(A)$ is a representation, we write $\ad \rho$ for $M_n(A)$ with its adjoint $\Gamma$-action, and $\ad^0 \rho \subset \ad \rho$ for the $A[\Gamma]$-submodule of trace 0 matrices. We will use the self-duality $\ad \rho \times \ad \rho \to A$, $(X, Y) \mapsto \tr X Y$, to identify $\ad \rho$ with its dual when we e.g.\ define dual Selmer conditions using Tate duality (see for example \S \ref{sec_unitary_pseudocharacters}).

If $G$ is a locally profinite group and $U \subset G$ is an open compact subgroup, then we write $\cH(G, U)$ for the set of compactly supported, $U$-biinvariant functions $f : G \to \bZ$. It is a $\bbZ$-algebra, where convolution is defined using the left-invariant Haar measure normalized to give $U$ measure 1; see \cite[\S 2.2]{new-tho}. It is free as a $\bbZ$-module, with basis given by the characteristic functions $[UgU]$ of double cosets.

Let $K$ be a non-archimedean characteristic $0$ local field, and let $\Omega$ 
be an algebraically 
closed field of characteristic $0$. We write $W_K \subset G_K$ for the Weil group of $K$ and $I_K \subset W_K$ for the inertia subgroup. We use the cohomological normalisation of 
class field theory: it is the isomorphism $\Art_K: K^\times \to W_K^{ab}$ which 
sends uniformizers to geometric Frobenius elements. We use the Tate 
normalisation of the local Langlands 
	correspondence 
for $\GL_n$: it is the bijection $\rec_K^T$ between isomorphism classes of irreducible, 
admissible $\Omega[\GL_n(K)]$-modules and isomorphism classes Frobenius-semisimple Weil--Deligne 
representations over $\Omega$ of rank $n$ which is normalised as in \cite[\S 
2.1]{Clo14}. 

If $\rho : G_K \to \GL_n(\overline{\bbQ}_p)$ is a continuous 
representation (assumed to be de Rham if $p$ equals the residue characteristic 
of 
$K$), then we write $\mathrm{WD}(\rho) = (r, N)$ for the associated 
Weil--Deligne representation, and $\mathrm{WD}(\rho)^{F-ss}$ for its 
Frobenius semisimplification. \begin{defn}
We say that a Weil--Deligne representation 
$(r,N)$ is 
generic if there is no non-zero morphism $(r,N) \to (r(1),N)$. We say that a 
continuous representation $\rho$ is generic if $WD(\rho)$ is generic.\end{defn} 
We note that if $WD(\rho)^{F-ss}$ is generic, then $\rho$ is generic. It 
follows from \cite[Lemma 1.1.3]{All16} that if $\pi$ is a generic irreducible 
admissible $\Qpbar[\GL_n(K)]$-module and $WD(\rho)^{F-ss} = \rec_K^T(\pi)$, 
then $\rho$ is generic. 

If $p$ equals the residue characteristic of $K$ and $V$ is the $E$-vector space 
on which 
$\rho$ acts (for some $E \subset \Qpbar$ finite over $\Qp$ with 
$\rho(G_K)\subset\GL_n(E)$), we have subspaces 
\[H^1_f(K,V) \subset H^1_g(K,V) \subset H^1(K,V) 
\] defined by \begin{align*}H^1_f(K,V) &= \ker\left(H^1(K,V) \to 
H^1(K,V\otimes_{\Qp}B_{crys})  \right)\\ H^1_g(K,V) &= \ker\left(H^1(K,V) \to 
H^1(K,V\otimes_{\Qp}B_{dR})\right). \end{align*} We have $H^1_f(K,\End(V)) = 
H^1_g(K,\End(V))$ if and only if $\rho$ is generic \cite[Remark 
1.2.9]{All16}. Similarly, if $p$ does not equal the residue characteristic of 
$K$, we have a subspace $H^1_{ur}(K,V) = \ker\left(H^1(K,V) \to 
H^1(I_K,V)\right)$. For notational compatibility with the $p$-adic case we 
write $H^1_f(K,V)= H^1_{ur}(K,V)$ and $H^1_g(K,V)= H^1(K,V)$. Then we again 
have $H^1_{f}(K,\End(V)) = H^1_g(K,\End(V))$ if and only 
if $\rho$ is 
generic. 

Let $F$ be a number field, and let $S$ be a finite set of finite places of $F$, 
containing the $p$-adic places $S_p$. Let $r: G_{F,S} \to \GL_n(\Qpbar)$ be a 
continuous representation, with underlying $E$-vector space $V$. We have 
global Selmer groups \[H^1_f(F,V) \subset H^1_{g,S}(F,V) \subset 
H^1(F_S/F,V)\] defined by
\begin{align*}H^1_f(F,V) &= \ker\left(H^1(F_S/F,V) \to \prod_{v \in 
S}H^1(F_v,V)/H^1_f(F_v,V)\right)\\
H^1_{g,S}(F,V) &= \ker\left(H^1(F_S/F,V) \to \prod_{v \in 
S}H^1(F_v,V)/H^1_g(F_v,V)\right)\\ &= \ker\left(H^1(F_S/F,V) \to 
\prod_{v \in S_p}H^1(F_v,V)/H^1_g(F_v,V)\right).\end{align*}  We note our convention that $H^1(F_S / F, \ast)$ denotes group cohomology for the group $G_{F, S}$. The group $H^1_f(F,V)$ 
does 
not change when $S$ is enlarged (this is why we do not record $S$ in the 
notation).

If $F$ is a number field and $\pi$ is an automorphic representation of 
$\GL_n(\A_F)$, we say that $\pi$ is regular algebraic if $\pi_\infty$ has the 
same infinitesimal character as an irreducible algebraic representation of 
$\Res_{F/\bQ}\GL_n$. We recall (cf. 
\cite[\S 2.1]{BLGGT}) that if $F$ is a totally real or CM number field, then a 
pair 
$(\pi,\chi)$ comprising an automorphic 
representation $\pi$ of $\GL_n(\bA_F)$ and a Hecke character $\chi : 
(F^+)^\times 
\backslash (\bA_{F^+})^\times \to \bC^\times$ is said to be polarized if there is an isomorphism $\pi^c \cong \pi^\vee \otimes (\chi \circ 
	\mathbf{N}_{F / F^+})$ and, if $F$ is CM, then $\chi_v(-1) = (-1)^n$ for each place $v | \infty$ of $F$. (The sign condition of \cite{BLGGT} in the case that $F$ is totally real can be suppressed, as a consequence of \cite[Theorem 2.1]{Pat15}.) The automorphic representations of unitary type discussed in our 
introduction correspond to polarized automorphic representations 
$(\pi,\delta_{F/F^+}^n)$, where $\delta_{F/F^+}$ is the quadratic character for 
$F/F^+$.

If $(\pi,\chi)$ is a regular algebraic, cuspidal, polarized automorphic 
representation, then for any isomorphism $\iota:\Qpbar \to \CC$ there is an 
associated Galois representation (we refer to \cite[Theorem 2.1.1]{BLGGT} for 
its 
properties) \[r_{\pi,\iota}: G_F\to 
\GL_n(\Qpbar).\] 

If $F$ is CM, then $r_{\pi,\iota}$ extends to a homomorphism $r_{\pi, \iota}:  
G_{F^+}\to 
	\cG_n(\Qpbar)$, with multiplier character $\nu\circ r_{\pi, \iota} = 
	\epsilon^{1-n}r_{\chi,\iota}$ ($\cG_n$ is the algebraic group defined in 
	\cite[\S2.1]{cht}; here the word `extends' is interpreted following the convention described at the top of \cite[p.~ 8]{cht}). This defines an extension of the $G_F$ action on $\ad 
	r_{\pi,\iota}$ to an action of $G_{F^+}$. More explicitly, if we  fix a 
	choice $c \in G_{F^+}$ of complex conjugation, there is a 
	perfect, 
	symmetric pairing $\langle \cdot, \cdot \rangle$ on $\Qpbar^n$ such that 
	\[\langle r_{\pi,\iota}(\sigma)v, r_{\pi,\iota}(\sigma^c)w\rangle = 
	(\epsilon^{1-n}r_{\chi,\iota}(\sigma))\langle v, w\rangle \] for all 
	$\sigma \in G_F, v, w \in \Qpbar^n$ and $c$ acts on $\ad r_{\pi,\iota} = 
	\End(\Qpbar^n)$ 
	by $X \mapsto -X^*$, where $X^*$ is the adjoint with respect to 
	$\langle\cdot,\cdot\rangle$. 

\section{Pseudocharacters}\label{sec:pseudochar}

In this paper we use Lafforgue's notion of pseudocharacter for a reductive group in the case of $\GL_n$ (see \cite[\S 11]{Laf18} or \cite[\S 4]{Boc19}), and Chenevier's notion of group determinant \cite{chenevier_det}. In fact, these are equivalent, but both definitions are useful. We will prove a new result about the deformation theory of pseudocharacters (Proposition \ref{prop_effectivity_of_pseudo_characters}) using Lafforgue's point of view, while we follow \cite{WWE_pseudo_def_cond} in using Chenevier's definition to impose deformation conditions on pseudocharacters. 

\subsection{Pseudocharacters vs.~determinants}

We begin by recalling the relevant definitions. Let $\Gamma$ be a group and fix $n \geq 1$. 
\begin{defn}
	A pseudocharacter of $\Gamma$ of dimension $n$ over a ring $A$ is a collection $\Theta = (\Theta_m)_{m \geq 1}$ of algebra homomorphisms $\Theta_m : \bbZ[\GL_n^m]^{\GL_n} \to \mathrm{Map}(\Gamma^m, A)$ satisfying the following conditions:
\begin{enumerate}
	\item For all $k, l \geq 1$ and for each map $\zeta : \{ 1, \dots, k \} \to \{ 1, \dots, l \}$, each $f \in \bbZ[\GL_n^k]^{\GL_n}$, and each $\gamma_1, \dots, \gamma_l \in \Gamma$, we have
	\[ \Theta_l(f^\zeta)(\gamma_1, \dots, \gamma_l) = \Theta_k(f)(\gamma_{\zeta(1)}, \dots, \gamma_{\zeta(k)}), \]
	where $f^\zeta(g_1, \dots, g_l) = f(g_{\zeta(1)}, \dots, g_{\zeta(k)})$.
	\item For each $k \geq 1$, for each $\gamma_1, \dots, \gamma_{k+1} \in \Gamma$, and for each $f \in \bbZ[\GL_n^k]^{\GL_n}$, we have 
	\[ \Theta_{k+1}(\hat{f})(\gamma_1, \dots, \gamma_{k+1}) = \Theta_{k}(f)(\gamma_1, \dots, \gamma_k \gamma_{k+1}), \]
	where $\hat{f}(g_1, \dots, g_{k+1}) = f(g_1, \dots, g_k g_{k+1})$.
\end{enumerate}
\end{defn}
If $\rho : \Gamma \to \GL_n(A)$ is a representation, then we can define its associated pseudocharacter $t = (t_m)_{m \geq 1} = \tr \rho$ by the formula 
\[ t_m(f)(\gamma_1, \dots, \gamma_n) = f(\rho(\gamma_1), \dots, \rho(\gamma_m)). \]
One can define the operations of twisting and duality on pseudocharacters in a way compatible with the usual operations on representations. For example, let $i : \GL_n \to \GL_n$ be the involution given by $i(g) = {}^t g^{-1}$. If $t$ is a pseudocharacter, then we define a new pseudocharacter $t^\vee$ by the formula $t^\vee_m(f)(\gamma_1, \dots, \gamma_m) = t_m(f')(\gamma_1, \dots, \gamma_m)$, where $f' \in \bbZ[\GL_n^m]$ is defined by 
\[ f'(g_1, \dots, g_m) = f(i(g_1), \dots, i(g_m)). \]
If $t = \tr \rho$, then $t^\vee = \tr \rho^\vee$. 

Similarly, if $\chi : \Gamma \to A^\times$ is a character, then we define the twist $t \otimes \chi$ by the formula $(t \otimes \chi)_m( f )(\gamma_1, \dots, \gamma_m) = f'(\gamma_1, \dots, \gamma_m)$, where $f' \in A[\GL_n^m]^{\GL_n}$ is defined by the formula $f'(g_1, \dots, g_m) = f(\chi_1(\gamma_1) g_1, \dots, \chi_m(\gamma_m) g_m)$. If $t = \tr \rho$, then $t \otimes \chi = \tr (\rho \otimes \chi)$. 

Before giving the definition of group determinant, we fix some notation. Let $A$ be a ring and let $A\operatorname{-alg}$ be the category of commutative $A$-algebras. If $M$ is an $A$-module, then we write $h_M : A\operatorname{-alg} \to \operatorname{Sets}$ for the functor $B \mapsto M \otimes_A B$. 
\begin{defn}
	A group determinant of $\Gamma$ of dimension $n$ over a ring $A$ is a 
	natural transformation of functors $D : h_{A[\Gamma]} \to h_A$ satisfying 
	the following conditions on the induced map $B[\Gamma] \to B$ for every $B 
	\in A\operatorname{-alg}$:
	\begin{enumerate}
		\item $D(1) = 1$.
		\item For any $x, y \in B[\Gamma]$, $D(xy) = D(x) D(y)$.
		\item For any $x \in B[\Gamma]$, $b \in B$, $D(bx) = b^n D(x)$.
	\end{enumerate}
\end{defn}
If $\rho : \Gamma \to \GL_n(A)$ is a representation, then we can define its 
associated group determinant $D(x) = \det(\rho(x))$ (where we extend $\rho$ to 
a homomorphism $\rho : B[\Gamma] \to M_n(B)$ for any $A$-algebra $B$). We omit 
the formulae for the dual or twist of a group determinant.

We now describe the relation between pseudocharacters and group determinants. For each $i = 0, \dots, n$, let $\lambda_i \in \bZ[\GL_n]^{\GL_n}$ be defined by the equation $\det(X - g) = \sum_{i=0}^n (-1)^i \lambda_i(g) X^{n-i} $. If $t$ is a pseudocharacter, we have functions ($i = 0, \dots, n$)
\[ t^{[i]} : \Gamma \to A \]
given by the formulae $t^{[i]}(\gamma) = t_1(\lambda_i)(\gamma)$. By \cite[\S3.1]{Don92}, for any $m \geq 1$
$\bZ[\GL_n^m]^{\GL_n}$ is generated as a ring by the functions 
$\lambda_i(g_{i_1} \dots g_{i_r})$ ($r \in \NN, 1 \leq i_1, \dots, i_r \leq 
m$), together with $\det(g_1 \dots g_m)^{-1}$. The axioms defining a 
pseudocharacter show that we have
\begin{equation}\label{eqn_donkin} t_m(\lambda_i(g_{i_1} \dots g_{i_r}))(\gamma_1, \dots, \gamma_m) = t_1(\lambda_i(g))(\gamma_{i_1} \dots \gamma_{i_r}). 
\end{equation}
It follows that the functions $t^{[i]}$ ($i = 0, \dots, n$) together determine $t$.

If $D$ is a group determinant, then we define functions ($i = 0, \dots, n$)
\[ D^{[i]} : \Gamma \to A \]
by the formula $D(X - \gamma) = \sum_{i=0}^n (-1)^i D^{[i]}(\gamma)  X^{n-i}$ (evaluation of $D$ over the ring $A[X]$). The functions $D^{[i]}$ ($i = 0, \dots, n$) together determine $D$ (by Amitsur's formula, cf. \cite[Lemma 1.12]{chenevier_det}).
\begin{theorem}\label{thm_emerson}
	For any group $\Gamma$ and ring $A$, the pseudocharacters $t$ of dimension 
	$n$ are in canonical bijection with the group determinants $D$ of dimension 
	$n$. This bijection is characterized by the equality $t^{[i]} = 
	D^{[i]}$ for each $i = 0, \dots, n$.
\end{theorem}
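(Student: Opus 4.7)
My plan is to exhibit mutually inverse constructions between pseudocharacters and group determinants by exploiting the fact that both structures are determined by a single family of functions $u^{[i]} : \Gamma \to A$ for $i = 0, \ldots, n$. The required compatibility $t^{[i]} = D^{[i]}$ then pins down the bijection uniquely.

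First, I would observe that each side is \emph{determined} by its coefficient functions. For pseudocharacters, this follows from Donkin's generation result: every element of $\bZ[\GL_n^m]^{\GL_n}$ is a polynomial in the $\lambda_i(g_{i_1} \cdots g_{i_r})$ together with $\det(g_1 \cdots g_m)^{-1}$, and formula (\ref{eqn_donkin}) expresses $t_m$ on each such generator in terms of the $t^{[i]}$. For group determinants, this is essentially Amitsur's formula (see \cite[Lemma 1.12]{chenevier_det}), which writes $D(\sum_j b_j \gamma_j)$ as a universal polynomial in the $b_j$ and the values $D^{[i]}(\gamma_{j_1} \cdots \gamma_{j_r})$. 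Thus the maps $t \mapsto (t^{[i]})$ and $D \mapsto (D^{[i]})$ are both injective, and the theorem reduces to identifying their images inside the set of tuples of functions $\Gamma \to A$.

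Next, I would construct the two operations. Given a pseudocharacter $t$, define $D_t$ on $B[\Gamma]$ by the Amitsur polynomial with the $t^{[i]}(\gamma_{j_1} \cdots \gamma_{j_r})$ substituted in; given a group determinant $D$, define $(t_D)_m$ on Donkin's generators by $\lambda_i(g_{i_1} \cdots g_{i_r}) \mapsto D^{[i]}(\gamma_{i_1} \cdots \gamma_{i_r})$ (and analogously on $\det^{-1}$), then extend multiplicatively. By construction the coefficient functions of $D_t$ and $t_D$ equal the inputs, so once well-definedness is verified the two operations are mutually inverse and the characterizing identity $t^{[i]} = D^{[i]}$ is automatic.

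The main obstacle is Step 3, verifying the axioms: that $D_t$ is multiplicative and homogeneous on $B[\Gamma]$, and that $t_D$ is well-defined modulo the relations among Donkin's generators and satisfies pseudocharacter axioms (1) and (2). The most conceptual route is a universal argument. Let $R_m$ be the coordinate ring of $\GL_n^m$ and let $\rho_{\mathrm{univ}} : F_m \to \GL_n(R_m)$ denote the tautological representation of the free group on $m$ generators, sending the $i$-th generator to the $i$-th generic matrix. Then $\tr \rho_{\mathrm{univ}}$ is a pseudocharacter and $\det \rho_{\mathrm{univ}}$ is a group determinant, and all identities relating the two families of coefficient functions hold at this universal level as classical matrix-algebra identities. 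The general case follows by pulling back along the ring homomorphism $R_m \to A$ determined by the values of the $u^{[i]}$; the content of the Donkin and Amitsur results is precisely that every required expression lands in the image of such a pullback. This is essentially the path taken by Emerson, and once the well-definedness is established the bijection and its characterization follow immediately.
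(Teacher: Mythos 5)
Your proposal follows essentially the same route as the paper's: the paper delegates the construction of the bijection to Emerson (Theorems 4.0.1 and 5.0.1 of \cite{emerson}) and then verifies the characterization by the coefficient functions, while you spell out a bit more of what Emerson does (Donkin's generation of the invariant ring, Amitsur's formula on the determinant side, and a universal argument to check the axioms) before likewise delegating the verification of well-definedness to \emph{loc.~cit.} One correction to your sketch of the universal argument: there is no ring homomorphism $R_m = \bbZ[\GL_n^m] \to A$ ``determined by the values of the $u^{[i]}$''. If such a map existed compatibly with the data, you would have an actual homomorphism from the free group on $m$ letters to $\GL_n(A)$, and the whole problem would be trivial. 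A pseudocharacter only supplies, for each tuple $(\gamma_1, \dots, \gamma_m) \in \Gamma^m$, a ring homomorphism from the \emph{invariant} ring $\bbZ[\GL_n^m]^{\GL_n}$ to $A$. The role of the Donkin and Amitsur results is precisely that the universal matrix identities you wish to invoke (multiplicativity and homogeneity of the determinant, etc.)\ are expressible in $\GL_n$-invariant functions of the generic matrices, and hence descend to the GIT quotient — which is exactly the locus where the delicacy of the argument lives and why the theorem has content. With this corrected, your plan is the right one.
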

\begin{proof}
	See \cite[Theorems 4.0.1 and 5.0.1]{emerson}, which explicitly construct a bijection between the two classes of objects. Suppose that $t$, $D$ are associated. Then for any $\gamma \in \Gamma$, $t_1(\gamma)$ determines a ring homomorphism $\bbZ[\GL_n]^{\GL_n} \to A$, hence a ring homomorphism $t_1(\gamma)[X] : \bbZ[\GL_n]^{\GL_n} \otimes_\bbZ \bbZ[X] \to A[X]$. We may think of $\det(X - g)$ as an element of $\bbZ[\GL_n]^{\GL_n} \otimes_\bbZ \bbZ[X]$, and the proof of \cite[Theorem 4.0.1]{emerson} shows that if $\gamma \in \Gamma$ then $D(X - \gamma) = t_1(\gamma)[X](\det(X-g))$. This equality is equivalent to equalities $D^{[i]}(\gamma) = t^{[i]}(\gamma)$ ($i = 0, \dots, n$).
\end{proof}
We now discuss continuity. Suppose therefore that $\Gamma$ is a profinite group and $A$ is a topological ring. The definitions are as follows. 
\begin{defn} Let $t = (t_m)_{m \geq 1}$ be a pseudocharacter. We say that $t$ is continuous if for each $m \geq 1$, $t_m$ takes values in the set $\operatorname{Map}_{cts}(\Gamma^m, A)$ of continuous functions $\Gamma^m \to A$.
\end{defn}
\begin{defn}
	Let $D$ be a group determinant. We say that $D$ is continuous if each function $D^{[i]}$ ($i = 0, \dots, n$) is continuous.
\end{defn}
It is clear from the definitions that if $\rho : \Gamma \to \GL_n(A)$ is a continuous representation, then $\tr \rho$ is continuous as a pseudocharacter. 
\begin{proposition}
	Let $t = (t_m)_{m \geq 1}$ and $D$ be associated under the bijection of Theorem \ref{thm_emerson}. Then $t$ is continuous if and only if $D$ is.
\end{proposition}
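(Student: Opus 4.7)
The plan is to reduce both implications to the defining equality $t^{[i]} = D^{[i]}$ of Theorem \ref{thm_emerson}. The forward direction is immediate: if $t$ is continuous then $t_1$ takes values in $\operatorname{Map}_{cts}(\Gamma, A)$, so each $D^{[i]} = t^{[i]} = t_1(\lambda_i)$ is continuous.

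For the converse, assume that each $D^{[i]}$ is continuous and fix $m \geq 1$. Since $A$ is a topological ring, pointwise sums and products of continuous maps $\Gamma^m \to A$ are continuous, so $\operatorname{Map}_{cts}(\Gamma^m, A)$ is a subalgebra of $\operatorname{Map}(\Gamma^m, A)$. As $t_m$ is an algebra homomorphism, it suffices to prove that $t_m(f)$ is continuous for $f$ in a generating family of the ring $\bZ[\GL_n^m]^{\GL_n}$. By the result of Donkin cited before \eqref{eqn_donkin}, such a family is given by the functions $\lambda_i(g_{i_1} \cdots g_{i_r})$ together with $\det(g_1 \cdots g_m)^{-1}$. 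For a generator of the first type, \eqref{eqn_donkin} together with Theorem \ref{thm_emerson} yields
\[ t_m(\lambda_i(g_{i_1} \cdots g_{i_r}))(\gamma_1, \dots, \gamma_m) = D^{[i]}(\gamma_{i_1} \cdots \gamma_{i_r}), \]
continuous as the composition of the multiplication map $\Gamma^m \to \Gamma$ with $D^{[i]}$.

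The one real issue is the generator $\det(g_1 \cdots g_m)^{-1}$, because inversion in the topological ring $A$ need not be continuous. Iterated application of axiom~(2) from the definition of pseudocharacter gives
\[ t_m(\det(g_1 \cdots g_m)^{-1})(\gamma_1, \dots, \gamma_m) = t_1(\det^{-1})(\gamma_1 \cdots \gamma_m), \]
and since $t_1$ is an algebra homomorphism and $\det \cdot \det^{-1} = 1$ in $\bZ[\GL_n]^{\GL_n}$, the right-hand side equals $D^{[n]}(\gamma_1 \cdots \gamma_m)^{-1}$ in $A$. The plan is to circumvent the continuity-of-inversion problem by using the multiplicativity of $D$ on $B[\Gamma]$: for any $\gamma \in \Gamma$, one has $D^{[n]}(\gamma)^{-1} = D(\gamma)^{-1} = D(\gamma^{-1}) = D^{[n]}(\gamma^{-1})$, so $t_1(\det^{-1})$ factors as $\Gamma \to \Gamma \to A$, where the first arrow is inversion in $\Gamma$ and the second is $D^{[n]}$. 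Both are continuous, and composing once more with multiplication $\Gamma^m \to \Gamma$ completes the argument. The main (and essentially only) obstacle is precisely this need to handle $\det^{-1}$ without a continuous-inversion assumption on $A$, which is resolved by pushing the inversion from $A$ into the profinite group $\Gamma$, where it is manifestly continuous.
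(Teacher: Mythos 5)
Your proof is correct, and it follows the same reduction as the paper (generators of $\bZ[\GL_n^m]^{\GL_n}$ via Donkin, then the identity \eqref{eqn_donkin}). What you add is a genuinely necessary step that the paper's proof leaves implicit: the generator $\det(g_1 \cdots g_m)^{-1}$ is not of the form $\lambda_i(g_{i_1}\cdots g_{i_r})$, so its continuity is not covered by \eqref{eqn_donkin} alone, and since $A$ is an arbitrary topological ring there is no general reason for $a \mapsto a^{-1}$ to be continuous on $A^\times$. Your device of rewriting $t_1(\det^{-1})(\gamma) = D(\gamma)^{-1} = D(\gamma^{-1}) = D^{[n]}(\gamma^{-1})$, thereby transferring inversion from $A$ to the profinite group $\Gamma$ (where it is automatic), resolves this cleanly and is exactly the observation one needs to make the argument complete at this level of generality. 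The paper's proof, which just cites \eqref{eqn_donkin} and Donkin, reads as though it has silently dropped this generator or is assuming a class of topological rings (e.g.\ linearly topologized complete local rings, where $1 + \ffrm$ has continuous inversion) in which the problem does not arise; your version is more careful and self-contained.
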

\begin{proof}
	In light of Theorem \ref{thm_emerson}, it is enough to show that if $t$ is 
	a pseudocharacter such that each function $t^{[i]}$ ($i = 0, \dots, n$) is continuous, then $t$ is continuous. This is again a consequence of (\ref{eqn_donkin}) and \cite[\S3.1]{Don92}.
\end{proof}

\subsection{Pseudocharacters vs.~representations}

Now let $p$ be a prime, let $E / \bbQ_p$ be a finite extension, and let 
$\Gamma$ be a profinite group. Let $\rho : \Gamma \to \GL_n(\cO)$ be a 
continuous homomorphism which is absolutely irreducible over $E$. Let $t = 
(t_m)_{m \geq 1} = \tr \rho$ denote the pseudocharacter associated to $\rho$. 
We consider liftings of $\rho$ and of $t$ to the ring $A = \cO \oplus \epsilon 
E / \cO$ (with $\epsilon^2 = 0$). Clearly if $\rho' : \Gamma \to \GL_n(A)$ is a 
lifting of $\rho$, in the sense that $\rho' \text{ mod }(\epsilon) = \rho$, 
then $t' = \tr \rho'$ is a lifting of $t$. We want to show that in fact 
deforming $\rho$ in this way is not too far from deforming $t$. 

We write $\alpha_k : A \to A$ for the $\cO$-algebra homomorphism which acts as multiplication by $p^k$ on the ideal $(\epsilon) \subset A$. We will prove:
\begin{proposition}\label{prop_effectivity_of_pseudo_characters}
	There exists an integer $k_0 \geq 0$, depending only on $\rho(\Gamma)$, with the following properties:
	\begin{enumerate}
		\item For any lifting $t'$ of $t$ to $A$, there exists a homomorphism 
		$\rho' : \Gamma \to \GL_n(A)$ lifting $\rho$ such that $\tr \rho' = 
		\alpha_{k_0} \circ t'$. If $t'$ is continuous, then $\rho'$ can be 
		chosen to be continuous.
		\item If $\rho'_1, \rho'_2 : \Gamma \to \GL_n(A)$ are two liftings with $\tr \rho'_1 = \tr \rho'_2$, then $\alpha_{k_0} \circ \rho'_1$ and $\alpha_{k_0} \circ \rho'_2$ are conjugate under the action of the group $1 + \epsilon M_{n \times n}(E / \cO) \subset \GL_n(A)$; and if $X \in M_{n \times n}(E / \cO)$ is such that $1 + \epsilon X$ centralizes $\rho'_1$, then $p^{k_0} X$ is a scalar matrix. 
	\end{enumerate}
\end{proposition}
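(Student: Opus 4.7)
The approach rests on Burnside's theorem: since $\rho \otimes_\cO E$ is absolutely irreducible, the $\cO$-subalgebra $R := \cO[\rho(\Gamma)] \subset M_n(\cO)$ satisfies $R \otimes_\cO E = M_n(E)$, so $R$ is an $\cO$-order in $M_n(E)$ of finite index in $M_n(\cO)$. Let $k_0 \geq 0$ be an integer with $p^{k_0} M_n(\cO) \subset R$; this depends only on $\rho(\Gamma)$. Concretely, choose $\gamma_1, \dots, \gamma_{n^2} \in \Gamma$ such that $e_i := \rho(\gamma_i)$ is an $E$-basis of $M_n(E)$; then $k_0$ equivalently annihilates the cokernel of the injection $\phi \colon M_n(\cO) \to \cO^{n^2}$, $X \mapsto (\tr(X e_i))_i$, whose cokernel is finite because $\phi$ becomes an isomorphism after inverting $p$ (by non-degeneracy of the trace pairing on $M_n(E)$).

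For part (1), given a lift $t'$ of $t$, set $\tilde t := \alpha_{k_0} \circ t'$. The plan is to reconstruct $\rho'$ from $\tilde t$ via the trace pairing: for each $\gamma \in \Gamma$, define
\[ v(\gamma) := \bigl(\tilde t_2(\tr(g_1 g_2))(\gamma, \gamma_i)\bigr)_{i=1}^{n^2} \in A^{n^2}, \]
and take $\rho'(\gamma) \in M_n(A)$ to satisfy $\phi_A(\rho'(\gamma)) = v(\gamma)$. The $\cO$-component of $v(\gamma)$ equals $\phi(\rho(\gamma))$ and lies automatically in the image of $\phi$, while the choice of $\rho'(\gamma)$ modulo $\ker \phi_{E/\cO}$ is pinned down by applying the $E$-linear inverse $\phi_E^{-1}$ to an $E$-lift of the $\epsilon$-part and reducing modulo $M_n(\cO)$. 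Multiplicativity $\rho'(\gamma \delta) = \rho'(\gamma) \rho'(\delta)$ is then verified by pairing both sides against each $e_i$ and invoking pseudocharacter axiom (2) with $f = \tr(g_1 g_2 g_3)$ applied to $(\gamma, \delta, \gamma_i)$; the prefactor $\alpha_{k_0}$ is precisely what is needed to absorb the $p^{k_0}$-torsion ambiguity inherent in the lifting. The identity $\tr \rho' = \tilde t$ is then checked on the characteristic polynomial coefficients $\lambda_i$ using Theorem \ref{thm_emerson} and (\ref{eqn_donkin}), and continuity propagates through the explicit formula.

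For part (2), write $\rho'_2 = \rho'_1 + \epsilon d$, so that $d \colon \Gamma \to M_n(E/\cO)$ is a crossed homomorphism for the $\Gamma$-action by $\rho$ on the left and $\rho^{-1}$ on the right. The hypothesis $\tr \rho'_1 = \tr \rho'_2$ applied to $\tr(g_1 g_2)$ yields $\tr(d(\gamma)\rho(\delta)) + \tr(d(\delta)\rho(\gamma)) = 0$ for all $\gamma, \delta \in \Gamma$, and combined with the analogous relations coming from $\tr(g_1 \cdots g_m)$ for $m \geq 3$ together with the denominator bound from $k_0$, one constructs $Y \in M_n(E/\cO)$ with $p^{k_0} d(\gamma) = Y\rho(\gamma) - \rho(\gamma)Y$. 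This $Y$ is the conjugator exhibiting $\alpha_{k_0} \circ \rho'_1$ and $\alpha_{k_0} \circ \rho'_2$ as conjugate under $1 + \epsilon M_n(E/\cO)$. For the centralizer statement, if $1 + \epsilon X$ centralizes $\rho'_1$ then $X$ commutes with $R$ in $M_n(E/\cO)$; a short integrality argument (if $Y \in M_n(E)$ is traceless with $[Y, M_n(\cO)] \subset M_n(\cO)$ then $Y \in M_n(\cO)$, applied after extracting the scalar part) combined with $p^{k_0} M_n(\cO) \subset R$ forces $p^{k_0} X$ to be a scalar matrix.

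The main obstacle is the verification of multiplicativity in part (1): while each individual value $\rho'(\gamma)$ is pinned down by the trace pairing up to $p^{k_0}$-torsion, showing that a coherent choice yields a genuine homomorphism (not merely one modulo further bounded torsion, which would force enlarging $k_0$) requires carefully unpacking the Lafforgue axioms on higher invariants beyond $\tr(g_1 g_2)$. A cleaner alternative would be to construct $\rho'$ via the Cayley--Hamilton quotient $\cO[\Gamma]/\mathrm{CH}(t')$ attached to $t'$, which by absolute irreducibility over $E$ is an order in $M_n(E)$ of controlled index, then embed this order integrally in $M_n(A)$ at the cost of the factor $p^{k_0}$; this has the advantage of making multiplicativity automatic at the level of the universal Cayley--Hamilton algebra.
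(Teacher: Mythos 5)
Your approach is genuinely different from the paper's: you work purely algebraically with the order $R = \cO[\rho(\Gamma)]$ and the trace pairing against a fixed basis $e_i = \rho(\gamma_i)$, whereas the paper works geometrically with the GIT quotients $Y_m = \Spec \cO[\GL_n^m]^{\GL_n}$, the $\PGL_n$-torsor structure near stable points, and a compactness argument to produce a \emph{uniform} bound $k_1$ on the torsion of the cohomology of the complexes $(\star_m), (\star_{m+1}), (\star_{m+2})$. Unfortunately your part (1) has a gap beyond the one you acknowledge. The value $\tilde t_2(\tr(g_1g_2))(\gamma,\gamma_i)$ is, for a genuine lift $\rho'$, equal to $\tr\bigl(\rho'(\gamma)\rho'(\gamma_i)\bigr)$, \emph{not} to $\tr\bigl(\rho'(\gamma)e_i\bigr)$; the difference is the term $\epsilon\,\tr\bigl(\rho(\gamma)d(\gamma_i)\bigr)$, where $d(\gamma_i)$ is the $\epsilon$-part of $\rho'(\gamma_i)$. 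Your prescription $\phi_A(\rho'(\gamma)) = v(\gamma)$ silently sets $\rho'(\gamma_i) = e_i$, i.e.\ assumes the lift does not move the $\gamma_i$ at all, which is incompatible with $\rho'$ being a homomorphism lifting $t'$. The inconsistency is already visible at $\gamma = 1$: your formula forces $\tr(\rho'(1)e_i) = \tilde t_2(\tr(g_1g_2))(1,\gamma_i) = \tilde\tau'(\gamma_i)$, whose $\epsilon$-part is $p^{k_0}\tau'(\gamma_i)_\epsilon$ and need not vanish, contradicting $\rho'(1) = 1$.

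The paper sidesteps this by \emph{first} fixing once and for all a lift $x' \in \Hom_\cO(T^*_x X_m, E/\cO)$ of $p^{k_1}y'$ — that is, fixing a consistent tuple of deformed values $(g'_1, \dots, g'_m)$ for the generating set — and only then extending this to all of $\Gamma$ via the fibre product at the level of $X_{m+1}$ and $X_{m+2}$; the uniform bound $k_1$ furnished by compactness ensures this extension exists and is well-defined after $\alpha_{6k_1}$. Your approach could in principle be repaired by first choosing lifts $d(\gamma_i)$ for $i = 1, \dots, n^2$ and then defining $d(\gamma)$ to satisfy $\tr(d(\gamma)e_i) = \tilde\tau'(\gamma\gamma_i)_\epsilon - \tr(\rho(\gamma)d(\gamma_i))$, but you would then still need to carry out the multiplicativity check — which you correctly flag as the main obstacle — and to verify that the accumulated $p$-torsion errors (coming from re-expressing $\rho(\delta\gamma_i)$ and $\rho(\gamma_i\gamma)$ in terms of the basis $e_j$ over $\cO$) stabilize at a fixed multiple of your $k_0$ rather than growing with $\gamma,\delta$. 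This last point is exactly where the paper's compactness argument earns its keep, and it is not obviously automatic in the purely algebraic formulation. As written, the proposal has a genuine gap.
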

For any $m \geq 1$, we define $X_m = (\GL_{n, \cO})^m$, and $Y_m = \Spec 
\cO[\GL_n^m]^{\GL_n}$. We write $\pi_m : X_m \to Y_m$ for the tautological 
morphism. We fix elements $\gamma_1, \dots, \gamma_m \in \Gamma$ such that 
$\rho(\gamma_1), \dots, \rho(\gamma_m)$ generate a Zariski dense subgroup of 
$\rho(\Gamma)$. We may assume that $m \ge 2$.

Let 
\[ x = (g_1, \dots, g_m) = (\rho(\gamma_1), \dots, \rho(\gamma_m)) \in X_m(\cO). \]
If $\gamma, \delta \in \Gamma$, then we define 
\[ x(\gamma) = (\rho(\gamma_1), \dots, \rho(\gamma_m), \rho(\gamma)) \in X_{m+1}(\cO) \]
and 
\[ x(\gamma, \delta) = (\rho(\gamma_1), \dots, \rho(\gamma_m), \rho(\gamma), \rho(\delta)) \in X_{m+2}(\cO). \]
We define $y = \pi_m(x)$, $y(\gamma) = \pi_{m+1}(x(\gamma))$, and $y(\gamma, \delta) = \pi_{m+2}(x(\gamma, \delta))$. Before going further, we recall the following lemma.
\begin{lemma}\label{lem_conormal_sheaves}
	Let $\pi : X \to Y$ be a separated morphism of schemes of finite type over 
	a base $S$. Let $G$ be a separated group scheme, smooth and of finite type 
	over $S$, and suppose that $G$ acts on $X$ in such a way that $\pi$ is 
	$G$-equivariant for the trivial action of $G$ on $Y$. Then:
	\begin{enumerate}
		\item There is a canonical morphism $\Omega_{X / Y} \to \Hom_{\cO_S}(\Lie G, \cO_X)$ of coherent sheaves of $\cO_X$-modules.
		\item If $\pi$ is a $G$-torsor, then this morphism is an isomorphism.
	\end{enumerate}
\end{lemma}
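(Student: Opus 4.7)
The plan is to construct the map in (1) by comparing cotangent sheaves along the action morphism, and then to observe that in the torsor case this map is visibly an isomorphism. First I would introduce the shearing morphism
\[ \Phi : G \times_S X \longrightarrow X \times_Y X, \qquad (g,x) \longmapsto (g \cdot x, \, x), \]
which is well-defined precisely because $\pi$ is $G$-equivariant for the trivial action on $Y$. The identity section $e : X \to G \times_S X$ (the base change of $\epsilon : S \to G$ along the structure morphism $X \to S$) then satisfies $\Phi \circ e = \Delta$, where $\Delta : X \to X \times_Y X$ is the diagonal; note that $\Delta$ is a closed immersion because $\pi$ is separated, and $e$ is a closed immersion because $G$ is separated over $S$.

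Let $I_\Delta \subset \cO_{X \times_Y X}$ and $I_e \subset \cO_{G \times_S X}$ denote the corresponding ideal sheaves. The scheme-theoretic inclusion $e(X) \subset \Phi^{-1}(\Delta(X))$ translates into $\Phi^\ast I_\Delta \subset I_e$, inducing a morphism of conormal sheaves $\Phi^\ast(I_\Delta/I_\Delta^2) \to I_e/I_e^2$. Pulling back along $e$ and using $(\Phi \circ e)^\ast = \Delta^\ast$ produces
\[ \Omega_{X/Y} = \Delta^\ast(I_\Delta/I_\Delta^2) \longrightarrow e^\ast(I_e/I_e^2). \]
Since $G \times_S X \to X$ is the base change of $G \to S$ along $X \to S$, and $e$ is the base change of $\epsilon$, flat base change identifies $e^\ast(I_e/I_e^2)$ with $\cO_X \otimes_{\cO_S} \epsilon^\ast \Omega_{G/S}$. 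Smoothness of $G$ guarantees that $\Lie G = (\epsilon^\ast \Omega_{G/S})^\vee$ is a finite locally free $\cO_S$-module, so this target is canonically $\Hom_{\cO_S}(\Lie G, \cO_X)$; this yields (1).

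For (2), I would invoke the definition of a $G$-torsor: the shearing morphism $\Phi$ is required to be an isomorphism (at worst after an fppf base change on $Y$, which is harmless since the statement of the lemma is local for the fppf topology on $Y$ and the construction is functorial in $S$). Under that isomorphism $\Phi^\ast I_\Delta$ coincides with $I_e$ as ideal sheaves, so the map $\Phi^\ast(I_\Delta/I_\Delta^2) \to I_e/I_e^2$ is an isomorphism, and hence so is its pullback along $e$.

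I do not expect any serious obstacle here beyond keeping the base change identifications straight. The one step requiring care is the comparison $e^\ast(I_e/I_e^2) \cong \cO_X \otimes_{\cO_S} \epsilon^\ast \Omega_{G/S}$, which reduces to the standard computation of the conormal sheaf of the unit section of a smooth group scheme combined with compatibility of conormal sheaves with flat base change; the freeness of $\Lie G$ (from smoothness) is what then converts the tensor product into the $\Hom$-sheaf appearing in the statement.
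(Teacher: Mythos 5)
Your proof is correct and follows essentially the same approach as the paper's: both introduce the shearing morphism $G \times_S X \to X \times_Y X$ and the unit section $e : X \to G \times_S X$, observe that the composite is the diagonal, and produce the desired map via functoriality of conormal sheaves; for the torsor case both use that the shearing morphism is an isomorphism. The paper simply cites \cite[Tag 01R4]{stacks-project} and SGA3 for the steps you spell out by hand, so the only difference is level of detail. One small remark: your caveat about needing fppf base change in (2) is unnecessary — for a $G$-torsor the shearing morphism is an isomorphism globally, not merely fppf-locally, since being an isomorphism is an fppf-local property on the base — and the phrase ``flat base change'' in the identification of $e^\ast(I_e/I_e^2)$ is a slight misnomer (what is used is compatibility of $\Omega$ with arbitrary base change, together with the fact that the conormal sheaf of a section of a smooth morphism agrees with the pullback of $\Omega$), but neither affects the correctness of the argument.
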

\begin{proof}
	Let $e : X \to G \times X$ be the morphism $x \mapsto (e, x)$, and let $\mu 
	: G \times X \to X \times_Y X$ be the morphism $(g, x) \mapsto (x, gx)$. 
	Then $\mu \circ e$ is the diagonal embedding, and both $e$ and $\mu \circ 
	e$ are closed immersions. The sheaf $\Hom_{\cO_S}(\Lie G, \cO_X)$ may be 
	identified with the conormal sheaf of the morphism $e$ (see e.g.\ \cite[II, Lemme 4.11.7]{SGA3})
while 
	$\Omega_{X/Y}$ may be identified with the conormal sheaf 
	of the morphism $\mu \circ e$. The existence of the morphism therefore 
	follows from \cite[\href{https://stacks.math.columbia.edu/tag/01R4}{Lemma 
	01R4}]{stacks-project}.
	
	Now suppose that $\pi$ is a $G$-torsor. In this case $\mu$ is an isomorphism, and the statement is immediate. 
\end{proof}
Let $\frg = \operatorname{Lie} \PGL_{n, \cO}$ and $\frg^\ast = \Hom(\frg, 
\cO)$, $\frg^\ast_{X_m} = \frg^\ast \otimes_{\cO} \cO_{X_m}$. We apply Lemma 
\ref{lem_conormal_sheaves} to the morphisms $\pi_k : X_k \to Y_k$, with $G = 
\PGL_{n, \cO}$, to obtain complexes (not necessarily exact) of coherent sheaves 
on $X_k$:
\begin{equation*}\label{eqn_int_1}
(\star_k) : 0 \to \pi_k^\ast\Omega_{Y_k / \cO} \to \Omega_{X_k / \cO} \to \frg^\ast_{X_k} \to 0.
\end{equation*}
We observe that e.g.\ $i_x^\ast (\star_m)$ is the complex
\[ i_x^\ast (\star_m) : 0 \to T_y^\ast Y_m \to T_x^\ast X_m \to \frg^\ast \to 0. \]
Here we write $T_x^\ast X_m = i_x^\ast \Omega_{X_m / \cO}$, by definition, and call it the Zariski cotangent space of $X_m$ at the point $x$. 
\begin{lemma}\phantomsection\label{lem_rational_fibres}
	\begin{enumerate}
		\item The complex $(\star_m)[1/p]$ on $X_m[1/p]$ is an exact sequence 
		of locally free sheaves above a Zariski open neighbourhood of the point 
		$y$.
		\item The complex $(\star_{m+1})[1/p]$ on $X_{m+1}[1/p]$ is an exact 
		sequence of locally free sheaves above a Zariski open neighbourhood of 
		the point $y(\gamma)$, for any $\gamma \in \Gamma$.
		\item The complex $(\star_{m+2})[1/p]$ on $X_{m+2}[1/p]$ is an exact 
		sequence of locally free sheaves above a Zariski open neighbourhood of 
		the point $y(\gamma, \delta)$, for any $\gamma, \delta \in \Gamma$.
	\end{enumerate}
\end{lemma}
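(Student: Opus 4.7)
The plan is to deduce all three parts from the single assertion that at each of the specified points of the generic fibre $X_k[1/p]$, the morphism $\pi_k[1/p]$ is \'etale-locally a trivial $\PGL_{n,\cO}$-torsor in a Zariski open neighborhood. Granting this, Lemma~\ref{lem_conormal_sheaves}(2) identifies $\Omega_{X_k/Y_k}[1/p]$ with $\frg^{\ast}_{X_k[1/p]}$ near the point, and smoothness of $\pi_k[1/p]$ (together with the resulting smoothness of $Y_k[1/p]$ over $E$ at the image point) makes the cotangent sequence
\[ 0 \to \pi_k^{\ast}\Omega_{Y_k[1/p]/E} \to \Omega_{X_k[1/p]/E} \to \Omega_{X_k[1/p]/Y_k[1/p]} \to 0 \]
a short exact sequence of locally free sheaves in a neighborhood. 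Splicing these reassembles $(\star_k)[1/p]$ as the desired short exact sequence.

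First, I would verify the key torsor property by checking two conditions at $x \in X_m(\cO)$: (i) the $\PGL_n$-stabilizer of $x$ is trivial, and (ii) the $\PGL_n$-orbit of $x$ in $X_m[1/p]$ is closed. For (i), the absolute irreducibility of $\rho$ over $E$ together with the Zariski density of $\rho(\gamma_1), \dots, \rho(\gamma_m)$ in $\rho(\Gamma)$ implies that the $E$-subalgebra of $M_n(E)$ generated by $g_1, \dots, g_m$ equals $M_n(E)$; hence the $\GL_n$-centralizer of the tuple consists of scalars, and the $\PGL_n$-stabilizer is trivial. For (ii), a tuple in $\GL_n^k$ has closed simultaneous conjugation orbit iff the subalgebra it generates is semisimple, which is clear here. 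The same arguments apply at $x(\gamma)$ and $x(\gamma, \delta)$, since adjoining matrices only shrinks the centralizer and preserves fullness of the generated subalgebra. Granted (i) and (ii), Luna's \'etale slice theorem (applicable in characteristic zero for the reductive group $\PGL_n$) furnishes the desired \'etale-local trivial torsor structure.

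The main technical obstacle is the application of Luna's theorem over the coefficient field $E$, which need not be algebraically closed; however, both (i) and (ii) are geometric conditions insensitive to base change to $\overline{E}$, and the \'etale-local torsor structure descends. The remaining step---passing from a pointwise assertion to exactness and local freeness on a Zariski open neighborhood---is routine, since the smooth locus of a morphism is open and the locus where a morphism of coherent sheaves of equal generic rank is an isomorphism is open.
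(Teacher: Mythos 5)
Your proof is correct, but it takes a genuinely different route from the paper. Both arguments aim to show that $\pi_k[1/p]$ is a $\PGL_{n,E}$-torsor (at least locally) above the relevant points, after which Lemma~\ref{lem_conormal_sheaves} plus smoothness of the morphism and of $Y_k[1/p]$ give the short exact sequence of locally free sheaves. But the tools differ: the paper identifies the open set $U$ of absolutely irreducible tuples with the set of \emph{stable} points via Richardson's theorem, observes that the stabilizers are trivial by Schur's lemma, and then applies a result of Bardsley--Richardson (Proposition 8.2 of their paper) asserting that the quotient map restricted to the stable locus with trivial stabilizers is a torsor. You instead verify directly that the orbit of $x$ is closed (via the standard fact that a tuple of matrices has closed simultaneous conjugation orbit if and only if the generated subalgebra is semisimple) and that the stabilizer is trivial (via Burnside plus Zariski density), then invoke Luna's \'etale slice theorem. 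Your route requires the extra step of base-changing to $\overline{E}$ and descending, because Luna's theorem is usually stated over an algebraically closed field; the paper's references apply directly. On the other hand, your argument is arguably more self-contained, since the semisimplicity criterion for closed orbits and Luna's theorem are perhaps more widely known than the Bardsley--Richardson torsor criterion. Both handle parts (2) and (3) the same way, by noting that the same verification works at $x(\gamma)$ and $x(\gamma,\delta)$.
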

\begin{proof}
	We show that $\pi_m[1/p]$ is a $\PGL_{n, E}$-torsor above a Zariski open 
	neighbourhood of $y$. The same proof shows the analogous statement for the 
	points $y(\gamma)$ and $y(\gamma, \delta)$, and in each case implies the 
	statement in the lemma (since a $\PGL_{n, E}$-torsor is in particular 
	smooth). Let $U$ denote the open subset of $X_m[1/p]$ corresponding to 
	tuples $(u_1, \dots, u_m)$ which generate an absolutely irreducible 
	subgroup of $\GL_n$. Then \cite[Theorem 4.1]{Ric88} shows that $U$ is 
	precisely the set of stable points of $X_m[1/p]$ (for the action of 
	$\PGL_{n, E}$). In particular, $\pi_m(U)$ is an open subset of $Y_m[1/p]$ 
	and $U = \pi_m^{-1}(\pi_m(U))$. Each point of $U$ has a trivial stabilizer 
	for the $\PGL_{n, E}$ action (Schur's lemma), so it follows from 
	\cite[Proposition 
	8.2]{Bar85} that $\pi_m|_U : U \to \pi_m(U)$ is a $\PGL_{n, E}$-torsor, as 
	required.
\end{proof}
We can use the compactness of $\Gamma$ to upgrade the previous lemma to the following uniform integral statement. 
\begin{lemma}\label{lem_uniform_bound}
	We can find an integer $k_1 \geq 0$ with the following properties: 
	\begin{enumerate}
		\item The cohomology of the complex $i_x^\ast (\star_m)$ is annihilated 
		by $p^{k_1}$.
		\item For any $\gamma \in \Gamma$, the cohomology of the complex 
		$i_{x(\gamma)}^\ast (\star_{m+1})$ is annihilated by $p^{k_1}$.
		\item For any $\gamma, \delta \in \Gamma$, the cohomology of the 
		complex $i_{x(\gamma, \delta)}^\ast (\star_{m+2})$ is annihilated by 
		$p^{k_1}$.
	\end{enumerate}
\end{lemma}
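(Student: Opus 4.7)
The plan is to leverage the generic-fibre exactness of Lemma \ref{lem_rational_fibres} together with the Noetherianity of the schemes $X_k$: the key point is that the open subschemes on which the generic-fibre exactness holds are \emph{uniform} in the parameters $\gamma$ (respectively $(\gamma,\delta)$), and this allows one to bound the $p$-power torsion of the cohomology sheaves globally.

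First, following the proof of Lemma \ref{lem_rational_fibres}, let $U_k \subseteq X_k[1/p]$ denote the open subscheme of tuples generating an absolutely irreducible subgroup of $\GL_n$, and let $\tilde U_k \subseteq X_k$ be its preimage. Since $(\rho(\gamma_1), \dots, \rho(\gamma_m))$ generates a Zariski-dense subgroup of $\rho(\Gamma)$, it acts absolutely irreducibly, so $x \in \tilde U_m$, $x(\gamma) \in \tilde U_{m+1}$ for every $\gamma$, and $x(\gamma,\delta) \in \tilde U_{m+2}$ for every $(\gamma,\delta)$. The cohomology sheaves of $(\star_k)|_{\tilde U_k}$ are coherent and vanish after inverting $p$, hence are $p$-power torsion. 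Since $\tilde U_k$ is an open subscheme of the affine Noetherian scheme $X_k = \GL_{n,\cO}^k$, it is quasi-compact, so is covered by finitely many affine opens; on each, the sections of a coherent $p$-power torsion sheaf form a finitely generated $p$-power torsion module, hence are annihilated by some power of $p$. Taking maxima yields an integer $N_k \geq 0$ with $p^{N_k} \cdot \cH^i((\star_k)|_{\tilde U_k}) = 0$ for $i = 0, 1, 2$.

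Next I would pass from this uniform annihilation of cohomology sheaves to a uniform annihilation of $H^i(i_x^\ast (\star_k))$ for any $\cO$-point $x$ of $\tilde U_k$. The degree-$\geq 1$ terms $\Omega_{X_k/\cO}$ and $\frg^\ast_{X_k}$ are locally free (hence flat), while the degree-$0$ term $\pi_k^\ast \Omega_{Y_k/\cO}$ need not be. A direct verification, replacing the degree-$0$ term by a projective resolution, shows that the flatness of the degree-$\geq 1$ terms forces the underived pullback $i_x^\ast (\star_k)$ to agree with the derived pullback $i_x^L (\star_k)$ in cohomological degrees $0, 1, 2$. The hypertor spectral sequence
\[
E_2^{p,q} = \Tor^{R}_{-p}\bigl(\cH^q((\star_k)|_{\tilde U_k}),\, \cO\bigr) \Longrightarrow H^{p+q}(i_x^L(\star_k))
\]
(with $R = \cO[X_k]$) then shows that $H^i(i_x^\ast(\star_k))$ admits a filtration of length at most $3$ whose subquotients are annihilated by $p^{N_k}$; hence $p^{3 N_k}$ annihilates $H^i(i_x^\ast(\star_k))$.

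Taking $k_1 := 3\max(N_m, N_{m+1}, N_{m+2})$ yields the desired uniform bound in all three parts of the lemma. The main obstacle is the second step: the non-flatness of $\pi_k^\ast \Omega_{Y_k/\cO}$ causes an a priori mismatch between derived and underived pullbacks that must be dispatched carefully, but exploiting the flatness of the higher-degree terms of $(\star_k)$ makes this essentially a bookkeeping calculation and the argument goes through.
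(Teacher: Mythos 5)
The overall strategy is genuinely different from the paper's (which bounds the torsion via the first part of Lemma \ref{lem_locally_constant_p_torsion} on $p$-adically open neighbourhoods and uses compactness of $\Gamma$), and the spectral sequence bookkeeping in your second step is sound. However, there is a gap at the very first step which is fatal in exactly the cases this paper cares most about. You assert that $x \in \widetilde{U}_m$, $x(\gamma)\in\widetilde{U}_{m+1}$, etc., because the tuple $(\rho(\gamma_1),\dots,\rho(\gamma_m))$ acts absolutely irreducibly over $E$. But $\widetilde{U}_k$ is an open subscheme of $X_k$ with $\widetilde{U}_k[1/p]=U_k$, and for the $\cO$-point $x : \Spec\cO \to X_k$ to factor through $\widetilde{U}_k$ one needs the \emph{closed} point of $\Spec\cO$ to land in $\widetilde{U}_k$ as well, i.e.\ one needs the residual tuple $(\overline{\rho}(\gamma_1),\dots,\overline{\rho}(\gamma_m))$ to avoid $\overline{Z_k}$. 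This fails whenever $\overline\rho$ is not absolutely irreducible: for instance if $\overline\rho$ is trivial (a case the paper explicitly allows), the residual tuple is $(1,\dots,1)$, which is the specialization of the $E$-point $(1,\dots,1) \notin U_k$, so any open whose generic fibre lies inside $U_k$ must exclude it. Consequently $x$ does not factor through $\widetilde{U}_k$, your hyper-Tor spectral sequence lives over the wrong local ring (the local ring $\cO_{X_k,\xi}$ at the closed point $\xi$ of $x$, which is not a local ring of $\widetilde{U}_k$), and the uniform bound $p^{N_k}$ on $\cH^q((\star_k)|_{\widetilde{U}_k})$ gives no control over $H^i(i_x^\ast(\star_k))$.

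To repair this one must control what happens near the residual tuple $\xi \in X_k(k)$, which is not in the absolutely irreducible locus and where the cohomology sheaves of $(\star_k)$ are not $p$-power torsion. Since only finitely many residual tuples occur (as $\overline\rho(\Gamma)$ is finite), the natural fix is to work over each fixed residual fibre, where the relevant $\cO$-points form a compact subset of $X_k(\cO)$ in the $p$-adic topology and one wants the torsion bound to vary locally constantly — but at that point one has essentially reconstructed the paper's argument via Lemma \ref{lem_locally_constant_p_torsion} and the compactness of $\Gamma$, rather than obtained an alternative to it.
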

\begin{proof}
	The first part of the lemma follows by Lemma \ref{lem_rational_fibres}. In fact, we can find numbers $k$, $k(\gamma)$, and $k(\gamma, \delta)$ such that the requirements of each point of the lemma are satisfied if $k_1$ is replaced by $k$, $k(\gamma)$, and $k(\gamma, \delta)$ in each case. What we must show is that we can find $k_1$ which exceeds $k$, $k(\gamma)$, and $k(\gamma, \delta)$ for all $\gamma, \delta \in \Gamma$. 
	
	To this end, let us suppose that $k$, $k(\gamma)$, and $k(\gamma, \delta)$ have been chosen to each take their smallest possible values. It suffices to show that $k(\gamma)$ and $k(\gamma, \delta)$ are locally constant as functions of $\gamma, \delta \in \Gamma$. Since $\Gamma$ is compact, this will imply that they are in fact bounded. This local constancy is a consequence of the second part of Lemma \ref{lem_locally_constant_p_torsion} below.
\end{proof}
\begin{lemma}\label{lem_locally_constant_p_torsion}
	Let $Z$ be a scheme of finite type over $\cO$. If $z \in Z(\cO)$, we write 
	$i_z : \Spec \cO \to Z$ for the corresponding morphism.
	\begin{enumerate}
		\item Let $\cF$ be a coherent sheaf on $Z$ such that $\cF[1/p]$ is 
		locally free on a Zariski open neighbourhood $V_z$ of $z \in 
		Z[1/p]$. 
	 Then for any 
		$z \in 
		Z(\cO)$, there exists an 
		open (for the $p$-adic topology) neighbourhood $U$ of $z$ in $Z(\cO)$ 
		such that for any $z' \in U$, $i_{z'}^\ast \cF \cong i_z^\ast \cF$ as 
		$\cO$-modules.
		\item Let $z \in Z(\cO)$ and let
		\[ (\star) : 0 \to \cF_1 \to \cF_2 \to \cF_3 \to 0 \]
		be a complex of coherent sheaves on $Z$, not necessarily exact, but 
		such that on a Zariski open neighbourhood of $z \in Z[1/p]$
		\[ (\star[1/p]) : 0 \to \cF_1[1/p] \to \cF_2[1/p] \to \cF_3[1/p] \to 0 \]
		 is an exact sequence of locally free sheaves. Then there exists a 
		 $p$-adically open 
		 neighbourhood $U$ of $z$ in $Z(\cO)$ and an integer $N \geq 0$ such 
		 that for all $z' \in U$, $p^N H^\ast(i_{z'}^\ast (\star)) = 0$.
	\end{enumerate}	 
\end{lemma}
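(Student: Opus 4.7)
The plan is to handle part (1) via Smith normal form, and then use it as a black box to bound the $\Tor$ groups arising in a diagram chase proving part (2).

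For part (1), I work on an affine Zariski neighbourhood $\Spec R \subseteq Z$ of the image of $z$, with a presentation $R^{a} \xrightarrow{\phi} R^{b} \to M \to 0$ of $\cF$. For any $z' \in Z(\cO)$ factoring through $\Spec R$, right-exactness of pullback gives $i_{z'}^\ast \cF \cong \cO^{b}/\phi_{z'}(\cO^{a})$, whose $\cO$-module isomorphism class is determined by the Smith normal form of $\phi_{z'}$, equivalently by the $p$-adic valuations $v_p$ of the $k \times k$ minors of $\phi_{z'}$ for $1 \le k \le b - r$, where $r$ is the rank of the locally free sheaf $\cF[1/p]$ on a Zariski open $V \ni z[1/p]$. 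I choose a $p$-adic neighbourhood $U$ of $z$ small enough that for every $z' \in U$: (i) $z'[1/p] \in V$, guaranteed because any regular function nonvanishing at $z[1/p]$ remains nonvanishing after a $p$-adically small perturbation; and (ii) the entries of $\phi_{z'}$ agree with those of $\phi_z$ modulo $p^{K}$ for some $K$ strictly greater than the largest $v_p$ of any $k \times k$ minor of $\phi_z$ with $k \le b-r$. Condition (ii) forces the $v_p$ of each such minor of $\phi_{z'}$ to equal that of $\phi_z$, so the elementary divisor exponents, and hence $i_{z'}^\ast \cF$, are unchanged.

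For part (2), denote the differentials by $\phi\colon \cF_1 \to \cF_2$ and $\psi\colon \cF_2 \to \cF_3$, and the cohomology sheaves of $(\star)$ by $\cH^1, \cH^2, \cH^3$. Shrinking $Z$ to a sufficiently small affine neighbourhood of the image of $z$, each $\cH^i$ is annihilated by a uniform $p^{N_0}$, since $\cH^i[1/p]$ vanishes on $V$ by hypothesis and $\cH^i$ is coherent on an affine. Consequently both $i_{z'}^\ast \cH^i$ and every $\Tor_j^R(\cO, \cH^i)$ are killed by $p^{N_0}$, the latter because multiplication by $p^{N_0}$ is the zero endomorphism of $\cH^i$. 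I then bound $H^\ast(i_{z'}^\ast (\star))$ via a diagram chase using the short exact sequences
\[ 0 \to \cH^1 \to \cF_1 \to \im\phi \to 0, \qquad 0 \to \im\phi \to \ker\psi \to \cH^2 \to 0, \]
\[ 0 \to \ker\psi \to \cF_2 \to \im\psi \to 0, \qquad 0 \to \im\psi \to \cF_3 \to \cH^3 \to 0, \]
together with their long exact $\Tor$ sequences obtained by applying $-\otimes_R \cO$.

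The main obstacle is that this chase also demands uniform bounds on the $\Tor$ groups of $\cF_i, \im\phi, \ker\psi, \im\psi$ and $\cF_2/\im\phi$, which are not immediate. I resolve this by reducing each such $\Tor$ to an instance of part (1). Each of these sheaves is coherent on $Z$ with its $[1/p]$ part locally free near $z[1/p]$, since $(\star)[1/p]$ is an exact sequence of locally free sheaves on $V$. Given any such coherent $\cG$, take a local presentation $0 \to \cL \to \cO_Z^b \to \cG \to 0$; the kernel $\cL$ is again coherent with $\cL[1/p]$ locally free of complementary rank on $V$. Because a short exact sequence of locally free sheaves is Zariski-locally split, the induced map $i_{z'}^\ast \cL \to \cO^b$ is generically injective, so $\Tor_1^R(\cO, \cG) = \ker(i_{z'}^\ast \cL \to \cO^b)$ coincides with the torsion submodule of $i_{z'}^\ast \cL$, which by part (1) applied to $\cL$ has uniformly bounded $p$-power annihilator. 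Iterating the construction with $\cL$ in place of $\cG$ handles all higher $\Tor_j^R(\cO, \cG)$. Assembling these bounds through the four short exact sequences above yields a single $N$ with $p^N H^\ast(i_{z'}^\ast (\star)) = 0$ for every $z'$ in a sufficiently small $p$-adic neighbourhood $U$.
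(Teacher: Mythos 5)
Your part (1) is essentially the paper's proof: reduce to a presentation $R^a \xrightarrow{\phi} R^b \to M \to 0$, restrict to $z'$ with $z'[1/p]\in V$ so that $\phi_{z'}$ has constant $E$-rank, and observe that the Smith normal form is locally constant in the $p$-adic topology. One small slip: what is locally constant is the valuation of the gcd of the $k\times k$ minors, not the valuations of the individual minors (some of which may vanish for $\phi_z$ yet be nonzero, with large valuation, for nearby $\phi_{z'}$); since your condition (ii) does control the gcds, the argument stands.

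Your part (2) replaces the paper's short reduction (embed $H^2(i_{z'}^\ast(\star))$ in the torsion of $i_{z'}^\ast(\cF_2/\im(\cF_1\to\cF_2))$ and apply part (1), with $H^1$ and $H^3$ handled the same way) with a diagram chase through four short exact sequences and their $\Tor$ long exact sequences. That strategy can work, and the device of bounding $\Tor_1^R(\cO,\cG)$ via part (1) applied to the kernel of a free presentation of $\cG$ is sound. However, there is a genuine gap in your treatment of the cohomology sheaves $\cH^i$: you assert that on a small enough affine $\Spec R$ containing $\bar{z}$ each $\cH^i$ is killed by a fixed $p^{N_0}$, on the grounds that $\cH^i[1/p]$ vanishes on $V$. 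This would require $\Spec R[1/p]\subseteq V$, which cannot in general be arranged: the Zariski closure in $Z$ of $Z[1/p]\setminus V$ can contain $\bar{z}$, so every Zariski open of $Z$ containing $\bar{z}$ meets the locus where $(\star[1/p])$ fails to be exact, and $\cH^i|_{\Spec R}$ is then not $p$-power torsion. This is not a pathology the paper avoids: when $\rhobar$ is trivial --- which this paper is specifically designed to allow --- the closed point $\bar{x}=(1,\dots,1)\in X_m$ lies in the closure of the non-stable locus, which is exactly where the $\cH^i[1/p]$ are supported. The remedy is to treat the $\cH^i$ exactly as you treat the other intermediate sheaves: $\cH^i[1/p]$ is (zero, hence) locally free near $z[1/p]$, so part (1) bounds $i_{z'}^\ast\cH^i$, and $\Tor_j(\cO,\cH^i)$ is bounded by iterating your presentation trick rather than by pointwise annihilation of $\cH^i$ over $\Spec R$.
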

\begin{proof}
	In each case we are free to replace $Z$ by a Zariski open neighbourhood of 
	the closed point specializing $z$. We can therefore assume that $Z = \Spec 
	B$ is affine. In the 
	first case we can assume that $\cF$ corresponds to a finite $B$-module $M$ 
	and that there is an exact sequence 
	\[ B^a \to B^b \to M \to 0. \]
	We may assume that $\cF[1/p]$ has constant rank $b-k$ on $V_z$, so we get a 
	continuous map $Z(\cO)\cap V_z(E) \to M_{a \times b}(\cO) \cap M_{a 
	\times b, 
	k}(E)$, where $M_{a \times b, k} \subset M_{a \times b}$ is the locally 
	closed subscheme of matrices of rank $k$ (equivalently with vanishing $(k+1) \times (k+1)$ minors but at least one non-zero $k \times k$ minor). Note that $Z(\cO)\cap V_z(E)$ is 
	$p$-adically open in $Z(\cO)$. We 
	are therefore reduced to showing that any matrix $T \in M_{a 
	\times b}(\cO) \cap M_{a \times b, k}(E)$ has an open neighbourhood $U$ 
	such that for $T' \in U$, $\cO^b / T' \cO^a$ is isomorphic to $\cO^b / T 
	\cO^a$. In other words, we need to show that there is an open neighbourhood 
	$U$ in which the Smith normal form of $T$ is constant. Let $m$ be the 
	largest valuation of a non-zero minor of $T$. Choosing $U$ so that for any 
	$T' \in U$, the minors of $T'$ are congruent to those of $T$ modulo 
	$\varpi^{m+1}$, we see that the Smith normal forms of $T$ and $T'$ are 
	indeed equal. (Note that the assumption that the $E$-rank is constant is 
	necessary; otherwise we have the example of $M = \cO[x] / x$ at the point 
	$x = 0$, where $\cO$ is a limit of $\cO / (\varpi^N)$'s.)

	We now turn to the second part. It suffices to show that we can find an 
	integer $N \geq 0$ and a $p$-adically open neighbourhood $U$ of $z$ such 
	that for all $z' \in U$, $p^N$ annihilates $\ker (i_{z'}^\ast \cF_2 \to 
	i_{z'}^\ast \cF_3) / \im (i_{z'}^\ast \cF_1 \to i_{z'}^\ast \cF_2)$. Our 
	hypotheses imply that for $z'$ in a Zariski open neighbourhood of $z$, this 
	group is contained in the torsion subgroup of $i_{z'}^\ast \cF_2 / \im 
	(i_{z'}^\ast \cF_1 \to i_{z'}^\ast \cF_2) = i_{z'}^\ast(\cF_2 / \im(\cF_1 
	\to \cF_2))$, so the result follows on applying the first part to $\cF_2 
	/\im(\cF_1 \to \cF_2)$. 
\end{proof}
We are now in a position to prove Proposition \ref{prop_effectivity_of_pseudo_characters}. Recall that we write $A = \cO \oplus \epsilon E / \cO$. It is helpful to first note that if  $X$ is a scheme over $\cO$ and $x \in X(\cO)$, then the fibre of $X(A) \to X(\cO)$ above $x$ is canonically identified with $\Hom_{\cO}(T_x^\ast X, E / \cO)$. 
\begin{proof}[Proof of Proposition \ref{prop_effectivity_of_pseudo_characters}]
	Let the integer $k_1$ be as in Lemma \ref{lem_uniform_bound}. We will show that we can take $k_0 = 6k_1$. Taking the Pontryagin dual of $i_x^\ast (\star_m)$ and $i_{x(\gamma)}^\ast (\star_{m+1})$ gives us, for any $\gamma \in \Gamma$, a commutative diagram
	\[\scalebox{0.8}{ \xymatrix{ 0 \ar[r] & \frg \otimes_{\cO} E / \cO \ar[r]  
	& \Hom_{\cO}(T_x^\ast X_m, E / \cO) \ar[r] & \Hom_{\cO}( T_y^\ast Y_m, E / 
	\cO) \ar[r] & 0 \\
		0 \ar[r] &\frg \otimes_{\cO} E / \cO \ar[u] \ar[r] & 
		\Hom_{\cO}(T_{x(\gamma)}^\ast X_{m+1}, E / \cO) \ar[r] \ar[u] & 
		\Hom_{\cO}( T_{y(\gamma)}^\ast Y_{m+1}, E / \cO) \ar[r] \ar[u] & 0.}} \]
	The first vertical arrow is the identity, while the other two vertical arrows correspond to forgetting the last entry in $\GL_n^{m+1}$. Both rows have cohomology annihilated by $p^{k_1}$. Consequently the induced map
	\begin{equation}\label{eqn_cartesian} \begin{split}  \Hom_{\cO}&(T_{x(\gamma)}^\ast X_{m+1}, E / \cO) \\ &\to \Hom_{\cO}(T_x^\ast X_m, E / \cO) \times_{\Hom_{\cO}( T_y^\ast Y_m, E / \cO)} \Hom_{\cO}( T_{y(\gamma)}^\ast Y_{m+1}, E / \cO)  \end{split} \end{equation}
	has kernel and cokernel annihilated by $p^{2k_1}$. In particular, given an element $z$ of the target we can define an element of the source unambiguously as follows: choose a pre-image $z'$ of $p^{2 k_1} z$. Then $z'' = p^{2 k_1} z'$ depends only on $z$ and has image $p^{4k_1} z$.
	
	Now suppose given a pseudocharacter $t'$ over $A$ lifting $t$. The data of the pseudocharacter $t'$ (more precisely, $t'_m$) determines an element $y' \in \Hom( T_y^\ast Y_m, E / \cO)$. We fix a choice of $x' \in \Hom_{\cO}(T_x^\ast X_m, E / \cO)$ with image equal to $p^{k_1} y'$. This corresponds to a tuple of elements $(g_1', \dots, g'_m) \in \GL_n(A)^m$ lifting the element $(g_1, \dots, g_m) = (\rho(\gamma_1), \dots, \rho(\gamma_m))$. If $x''$ is any other choice of element with image equal to $p^{k_1} y'$, then $p^{k_1} x - p^{k_1} x''$ is in the image of $\frg \otimes_\cO E / \cO$.
	
	 The pseudocharacter $t'$ also determines elements \[ y'(\gamma) 
	 \in \Hom( T_{y(\gamma)}^\ast Y_{m+1}, E / \cO) \] for any $\gamma \in 
	\Gamma$, and the pair $(p^{k_1} x', p^{2k_1} y'(\gamma))$ lies on the 
	right-hand side of (\ref{eqn_cartesian}). We may define $\rho'(\gamma)$ 
	uniquely as follows: it is the lift of $p^{4k_1} (p^{k_1} x', 
	p^{2k_1} y'(\gamma))$ associated to the map (\ref{eqn_cartesian}). 
	Then $\rho' : \Gamma \to \GL_n(A)$ has associated trace function $\tr \rho' 
	= \alpha_{6k_1} \circ t'$, and its conjugacy class under  $1 + \epsilon \frg \otimes_\cO 
	E / \cO$ is independent of choices. We can verify that $\rho'$ is a 
	homomorphism (i.e.\ that it respects 
	multiplication) using the fact that $t'$ is a 
	pseudocharacter, together with a diagram with rows corresponding to 
	elements $x(\gamma \delta)$ and $x(\gamma, \delta)$. 
	
	Now suppose given two liftings $\rho'_1, \rho'_2$ of $\rho$ to $A$ with 
	$\tr \rho'_1 = \tr \rho'_2 = t'$, say. 	This implies that for each $\gamma 
	\in \Gamma$, the tuples $(\rho'_i(\gamma_1), \dots, \rho'_i(\gamma_m), 
	\rho'_i(\gamma))$ ($i = 1, 2$), when identified with elements of 
	$\Hom_{\cO}( T_{x(\gamma)}^\ast X_{m+1}, E / \cO)$, have equal image in 
	$\Hom_{\cO}( T_{y(\gamma)}^\ast Y_{m+1}, E / \cO)$. Consequently there is 
	$X_\gamma$ in $\frg \otimes_{\cO} E / \cO$ taking $\alpha_{k_1}$ of the 
	first 
	tuple to $\alpha_{k_1}$ of the second. Passing to the top row of the 
	commutative
	diagram, we see that for any $\gamma, \gamma' \in \Gamma$, we have 
	$p^{k_1}(X_\gamma - X_{\gamma'}) = 0$, hence $X = p^{k_1} X_\gamma$ is 
	independent of $\gamma \in \Gamma$. It follows that $X$ takes 
	$\alpha_{2k_1} \circ \rho'_1$ to $\alpha_{2k_1} \circ \rho'_2$. 
	
	It remains to verify that if $t'$ is continuous, then so is $\rho'$. It is 
	enough to show that for each $s \geq 1$, there exists an open subgroup $N 
	\subset \Gamma$ such that $\rho'(N)$ takes values in the subgroup $1 + 
	\varpi^s M_n(\cO)$ of $\GL_n(A)$. Since $\bbZ[\GL_n^{m+1}]^{\GL_n}$ is a 
	finitely generated $\Z$-algebra and $\Gamma$ is compact, there exists $r 
	\ge 1$ such that $t'_{m+1}$ takes values in 
	$\mathrm{Map}(\Gamma^{m+1},A_r)$, 
	where $A_r = \cO \oplus \epsilon \varpi^{-r}\cO/\cO \subset A$. Increasing $s$, we can assume that $s \geq r$, that $\rho'(\gamma_1), \dots, \rho'(\gamma_m)$ lie in $\GL_n(A_s)$, and that there exists an open subgroup $N \subset \Gamma$ such that for all $\gamma \in N$, $\rho(\gamma) \in 1 + \varpi^s M_n(\cO)$ and $t'_{m+1}(\gamma_1, \dots, \gamma_m, \gamma) \equiv t'_{m+1}(\gamma_1, \dots, \gamma_m, 1) \text{ mod } \varpi^s A_s$. We observe that for $\gamma \in N$, there is a commutative square
	\[\scalebox{0.8}{\xymatrix{\Hom_{\cO}(T_{x(\gamma)}^\ast X_{m+1}, 
	\varpi^{-s} \cO 
	/ 
	\cO)\ar[r]\ar[d]&\ar[d] \Hom_{\cO}(T_x^\ast X_m, \varpi^{-s} \cO / \cO) 
	\times \Hom_{\cO}( 
	T_{y(\gamma)}^\ast Y_{m+1},\varpi^{-s} \cO / \cO)   \\ 
	 \Hom_{\cO}(T_{x(1)}^\ast X_{m+1}, \varpi^{-s} \cO / \cO)\ar[r] & 
	 \Hom_{\cO}(T_x^\ast X_m, \varpi^{-s} \cO / \cO) \times 
	 \Hom_{\cO}(T_{y(1)}^\ast Y_{m+1},\varpi^{-s} \cO / \cO)}}\]
	where the horizontal arrows are the ones already considered in 
	(\ref{eqn_cartesian}) (suppressing the subscripts indicating the 
	fibre product to save space), and the vertical ones are bijections arising 
	from the identification between $\Hom_{\cO}(T_{x(\gamma)}^\ast 
	X_{m+1},\varpi^{-s} \cO / \cO)$ and the fibre of $X_{m+1}(\cO / \varpi^s 
	\oplus \epsilon \varpi^{-s} \cO / \cO) \to X_{m+1}(\cO / \varpi^s)$ above 
	$x(\gamma) \text{ mod }\varpi^s = x(1) \text{ mod }\varpi^s$. The 
	horizontal arrows have kernels annihilated by $p^{2k_1}$. Our assumptions 
	imply that the elements of \[ \Hom_{\cO}(T_{x(\gamma)}^\ast X_{m+1}, 
	\varpi^{-s} \cO / \cO)\] and \[ \Hom_{\cO}(T_{x(1)}^\ast X_{m+1}, 
	\varpi^{-s} \cO / \cO)\]
	 corresponding to the images of $\rho'(\gamma)$ and $\rho'(1)$ in $\GL_n(\cO / \varpi^s \oplus \epsilon \varpi^{-s} \cO / \cO)$ are identified under the above bijection. This is what we needed to prove.
\end{proof}
\subsection{Pseudocharacters -- Galois deformation theory}

We again fix a prime $p$ and a finite extension $E / \bQ_p$ with ring of integers $\cO$ and residue field $k$. Let $\cC_\cO$ be the category of complete Noetherian local $\cO$-algebras with residue field $k$. 

Let $F$ be a number field, and let $S$ be a finite set of finite places of $F$, containing the $p$-adic ones. Let $\overline{\rho} : G_{F, S} \to \GL_n(k)$ be a continuous representation. Let $\overline{D}$ denote the associated group determinant over $k$. 

We write $\operatorname{Def}_{\overline{D}, S} : \cC_\cO \to \operatorname{Sets}$ for the functor which associates to each $A \in \cC_\cO$ the set of continuous group determinants $D$ of $G_{F, S}$ over $A$ such that $D \text{ mod }\ffrm_A = \overline{D}$. 
\begin{proposition}
	The functor $\operatorname{Def}_{\overline{D}, S}$ is represented by an object $R_{\overline{D}, S} \in \cC_\cO$.
\end{proposition}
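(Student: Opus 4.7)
The plan is to verify the Schlessinger criteria for $\operatorname{Def}_{\overline{D},S}$, following the framework of Chenevier \cite{chenevier_det}, and to conclude Noetherianity by bounding the tangent space in terms of Galois cohomology of $G_{F,S}$.

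First I would establish the formal Schlessinger axioms H1 and H2 (compatibility with fibre products and surjective fibre products of small extensions in $\cC_\cO$). These are essentially automatic from the definition of a group determinant: a group determinant with values in $B_1\times_{B_0} B_2$ is the same datum as a pair of group determinants over $B_1$ and $B_2$ inducing the same determinant over $B_0$, because the $n$-homogeneous multiplicative functions $D^{[i]}: G_{F,S}\to A$ determine $D$ (via Amitsur's formula), and the target $\operatorname{Map}_{cts}(G_{F,S}^m,\cdot)$ commutes with fibre products of rings. Continuity is preserved since $B_1\times_{B_0}B_2\to B_1\times B_2$ is a topological embedding.

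The crux is verifying the finiteness axiom H3: the tangent space $t_{\overline{D},S}:=\operatorname{Def}_{\overline{D},S}(k[\epsilon]/\epsilon^2)$ is a finite-dimensional $k$-vector space. Here I would argue as in \cite[\S3]{chenevier_det}: the tangent space only depends on $\overline{D}$ through its semisimple refinement, so I may assume $\overline{\rho}=\bigoplus_i \overline{\rho}_i$ is semisimple with absolutely irreducible $\overline{\rho}_i$. A tangent vector is a continuous group determinant over $k[\epsilon]/\epsilon^2$ lifting $\overline{D}$. By standard arguments (e.g.\ Bella\"{i}che--Chenevier, or directly using Chenevier's GMA theory), $t_{\overline{D},S}$ embeds into $\bigoplus_{i,j}H^1\bigl(G_{F,S},\operatorname{Hom}_k(\overline{\rho}_i,\overline{\rho}_j)\bigr)$. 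Each summand is finite-dimensional by the classical finiteness of $H^1(G_{F,S},V)$ for a finite $G_{F,S}$-module $V$ (the Hermite--Minkowski finiteness for cohomology of number fields with restricted ramification). Hence $t_{\overline{D},S}$ is finite-dimensional.

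With H1, H2, H3 in hand, Schlessinger's theorem (together with the formal argument that $\operatorname{Def}_{\overline{D},S}$ is a continuous deformation functor) gives pro-representability, and the finite-dimensional tangent space promotes the pro-representing object to a complete Noetherian local $\cO$-algebra with residue field $k$, i.e.\ an object of $\cC_\cO$. The main obstacle is really the tangent space bound: when $\overline{\rho}$ is not absolutely irreducible the pseudodeformation tangent space is strictly larger than the representation deformation tangent space and must be bounded via the decomposition into Hom spaces between irreducible constituents, after which the global Galois cohomology finiteness concludes the argument.
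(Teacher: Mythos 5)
The paper's own proof is a one-line citation of Chenevier \cite[\S 3.3]{chenevier_det}, whose argument is quite different from yours. He first shows (\cite[Lemma 3.8]{chenevier_det}) that every continuous deformation of $\overline{D}$ to an object of $\cC_\cO$ factors through a specific topologically finitely generated quotient of $G_{F,S}$ --- namely $\Gal(M_S/F)$, where $M_S$ is the maximal pro-$p$ extension, unramified outside $S$, of the field cut out by $\overline{\rho}$ --- and then applies his Noetherianity results for determinant rings of topologically finitely generated profinite groups (\cite[\S 2.37]{chenevier_det}), which ultimately rest on invariant theory of $\bZ[\GL_n^d]^{\GL_n}$ rather than on Galois cohomology. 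The present paper invokes exactly this chain again in the proof of Lemma \ref{lem_upper_bound_on_ps_ring}.

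The gap in your argument is the claim that $t_{\overline{D},S}$ embeds into $\bigoplus_{i,j}H^1\bigl(G_{F,S},\Hom_k(\overline{\rho}_i,\overline{\rho}_j)\bigr) = H^1(G_{F,S},\ad\overline{\rho})$. This is not a standard fact: the GMA machinery of Bella\"iche--Chenevier that you appeal to applies only when the residual pseudocharacter is \emph{multiplicity-free}, i.e.\ when the $\overline{\rho}_i$ are pairwise non-isomorphic, and you cannot reduce to that case. The bound is moreover false as a general statement about profinite groups. For $\Gamma = \bZ_p^m$, $n=2$, and $\overline{\rho}$ trivial, the universal determinant ring is the completed ring of $S_2$-multisymmetric functions $\cO\llbracket x_{1j}, x_{2j} : 1\le j\le m\rrbracket^{S_2}$; its reduced cotangent space has dimension $m + m(m+1)/2$ (the $m$ degree-one generators $x_{1j}+x_{2j}$ together with $m(m+1)/2$ degree-two invariants modulo products of degree-one ones), which for $m=8$ gives $44 > 32 = \dim_k H^1(\bZ_p^8, \ad\overline{\rho})$. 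Although $\bZ_p^m$ is not literally a $G_{F,S}$, this shows the tangent-space bound you wrote down has no general proof, and any proof would need an input beyond $H^1(\ad\overline{\rho})$. Your verification of the Schlessinger fibre-product axioms is fine as far as it goes, but without a correct finiteness argument for the tangent space the proof does not close; Chenevier's device of factoring through a finitely generated quotient, followed by invariant-theoretic finiteness, is precisely the replacement for the cohomological bound you hoped to use.
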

\begin{proof}
	See \cite[\S 3.3]{chenevier_det}. (This reference deals with the case $\cO = W(k)$, but the extension to general coefficients is trivial.)
\end{proof}
\begin{lemma}\label{lem_upper_bound_on_ps_ring}
Fix an integer $q \geq 0$. Then there exists an integer $g_0 = g_0(S, \overline{D}, q)$ such that for any set $Q$ of finite places of $F$ such that $|Q| \leq q$, there exists a surjection $\cO \llbracket X_1, \dots, X_{g_0} \rrbracket \to R_{\overline{D}, S \cup Q}$.
\end{lemma}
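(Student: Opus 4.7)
The minimal number of generators of $R_{\bar{D}, S \cup Q}$ as an $\cO$-algebra equals $\dim_k \operatorname{Def}_{\bar{D}, S \cup Q}(k[\epsilon]/(\epsilon^2))$, the dimension of the mod-$\varpi$ tangent space. My plan is to bound this dimension uniformly in $Q$ (with $|Q| \leq q$) by a quantity depending only on $S$, $\bar{D}$ and $q$.

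I would first pass to a finite unramified extension of $\cO$; this preserves the minimal number of generators, since the tangent space simply base-changes with the residue field. After this enlargement I may assume, by Taylor's theorem on pseudocharacters over algebraically closed fields, that $\bar{D}$ is the pseudocharacter of a continuous semisimple representation $\bar{\rho} : G_{F,S} \to \GL_n(k)$, with isotypic decomposition $\bar{\rho} = \bigoplus_i \bar{\rho}_i^{m_i}$ into absolutely irreducible summands. Note that $\bar{\rho}$ factors through $G_{F,S}$ for the original $S$, so the associated adjoint modules are fixed $G_{F,S}$-modules independent of $Q$.

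Next I would invoke the standard description of the tangent space of the pseudocharacter deformation ring at such a semisimple point (as in the work of Bella\"iche--Chenevier on families of Galois representations and their Selmer groups), which gives an inequality
\[ \dim_k t_{R_{\bar{D}, S \cup Q}} \leq \sum_{i,j} m_i m_j \dim_k H^1\bigl(G_{F, S \cup Q}, \Hom_k(\bar{\rho}_j, \bar{\rho}_i)\bigr). \]
This reduces the task to bounding each global $H^1$ uniformly as $Q$ varies, for a fixed finite $G_{F,S}$-module $M = \Hom_k(\bar{\rho}_j,\bar{\rho}_i)$.

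For the last step I would combine the global Euler--Poincar\'e characteristic formula with Poitou--Tate duality. The Euler characteristic $\chi(G_{F, S \cup Q}, M)$ depends only on local data at archimedean and $p$-adic places, hence is independent of the added set $Q$; and $H^0(G_{F, S \cup Q}, M) = H^0(G_{F,S}, M)$ since $G_{F, S \cup Q} \twoheadrightarrow G_{F,S}$. Thus the growth of $H^1$ is controlled by that of $H^2$, which by the Poitou--Tate nine-term sequence grows by at most $\sum_{v \in Q} \dim H^2(G_{F_v}, M) \leq q \cdot \dim M$ via local Tate duality (for $v \notin S$ we have $v \nmid p$ and $M$ is a finite module). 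Putting it together gives an explicit $g_0 = g_0(S, \bar{D}, q)$ with $\dim_k t_{R_{\bar{D}, S \cup Q}} \leq g_0$ for every admissible $Q$, producing the required surjection $\cO \llbracket X_1, \dots, X_{g_0} \rrbracket \twoheadrightarrow R_{\bar{D}, S \cup Q}$. The main obstacle is the third step: invoking a sufficiently precise Galois-cohomological bound on the pseudodeformation tangent space, and in particular checking that the statement transfers cleanly between the Lafforgue and Chenevier formalisms used in the paper (via Theorem~\ref{thm_emerson}).
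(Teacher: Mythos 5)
Your proposal takes a genuinely different route from the paper's. The paper first observes (via Chenevier's Lemma~3.8) that every pseudodeformation of $\overline{D}$ unramified outside $S\cup Q$ factors through $\Gal(M_{S\cup Q}/F)$, where $M_{S\cup Q}$ is the maximal pro-$p$ extension of the field $L$ cut out by $\overline{\rho}$ unramified outside $S\cup Q$; it then bounds the number of topological generators of this group uniformly in $Q$ (this reduces to a bound on $\dim_k H^1(G_{L,S\cup Q},k)$, which is where the "${}\leq q$ extra places" hypothesis enters), and finally invokes the structural finiteness results of \cite[\S 2.37]{chenevier_det} for pseudodeformation rings of topologically finitely generated profinite groups. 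Your proposal instead aims to bound the reduced tangent space $\mathfrak{m}_R/(\mathfrak{m}_R^2,\varpi)$ directly via Galois cohomology of adjoint modules, then uses Euler--Poincar\'e plus Poitou--Tate to control the growth as $Q$ varies. The final cohomological step in your argument is sound and parallel in spirit to what the paper does; the reduction to it is what differs.

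There is, however, a genuine gap at the step you yourself flag as "the main obstacle": the claimed inequality
\[
\dim_k t_{R_{\overline{D},S\cup Q}} \;\leq\; \sum_{i,j} m_i m_j \dim_k H^1\bigl(G_{F,S\cup Q},\Hom_k(\overline{\rho}_j,\overline{\rho}_i)\bigr) \;=\; h^1(G_{F,S\cup Q},\ad\overline{\rho}).
\]
The Bella\"iche--Chenevier tangent space analysis you invoke treats the residually \emph{multiplicity-free} case, whereas here $\overline{\rho}$ may have repeated constituents. The na\"ive multiplicity-free bound $\dim t_R \leq \sum_{i,j}\dim\Ext^1_G(\overline{\rho}_j,\overline{\rho}_i)$ is in fact \emph{false} when multiplicities are present: take $G=\bZ_p$ and $\overline{\rho}=\mathbf{1}\oplus\mathbf{1}$, for which the tangent space of $R_{\overline{D}}$ is $2$-dimensional (it is $\cO\llbracket X_1,X_2\rrbracket^{\mathfrak{S}_2}$, cf.\ the lemma later in the paper), while $\Ext^1_{\bZ_p}(\mathbf{1},\mathbf{1})=H^1(\bZ_p,k)$ is $1$-dimensional. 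Your version with the extra $m_im_j$ factors survives this example, and may well be correct, but it is not a citation-ready statement; it would need to be proved (e.g.\ by building a suitable GMA/Cayley--Hamilton representation over $k[\epsilon]$ and carefully counting parameters in the non-multiplicity-free case). The paper's route deliberately sidesteps this delicate estimate: it only needs the much coarser structural fact that a pseudodeformation ring of a profinite group topologically generated by $g_1$ elements is a quotient of a power series ring in a number of variables depending only on $g_1$ and $n$. If you want to complete your argument you would need to either prove the $m_im_j$-weighted tangent-space bound or replace it with the paper's reduction via $\Gal(M_{S\cup Q}/F)$.
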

\begin{proof}
Let $L / F$ denote the extension cut out by $\overline{\rho}$, and let $M_{S \cup Q}$ denote the maximal pro-$p$ extension of $L$ unramified outside $S \cup Q$. Then there exists $g_1 = g_1(S, \overline{\rho}, q)$ such that the group $\Gal(M_{S \cup Q} / F)$ can be topologically generated by $g_1$ elements (because the dimension of the space $H^1(G_{L, S \cup Q}, k)$ can be bounded in a way depending only on $q$). Moreover, any deformation of $\overline{D}$ to $G_{F, S \cup Q}$ in fact factors through $\Gal(M_{S \cup Q} / F)$ (by \cite[Lemma 3.8]{chenevier_det}). The statement of the lemma follows on applying e.g. the results of \cite[\S 2.37]{chenevier_det}.
\end{proof}
Now fix integers $a \leq b$, and let $\cE_{F, S}^{[a, b]}$ denote the category 
of finite cardinality $\Z_p[G_{F, S}]$-modules $M$ such that for each place $v 
| p$ of $F$, $M$ is isomorphic as $\bZ_p[G_{F_v}]$-module to a subquotient of lattice in a 
semistable representation of $G_{F_v}$ with Hodge--Tate weights in $[a, b]$. 
This defines a stable condition in the sense of \cite[Definition 
2.3.1]{WWE_pseudo_def_cond}.

We write $\operatorname{Def}_{\overline{D}, S}^{[a, b]} \subset 
\operatorname{Def}_{\overline{D}, S}$ for the subfunctor which assigns to $A \in \cC_\cO$ the set of determinants $D$ over $A$
satisfying the following condition:
\begin{equation}\label{eqn_Cayley} \begin{split} 
&\text{There exists a Cayley--Hamilton representation }(\cO[G_{F, S}], D) 
	\to (B, D')\\&\text{over }A\text{ such that for each }n \geq 1, B/ \ffrm_A^n B\text{ lies in }\cE_{F, S}^{[a, b]}, \text{when equipped}\\&  \text{with its left }G_{F, S}\text{-action}.\end{split}
\end{equation}
The notion of Cayley--Hamilton representation is defined in \cite[Definition 
2.1.8]{WWE_pseudo_def_cond}. We recall that it is an $\cO$-algebra homomorphism 
$\rho : \cO[G_{F, S}] \to B$, where $B$ is a finitely generated $A$-algebra and 
$D' : B \to A$ is a determinant such that $D' \circ \rho = D$ and the 
associated Cayley--Hamilton ideal $\operatorname{CH}(D') \subset B$ is zero. 
Note that in this situation $B$ is necessarily finitely generated as an 
$A$-module (\cite[Proposition 2.1.7]{WWE_pseudo_def_cond}).
\begin{proposition}
	The functor $\operatorname{Def}^{[a, b]}_{\overline{D}, S}$ is represented by an object $R^{[a, b]}_{\overline{D}, S} \in \cC_\cO$.
\end{proposition}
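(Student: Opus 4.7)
The plan is to realize $\operatorname{Def}^{[a,b]}_{\overline{D},S}$ as a closed subfunctor of $\operatorname{Def}_{\overline{D},S}$, so that representability by a quotient $R^{[a,b]}_{\overline{D},S}$ of $R_{\overline{D},S}$ follows automatically. The key tool is the Cayley--Hamilton algebra machinery of \cite{WWE_pseudo_def_cond}.

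First I would construct the universal Cayley--Hamilton algebra $E^{\univ}$ attached to the universal determinant $D^{\univ}$ over $R = R_{\overline{D},S}$. Explicitly, one forms a suitable completion of $R[G_{F,S}]$ and quotients by the Cayley--Hamilton ideal $\operatorname{CH}(D^{\univ})$; by \cite[Proposition 2.1.7]{WWE_pseudo_def_cond} this is finitely generated as an $R$-module. The universal property of the Cayley--Hamilton quotient translates condition (\ref{eqn_Cayley}) for a determinant $D_A : \cO[G_{F,S}] \to A$ (corresponding to a map $\varphi : R \to A$) into: for every $n \geq 1$, the finite $\bZ_p[G_{F,S}]$-module $(E^{\univ} \otimes_{R,\varphi} A) / \ffrm_A^n$ lies in $\cE_{F,S}^{[a,b]}$. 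Here I use that any Cayley--Hamilton representation satisfying (\ref{eqn_Cayley}) is a quotient of $E^{\univ} \otimes_R A$, and that $\cE_{F,S}^{[a,b]}$, being a stable condition, is closed under $\bZ_p[G_{F,S}]$-subquotients.

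Next I would show that this condition cuts out a closed subscheme of $\Spec R$. For each Artinian quotient $R/\ffrm_R^n$, the module $M_n = E^{\univ}/\ffrm_R^n E^{\univ}$ is a finite $\bZ_p[G_{F,S}]$-module; stability of $\cE_{F,S}^{[a,b]}$ under subobjects and finite products implies that the set of closed points of $\Spec(R/\ffrm_R^n)$ where the corresponding finite quotient of $M_n$ lies in $\cE_{F,S}^{[a,b]}$ is cut out by an ideal $I_n \subset R/\ffrm_R^n$. These form a compatible system under $n$, so define a closed ideal $I = \varprojlim I_n \subset R$; set $R^{[a,b]}_{\overline{D},S} := R/I$. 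The $[a,b]$-condition holds for the universal determinant over $R/I$ essentially by construction, and conversely any deformation satisfying (\ref{eqn_Cayley}) factors through $R/I$ by the defining property of $I$.

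Finally I would verify the universal property: given $A \in \cC_\cO$ and a determinant $D$ over $A$ in $\operatorname{Def}^{[a,b]}_{\overline{D},S}(A)$, the classifying map $R \to A$ kills $I$ because at each level $n$ the pullback of $M_n$ to $A/\ffrm_A^n$ is a $\bZ_p[G_{F,S}]$-quotient of a module in $\cE_{F,S}^{[a,b]}$, hence lies in $\cE_{F,S}^{[a,b]}$ by stability. The main obstacle is step two, namely verifying that the locus in $\Spec R$ on which the tower $\{M_n\}$ satisfies the $\cE_{F,S}^{[a,b]}$-condition is genuinely closed and not merely constructible; this is precisely the role of the stability axioms (closure under subquotients, extensions, and finite products) and is the technical heart of \cite[\S 2--3]{WWE_pseudo_def_cond}, which I would invoke rather than reprove.
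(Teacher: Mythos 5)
The paper's own proof is a single citation to \cite[Theorem 2.5.5]{WWE_pseudo_def_cond}, and your sketch is a reasonable reconstruction of the argument underlying that theorem, so you are aligned in spirit with the paper. However, there is a logical error in your translation step that is worth flagging. You claim that condition (\ref{eqn_Cayley}) --- ``there exists a Cayley--Hamilton representation $(B,D')$ over $A$ with $B/\ffrm_A^n B \in \cE_{F,S}^{[a,b]}$ for all $n$'' --- is equivalent to ``$(E^{\univ}\otimes_{R,\varphi}A)/\ffrm_A^n \in \cE_{F,S}^{[a,b]}$ for all $n$,'' and you justify this by noting that any Cayley--Hamilton representation is a \emph{quotient} of $E^{\univ}\otimes_R A$ together with closure of $\cE_{F,S}^{[a,b]}$ under subquotients. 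That justification runs in the wrong direction: closure under subquotients shows that if the universal algebra $E^{\univ}\otimes_R A$ lies in $\cE_{F,S}^{[a,b]}$, then so does any quotient $B$, but it gives you nothing when you start from a smaller $B$ and want to lift the conclusion to $E^{\univ}\otimes_R A$. The map $E^{\univ}\otimes_R A \twoheadrightarrow B$ is generally not injective, so the universal algebra is not a subobject of $B$. The equivalence you want does hold in WWE's framework, but it is part of the content of their set-up rather than a formal consequence of stability; in particular, WWE phrase the deformation condition directly in terms of the universal Cayley--Hamilton algebra, and identifying this with the ``there exists'' formulation of (\ref{eqn_Cayley}) requires a genuine argument (not merely closure under subquotients). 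Since you explicitly defer to \cite{WWE_pseudo_def_cond} for the technical heart, this is a gap in your sketch rather than a fatal flaw, but as written the stated reasoning for the key equivalence is backwards.
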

\begin{proof}
	See \cite[Theorem 2.5.5]{WWE_pseudo_def_cond}.
\end{proof}
Now suppose given a lift $\rho : G_{F, S} \to \GL_n(\cO)$ of $\overline{\rho}$ with the following properties:
\begin{itemize}
	\item $\rho \otimes_\cO E$ is absolutely irreducible.
	\item For each place $v | p$ of $F$, $\rho|_{G_{F_v}} \otimes_\cO E$ is semistable with Hodge--Tate weights in the interval $[a, b]$.
\end{itemize}
Let $D$ denote the associated group determinant over $\cO$. Then $D$ determines a 
homomorphism $R^{[a, b]}_{\overline{D}, S} \to \cO$. We write $\q$ for the 
kernel. Let $W = \ad \rho$, $W_E = W \otimes_\cO E$, $W_{E/\cO} = W_E / W$, $W_m = \ad \rho 
\otimes_\cO \cO 
/\varpi^m$. We have a Selmer group $H^1_{\cE_{F, 
		S}^{[a, b]}}(F, W_m)$ defined by local conditions: if $v \not\in S$, we 
		take 
the unramified subgroup of $H^1(F_v, W_m)$, if $v \in S - S_p$ we impose no 
condition and if $v \in S_p$ we take the subspace of $H^1(F_v, W_m)$ 
corresponding to self-extensions of $\rho|_{G_{F_v}}\otimes_{\cO}\cO/\varpi^m$ 
which are subquotients of 
lattices in semistable representations with Hodge--Tate weights in the interval 
$[a, b]$.
\begin{proposition}\label{prop_comparison_of_tangent_spaces}
	 There exists a canonical homomorphism
	\begin{equation}\label{eqn_reps_to_pseudochars} \tr_{m} : H^1_{\cE_{F, S}^{[a, b]}}(F, W_m) \to \Hom_\cO(\q / \q^2, \cO / \varpi^m). 
	\end{equation}
	Moreover, there is a constant $c \geq 1$ depending only on $\rho$ (and not 
	on $S$, $[a,b]$, or $m$) such that for any $m \geq 1$, the kernel and cokernel of 
	$\tr_m$ are both annihilated by $p^c$.
\end{proposition}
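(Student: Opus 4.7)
The plan is to construct $\tr_m$ as the natural trace map and to estimate its kernel and cokernel by passing to coefficients in $A = \cO \oplus \epsilon E/\cO$ and invoking Proposition \ref{prop_effectivity_of_pseudo_characters}.

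First I would set up canonical identifications. Since $\rho \otimes_\cO E$ is absolutely irreducible, Schur's lemma gives $H^0(F, W_E) = E$, whence $H^0(F, W_{E/\cO}) = E/\cO$, which is $\varpi$-divisible. The long exact sequence attached to $0 \to W_m \to W_{E/\cO} \xrightarrow{\varpi^m} W_{E/\cO} \to 0$ therefore yields an isomorphism $H^1(F, W_m) \cong H^1(F, W_{E/\cO})[\varpi^m]$; and left-exactness of $\Hom$ similarly gives $\Hom_\cO(\q/\q^2, \cO/\varpi^m) \cong \Hom_\cO(\q/\q^2, E/\cO)[\varpi^m]$. Under these identifications, $\tr_m$ becomes the restriction to $\varpi^m$-torsion of an $A$-coefficient trace map, sending a deformation $\rho' : G_{F,S} \to \GL_n(A)$ of $\rho$ (satisfying the Selmer conditions) to its associated pseudocharacter $\tr \rho'$. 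To see that $\tr \rho' \in \operatorname{Def}^{[a,b]}_{\overline D, S}(A)$, one uses $M_n(A)$ acting via $\rho'$ as a Cayley--Hamilton witness, together with the stability of the condition $\cE_{F,S}^{[a,b]}$ in the sense of \cite[Definition 2.3.1]{WWE_pseudo_def_cond}.

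For the kernel: if $\tr_m([\rho']) = 0$, then $\rho'$ and the trivial lift of $\rho$ to $\GL_n(A)$ have the same trace, so Proposition \ref{prop_effectivity_of_pseudo_characters}(2) conjugates $\alpha_{k_0} \circ \rho'$ to $\alpha_{k_0} \circ \rho = \rho$ by an element of $1 + \epsilon M_n(E/\cO)$. Hence $p^{k_0}[\rho'] = 0$ in $H^1(F, W_{E/\cO})$, and so also in $H^1(F, W_m)$. For the cokernel: given $\phi$, regard its image $\tilde\phi \in \Hom_\cO(\q/\q^2, E/\cO)[\varpi^m]$ as an $A$-valued pseudodeformation $\tilde D$ of $D$. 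Proposition \ref{prop_effectivity_of_pseudo_characters}(1) produces $\rho' : G_{F,S} \to \GL_n(A)$ lifting $\rho$ with $\tr \rho' = \alpha_{k_0} \circ \tilde D$. Since $\tilde D$ has $\varpi^m$-torsion deformation part, so does $\alpha_{k_0} \circ \tilde D$; thus $\varpi^m \rho'$ has trivial deformation trace, and applying Proposition \ref{prop_effectivity_of_pseudo_characters}(2) to $\varpi^m \rho'$ and $\rho$ forces $p^{k_0} \varpi^m [\rho'] = 0$. Hence $p^{k_0}[\rho']$ is $\varpi^m$-torsion, and the corresponding class $\psi \in H^1(F, W_m)$ satisfies $\tr_m(\psi) = p^{2 k_0} \phi$. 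So the cokernel is annihilated by $p^{2 k_0}$; one may take $c = 2 k_0$, which depends only on $\rho(G_{F, S})$ by Proposition \ref{prop_effectivity_of_pseudo_characters}.

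The main obstacle is verifying that the local condition at places $v \mid p$ is correctly preserved throughout. On the kernel side this is essentially automatic from functoriality of the Selmer condition under $W_m \hookrightarrow W_{E/\cO}$. On the cokernel side one must check that the representation $\rho'$ supplied by Proposition \ref{prop_effectivity_of_pseudo_characters}(1) satisfies the Selmer condition at each $v \mid p$, and hence that the resulting class $\psi$ lies in $H^1_{\cE_{F,S}^{[a,b]}}(F, W_m)$. This is done by comparing $\rho'$ with the Cayley--Hamilton witness $(B, D')$ supplied by $\tilde D \in \operatorname{Def}^{[a,b]}_{\overline D, S}(A)$, using absolute irreducibility of $\rho \otimes E$ to identify $B \otimes_A E$ with $M_n(E)$ and transfer the local $\cE^{[a,b]}$-structure from $B$ to $\rho'$, with any residual discrepancy absorbed into the constant $c$.
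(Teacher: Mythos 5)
Your overall strategy is the same as the paper's: construct $\tr_m$ by sending a class $[\phi]$ to the pseudocharacter of the corresponding lift $\rho_\phi$ with coefficients in $A_m = \cO \oplus \epsilon\varpi^{-m}\cO/\cO$, and control both kernel and cokernel via Proposition \ref{prop_effectivity_of_pseudo_characters}. The kernel argument is essentially correct, modulo one inaccuracy: Schur's lemma gives $H^0(F, W_E) = E$, but this does \emph{not} imply $H^0(F, W_{E/\cO}) = E/\cO$ --- the exact sequence $H^0(F, W_E) \to H^0(F, W_{E/\cO}) \to H^1(F, W)$ shows that $H^0(F, W_{E/\cO})$ may pick up finite torsion from $H^1(F,W)$. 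So the map $H^1(F, W_m) \to H^1(F, W_{E/\cO})[\varpi^m]$ is not an isomorphism, only an isomorphism up to a uniformly bounded kernel $H^0(F, W_{E/\cO}) \otimes \cO/\varpi^m$; this error in your constant is harmless but should be flagged.

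The genuine gap is in the cokernel argument, and it is precisely the place you flag as ``the main obstacle.'' Having produced $\rho'$ from Proposition \ref{prop_effectivity_of_pseudo_characters}(1), you must show the associated class lands in $H^1_{\cE_{F,S}^{[a,b]}}(F, W_m)$, i.e.\ that the finite quotients of $M_n(A_m)$ with the $G_{F,S}$-action via $\rho'$ (suitably rescaled) lie in $\cE_{F,S}^{[a,b]}$. Your proposal to ``identify $B \otimes_A E$ with $M_n(E)$ and transfer the local $\cE^{[a,b]}$-structure'' will not do this: the defining condition in (\ref{eqn_Cayley}) is integral --- it is a condition on the finite quotients $B/\ffrm_A^n B$ --- and passing to $B \otimes_A E$ erases exactly the torsion information that the condition controls. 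What one actually needs is a surjection of $A_m$-algebras from $B$ (or a quotient through which $D'$ factors) onto a bounded subalgebra of $M_n(A_m)$, compatible with the $G_{F,S}$-actions. The paper achieves this by showing that the image $\mathcal{A}_{p^{k_1}\phi} = \rho_{p^{k_1}\phi}(A_m[G_{F,S}])$ contains $p^{k_1}M_n(A_m)$ (using a fixed $k_1$ with $p^{k_1}M_n(\cO) \subset \rho(\cO[G_{F,S}])$), that the kernel of the induced determinant $D'''$ is annihilated by $\alpha_{k_1}$, and that $\ker(r) \subset \ker(D')$; this yields the diagram factoring $B \to M_n(A_m)$ (after $\alpha_{2k_0+k_1}$) that transfers the finite-quotient condition. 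These quantitative integral comparisons are essential, cannot be recovered from the rational isomorphism $B \otimes_A E \cong M_n(E)$, and account for the additional factor $p^{2k_1}$ in the paper's final constant, whereas your tentative $c = 2k_0$ is too optimistic even granting the transfer.
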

In applications of the proposition we will enlarge $S$ by adding Taylor--Wiles 
places. This is why it is important that the constant $c$ is independent of the 
set $S$.
\begin{proof}
	We first describe the map $\tr_m$. Let $A_m = \cO \oplus \epsilon \varpi^{-m}\cO / 
	\cO \subset A = \cO\oplus \epsilon E/\cO$. If $k \geq 0$, then we 
	write 
	$\alpha_k : A_m \to A_m$ for the $\cO$-algebra homomorphism that sends 
	$\epsilon$ to $p^k \epsilon$. A class $[\phi] \in H^1_{\cE_{F, S}^{[a, 
	b]}}(F, W_m)$ corresponds to an equivalence class of liftings 
	\[ \rho_\phi : G_{F, S} \to \GL_n(A_m) \]
	such that $\rho_\phi \text{ mod }(\epsilon) = \rho$ and for each $N \geq 
	1$, $\rho_\phi \text{ mod }\varpi^N \in \cE_{F, S}^{[a, b]}$ (this can be 
	seen by 
	considering the extension of scalars along the injective ring homomorphism 
	$A_m \hookrightarrow \cO\times \cO/\varpi^m[\epsilon]$ and using the fact 
	that $\cE_{F, S}^{[a, b]}$ is closed under taking 
	sub-$\Zp[G_{F,S}]$-modules).
	On the other hand, we can identify  $\Hom_\cO(\q / \q^2, \cO / \varpi^m)$ 
	with the pre-image  under the map \[  \Hom_\cO(R_{\overline{D}, S}^{[a, 
	b]}, A_m) \to \Hom_\cO(R_{\overline{D}, S}^{[a, b]}, \cO) \] of the 
	classifying map of $D$.	The map $\tr_m$ is the one which sends $[\phi]$ to 
	the classifying map of the pseudocharacter $\tr \rho_\phi$ over $A_m$. Note 
	that multiplication by $p^k$ on either side of 
	(\ref{eqn_reps_to_pseudochars}) corresponds at the level of representations 
	(resp. determinants) to composition with $\alpha_k$.
	
	Next we analyze the kernel of $\tr_m$. Suppose that $\tr \rho_\phi = \tr \rho$. Let $k_0$ be the constant of Proposition \ref{prop_effectivity_of_pseudo_characters}: it depends only on $\rho(G_{F, S}) \subset \GL_n(\cO)$. Then we can find $X \in  M_n(E / \cO)$ such that
	\[ (1 + \epsilon X) \rho_{p^{k_0} \phi} (1 - \epsilon X) = \rho, \]
	or equivalently such that $p^{k_0} \phi$ becomes a coboundary in 
	$H^1(F, W_{E/\cO})$. Since the kernel of $H^1(F, W_m) \to H^1(F, 
	W_{E/\cO})$ is isomorphic to $H^0(F,W_{E/\cO})\otimes \cO/\varpi^m$, it is 
	killed by a uniformly bounded power of $p$ and we see that the same 
	is true for the kernel of $\tr_m$.
	
	Now we analyze the cokernel of $\tr_m$. This is more subtle. Let $D'$ be a 
	determinant of $G_{F, S}$ over $A_m$ corresponding to an element of the 
	right-hand side of (\ref{eqn_reps_to_pseudochars}). By assumption, there 
	exists a Cayley--Hamilton representation $r : A_m[G_{F, S}] \to B$ and a 
	determinant $D'' : B \to A_m$ such that $D' = D'' \circ r$, and the finite 
	quotients of $B$ lie in $\cE_{F, S}^{[a, b]}$. We may assume without loss 
	of generality that $r$ is surjective. According to the characterization of 
	$\ker(D') \subset A_m[G_{F, S}]$ given by \cite[Lemma 1.19]{chenevier_det}, 
	we have $\ker(r) \subset \ker(D')$.
	
	By Proposition \ref{prop_effectivity_of_pseudo_characters},  there exists a 
	homomorphism $\rho_\phi : G_{F, S} \to \GL_n(A)$ such that the associated 
	group determinant is $\alpha_{k_0} \circ D'$. It also follows from 
	Proposition 
	\ref{prop_effectivity_of_pseudo_characters} that the associated cohomology 
	class $[\phi] \in H^1(F,W_{E/\cO})$ is killed by multiplication by 
	$\varpi^mp^{k_0}$ (as $\tr(\rho_{\varpi^m \phi}) = \tr(\rho)$).  
	We deduce that $p^{k_0}[\phi]$ is contained in the image of $H^1(F,W_m)$ in 
	$H^1(F,W_{E/\cO})$. So we may in fact assume that we have $\rho_\phi: G_{F, 
	S} \to 
	\GL_n(A_m)$ such that the associated 
	group determinant is $\alpha_{2k_0} \circ D'$.
	
	We must show that there is $c 
	\geq 0$ depending only on $\rho$ such that for each $N \geq 1$, $\alpha_{c} 
	\circ \rho_\phi \text{ mod }\varpi^N$ defines an object of $\cE_{F, S}^{[a, 
	b]}$ (as then $p^{c} [\phi]$ is a pre-image under 
	(\ref{eqn_reps_to_pseudochars}) of $\alpha_{2k_0 + c}\circ D'$).
	
	Let $\mathcal{A}_\phi = \rho_\phi(A_m[G_{F, S}]) \subset M_n(A_m)$. Let 
	$k_1 \geq 0$ be an integer such that $p^{k_1} M_n(\cO) \subset 
	\rho(\cO[G_{F, S}])$ (this exists since $\rho\otimes_\cO E$ is absolutely 
	irreducible, 
	by assumption, and hence $\rho(E[G_{F,S}]) = M_n(E)$). Then 
	$\mathcal{A}_{p^{k_1} \phi}$  contains 
	$p^{k_1} M_n(A_m)$. Indeed, let $E_{ij}$ denote the elementary matrix 
	in $M_n(\cO)$ with entries $1$ in the $(i, j)$ spot and 0 elsewhere. Then 
	$p^{k_1} E_{ij} \in \rho(\cO[G_{F, S}])$, so there is $X_{ij} \in 
	M_n(\cO / \varpi^m)$ such that $p^{k_1} E_{ij} + \epsilon X_{ij} \in 
	\mathcal{A}_\phi$ and $p^{k_1} E_{ij} + \epsilon p^{k_1} 
	X_{ij} \in \mathcal{A}_{p^{k_1} \phi}$. After multiplying by 
	$\epsilon$, we see that $p^{k_1} \epsilon E_{ij} \in 
	\mathcal{A}_{p^{k_1}\phi}$ for all $(i, j)$, hence $p^{k_1} 
	E_{ij} \in \mathcal{A}_{p^{k_1} \phi}$.
	
	Let $D''' : \mathcal{A}_{p^{k_1} \phi} \to A_m$ denote the 
	determinant induced by the natural inclusion $\mathcal{A}_{p^{k_1} \phi} \to M_n(A_m)$. If $x \in \ker D'''$, then \cite[Lemma 1.19]{chenevier_det} 
	shows that $\tr(x p^{k_1} E_{ij}) = 0$ for all $i, j$. (Here $\tr : M_n(A_m) \to A_m$ is the usual trace of an $n \times n$ matrix, not a pseudocharacter.)
	Hence $\ker D'''$ is contained in the intersection of 
	$\mathcal{A}_{p^{k_1} \phi}$ with $M_n(A_m)[p^{k_1}]$, and is therefore
	annihilated by the homomorphism $\alpha_{k_1}: M_n(A_m) \to M_n(A_{m})$.
	
	It follows that there exists a commutative diagram of $A_m$-algebras
	\[ \xymatrix{ A_m[G_{F, S}] \ar[r]^{\rho_{p^{k_1}\phi}}\ar[d]_r& 
	\mathcal{A}_{p^{k_1} \phi} 
	\ar[d]\ar[r] & M_n(A_m) \ar[d]^{\alpha_{k_1}} \\ B \ar[r] & A_m[G_{F, S}] / 
	\ker(\alpha_{2k_0 + k_1} \circ D') \ar[r] & M_n(A_{m}). }\]
	Indeed, the quotient map $A_m[G_{F, S}] \to A_m[G_{F, S}] / 
	\ker(\alpha_{2k_0 + k_1} \circ D')$ factors through $B$ because $\ker(r) 
	\subset \ker(D') \subset \ker(\alpha_{2k_0 + k_1} \circ D')$, and bottom right arrow exists because $\alpha_{2k_0 + k_1} \circ D' = D''' \circ \rho_{p^{k_1} \phi}$. The existence of this diagram shows that the  finite 
	quotients of $M_n(A_{m})$, with 
	induced action of $A_m[G_{F, S}]$ by left multiplication via 
	$\rho_{p^{2 k_1} \phi}$, satisfy condition $\cE_{F, S}^{[a, b]}$ 
	(since this holds for finite quotients of $B$). 
	This implies that the finite quotients of $\alpha_{2k_1}\circ\rho_\phi$ 
	also satisfy the condition $\cE_{F, S}^{[a, b]}$. We deduce that the cokernel of $\tr_m$ is annihilated by $p^{2k_0 + 2k_1}$.
\end{proof}

\subsection{Pseudocharacters -- Taylor--Wiles data}\label{sec_unitary_pseudocharacters}

We continue our discussion of the Galois deformation theory of pseudocharacters, now focusing on what happens when we impose conjugate self-duality conditions and allow additional primes of ramification. We thus fix the following notation:
\begin{itemize}
	\item $p$ is a prime and $E / \bQ_p$ is a coefficient field.
	\item $F / F^+$ is a CM quadratic extension of a totally real field.
	\item $S$ is a finite set of finite places of $F^+$ containing the $p$-adic ones. We assume that each place of $S$ splits in $F$, and fix for each $v \in S$ a choice of place $\wv$ of $F$ lying above $v$. We set $S_p = \{ v \mid v | p \}$ and $\widetilde{S} = \{ \wv \mid v \in S \}$.
	\item $r : G_{F^+, S} \to \cG_n(\cO)$ is a continuous representation such that $r|_{G_{F, S}} \otimes_\cO E$ is absolutely irreducible. We set $\rho = r|_{G_{F, S}} : G_{F, S} \to \GL_n(\cO)$ and write $D$ for the group determinant of $\rho$ and $\overline{D}$ for its reduction modulo $\varpi$. We set $\chi = \nu \circ r$. The existence of $r$ implies that the $\cO[G_{F, S}]$-module structure on $W = \ad \rho$ extends to an $\cO[G_{F^+, S}]$-module structure (set $W = \ad r$, where $\ad r$ is defined as in \cite[\S 2.1]{cht}), and similarly for $W_E$, $W_m$, etc.
	\item $a \leq b$ are integers such that $D$ defines a homomorphism $R_{\overline{D}, S}^{[a, b]} \to \cO$. Thus for each $v \in S_p$, $\rho|_{G_{F_\wv}} \otimes_\cO E$ is semistable with all Hodge--Tate weights in the range $[a, b]$. Note that $\chi|_{G_{F_{\wv}}}$ is then semistable and there exists $w \in \bZ$ such that $\chi \epsilon^{w}$ has finite order. We assume moreover that $a + b = w$.
\end{itemize}
Let $R_S$ denote the quotient of $R_{\overline{D}, S}^{[a, b]}$ corresponding to pseudocharacters $D'$ such that $(D')^c = (D')^\vee \otimes \chi|_{G_{F, S}}$. Then $\rho$ determines a homomorphism $R_S \to \cO$, and we write $\q_S$ for its kernel. 

We 
define Selmer conditions $\cL_S =  \{\cL_v \} = \{ \cL_{v, m} \}$ for $W_m$ as 
follows: if $v \not\in S$, then $\cL_v$ is the unramified subgroup of 
$H^1(F^+_v, W_m)$. If $v \in S - S_p$, then $\cL_v = H^1(F_v^+, W_m)$. If $v 
\in S_p$, then $\cL_v$ is the subspace of $H^1(F_v^+, W_m)$ corresponding to 
self-extensions of $\rho|_{G_{F_\wv}}\otimes_{\cO}\cO/\varpi^m$ which are 
subquotients of lattices in 
semistable representations with Hodge--Tate weights in the interval $[a, b]$. 
We define dual 
Selmer conditions $\cL^\perp = \{\cL^\perp_v\}$ to be given by the annihilators 
of $\cL_v$ under local duality. 

The corresponding 
Selmer groups are defined by \begin{align*}H^1_{\cL_S}(F^+,W_m) &= 
\ker\left(H^1(F^+,W_m) 
\to \prod_v H^1(F^+_v,W_m)/\cL_{v,m}  \right)\\ H^1_{\cL_S^\perp}(F^+,W_m(1)) 
&= \ker\left(H^1(F^+,W_m(1)) 
\to \prod_v H^1(F^+_v,W_m(1))/\cL^\perp_{v,m}  \right).\end{align*} 

These Selmer groups are finite length $\cO$-modules. We denote their length by 
$h^1_{\cL_S}(F^+, W_m), h^1_{\cL_S^\perp}(F^+,W_m(1))$. Taking inverse limits 
with respect to the projection maps $W_{m+1}\to W_m$ and direct limits with 
respect to the injections $W_m \cong \varpi W_{m+1} \subset W_{m+1}$, we also 
define Selmer 
groups with characteristic $0$ and divisible coefficients:
\begin{align*}H^1_{\cL_S}(F^+,W_E) &= 
\left(\varprojlim_m H^1_{\cL_S}(F^+, W_m)\right)\otimes_{\cO}E\\ 
H^1_{\cL_S}(F^+,W_{E/\cO}) &= 
\varinjlim_m H^1_{\cL_S}(F^+, W_m).
\end{align*} 

 Proposition \ref{prop_comparison_of_tangent_spaces} has the following 
 consequence:
\begin{proposition}\label{prop_comparison_of_csd_tangent_spaces}
	For each $m \geq 1$, there is a canonical homomorphism
	\begin{equation}\label{eqn_comparison_of_tangent_spaces_with_duality} \tr_{m, S} : H^1_\mathcal{L_S}(F^+, W_m) \to \Hom_\cO(\q_S / \q_S^2, \cO / \varpi^m). 
	\end{equation}
	Moreover, there is a constant $d \geq 0$ depending only on $r$ (and not on 
	$S$, $[a, b]$ or $m$) such that $p^d$ annihilates the kernel and cokernel of 
	$\tr_{m, S}$.
\end{proposition}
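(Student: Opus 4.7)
The strategy is to construct $\tr_{m,S}$ by composing the restriction map $\mathrm{res} : H^1(F^+, W_m) \to H^1(F, W_m)$ with the trace map $\tr_m$ of Proposition \ref{prop_comparison_of_tangent_spaces}, and to bound the kernel and cokernel by combining the $p^c$ bound of that proposition with the 2-torsion inherent in inflation-restriction for the quadratic extension $F/F^+$.

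To set this up, I would introduce the natural involution $T^*$ on $\Hom_\cO(\q/\q^2, \cO/\varpi^m)$ induced by the operation $D' \mapsto ((D')^c)^\vee \otimes \chi|_{G_{F,S}}$ on pseudocharacters, and check that $\tr_m$ intertwines $T^*$ with the $\Gal(F/F^+)$-action on $H^1(F, W_m)$ coming from the $G_{F^+}$-module structure on $W_m$ (given by $c \cdot X = -X^*$ on $\ad\rho$). The $T^*$-fixed subspace coincides with the image of the inclusion $\Hom_\cO(\q_S/\q_S^2, \cO/\varpi^m) \hookrightarrow \Hom_\cO(\q/\q^2, \cO/\varpi^m)$ induced by the quotient $R^{[a,b]}_{\overline{D},S} \twoheadrightarrow R_S$. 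Since each $v \in S$ splits in $F$, giving $F^+_v = F_{\wv}$, the Selmer conditions at $v \in S$ match exactly between the two sides, so restriction carries $H^1_{\cL_S}(F^+, W_m)$ into $H^1_{\cE_{F,S}^{[a,b]}}(F, W_m)^{\Gal(F/F^+)}$; composing with $\tr_m$ (whose image automatically lands in the csd subspace by the previous remarks) then defines $\tr_{m,S}$.

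For the kernel: if $\tr_{m,S}([\psi]) = 0$, then $\tr_m(\mathrm{res}[\psi]) = 0$, hence $p^c \mathrm{res}[\psi] = 0$ by Proposition \ref{prop_comparison_of_tangent_spaces}; the kernel of $\mathrm{res}$ is a subgroup of $H^1(\Gal(F/F^+), W_m^{G_F})$, which is killed by $2$, giving $2 p^c [\psi] = 0$. For the cokernel: given a csd tangent vector $f$, Proposition \ref{prop_comparison_of_tangent_spaces} produces $[\phi] \in H^1_{\cE_{F,S}^{[a,b]}}(F, W_m)$ with $\tr_m([\phi]) = p^c f$; by $T^*$-fixedness of $f$ and the equivariance of $\tr_m$, $T[\phi] - [\phi] \in \ker \tr_m$, so $p^c[\phi]$ is $T$-invariant. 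The cokernel of $H^1(F^+, W_m) \to H^1(F, W_m)^{\Gal(F/F^+)}$ is killed by $2$ (via $H^2(\Gal(F/F^+), W_m^{G_F})$), so $2 p^c [\phi]$ lifts to some $[\psi'] \in H^1(F^+, W_m)$; multiplying by a further factor of $2$ if needed (to absorb the $2$-torsion ramification mismatch at primes outside $S$ ramifying in $F/F^+$) yields $[\psi] \in H^1_{\cL_S}(F^+, W_m)$ with $\tr_{m,S}([\psi]) = 4 p^{2c} f$. One can therefore take $d$ depending only on $\rho$ (hence on $r$), independent of $S$, $[a,b]$, and $m$.

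The main point requiring verification is the $\Gal(F/F^+)$-equivariance of $\tr_m$. This reduces to the observation that a class $[\phi]$ corresponds to a deformation $\rho_\phi : G_{F,S} \to \GL_n(A_m)$, that the $c$-action on $[\phi]$ corresponds under this identification to $\rho_\phi \mapsto \rho_\phi^{c,\vee} \otimes \chi$, and that the operations of conjugation, duality and twist by a character all commute with taking traces; the remaining inflation-restriction and Selmer-matching steps are routine.
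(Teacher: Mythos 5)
Your proof is correct and follows essentially the same strategy as the paper's (extremely terse) proof: define $\tr_{m,S}$ as the composite of restriction $H^1_{\cL_S}(F^+,W_m)\to H^1_{\cE_{F,S}^{[a,b]}}(F,W_m)^{\Gal(F/F^+)}$ with the $c$-equivariant map $\tr_m$, observe that $\Hom_\cO(\q_S/\q_S^2,\cO/\varpi^m)$ is the $c$-fixed subspace of $\Hom_\cO(\q/\q^2,\cO/\varpi^m)$, and bound the errors by combining the $p^c$-torsion from Proposition \ref{prop_comparison_of_tangent_spaces} with the $2$-torsion in inflation--restriction for the quadratic extension $F/F^+$. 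The paper compresses all of this into the single sentence ``the left-hand side of (\ref{eqn_comparison_of_tangent_spaces_with_duality}) maps to the $c$-invariants in the left-hand side of (\ref{eqn_reps_to_pseudochars}) with bounded kernel and cokernel''; your write-up is a correct unwinding of that sentence, including the additional factor of $2$ needed to handle places outside $S$ that ramify in $F/F^+$ (a fixed finite set depending only on $r$, so the final constant $d$ is indeed independent of $S$, $[a,b]$ and $m$).
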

\begin{proof}
	The map (\ref{eqn_reps_to_pseudochars}) is $\Gal(F/F^+)$-equivariant for 
	the action on the left-hand side induced by the $G_{F^+,S}$ action on $W_m$ 
	and the action on the right-hand side defined as follows: the non-trivial 
	element $c \in \Gal(F / F^+)$ acts on 
	$R_{\overline{D}, S}^{[a, b]}$ by sending a pseudocharacter $D'$ to 
	$(D')^{c, \vee} \otimes \chi|_{G_{F, S}}$, and this induces an action on 
	the right-hand side of (\ref{eqn_reps_to_pseudochars}). Note that this 
	action makes 
	sense because of our condition $a + b = w$. The right-hand side of 
	(\ref{eqn_comparison_of_tangent_spaces_with_duality}) is the $c$-invariants 
	in the right-hand side of (\ref{eqn_reps_to_pseudochars}).  The left-hand 
	side of	(\ref{eqn_comparison_of_tangent_spaces_with_duality}) maps to the 
	$c$-invariants in the left-hand side of 
	(\ref{eqn_reps_to_pseudochars}) with bounded kernel and cokernel. This is 
	enough.
\end{proof}

Here is a variant that will be useful when it comes to deduce our main 
vanishing result.
\begin{prop}\phantomsection\label{prop_generic_tangent_spaces}\begin{enumerate}
	\item There is an isomorphism $\tr_{E, S} : H^1_{\cL_S}(F^+, W_E) \to 
	\Hom_\cO(\q_S / \q_S^2, E)$.

\item The natural map $H^1_{\cL_S}(F^+,W_E) \to H^1(F_S/F^+,W_E)$ identifies 
$H^1_{\cL_S}(F^+,W_E)$ with the geometric Selmer group 
$H^1_{g,S}(F^+,W_E)$
\item Suppose that for each $v \in S$, $\rho|_{G_{F_\wv}}$ is generic. Then 
$H^1_{g,S}(F^+,W_E) = H^1_f(F^+,W_E)$.
\end{enumerate}
\end{prop}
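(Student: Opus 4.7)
My plan is to deduce all three parts from Proposition \ref{prop_comparison_of_csd_tangent_spaces} together with standard Bloch--Kato comparisons of local conditions at the places in $S$.

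For part (1), I would take the inverse limit of the maps $\tr_{m,S}$ over $m$ and then invert $p$. The key is that the kernel and cokernel of each $\tr_{m,S}$ are annihilated by the fixed power $p^d$, independently of $m$. Passage to the inverse limit yields a map
\[ \varprojlim_m H^1_{\cL_S}(F^+, W_m) \longrightarrow \Hom_\cO(\q_S/\q_S^2, \cO) \]
with kernel and cokernel still killed by a bounded power of $p$ (using that $\q_S/\q_S^2$ is finitely generated over $\cO$, since $R_S$ is Noetherian). Tensoring with $E$ kills this bounded $p$-torsion and produces the asserted isomorphism $\tr_{E,S}$.

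For part (2), I would compare local conditions place by place inside $H^1(F_S/F^+, W_E)$. At $v \notin S$ every class in $H^1(F_S/F^+, W_E)$ is automatically unramified. At $v \in S - S_p$ neither side imposes any condition: $\cL_v = H^1(F^+_v, W_m)$ by definition, while $H^1_g(F^+_v, W_E) = H^1(F^+_v, W_E)$ by the convention fixed in Section \ref{sec_notation}. The only real content is at $v \in S_p$, where I must show that the image in $H^1(F^+_v, W_E)$ of $(\varprojlim_m \cL_{v,m}) \otimes_\cO E$ is exactly $H^1_g(F^+_v, W_E)$. The inclusion $\subset$ is immediate, since semistability is preserved under subquotients and semistable representations are de Rham. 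The reverse inclusion is the main $p$-adic Hodge-theoretic input: any de Rham self-extension of $\rho|_{G_{F_\wv}}\otimes_\cO E$ with Hodge--Tate weights in $[a,b]$ admits $G_{F_\wv}$-stable $\cO$-lattices whose reductions mod $\varpi^m$ satisfy $\cE_{F,S}^{[a,b]}$; this is essentially the statement that the stable condition of \cite{WWE_pseudo_def_cond} is the correct integral refinement of the $g$-condition.

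For part (3), I would compare $H^1_f(F^+, W_E)$ and $H^1_{g,S}(F^+, W_E)$ directly. By definition $H^1_f \subset H^1_{g,S}$, and the cokernel is controlled by $H^1_g(F^+_v, W_E)/H^1_f(F^+_v, W_E)$ at $v \in S_p$ together with $H^1(F^+_v, W_E)/H^1_{ur}(F^+_v, W_E)$ at $v \in S - S_p$. The criterion recalled in Section \ref{sec_notation} says that each of these quotients vanishes exactly when $\rho|_{G_{F_\wv}}$ is generic, so the genericity hypothesis at every $v \in S$ gives the desired equality. The principal obstacle in the whole proposition is the $\supset$ inclusion in part (2) at $p$-adic places, where the integral stable condition $\cE_{F,S}^{[a,b]}$ must be matched with Bloch--Kato's $H^1_g$ via concrete $p$-adic Hodge theory; parts (1) and (3) amount respectively to taking limits and inverting $p$, and to invoking the genericity criterion from the preliminaries.
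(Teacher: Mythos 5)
Your overall approach matches the paper's for parts (1) and (3), but your justification of part (2) is where the real content lies and your account of it is imprecise in a way worth flagging. The condition $\cE_{F,S}^{[a,b]}$ is an integral refinement of the \emph{semistable} condition, not of the de Rham (``$g$'') condition, and the equality you want at $p$-adic places is not a formal tautology about Wake--Wang-Erickson's framework being ``the correct integral refinement''; it splits into two genuinely separate inputs that the paper cites explicitly. First, the main result of \cite{Tong} is what guarantees that, after taking inverse limits and inverting $p$, the local pieces of $H^1_{\cL_S}(F^+, W_E)$ classify precisely the semistable self-extensions of $\rho_E$ at $v \in S_p$ (this is the fact that Kisin-style integral semistable deformation rings have the expected generic fibre). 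Second, one needs the separate theorem of Nekov\'a\v{r} \cite[Corollary 1.27]{nekovar} that a de Rham self-extension of a semistable representation is automatically semistable; without it, $\varprojlim_m \cL_{v,m} \otimes_{\cO} E$ only gives the potentially-smaller semistable local condition, not $H^1_g$. Your ``the reverse inclusion is the main $p$-adic Hodge-theoretic input'' sentence gestures at the right claim but then asserts it rather than proving it, collapsing two distinct theorems into one vague slogan. For parts (1) and (3) your argument is fine and is essentially the paper's: part (1) follows from Proposition \ref{prop_comparison_of_csd_tangent_spaces} by passing to the inverse limit and inverting $p$ (and your remark that finite generation of $\q_S/\q_S^2$ justifies commuting $\varprojlim$ with $\Hom_\cO(-,\cO)$ is a reasonable extra detail), and part (3) is exactly the place-by-place comparison of $H^1_f$ with $H^1_g$ using the genericity criterion recalled in \S\ref{sec_notation}.
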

\begin{proof}
The first part follows from Proposition 
\ref{prop_comparison_of_csd_tangent_spaces} by taking the inverse limit over 
$m$ and inverting $p$. The main result of \cite{Tong} implies that 
$H^1_{\cL_S}(F^+,W_E)$ classifies (polarized) semistable self-extensions of 
$\rho_E$. A de 
Rham self-extension of $\rho_E$ is automatically semistable \cite[Corollary 
1.27]{nekovar}, so the second part follows. The third part follows from the 
equality of the respective local Selmer groups in the generic case.
\end{proof}
For $m' \ge m$ the inverse 
image of $\cL_{v,m'}$ in $H^1(F_v^+,W_m)$ under the map
\[ H^1(F_v^+,W_m) \to H^1(F_v^+,W_{m'}) \]
induced by the injection $W_m \to W_{m'}$
 equals $\cL_{v,m}$. Indeed, 
the natural map $H^1(F_v^+,W_m) \to  H^1(F_v^+,W_{m'})$ corresponds to pushing 
forward a $\GL_n(A_m)$-valued lifting to $\GL_n(A_{m'})$, which preserves semistability (cf. the argument of \cite[Proposition 1.1]{Ram93} -- here are are writing $A_m = \cO \oplus \epsilon \varpi^{-m}\cO / 
	\cO$, as in the proof of Proposition \ref{prop_comparison_of_tangent_spaces}). We record a consequence 
of this in the following lemma.
\begin{lem}\label{lem:selmertorsion}
	The natural map $H^1_{\cL_S}(F^+,W_m) \to 
	H^1_{\cL_S}(F^+,W_{E/\cO})[\varpi^m]$ is surjective.
\end{lem}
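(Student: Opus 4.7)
The plan is to use the short exact sequence of Galois modules
\[ 0 \to W_m \to W_{E/\cO} \xrightarrow{\varpi^m} W_{E/\cO} \to 0, \]
where $W_m$ is identified with $\varpi^{-m}W/W \subset W_{E/\cO}$. Its long exact sequence already shows that the map $H^1(F^+, W_m) \to H^1(F^+, W_{E/\cO})[\varpi^m]$ is surjective at the level of full Galois cohomology. Given a class $[c] \in H^1_{\cL_S}(F^+, W_{E/\cO})[\varpi^m]$, the task therefore reduces to showing that any lift $[\tilde c] \in H^1(F^+, W_m)$ automatically satisfies $[\tilde c]_v \in \cL_{v,m}$ at every finite place $v$ of $F^+$.

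To verify this, I use the definition $H^1_{\cL_S}(F^+, W_{E/\cO}) = \varinjlim_{m'} H^1_{\cL_S}(F^+, W_{m'})$ to find some $m' \geq m$ and $[c'] \in H^1_{\cL_S}(F^+, W_{m'})$ lifting $[c]$. Writing $\phi_{m',m} : W_m \to W_{m'}$ for multiplication by $\varpi^{m'-m}$, the classes $\phi_{m',m,*}[\tilde c]$ and $[c']$ share the same image in $H^1(F^+, W_{E/\cO})$, so their difference lies in the kernel of $H^1(F^+, W_{m'}) \to H^1(F^+, W_{E/\cO})$. By the long exact sequence for $\varpi^{m'}$ acting on $W_{E/\cO}$, this kernel equals the image of the boundary map $H^0(F^+, W_{E/\cO})/\varpi^{m'} \to H^1(F^+, W_{m'})$, and the same description holds after restricting to each place $v$.

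The crucial local fact is that at every $v$, the image of this boundary map already sits inside $\cL_{v, m'}$. For $v \in S \setminus S_p$ this is vacuous since $\cL_{v,m'}$ is everything. For $v \notin S$, the representation $\rho$ is unramified at $v$, so any $G_{F_v^+}$-invariant $\tilde w \in W \otimes_\cO E$ gives a boundary cocycle $g \mapsto (g-1)\tilde w$ that vanishes on inertia and is therefore unramified. For $v \in S_p$, the boundary class becomes a coboundary after inverting $p$; the corresponding self-extension of $\rho|_{G_{F_v^+}} \otimes_\cO \cO/\varpi^{m'}$ embeds as a subquotient of a lattice in the split extension $\rho_E \oplus \rho_E$, which is semistable with Hodge--Tate weights in $[a, b]$ because $\rho_E$ is, so the class indeed satisfies $\cL_{v, m'}$. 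Granting this, we get $\phi_{m',m,*}[\tilde c]_v \in \cL_{v, m'}$, and the compatibility displayed just before the lemma, asserting that the preimage of $\cL_{v, m'}$ in $H^1(F_v^+, W_m)$ is exactly $\cL_{v, m}$, forces $[\tilde c]_v \in \cL_{v, m}$. The main obstacle I anticipate is precisely this verification at $p$-adic places, where the boundary class must be seen to satisfy the semistability condition; once one exhibits its associated self-extension as a subquotient of a lattice in $\rho_E \oplus \rho_E$, however, the argument is immediate.
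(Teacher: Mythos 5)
Your proof is correct, but it takes a genuinely different route from the paper. The paper argues by a two-line diagram chase: it places the Selmer groups in the commutative diagram
\[
\xymatrix@C=12pt{
0 \ar[r] & H^1_{\cL_S}(F^+, W_{m})\ar[r]\ar[d] & H^1(F_S / F^+, W_{m})\ar[r]\ar[d] & \bigoplus_{v \in S_p}H^1(F_v^+,W_m)/\cL_{v,m} \ar[d]\\
0 \ar[r] & 	H^1_{\cL_S}(F^+, W_{E/\cO})[\varpi^m]\ar[r]& H^1(F_S / F^+, W_{E/\cO})[\varpi^m]\ar[r] & \bigoplus_{v \in S_p}\varinjlim_{m'}H^1(F_v^+,W_{m'})/\cL_{v,m'}}
\]
with exact rows, notes that the middle vertical map is surjective and the right one is injective (this is precisely where the observation that $\phi_{m',m}^{-1}(\cL_{v,m'}) = \cL_{v,m}$ is used: it shows the transition maps $H^1(F_v^+,W_{m'})/\cL_{v,m'} \to H^1(F_v^+,W_{m''})/\cL_{v,m''}$ are injective), and concludes by a four-lemma argument. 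Your argument works at the cocycle level: you lift globally, compare with a lift from the direct limit, isolate the discrepancy as the image of a connecting map $H^0(F^+, W_{E/\cO})/\varpi^{m'} \to H^1(F^+, W_{m'})$, and then verify by hand that this connecting class satisfies the local conditions at every place. That last local computation is the extra content in your proof: it is not wrong, but it is not needed by the paper's route, since the diagram chase encapsulates it in the exactness of the rows and the injectivity of the right-hand map. In particular your $S_p$ step, exhibiting the self-extension attached to the boundary cocycle as a quotient of a $G_{F_{\wv}}$-stable lattice $\varpi^{-m'}\cO^n \oplus \cO^n$ inside a conjugate of $\rho_E \oplus \rho_E$, is a correct and slightly nontrivial verification that the paper avoids entirely. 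Two small phrasing slips: the connecting map takes a $G_{F^+}$-invariant $w \in W_{E/\cO}$ (a torsion element), and the cocycle $g \mapsto (g-1)\tilde{w}$ comes from a choice of (non-invariant) lift $\tilde{w}$ of $\varpi^{-m'}$-times $w$; calling $\tilde{w} \in W \otimes_\cO E$ ``$G_{F_v^+}$-invariant'' is not what you mean. Also the local condition at $v \in S_p$ refers to self-extensions of $\rho|_{G_{F_{\wv}}}$ (the restriction to the Galois group of the chosen place of $F$ above $v$), not of $\rho|_{G_{F_v^+}}$, since $\rho$ itself is only defined on $G_F$. Neither affects the substance of the argument.
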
 
\begin{proof}We consider the commutative diagram with exact rows:
	\begin{equation*} 
\scalebox{0.8}{
	\xymatrix{
		0 \ar[r] &  
		H^1_{\cL_S}(F^+, W_{m})\ar[r]\ar[d] & H^1(F_S / F^+, W_{m})\ar[r]\ar[d] 
		& \bigoplus_{v \in S_p}H^1(F_v^+,W_m)/\cL_{v,m} \ar[d]\\
		0 \ar[r] & 	H^1_{\cL_S}(F^+, W_{E/\cO})[\varpi^m]\ar[r]& 
		H^1(F_S / F^+, W_{E/\cO})[\varpi^m]\ar[r] & 
		\bigoplus_{v \in S_p}\varinjlim_{m'}H^1(F_v^+,W_{m'})/\cL_{v,m'} }}
	\end{equation*}
	The central vertical map is surjective. The right vertical map is injective 
	(by the observation preceding this lemma). So the left vertical map is 
	surjective.
\end{proof}

If $Q$ is a set of finite places of $F^+$ and $N$ is a positive integer, we 
say that $Q$ is \emph{a set of Taylor--Wiles places of level $N$} 
(relative to $r$, $S$) if it satisfies the following conditions:
\begin{itemize}
	\item $Q \cap S = \emptyset$.
	\item For each $v \in Q$, $v = w w^c$ splits in $F$; and $\rho(\Frob_w)$ has $n$ distinct eigenvalues $\alpha_{w, 1}, \dots, \alpha_{w, n} \in \cO$.
	\item For each $v \in Q$, $q_v \equiv 1 \text{ mod }p^N$.
\end{itemize}
A \emph{Taylor--Wiles datum of level $N \geq 1$} is a tuple $(Q, \widetilde{Q}, 
(\alpha_{\wv, 1}, \dots, \alpha_{\wv, n})_{\wv \in \widetilde{Q}})$, where $Q$ 
is a set of Taylor--Wiles places of level $N$, $\widetilde{Q}$ is a set 
consisting of a choice, for each $v \in Q$, of a place $\wv$ of $F$ lying above 
$v$, and $(\alpha_{\wv, 1}, \dots, \alpha_{\wv, n})$ is a choice of ordering of 
the eigenvalues of $\rho(\Frob_{\wv})$. 
\begin{lemma}\label{lem_greenberg_wiles_formula}
	Suppose that the following conditions are satisfied:
	\begin{enumerate}
		\item For each $v \in S$, $\rho|_{G_{F_\wv}}$ is generic.
		\item For each place $v | \infty$, $\chi(c_v) = -1$.
	\end{enumerate}
	 Then there exists $d \geq 0$ with the following property: for every $N \ge 
	 1$, every Taylor--Wiles datum $(Q, \widetilde{Q}, (\alpha_{\wv, 1}, \dots, 
	 \alpha_{\wv, n})_{\wv \in \widetilde{Q}})$ of level $N$, and every $1 \leq 
	 m \leq N$, we have:
	\[ h^1_{\cL_{S \cup Q}}(F^+, W_m) \leq d + h^1_{\cL_{S \cup Q}^\perp}(F^+, 
	W_m(1)) + m n |Q| + \sum_{v \in Q} \sum_{i \neq j} \ord_{\varpi} 
	(\alpha_{\wv, i} - \alpha_{\wv, j}). \]
\end{lemma}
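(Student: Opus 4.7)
The plan is to apply the Greenberg--Wiles (Wiles) global duality formula for Selmer and dual Selmer groups, and carefully analyze each local contribution. Writing $\ell_{\cO}(\cdot)$ for $\cO$-length and adopting the convention $\cL_v = 0$ at the real archimedean places of $F^+$ and $\cL_v = H^1_{\mathrm{ur}}(F^+_v, W_m)$ at finite $v \notin S \cup Q$, Wiles's formula yields the exact equality
\[ h^1_{\cL_{S \cup Q}}(F^+, W_m) - h^1_{\cL^\perp_{S \cup Q}}(F^+, W_m(1)) = \Delta_m + \sum_{v}\bigl(\ell_{\cO}(\cL_{v,m}) - h^0(F^+_v, W_m)\bigr), \]
with $\Delta_m = h^0(F^+, W_m) - h^0(F^+, W_m(1))$; only places in $S \cup Q \cup \{v \mid \infty\}$ contribute nontrivially. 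It suffices to bound this right-hand side by $d + mn|Q| + \sum_{v \in Q}\sum_{i \neq j}\ord_\varpi(\alpha_{\wv,i} - \alpha_{\wv,j})$.

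First, at the Taylor--Wiles places $v \in Q$, the local condition is $\cL_{v,m} = H^1(F^+_v, W_m)$, and local Tate duality together with the local Euler characteristic formula identifies the summand as $h^0(F^+_v, W_m(1))$. Since $q_v \equiv 1 \pmod{p^N}$ and $m \leq N$, the cyclotomic character is trivial on $G_{F^+_v}$ modulo $\varpi^m$, giving $W_m(1) \cong W_m$ as $G_{F^+_v}$-modules; the contribution equals $h^0(F^+_v, W_m)$. Diagonalizing $\rho(\Frob_{\wv})$ with distinct eigenvalues $\alpha_{\wv,i}$, the module $\ad\rho \otimes \cO/\varpi^m$ decomposes under the Frobenius action into $n^2$ rank-one pieces, with the $(i,j)$-piece contributing $\min(m, \ord_\varpi(\alpha_{\wv,i} - \alpha_{\wv,j}))$ to the invariants, so $h^0(F^+_v, W_m) \leq mn + \sum_{i \neq j}\ord_\varpi(\alpha_{\wv,i} - \alpha_{\wv,j})$. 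Summing over $v \in Q$ produces the desired $Q$-dependent terms.

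It remains to bound all other summands uniformly in $N, Q, m$ by a constant $d$ depending only on $r, S, [a,b]$. The global term $\Delta_m$ is bounded because $H^0(F^+, W_E) = H^0(F^+, W_E(1)) = 0$ (absolute irreducibility of $\rho_E$, together with $\chi(c_v) = -1$ and $a + b = w$). At $v \in S \setminus S_p$ the summand equals $h^0(F^+_v, W_m(1))$, uniformly bounded by genericity of $\rho|_{G_{F_\wv}}$. The key remaining point is the cancellation between the $S_p$ and archimedean contributions, each growing linearly in $m$: by Bloch--Kato the $v \in S_p$ summand is $m\cdot\dim_E\bigl(D_{\mathrm{dR}}(\ad\rho_E|_{G_{F_\wv}})/\Fil^0\bigr) + O(1)$, while each real archimedean summand is $-m\binom{n}{2} + O(1)$ ($c_v$ acts on $\ad\rho$ by $X \mapsto -X^\ast$ and the pairing is symmetric since $\chi(c_v) = -1$, so the $c_v$-invariants form a rank-$\binom{n}{2}$ submodule of skew-self-adjoint matrices). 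The self-duality of $\ad\rho$ about zero (a consequence of conjugate self-duality combined with $a + b = w$) implies $\sum_{v \in S_p}\dim_E(D_{\mathrm{dR}}/\Fil^0) \leq \binom{n}{2}[F^+:\mathbf{Q}]$, so the linear-in-$m$ contributions cancel up to a non-positive quantity and only a bounded error survives. The main obstacle is to make this cancellation precise at the integral level: one must show that the discrepancy between $\ell_{\cO}(\cL_{v,m})$ at $v \in S_p$ and $m$ times the corresponding characteristic-zero dimension is bounded independently of $m$, which in turn reduces to controlling torsion in the divisible-coefficient Selmer group at $S_p$.
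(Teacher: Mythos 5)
Your overall strategy coincides with the paper's: apply the Greenberg--Wiles formula, absorb the Taylor--Wiles places via $h^0(F^+_v, W_m) = h^0(F^+_v, W_m(1)) \leq mn + \sum_{i\neq j}\ord_\varpi(\alpha_{\wv,i}-\alpha_{\wv,j})$ (valid since $m \leq N$), bound the global terms by absolute irreducibility and the polarization, bound the $v \in S\setminus S_p$ terms by genericity, and show that the $S_p$ terms and the archimedean term cancel to first order in $m$. Your treatment of each of these pieces is correct, including the identification of the archimedean contribution with the length of the module of skew-self-adjoint matrices (rank $\binom{n}{2}$), which works even for $p=2$.

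However, there is a genuine gap at the $p$-adic places, which you explicitly flag but do not close: you need $\ell_\cO(\cL_{v,m}) - h^0(F_\wv, W_m) = m[F_v^+:\QQ_p]\,\tfrac{n(n-1)}{2} + O(1)$ with the $O(1)$ uniform in $m$ (and you also need the precise constant $[F_v^+:\QQ_p]\,\tfrac{n(n-1)}{2}$, not just an upper bound $\leq$, since the full statement requires the $S_p$ and archimedean linear terms to cancel exactly at the integral level). The paper proves this by working with the integral framed local deformation ring $R_v^{\square,[a,b]}$ of subquotients of lattices in semistable representations with Hodge--Tate weights in $[a,b]$: writing $\q_v = \ker(R_v^{\square,[a,b]} \to \cO)$ for the point determined by $\rho|_{G_{F_\wv}}$, one has the \emph{exact} identity $\ell_{v,m} - h^0(F_\wv, W_m) = \ell_\cO(\q_v/\q_v^2 \otimes_\cO \cO/\varpi^m) - mn^2$. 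Since $\q_v/\q_v^2$ is a finitely generated $\cO$-module, $\ell_\cO(\q_v/\q_v^2 \otimes_\cO \cO/\varpi^m) = m\cdot\mathrm{rk}_\cO(\q_v/\q_v^2) + O(1)$, reducing the problem to computing the \emph{rank}, equivalently $\dim_E \q_v/\q_v^2 \otimes_\cO E$. This is done by identifying the completed local ring of $R_v^{\square,[a,b]}$ at $\q_v$ with the deformation ring over $E$ of $\rho|_{G_{F_\wv}}\otimes E$ with semistable Artinian quotients (combining \cite[Proposition 2.3.5]{kis04} with Liu's theorem \cite{Tong}), whose tangent space has dimension $n^2 + [F_v^+:\QQ_p]\,\tfrac{n(n-1)}{2}$ by \cite[Theorem 1.2.7]{All16}. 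This is precisely the ``integral control'' step your proposal identifies as the remaining obstacle; it is not a routine reduction, and the passage through $R_v^{\square,[a,b]}$ (rather than directly through a torsion Bloch--Kato $H^1_f$) is what makes the length count exact and uniform.
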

\begin{proof}
	Fix a Taylor--Wiles datum. By the usual Greenberg--Wiles formula, we have
	\begin{multline*}
	h^1_{\cL_{S \cup Q}}(F^+, W_m) = h^1_{\cL_{S \cup Q}^\perp}(F^+, W_m(1)) + h^0(F^+, W_m) - h^0(F^+, W_m(1)) \\
+ \sum_{v \in S \cup Q} (\ell_{v, m} - h^0(F_\wv, W_m)) - \sum_{v | \infty} l( (1 + c_v) W_m),
	\end{multline*}
	where $l_{v, m} = l(\cL_{v, m})$ and $l$ denotes the length of an $\cO$-module. The contribution from the infinite places 
	is $m [F^+ : \bQ] n(n-1) / 2$, up to 
	a uniformly bounded error. The global terms $h^0(F^+, W_m)$ and $h^0(F^+, 
	W_m(1))$ are both uniformly bounded, the first since $\rho$ is absolutely 
	irreducible and the second since $\rho$ is absolutely irreducible and $\rho, \rho(1)$ have different sets of Hodge--Tate weights.

	If $v \in Q$, then $(\ell_{v, m} - h^0(F_\wv, W_m)) = h^0(F_\wv, W_m(1)) = 
	h^0(F_\wv, W_m)$, since we are assuming $m \leq N$, and this is bounded 
	above by $nm + \sum_{i \neq j} \ord_\varpi (\alpha_{\wv, i} - \alpha_{\wv, 
	j})$. If $v \in S - S_p$, then $(\ell_{v, m} - h^0(F_\wv, W_m))$ is 
	uniformly bounded, by \cite[Proposition 1.2.2]{All16}.
	
	Finally, suppose that $v \in S_p$. Let $R_v^{\square, [a, b]} \in 
	\mathcal{C}_\cO$ denote the object representing the functor of lifts of 
	$\overline{\rho}|_{G_{F_\wv}}$ whose projections to Artinian quotients are 
	subquotients of lattices in semistable representations with all Hodge--Tate 
	weights in the interval $[a, b]$. The representation $\rho|_{G_{F_\wv}}$ 
	determines a homomorphism $R_v^{\square, [a, b]} \to \cO$. If $\q_v$ 
	denotes its kernel, then by definition we have $l_{v, m} - h^0(F_\wv, W_m) = l( \q_v / 
	\q_v^2 \otimes_\cO \cO / \varpi^m) - m n^2 $. We wish to show that $l_{v, m}  - h^0(F_\wv, W_m) - m [F_v^+ : \bQ_p] 
	n(n-1)/2$ is bounded independently of $m$. This in turn will follow if we 
	can show that $\q_v / \q_v^2 \otimes_\cO E$ has dimension $n^2 + [F_v^+ : \bQ_p] 
	n(n-1)/2$ as $E$-vector space.
	
	However, the argument of \cite[Proposition 2.3.5]{kis04}, together with \cite[Conjecture 1.0.1]{Tong} (stated as a conjecture but proved in that paper), shows that the completed local ring of $R_v^{\square, [a, b]}$ at $\q_v$ represents the functor $\cC_E \to \operatorname{Sets}$ of lifts of $\rho|_{G_{F_\wv}} \otimes_\cO E$ whose Artinian quotients are semistable with all Hodge--Tate weights in the interval $[a, b]$.
	The tangent space to this functor (which is equal to $\q_v / \q_v^2 
	\otimes_\cO E$) is computed in the proof of \cite[Theorem 
	1.2.7]{All16}, which gives the desired result.
\end{proof}

\begin{lem}\label{lem:quotientlength}
	Suppose $M$ is a finitely generated $\cO$-module and let $N\ge 1$ and $d, g 
	\ge 
	0$ be integers. Suppose we know that for all $m \le N$ we have 
	\[\ell(M/\varpi^m) \le gm + d.\] Then there is a map $\cO^g \to M/\varpi^N$ 
	with cokernel of length $\le d$. 
\end{lem}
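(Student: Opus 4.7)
The plan is to apply the structure theorem for finitely generated modules over the DVR $\cO$ directly. First I would observe that for each $m \leq N$ we have $M/\varpi^m = (M/\varpi^N)/\varpi^m(M/\varpi^N)$, so the lemma immediately reduces to the case where $M$ itself is a torsion $\cO$-module killed by $\varpi^N$. Having made that reduction, I would invoke the structure theorem to decompose $M \cong \bigoplus_{i=1}^{s} \cO/\varpi^{c_i}$ with $N \geq c_1 \geq c_2 \geq \cdots \geq c_s \geq 1$; the hypothesis then reads
\[ \textstyle \sum_{i=1}^{s} \min(c_i, m) \;\leq\; gm + d \qquad (1 \leq m \leq N). \]

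The natural candidate for the map $\cO^g \to M$ is the one sending a basis of $\cO^g$ to generators of the $g$ largest cyclic summands of $M$ (and to $0$ if $s < g$). Its cokernel is $\bigoplus_{i>g} \cO/\varpi^{c_i}$, of length $\sum_{i>g} c_i$. If $s \leq g$ the cokernel vanishes and there is nothing to prove. Otherwise $c_g \geq 1$, and since every $c_i \leq N$ the value $m = c_g$ lies in the range $[1,N]$ where the hypothesis applies. Substituting it, the left-hand side becomes exactly $g c_g + \sum_{i>g} c_i$ (because $\min(c_i,c_g) = c_g$ for $i \leq g$ and $\min(c_i,c_g) = c_i$ for $i > g$), so the inequality collapses to $\sum_{i>g} c_i \leq d$, which is what we wanted.

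I do not expect any real obstacle here: the proof turns entirely on spotting that the hypothesis should be evaluated at the specific value $m = c_g$, and once this is done everything reduces to a direct computation with elementary divisors. The only point that needs to be checked is that $c_g$ lies in the interval $[1,N]$ where the inequality is known to hold, and this is immediate from the inequalities $1 \leq c_g \leq c_1 \leq N$.
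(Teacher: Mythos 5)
Your argument is correct, and it takes a genuinely different route from the paper's. You invoke the structure theorem for finitely generated modules over a DVR, reduce (after first passing from $M$ to $M/\varpi^N$, which is harmless) to the explicit decomposition $M \cong \bigoplus_i \cO/\varpi^{c_i}$ with $N \geq c_1 \geq \cdots \geq c_s \geq 1$, take the obvious map onto the $g$ largest summands, and then evaluate the hypothesis at the single well-chosen value $m = c_g$ to read off the bound $\sum_{i>g} c_i \leq d$ on the cokernel. The paper instead argues by induction on the number of generators: it picks a cyclic submodule $C \subset M$ of maximal length, passes to $M' = M/C$, observes that $\ell(M'/\varpi^m) = \ell(M/\varpi^m) - m$ for $m$ up to the maximal cyclic length $N'$ of $M'$, and applies the inductive hypothesis with $g-1$ in place of $g$. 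The two arguments are essentially the same computation viewed from different angles: your value $m = c_g$ is precisely what falls out of the last step of the paper's descending induction, but making the structure theorem explicit lets you do it in one stroke. Your version is arguably more transparent, since the choice $m = c_g$ and the cancellation $\sum_i \min(c_i, c_g) = g c_g + \sum_{i>g} c_i$ make the mechanism visible, at the modest cost of an explicit appeal to the classification of finitely generated $\cO$-modules that the paper's induction avoids.
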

\begin{proof}
	We prove the lemma by induction on the number of generators of $M$. The 
	lemma is obvious if $M$ is cyclic. For a general $M$, we can first replace 
	$M$ by $M/\varpi^N$ without changing anything. Now let $M' = M/C$ where $C$ 
	is a cyclic submodule of $M$ of maximal length and let $N' \le N$ be the 
	maximal length of a cyclic submodule of $M'$. It suffices to prove that 
	there is a map $\cO^{g-1} \to M'/\varpi^{N'} = M'/\varpi^N$ with cokernel 
	of length $\le d$. For all $m \le N'$ we have $\ell(M'/\varpi^m) = 
	\ell(M/\varpi^m) - m \le (g-1)m + d$. By induction, we have the desired map 
	$\cO^{g-1} \to M'/\varpi^{N'}$. 
\end{proof}

\begin{cor}\label{cor_if_we_kill_dual_Selmer} Suppose that $\rho$ satisfies the 
hypotheses of Lemma \ref{lem_greenberg_wiles_formula}. Then there exists $d \in 
\NN$ such that for all $N \in \NN$ and every Taylor--Wiles datum of level $N$,
	there is a map \[\cO^{n |Q|} \to H^1_{\cL_{S \cup Q}}(F^+,W_N)\] with 
	cokernel 
	of length $\le d + h^1_{\cL_{S \cup Q}^\perp}(F^+,W_N(1)) + 
	\sum_{v \in Q}\sum_{i \ne 
		j}\ord_{\varpi}(\alpha_{\wv,i} - \alpha_{\wv,j})$.
\end{cor}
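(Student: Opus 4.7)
The plan is to apply Lemma \ref{lem:quotientlength} with $g = n|Q|$ to the Pontryagin dual $M := H^1_{\cL_{S\cup Q}}(F^+, W_{E/\cO})^\vee$ of the divisible Selmer group, and then to convert the resulting statement about $M/\varpi^N$ into the desired statement about $H^1_{\cL_{S\cup Q}}(F^+, W_N)$. Note that Lemma \ref{lem:selmertorsion} applies verbatim with $S$ replaced by $S\cup Q$, since the local conditions at Taylor--Wiles places are trivially full; it therefore supplies surjections $H^1_{\cL_{S\cup Q}}(F^+, W_m) \twoheadrightarrow H^1_{\cL_{S\cup Q}}(F^+, W_{E/\cO})[\varpi^m]$ for each $m \le N$. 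In particular,
\[ \ell(M/\varpi^m) = \ell(H^1_{\cL_{S\cup Q}}(F^+, W_{E/\cO})[\varpi^m]) \le h^1_{\cL_{S\cup Q}}(F^+, W_m). \]

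To invoke Lemma \ref{lem:quotientlength} I need $\ell(M/\varpi^m) \le gm + d$ with $d$ independent of $m$, for all $1 \le m \le N$. Lemma \ref{lem_greenberg_wiles_formula} gives $h^1_{\cL_{S\cup Q}}(F^+, W_m) \le d_1 + h^1_{\cL^\perp_{S\cup Q}}(F^+, W_m(1)) + mn|Q| + C_Q$, where $C_Q = \sum_{v \in Q}\sum_{i\ne j}\ord_\varpi(\alpha_{\wv, i} - \alpha_{\wv, j})$, so it suffices to trade $h^1_{\cL^\perp_{S\cup Q}}(F^+, W_m(1))$ for $h^1_{\cL^\perp_{S\cup Q}}(F^+, W_N(1))$ at the cost of a uniform error. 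This is achieved using the map of dual Selmer groups induced by the injection $W_m(1) \hookrightarrow W_N(1)$ (multiplication by $\varpi^{N-m}$): compatibility with dual Selmer follows from local Tate duality and the observation, already recorded before Lemma \ref{lem:selmertorsion}, that the projection $W_N \to W_m$ respects the Selmer conditions; its kernel is a quotient of $H^0(F^+, W_{N-m}(1))$, which is uniformly bounded because $H^0(F^+, W_E(1)) = 0$ (by the disjointness of the Hodge--Tate weights of $\rho$ and $\rho(1)$, as already used in the proof of Lemma \ref{lem_greenberg_wiles_formula}). Combining these bounds and applying Lemma \ref{lem:quotientlength} produces a map $\cO^{n|Q|} \to M/\varpi^N$ whose cokernel has length at most $d_0 := d' + h^1_{\cL^\perp_{S\cup Q}}(F^+, W_N(1)) + C_Q$ for a uniform constant $d'$.

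It remains to pass from $M/\varpi^N \cong H^1_{\cL_{S\cup Q}}(F^+, W_{E/\cO})[\varpi^N]^\vee$ to the Selmer group $H^1_{\cL_{S\cup Q}}(F^+, W_N)$ itself. For this I dualize the surjection provided by Lemma \ref{lem:selmertorsion} at level $N$, whose kernel is a quotient of $H^0(F^+, W_{E/\cO})$ and hence of uniformly bounded length, and I use that any finite $\cO$-module is non-canonically isomorphic to its Pontryagin dual. This yields the required map with cumulative cokernel of the form $d + h^1_{\cL^\perp_{S\cup Q}}(F^+, W_N(1)) + C_Q$ for a uniform $d$. The principal technical point throughout is verifying uniformity of the error constants in $N$ and $Q$; this reduces to finiteness of $H^0(F^+, W_{E/\cO})$ and $H^0(F^+, W_{E/\cO}(1))$, which are independent of $Q$ because the Galois action on $W_{E/\cO}$ factors through $G_{F^+, S}$, and whose divisible parts are killed on reduction modulo any $\varpi^N$ while their torsion parts have length depending only on $r$.
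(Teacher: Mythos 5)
Your proof is correct, and it uses the same key estimates as the paper: the Greenberg--Wiles bound from Lemma \ref{lem_greenberg_wiles_formula}, the surjectivity from Lemma \ref{lem:selmertorsion}, the uniform bound on the kernel of $H^1_{\cL^\perp_{S\cup Q}}(F^+, W_m(1)) \to H^1_{\cL^\perp_{S\cup Q}}(F^+, W_N(1))$ via $H^0(F^+, W_{E/\cO}(1))$, and finally Lemma \ref{lem:quotientlength}. The route you take is, however, slightly reorganized. The paper applies Lemma \ref{lem:quotientlength} directly with $M = H^1_{\cL_{S\cup Q}}(F^+, W_N)$, which forces it to prove a bound (\ref{eqn:modselmer}) on $\ell(H^1_{\cL_{S\cup Q}}(F^+, W_N)/\varpi^m)$ by going through $H^1_{\cL_{S\cup Q}}(F^+, W_{E/\cO})[\varpi^N]/\varpi^m$; the payoff is that the resulting map lands directly in $H^1_{\cL_{S\cup Q}}(F^+, W_N)$. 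You instead take $M$ to be the Pontryagin dual of the divisible Selmer group, which makes the analogous length bound an exact equality $\ell(M/\varpi^m) = \ell(H^1_{\cL_{S\cup Q}}(F^+, W_{E/\cO})[\varpi^m])$, but you then need the extra step of dualizing Lemma \ref{lem:selmertorsion}'s surjection at level $N$ and invoking non-canonical self-duality of finite $\cO$-modules to land in the correct target. Both versions are valid and of comparable length; the paper's is marginally more direct because it avoids Pontryagin duality entirely. One point worth making explicit in your version: for Lemma \ref{lem:quotientlength} to apply you need $M$ finitely generated over $\cO$, which follows from cofinite generation of the divisible Selmer group (or, alternatively, from the very bound $\ell(M/\varpi) < \infty$ by topological Nakayama), but you should say so.
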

\begin{proof}
Using Lemma \ref{lem_greenberg_wiles_formula} and Lemma \ref{lem:quotientlength}, we see that it is enough to find $d_0, d_1 \in \NN$ such that for any $1 \leq m \leq N$ and any Taylor--Wiles datum of level $N$, we have
\begin{equation}\label{eqn:modselmer}  l(H^1_{\cL_{S \cup Q}}(F^+, W_N) / (\varpi^m)) \leq h^1_{\cL_{S \cup Q}}(F^+, W_m) + d_0
\end{equation}
and
\begin{equation}\label{eqn:moddualselmer} h^1_{\cL^\perp_{S \cup Q}}(F^+, W_m(1)) \leq h^1_{\cL^\perp_{S \cup Q}}(F^+, W_N(1)) + d_1. 
\end{equation}
We treat these in turn. For the first inequality, we note that Lemma \ref{lem:selmertorsion} shows that the map
\[ H^1_{\cL_{S \cup Q}}(F^+, W_m) \to H^1_{\cL_{S \cup Q}}(F^+, W_{E / \cO})[\varpi^m] \]
is surjective, with kernel a subquotient of $H^0(F^+, W_{E / \cO})$. It follows that there is a surjective homomorphism
\[ H^1_{\cL_{S \cup Q}}(F^+, W_N) / (\varpi^m) \to H^1_{\cL_{S \cup Q}}(F^+, W_{E / \cO})[\varpi^N] / (\varpi^m) \]
with kernel a subquotient of $H^0(F^+, W_{E / \cO})$. Since we have
\[ l(H^1_{\cL_{S \cup Q}}(F^+, W_{E / \cO})[\varpi^N] / (\varpi^m)) = l(H^1_{\cL_{S \cup Q}}(F^+, W_{E / \cO})[\varpi^m]), \]
we see that (\ref{eqn:modselmer}) holds with $d_0 = h^0(F^+, W_{E / \cO})$.

For the second inequality, we note that the kernel of the natural map
\begin{equation}\label{eqn_dual_m_to_N} H^1(F_{S} / F^+, W_m(1)) \to H^1(F_S / F^+, W_N(1)) \end{equation}
is contained in the kernel of the map
\[ H^1(F_S / F^+, W_m(1)) \to H^1(F_S / F^+, W_{E / \cO}(1)), \]
which is a subquotient of $H^0(F^+, W_{E/ \cO}(1))$ (which is finite, by the 
same argument showing boundedness of $h^0(F^+,W_m(1))$ in the proof of Lemma 
\ref{lem_greenberg_wiles_formula}). We see that (\ref{eqn:moddualselmer}) will 
hold with $d_1 = h^0(F^+, W_{E / \cO}(1))$ provided that the map 
(\ref{eqn_dual_m_to_N}) sends $H^1_{\cL_{S \cup Q}^\perp}(F^+, W_m(1))$ into 
$H^1_{\cL_{S \cup Q}^\perp}(F^+, W_N(1))$. Recalling the definition of our 
local conditions, this means we must show that if $v \in S_p$, then the map 
$H^1(F^+_v, W_m(1)) \to H^1(F^+_v, W_N(1))$ sends $\cL_{v, m}^\perp$ to 
$\cL_{v, N}^\perp$. By duality, we must show that if $v \in S_p$, then the map 
$H^1(F^+_v, W_N) \to H^1(F^+_v, W_m)$ induced by the surjection $W_N \to W_m$ 
sends $\cL_{v, N}$ into $\cL_{v, m}$. However, this follows immediately from 
the definitions.
\end{proof}
For Corollary \ref{cor_if_we_kill_dual_Selmer} to be useful, we need to be able to find Taylor--Wiles data with good properties. To do this, we first introduce a useful definition.
\begin{lemma}\label{lem_non-zero_cohomology}
	Let $H \subset \GL_n(\cO)$ be a compact subgroup, and suppose the 
	characteristic polynomial of every element of $H$ splits over $E$ (this 
	condition is always satisfied after possibly enlarging $E$). Denote the 
	associated representation of $H$ by $\rho$, so we have $\cO[H]$-modules $W = \ad 
	\rho$, $W_E$, $W_{E/\cO}$ as above. Then the 
	following conditions are equivalent:
	\begin{enumerate}
		\item For all simple $E[H]$-submodules $V \subset W^0_E = \ad^0\rho 
		\otimes E$, we can find $h\in 
		H$ with $n$ distinct eigenvalues and $\alpha \in E$	such that $\alpha$ is 
		an eigenvalue of $h$ and $\tr e_{h,\alpha} V \ne 0$ (where 
		$e_{h,\alpha} \in W_E$ denotes the $h$-equivariant projection to 
		the $\alpha$-eigenspace).
		\item For all simple $E[H]$-submodules $V \subset W_E$, we can find $h\in 
		H$ with $n$ distinct eigenvalues and $\alpha \in E$	such that $\alpha$ is 
		an eigenvalue of $h$ and $\tr e_{h,\alpha} V \ne 0$.
		\item For all non-zero $E[H]$-submodules $V \subset W_E$, there exists $h \in H$ with $n$ distinct eigenvalues such that $V \not\subset (h - 1)W_E$.
		\item For all non-zero divisible $\cO[H]$-submodules $V \subset W_{E / 
		\cO}$, there exists $h \in H$ with $n$ distinct eigenvalues such that 
		$V \not\subset (h - 1)W_{E / \cO}$.
	\end{enumerate}
\end{lemma}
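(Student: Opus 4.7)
The plan is to prove the chain of equivalences (2)$\Leftrightarrow$(3), (1)$\Leftrightarrow$(2), (3)$\Leftrightarrow$(4), each by a short self-contained argument. The central computation is (2)$\Leftrightarrow$(3). For $h \in H$ with $n$ distinct eigenvalues $\alpha_1, \dots, \alpha_n \in E$, the conjugation action of $h$ on $W_E = M_n(E)$ diagonalizes as $W_E = \bigoplus_{i,j} e_{h,\alpha_i} W_E e_{h,\alpha_j}$ with weight $\alpha_i \alpha_j^{-1}$, so $\ker(h-1) = \bigoplus_i E \cdot e_{h,\alpha_i}$ is complementary to $(h-1)W_E$. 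Because each $e_{h,\alpha}$ is a rank-one idempotent of trace $1$, one has $e_{h,\alpha} X e_{h,\alpha} = \tr(e_{h,\alpha} X)\, e_{h,\alpha}$, so the projection onto $\ker(h-1)$ sends $X$ to $\sum_\alpha \tr(e_{h,\alpha} X)\, e_{h,\alpha}$. Hence $X \in (h-1)W_E$ iff $\tr(e_{h,\alpha} X) = 0$ for every eigenvalue $\alpha$, and for any submodule $V \subset W_E$, $V \not\subset (h-1)W_E$ iff $\tr(e_{h,\alpha} V) \neq 0$ for some $\alpha$. Then (3)$\Rightarrow$(2) is trivial, and (2)$\Rightarrow$(3) follows since any nonzero $E[H]$-submodule contains a simple one.

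For (1)$\Leftrightarrow$(2), the plan is to invoke the $E[H]$-module decomposition $W_E = W_E^0 \oplus E \cdot \Id$, valid since $n$ is invertible in $E$. A simple $E[H]$-submodule $V \subset W_E$ is either contained in $W_E^0$, in which case the condition of (2) for $V$ coincides with (1) for $V$, or meets $W_E^0$ trivially and is therefore a one-dimensional trivial submodule $V = E \cdot v$ with $v = w + s\,\Id$, $s \in E^\times$, $w \in (W_E^0)^H$. For such $v$, the identity $\sum_\alpha \tr(e_{h,\alpha} v) = \tr(v) = ns \neq 0$ forces $\tr(e_{h,\alpha} v) \neq 0$ for some $\alpha$, so the condition of (2) is automatic on these extra modules once some $h \in H$ with $n$ distinct eigenvalues is known to exist. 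When $n \geq 2$, the space $W_E^0$ is nonzero and (1) applied to any simple submodule of $W_E^0$ supplies such an $h$; the $n = 1$ case is vacuous.

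For (3)$\Leftrightarrow$(4), the plan is to exhibit a correspondence between nonzero $E[H]$-subspaces $V_E \subset W_E$ and nonzero divisible $\cO[H]$-submodules $V \subset W_{E/\cO}$, namely $V_E \mapsto V = (V_E + W)/W$ (divisible because $V_E$ is $E$-stable) with inverse $V \mapsto V_E := T(V) \otimes_\cO E$, using the natural identification $T(W_{E/\cO}) = \Hom_\cO(E/\cO, W_{E/\cO}) \cong W$. The key point is that under this correspondence $(h-1) W_E$ and $(h-1) W_{E/\cO}$ correspond: the identity $T((h-1)W_{E/\cO}) = (h-1)W_E \cap W$ holds by a Krull-intersection computation (taking $p$-adic limits of the containment $p^{-k}x \in (h-1)W_E + W$), and $(h-1)W_E \cap W$ is a lattice in $(h-1)W_E$. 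Hence $V_E \subset (h-1)W_E$ iff $V \subset (h-1) W_{E/\cO}$, giving the equivalence.

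The step I expect to be the most delicate is (3)$\Leftrightarrow$(4), because the clean rational decomposition $W_E = \ker(h-1) \oplus (h-1) W_E$ does not descend verbatim to $W_{E/\cO}$: the idempotents $e_{h,\alpha}$ need not lie in $M_n(\cO)$, so one cannot simply reduce the setup mod $\varpi^m$. The Tate-module passage is precisely what converts the rational statement back into an integral one at the level of saturated sublattices of $W$.
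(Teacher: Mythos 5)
Your argument is correct and mirrors the paper's proof essentially step for step: (2)$\Leftrightarrow$(3) via the $h$-eigenspace decomposition $W_E = W_E^h \oplus (h-1)W_E$, (1)$\Leftrightarrow$(2) via the complement $W_E = W_E^0 \oplus E\cdot\mathrm{Id}$, and (3)$\Leftrightarrow$(4) via the inclusion-preserving bijection between $E$-subspaces of $W_E$ and divisible $\cO$-submodules of $W_{E/\cO}$. Your rank-one-idempotent identity $e_{h,\alpha}Xe_{h,\alpha}=\tr(e_{h,\alpha}X)\,e_{h,\alpha}$ makes the characterization of $(h-1)W_E$ slightly more explicit than the paper's (and works for arbitrary, not just $h$-invariant, subspaces), but the mechanism is the same.
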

\begin{proof}
	We note that (1) and (2) are equivalent because the scalar matrices $Z_E \subset W_E$ give a complement to $W_E^0$ in $W_E$, and the condition $\tr e_{h, \alpha} Z_E \neq 0$ is satisfied for any regular semisimple element $h \in H$ and eigenvalue $\alpha \in E$.
	
	If $h \in \GL_n(\cO)$ has $n$ distinct eigenvalues, then it acts semisimply 
	on $W_E$. In particular, there is a unique $h$-invariant direct sum 
	decomposition $W_E = W_E^h \oplus (h-1)W_E$. If $V \subset W_E$ is a 
	$h$-invariant subspace, then there is a similar direct sum decomposition $V 
	= V^h \oplus (h-1)V$. The condition that there exists an eigenvalue $\alpha 
	\in E$ of $h$ such that $\tr e_{h, \alpha} V \ne 0$ is equivalent to the 
	condition that the projection of $V$ to $W_E^h$ is non-zero, or 
	equivalently that $V^h \neq 0$. This is in turn equivalent to the condition 
	that $V \not\subset (h-1) W_E$. This shows that (2) and (3) are equivalent. 
	
	Now we show that (3) and (4) are equivalent. For this we note that there is a $\GL_n(\cO)$-equivariant, inclusion-preserving bijection between the $E$-subspaces of $W_E$ and the divisible $\cO$-submodules of $W_{E / \cO}$; this sends $V \subset W_E$ to $V + W/ W$ and $V' \subset W_{E / \cO}$ to
	\[ V = \{ v \in W_E \mid \forall n \geq 0, \varpi^{-n} v \text{ mod }W 
	\in V'\}. \]
	The proof in this case is complete on noting that $(h-1)W_E$ corresponds to $(h-1)W_{E / \cO}$ under this bijection.
\end{proof}
\begin{defn}\label{dfn:enormous}
	We say that a subgroup $H \subset \GL_n(\cO)$ is enormous if for all simple $E[H]$-submodules $V \subset W_E$, we can find $h\in 
	H$ with $n$ distinct eigenvalues in $E$ 
	and $\alpha \in E$	such that $\alpha$ is 
	an eigenvalue of $h$ and $\tr e_{h,\alpha} V \ne 0$.
\end{defn}
\begin{remark}
The above is a natural analogue of the definition of enormous subgroups in 
positive characteristic \cite[Definition 4.10]{KT}. In contrast to the positive 
characteristic case, we do not need to assume any vanishing of cohomology 
groups for $H$. The necessary vanishing will follow from the purity of our 
Galois representations (see \cite[Lemma 6.2]{Kis04a}, this goes back to Serre 
for Tate modules of abelian varieties \cite{Ser71}).
\end{remark}
\begin{lemma}
	Let $H \subset \GL_n(\cO)$ be an enormous subgroup. Then $H$ acts 
	absolutely irreducibly on $E^n$, and in particular we have $H^0(H,W^0_E) = 
	0$.
\end{lemma}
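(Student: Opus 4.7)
The plan is to prove absolute irreducibility by contradiction: assuming $H$ does not act absolutely irreducibly on $E^n$, I will produce a simple $E[H]$-submodule of $W_E$ violating the enormous condition. As a preliminary, applying the enormous condition to the trivial simple submodule $E \cdot \Id \subset W_E$ produces at least one element $h_0 \in H$ with $n$ distinct eigenvalues in $E$; this element will be needed in the second case below.

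Case 1: $E^n$ is reducible, with a proper nonzero $E[H]$-submodule $U \subset E^n$. The $E$-subspace
\[ V_1 = \{ X \in W_E : X(E^n) \subset U \text{ and } X|_U = 0 \} \]
is a nonzero $E[H]$-submodule of $W_E$ (canonically isomorphic to $\Hom_E(E^n/U, U)$ with its natural $H$-action). For any $h \in H$ with $n$ distinct eigenvalues in $E$, the subspace $U$ is $h$-stable, so one can choose an eigenbasis $v_1,\dots,v_n$ of $h$ such that $U = \bigoplus_{i \in I} E v_i$ for some proper subset $I \subset \{1,\dots,n\}$. In this basis every $X \in V_1$ has all diagonal entries zero, so $\tr(e_{h,\alpha} X) = 0$ for each eigenvalue $\alpha$ of $h$. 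Any simple $E[H]$-submodule $V \subset V_1$ then contradicts enormity.

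Case 2: $E^n$ is irreducible but not absolutely irreducible, so by Schur's lemma $D = \End_{E[H]}(E^n)$ is a division $E$-algebra strictly larger than $E$. Any $X \in D$ commutes with $h_0$, hence is diagonal in an eigenbasis of $h_0$. Choosing $X \in D \setminus E \cdot \Id$, the matrix $X$ has at least two distinct diagonal entries lying in $E$, and therefore a proper nonzero eigenspace in $E^n$. This eigenspace is $H$-invariant since $X$ commutes with $H$, contradicting the irreducibility of $E^n$.

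Thus $H$ acts absolutely irreducibly on $E^n$, and Schur's lemma immediately gives $W_E^H = E \cdot \Id$, whence $(W_E^0)^H = 0$. The only substantive construction is that of $V_1$ in Case 1; I do not anticipate serious obstacles beyond verifying the straightforward linear-algebraic assertions about diagonal entries in well-chosen bases.
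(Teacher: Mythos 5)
Your argument is correct, but it takes a genuinely different route from the paper's. The paper proves the statement in one step via the trace pairing: it sets $U = \{u \in W_E : \tr(hu) = 0 \ \forall h \in H\}$, the trace-orthogonal complement of the $E$-span of $H$, and observes that any simple $E[H]$-submodule $V \subset U$ would violate enormity because $e_{h,\alpha}$ is a polynomial in $h$ and hence lies in that span; thus $U = 0$, perfectness of the trace pairing forces the span of $H$ to be all of $W_E$, and absolute irreducibility then follows from Burnside's theorem. You instead argue by contradiction with a case split: in the reducible case you exhibit a simple submodule inside $\Hom_E(E^n/U, U) \subset W_E$, whose elements have zero diagonal in any eigenbasis of a regular semisimple $h$ so that $\tr(e_{h,\alpha}\,\cdot\,)$ vanishes identically there (this is essentially verifying condition (3) of Lemma \ref{lem_non-zero_cohomology} by hand for this particular submodule, since $\Hom_E(E^n/U,U) \subset (h-1)W_E$ for every such $h$); and in the irreducible-but-not-absolutely-irreducible case you note that a non-scalar element of $\End_{E[H]}(E^n)$, being diagonal in the eigenbasis of $h_0$ with entries in $E$, has a proper $H$-stable eigenspace, which is impossible. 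Both are valid; the paper's trace-pairing argument is shorter and avoids the case split because the pairing formalism absorbs the diagonal-entry bookkeeping, while yours makes the witnessing submodule explicit.
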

\begin{proof}
We need to show that the $E$-linear span of $H$ in $W_E$ is 
everything (by Burnside's theorem on matrix algebras). Consider \[U = \{u \in 
W_E : \tr(hu) = 0\quad \forall h \in H\}.\] If $U$ is non-zero, let $V$ be a 
simple $E[H]$-submodule of $U$ so we have an $h \in H$ with $\alpha \in E$ 
such that $\tr e_{h,\alpha}V \ne 0$. This is a contradiction, since 
$e_{h,\alpha}$ is a polynomial in $h$. We conclude that $U = 0$ and since the 
trace pairing on $W_E$ is perfect the span of $H$ is indeed $W_E$. (Compare 
with \cite[Appendix, Lemma 1]{jack}.)
\end{proof}

\begin{lem}\label{lem:findTWprimes}
	Let $q \geq \mathrm{corank}_\cO H^1(F_S / F^+,W_{E / \cO}(1))$, and suppose 
	that $\rho$ satisfies the following conditions:
	\begin{enumerate}
		\item For all but finitely many finite places $v \nmid S$ of $F$, the eigenvalues of $\rho(\Frob_v)$ are algebraic numbers which have absolute value $q_v^{w/2}$ with respect to any complex embedding.
		\item $\rho(G_{F(\zeta_{p^\infty})})$ is enormous.
	\end{enumerate}
	Then there exists $d \in \NN$ such that for any $N \in \NN$ we can find a 
	Taylor--Wiles datum $(Q, \widetilde{Q}, (\alpha_{\wv, 1}, \dots, 
	\alpha_{\wv, n})_{\wv \in \widetilde{Q}})$ of level $N$ with $|Q| = q$ such
	that
	\begin{enumerate}
		\item for all $v \in Q$ and $i \ne j$ we have 
		$\ord_{\varpi}(\alpha_{\wv,i}-\alpha_{\wv,j}) \le d$
		\item $h^1_{\cL_{S \cup Q}^\perp}(F^+,W_N(1)) \le d$.
	\end{enumerate}
\end{lem}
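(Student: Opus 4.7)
The plan is to produce the required Taylor--Wiles data by a Chebotarev density argument, using the enormous hypothesis to supply Frobenius elements of the required local shape and using purity to control torsion uniformly in $N$.

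First, I would use the enormous hypothesis together with the equivalence of conditions in Lemma \ref{lem_non-zero_cohomology} to prepare the Frobenius elements. For any non-zero divisible $\cO$-submodule $V \subset W_{E/\cO}$ stable under $\rho(G_{F(\zeta_{p^\infty})})$, condition (4) gives an element $h \in \rho(G_{F(\zeta_{p^\infty})})$ with $n$ distinct eigenvalues in $E$ such that $V \not\subset (h-1)W_{E/\cO}$. Since the eigenvalues of $h$ are fixed algebraic numbers in $\cO$, the valuations $\ord_\varpi(\alpha_i - \alpha_j)$ of their differences are bounded by a constant depending only on $\rho$, which will supply constant $d_0$ in conclusion~(1) whenever we realise $h$ as a Frobenius.

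Second, I would bound the dual Selmer group by killing generating classes one at a time. Let $M = H^1_{\cL_S^\perp}(F^+, W_{E/\cO}(1))$, an $\cO$-module of corank at most $q$ by hypothesis. For a class $\phi \in M$, restrict $\phi$ to $G_{F(\zeta_{p^\infty})}$ and use the isomorphism $W_{E/\cO}(1) \cong W_{E/\cO}$ of $G_{F(\zeta_{p^\infty})}$-modules to view $\phi$ as a cocycle with values in $W_{E/\cO}$. By the previous paragraph I may pick $h_\phi \in \rho(G_{F(\zeta_{p^\infty})})$ with $n$ distinct eigenvalues that detects the $\Gal$-submodule of $W_{E/\cO}$ generated by the image of $\phi$. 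Applying Chebotarev density to the compositum of $F(\zeta_{p^N})$, the splitting field of $\rho \bmod \varpi^N$, and the splitting field of $\phi \bmod \varpi^N$, I obtain infinitely many finite places $v$ of $F^+$ split in $F$ such that $\rho(\Frob_{\wv})$ is conjugate to $h_\phi$, $q_v \equiv 1 \bmod p^N$, and $\phi$ has non-trivial image in $H^1(F^+_v, W_N(1))/\cL^\perp_{v,N}$. Iterating this for a set of at most $q$ classes cutting out the cofree part of $M$ yields the set $Q$.

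The main obstacle is that the bound $d$ in conclusion~(2) must be \emph{independent} of $N$. Conclusion (1) is straightforward since finitely many choices of $h$ suffice and their eigenvalue differences have bounded valuation. For (2), one compares $H^1_{\cL^\perp_{S\cup Q}}(F^+, W_N(1))$ with $M[\varpi^N]$ via an analogue of Lemma \ref{lem:selmertorsion} for the dual Selmer group; the error terms are subquotients of $H^0(F^+, W_{E/\cO}(1))$, which is finite because purity (hypothesis~(1)) forces the Frobenius eigenvalues on $W_E(1)$ to avoid $1$, and more strongly $H^0(F(\zeta_{p^\infty}), W_E(1)) = 0$ (since enormous implies absolute irreducibility of $\rho$, so $\rho(1)$ and the trivial representation have different Hodge--Tate--Sen weights). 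Thus once $Q$ kills the cofree part of $M$, the remaining dual Selmer group is controlled by this finite torsion, uniformly in $N$.
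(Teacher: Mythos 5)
Your plan is in essence the paper's proof: use enormousness (via Lemma \ref{lem_non-zero_cohomology}(4)) to produce elements of $G_{F(\zeta_{p^\infty})}$ with distinct eigenvalues detecting divisible submodules, kill the cofree part of the dual Selmer group one class at a time via Chebotarev, and control the $N$-dependence by a diagram comparing $H^1(\cdot, W_N(1))$ with $H^1(\cdot, W_{E/\cO}(1))[\varpi^N]$. However, there are two real gaps and one factual error.

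The central gap is that you never justify why the restriction of a dual Selmer class $\phi$ to $G_{F(\zeta_{p^\infty})}$ (and then further to $G_{L_\infty}$, where $L_\infty$ is the compositum of $F(\zeta_{p^\infty})$ with the field cut out by $W_E(1)$, which is the subgroup on which $\phi$ becomes a homomorphism) remains non-zero. If it vanished, there would be no submodule to detect. The paper handles this by inflation--restriction: the kernel of restriction is controlled by $H^1(L_\infty/F^+, W_{E/\cO}(1))$, which is killed by a bounded power of $p$ because $H^1(L'_\infty/F^+, W_E(1)) = 0$ by \cite[Lemma 6.2]{Kis04a} --- and it is precisely here that the purity hypothesis (1) is consumed, not (as you suggest) in the finiteness of $H^0(F^+, W_{E/\cO}(1))$. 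A second, smaller gap: detection in the sense of Lemma \ref{lem_non-zero_cohomology}(4) gives you $\sigma$ with $M\not\subset(\sigma-1)W_{E/\cO}(1)$, which ensures $f(1/\varpi^m)(\tau)\notin(\sigma-1)W_{E/\cO}(1)$ for some $\tau \in G_{L_\infty}$; but you actually need the value at $\sigma$ itself, not at $\tau$, to escape $(\sigma-1)W_{E/\cO}(1)$. The paper resolves this with the standard trick of replacing $\sigma$ by $\tau\sigma$ when the first attempt fails; this step is absent from your plan.

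Finally, your claim $H^0(F(\zeta_{p^\infty}), W_E(1)) = 0$ is false: $\epsilon$ is trivial on $G_{F(\zeta_{p^\infty})}$, so $W_E(1)\cong W_E$ as $G_{F(\zeta_{p^\infty})}$-modules and $H^0$ contains the scalar matrices. What you actually need is $H^0(F, W_E(1)) = 0$, which holds because $\rho$ is absolutely irreducible and $\rho$, $\rho(1)$ have distinct Hodge--Tate weights; that (not purity) is what makes $H^0(F^+, W_{E/\cO}(1))$ finite. With those repairs, your argument would line up with the paper's.
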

\begin{proof}
	If $Q$ is a set of Taylor--Wiles places then we have an exact sequence \[ 0 
	\to 
	H^1_{\cL_{S \cup Q}^\perp}(F^+,W_N(1)) \to  H^1_{\cL_S^\perp}(F^+, W_N(1)) 
	\to \bigoplus_{v \in Q}H^1(k(v), W_N(1)).\]
	Suppose we could find $\sigma_1,\ldots,\sigma_q \in 
	G_{F(\zeta_{p^\infty})}$ such 
	that
	\begin{enumerate}
		\item[(a)] for each $i = 1, \dots, q$, $\rho(\sigma_i)$ has $n$ 
		distinct eigenvalues in $E$;
		\item[(b)] the kernel of the map \[H^1(F_S / F^+, W 
		_{E/\cO}(1)) \to \bigoplus_{i=1}^q 
		H^1(\widehat{\bZ}, W_{E/\cO}(1)) \cong \bigoplus_{i=1}^q W_{E 
		/ \cO}(1) / (\sigma_i - 1) W_{E / \cO}(1)
		\] (product of restriction maps associated to the homomorphisms $\widehat{\bZ} \to G_{F^+, S}$, the $i^\text{th}$ such homomorphism sending $1$ to $\sigma_i$) is a finite length $\cO$-module.
	\end{enumerate}
	Then consideration of the following diagram:
	\begin{equation*} 
	\xymatrixcolsep{.1in}
\scalebox{0.8}{\xymatrix{
		0 \ar[r] & H^0(F^+, W_{E/\cO}(1))/\varpi^N\ar[r]\ar[d] & 
		H^1(F_S / F^+, W_N(1))\ar[r]\ar[d] & H^1(F_S / F^+, W_{
			E/\cO}(1))[\varpi^N]\ar[r]\ar[d] & 0 \\
		0 \ar[r] & \bigoplus_{i=1}^q H^0(\langle \sigma_i \rangle, 
		W_{E/\cO}(1))/\varpi^N\ar[r] & 
		\bigoplus_{i=1}^q H^1(\langle \sigma_i \rangle, W_N(1))\ar[r] & 
		\bigoplus_{i=1}^q H^1(\langle \sigma_i \rangle, 
		W_{E/\cO}(1))[\varpi^N]\ar[r] & 0 }}
	\end{equation*}
	shows that the kernel of the map 
	\[ H^1(F_S/F^+, W_N (1)) \to \oplus_{i=1}^q H^1(\langle \sigma_i 
	\rangle, W_N(1)) \]
	has length bounded independently of $N$
	(note that $H^0(F^+, W_{E/\cO}(1))$ is a finite length $\cO$-module). An 
	application of the Chebotarev density theorem would then yield the theorem.

       To complete the proof, it therefore suffices to show that for any 
        non-zero homomorphism $f : E / \cO \to H^1(F_S / F^+, W 
_{E/\cO}(1))$,  we can find $\sigma \in 
G_{F(\zeta_{p^\infty})}$ 
such that $\rho(\sigma)$ has $n$ distinct eigenvalues in $E$ and $\Res^{G_{F^+, 
S}}_{\langle \sigma \rangle} \circ f : E / \cO \to W_{E / \cO}(1) / (\sigma - 
1)W_{E / \cO}(1)$ is still non-zero (as then we can argue by induction to get 
$\sigma_1, \dots, \sigma_q$ satisfying conditions (a), (b) above).

Let $F_\infty = F(\zeta_{p^\infty})$, let $L_\infty' / F^+$ be the 
extension cut out by $W_E(1)$, and let $L_\infty = L_\infty' \cdot 
F_\infty$. Then $H^1(L_\infty' / F^+, W_E(1)) = 0$, by \cite[Lemma 
6.2]{Kis04a}. (It is in our appeal to this result that we make use of the purity assumption in the statement of the lemma.) The extension $L_\infty / L_\infty'$ is finite (as 
$E(\epsilon 
\delta_{F / F^+}) \subset W_E(1)$), so $H^1(L_\infty / F^+, W_E(1)) = 0$. 
It follows that $H^1(L_\infty / F^+, W_{E / \cO}(1))$ is killed by a power of 
$p$ and hence the homomorphism 
\begin{align*} \Res^{G_{F^+, S}}_{G_{L_\infty,S_{L_\infty}}} \circ f : E / 
\cO \to &H^1(F_S/L_\infty, W_{E / \cO}(1))^{G_{F^+, S}} \\&\cong \Hom_{G_{F^+, 
S}}(G_{L_\infty,S_{L_\infty}}, W_{E / \cO}(1)) \end{align*}
is still non-zero. Let $M \subset W_{E / \cO}(1)$ be the $\cO$-submodule 
generated by the elements $f(x)(\sigma)$, $x \in E/\cO$, $\sigma \in 
G_{L_\infty}$; it is a non-zero divisible $\cO[G_{F_\infty}]$-submodule of 
$W_{E / \cO}(1)$. Using Lemma \ref{lem_non-zero_cohomology}, we see that there 
exists $\sigma \in G_{F_\infty}$ such that $\rho(\sigma)$ has $n$ distinct 
eigenvalues in $E$ and $M \not\subset (\sigma-1)W_{E / \cO}(1)$. Consequently, 
there exists $m \geq 0$ and $\tau \in G_{L_\infty}$ such that $f(1 / 
\varpi^m)(\tau) \not\in (\sigma - 1)W_{E / \cO}(1)$.

If $f(1 / \varpi^m)(\sigma) \not\in (\sigma - 1)W_{E / \cO}(1)$, then we're 
done: $\Res^{G_{F^+, S}}_{\langle \sigma \rangle} \circ f$ is non-zero. 
Otherwise, we can assume $f(1 / \varpi^m)(\sigma) \in (\sigma - 1)W_{E / 
\cO}(1)$, and then $\Res^{G_{F^+, S}}_{\langle \tau \sigma \rangle} \circ f$ is 
non-zero. This completes the proof.
\end{proof}

Putting everything together, we obtain:
\begin{cor}\label{cor:TWGalois}
 Let $q \geq \mathrm{corank}_\cO H^1(F_S / F^+,W_{E / \cO}(1))$, and suppose that $\rho$ 
	satisfies the following conditions:
	\begin{enumerate}
		\item For all but finitely many finite places $v \nmid S$ of $F$, the eigenvalues of $\rho(\Frob_v)$ are algebraic numbers which have absolute value $q_v^{w/2}$ with respect to any complex embedding.
		\item For each $v \in S$, $\rho|_{G_{F_\wv}}$ is generic.
		\item For each place $v | \infty$ of $F^+$, $\chi(c_v) = -1$.
		\item $\rho(G_{F(\zeta_{p^\infty})})$ is enormous.
	\end{enumerate}
	Then there exists $d \in \NN$ such that for each $N \in \NN$ we can find a 
	Taylor--Wiles datum $Q_N$ of level $N$ with $|Q_N| = q$ and a map 
	\[ \cO\llbracket x_1,\ldots,x_{nq} \rrbracket \to R_{S\cup Q_N} \] 
	such that the images of the $x_i$ are in $\q_{S \cup Q_N}$ and 
	\[\q_{S \cup Q_N}/(\q_{S \cup Q_N}^2,x_1,\ldots,x_{nq})\] is a quotient of 
	$(\cO / 
	\varpi^d)^{g_0}$, where $g_0 = g_0(S, \overline{\rho}, q)$ is as 
	defined in the statement of Lemma \ref{lem_upper_bound_on_ps_ring}. 
\end{cor}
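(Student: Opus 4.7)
The plan is to combine Lemma \ref{lem:findTWprimes}, Corollary \ref{cor_if_we_kill_dual_Selmer}, Proposition \ref{prop_comparison_of_csd_tangent_spaces}, and Lemma \ref{lem_upper_bound_on_ps_ring}. First, apply Lemma \ref{lem:findTWprimes} to select, for each $N$, a Taylor--Wiles datum $Q_N$ of level $N$ with $|Q_N|=q$ such that both $h^1_{\cL^\perp_{S\cup Q_N}}(F^+, W_N(1))$ and the eigenvalue differences $\ord_\varpi(\alpha_{\wv,i}-\alpha_{\wv,j})$ are bounded uniformly in $N$. Corollary \ref{cor_if_we_kill_dual_Selmer} then provides a map $\cO^{nq}\to H^1_{\cL_{S\cup Q_N}}(F^+, W_N)$ with cokernel of length bounded uniformly in $N$; composing with $\tr_{N,S\cup Q_N}$ from Proposition \ref{prop_comparison_of_csd_tangent_spaces} yields
\[
\phi_N : \cO^{nq}\longrightarrow \Hom_\cO(M_N,\cO/\varpi^N),
\]
where $M_N := \q_{S\cup Q_N}/\q_{S\cup Q_N}^2$, still with cokernel of length bounded by some constant $C$ independent of $N$. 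By Lemma \ref{lem_upper_bound_on_ps_ring}, $M_N$ is a quotient of $\cO^{g_0}$ with $g_0=g_0(S,\overline{\rho},q)$ independent of $N$.

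Write $M_N \cong \cO^{r_N}\oplus\bigoplus_j \cO/\varpi^{a_{j,N}}$ using the structure theorem for finitely generated $\cO$-modules. The key structural point is that $r_N = nq$: indeed, the Greenberg--Wiles estimate underlying Corollary \ref{cor_if_we_kill_dual_Selmer} yields $h^1_{\cL_{S\cup Q_N}}(F^+, W_m) = nqm + O(1)$ uniformly in $m$, so $\dim_E H^1_{\cL_{S\cup Q_N}}(F^+, W_E) = nq$; and the $p^d$-annihilation on kernel and cokernel of $\tr_{m,S\cup Q_N}$ provided by Proposition \ref{prop_comparison_of_csd_tangent_spaces} gives an isomorphism after inverting $p$, showing $r_N = \dim_E\Hom_\cO(M_N, E) = nq$. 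The length inequality arising from the bounded cokernel of $\phi_N$ reads $r_N N + \sum_j \min(a_{j,N},N) \le nq N + C$; with $r_N = nq$ in hand this forces $\sum_j a_{j,N} \le C$ for $N$ sufficiently large, so the torsion submodule of $M_N$ has length bounded by $C$ uniformly in $N$.

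One now chooses $x_1,\dots,x_{nq}\in \q_{S\cup Q_N}$ to be any lifts of an $\cO$-basis of the free part $\cO^{r_N}$ of $M_N$. Then $M_N/(\overline{x}_1,\dots,\overline{x}_{nq})$ is isomorphic to the torsion part of $M_N$; this has length at most $C$ independent of $N$, in particular it is killed by $\varpi^C$, and since it is a quotient of $M_N$ and hence of $\cO^{g_0}$, it is a quotient of $(\cO/\varpi^C)^{g_0}$. The induced map $\cO\llbracket x_1,\dots,x_{nq}\rrbracket \to R_{S\cup Q_N}$ sending each formal variable to the corresponding $x_i$ is well-defined because each $x_i$ lies in the maximal ideal $\q_{S\cup Q_N}$. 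The technical heart of the argument is the identification of $r_N$ with $nq$ via the pseudocharacter/representation comparison of Proposition \ref{prop_comparison_of_csd_tangent_spaces} combined with the Greenberg--Wiles bookkeeping; once this is in place, the extraction of the $x_i$ is a straightforward application of the structure theorem.
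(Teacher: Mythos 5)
The proposal has a genuine gap at the step asserting ``$r_N = nq$''. In fact $r_N := \operatorname{rank}_\cO(\q_{S\cup Q_N}/\q_{S\cup Q_N}^2)$ is \emph{independent} of $N$ and of the Taylor--Wiles datum. By Proposition \ref{prop_generic_tangent_spaces} we have $r_N = \dim_E H^1_{\cL_{S\cup Q_N}}(F^+, W_E)$, and for $v \in Q_N$ purity gives $H^0(F^+_v, W_E(1)) = 0$ (the Frobenius eigenvalues on $W_E$ have $\iota$-absolute value $1$, while $q_v \neq 1$), so $H^1(F^+_v, W_E) = H^1_{ur}(F^+_v, W_E)$; hence $H^1_{\cL_{S\cup Q_N}}(F^+, W_E) = H^1_{\cL_S}(F^+, W_E)$ and $r_N = \dim_E H^1_{\cL_S}(F^+, W_E)$. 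This is precisely the quantity the theorem is trying to show vanishes, and a priori it could be any integer in $[0, q]$ --- it is certainly not known to equal $nq$. The step ``$h^1_{\cL_{S\cup Q_N}}(F^+, W_m) = nqm + O(1)$ uniformly in $m$, so $\dim_E H^1_{\cL_{S\cup Q_N}}(F^+, W_E) = nq$'' is where the argument fails: the Greenberg--Wiles estimate holds only for $m \leq N$ (it uses $W_m(1)\cong W_m$ as $G_{F_{\wv}}$-module for $v \in Q_N$, which requires $m \leq N$), and for $m\leq N$ the estimate $h^1(W_m) = nqm + O(1)$ is equally consistent with $M_N \cong \cO^{nq}\oplus(\text{bounded torsion})$ and with, say, $M_N \cong \cO^{s}\oplus(\cO/\varpi^N)^{nq-s}$ for $s < nq$. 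In the latter scenario the torsion has length $\approx (nq-s)N$, so your conclusion ``the torsion submodule of $M_N$ has length bounded by $C$ uniformly in $N$'' is false, and the construction of the $x_i$ as lifts of a basis of the free part breaks down (there are only $s < nq$ basis vectors, and the quotient by any $nq$ of your chosen elements can still have length $\approx (nq-s)N$).

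The paper sidesteps this issue entirely. It does not try to determine the $\cO$-module structure of $M_N$. Instead, it records only that there is a map $\cO^{nq} \to M_N \otimes_\cO \cO/\varpi^N$ with cokernel killed by $\varpi^d$, where the generators of $\cO^{nq}$ are chosen via the composite $\cO^{nq}\to H^1_{\cL_{S\cup Q_N}}(F^+, W_N)\to \Hom_\cO(M_N,\cO/\varpi^N)$ (this is the essential point: the $x_i$ must come from this particular map, not from an abstract structure decomposition of $M_N$). Lifting these generators to elements $x_i\in\q_{S\cup Q_N}$, the quotient $M := M_N/(\bar x_i)$ is then a quotient of $\cO^{g_0}$ (Nakayama plus Lemma \ref{lem_upper_bound_on_ps_ring}) whose reduction $M/\varpi^N$ is killed by $\varpi^d$. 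Since $N > d$, a finitely generated $\cO$-module whose reduction modulo $\varpi^N$ is killed by $\varpi^d$ is itself killed by $\varpi^d$ (it has no free part and no cyclic summand of length $>d$), so $M$ is a quotient of $(\cO/\varpi^d)^{g_0}$. No knowledge of $r_N$ or of the torsion of $M_N$ is required.

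To fix your argument you would need to abandon the structure-theoretic analysis of $M_N$ and instead follow the paper's route: choose the $x_i$ via the Selmer-theoretic map itself, and use the ``mod $\varpi^N$, $N>d$'' observation above to conclude.
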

\begin{proof}
	Combining Proposition \ref{prop_comparison_of_csd_tangent_spaces}, 
	Corollary \ref{cor_if_we_kill_dual_Selmer} and Lemma \ref{lem:findTWprimes} 
	we deduce that there exists a constant $d$ such that for each $N$ we can 
	find $Q_N$ of 
	level $N$ and an $\cO$-module map $\cO^{nq} \to \q_{S \cup Q_N}/\q_{S \cup 
	Q_N}^2 
	\otimes_\cO \cO / 
	\varpi^N$ with cokernel killed by $\varpi^d$ (note that the two 
	$\cO$-modules $\q_{S \cup Q_N}/\q_{S \cup Q_N}^2 
	\otimes_\cO \cO / 
	\varpi^N$ and $\Hom_{\cO}(\q_{S \cup Q_N}/\q_{S \cup Q_N}^2,\cO/\varpi^N)$ 
	are abstractly isomorphic). This allows us to define a map $\cO\llbracket 
	x_1,\ldots,x_{nq} \rrbracket \to R_{S\cup Q_N}$ with the $x_i$ mapping to 
	images of generators of $\cO^{nq}$ in $\q_{S \cup Q_N}/\q_{S \cup Q_N}^2 
	\otimes_\cO \cO / \varpi^N$, so that
	\[ \q_{S \cup Q_N}/(\q_{S \cup Q_N}^2,x_1,\ldots,x_{nq}) \otimes_\cO \cO / 
	\varpi^N \]
	is killed by $\varpi^d$. We need to explain how to deduce the slightly 
	stronger result in the statement of the corollary. We first note that 
	$\q_{S \cup Q_N} / \q_{S \cup Q_N}^2$ is a quotient of $\cO^{g_0}$. Indeed, it is a 
	finitely generated $\cO$-module, and there is an isomorphism $\q_{S \cup Q_N} / 
	\q_{S \cup Q_N}^2 \otimes_\cO \cO / \varpi \cong \ffrm_{R_{S \cup Q_N}} / (\ffrm^2_{R_{S \cup Q_N}}, \varpi)$, 
	so we can apply Nakayama's lemma together with Lemma 
	\ref{lem_upper_bound_on_ps_ring}.
	
	We may assume that $N > d$. In this case $M = \q_{S \cup Q_N}/(\q_{S \cup 
	Q_N}^2,x_1,\ldots,x_{nq})$ is a quotient of $\cO^{g_0}$ with the property 
	that $M / (\varpi^N)$ is killed by $\varpi^d$. This is only possible if $M$ 
	is itself killed by $\varpi^d$, implying that $M$ is a quotient of $(\cO / 
	\varpi^d)^{g_0}$.
\end{proof}

\subsection{Some examples of enormous subgroups}\label{sec_examples_of_enormous}

Let $E / \bQ_p$ be a coefficient field, and let $H \subset \GL_n(\cO)$ be a compact subgroup.
\begin{lemma}\label{lem_alg_implies_enormous} Suppose that for each $h \in H$, the characteristic polynomial of $h$ has all of its roots in $E$.
	\begin{enumerate}
		\item Let $H' \subset H$ be a closed subgroup. If $H'$ is enormous, then so is $H$. 
		\item Let $G \subset \GL_n$ denote the Zariski closure of $H$. If $G^\circ$ contains regular semisimple elements and acts absolutely irreducibly on $E^n$, then $H$ is enormous.
	\end{enumerate}
\end{lemma}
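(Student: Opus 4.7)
The plan is to handle the two parts separately. For part (1) the argument will be essentially immediate: given any simple $E[H]$-submodule $V \subset W_E$, I would pick any simple $E[H']$-submodule $V' \subset V$ (which exists by finite dimensionality). Applying enormousness of $H'$ to $V'$ produces $h \in H' \subset H$ with $n$ distinct eigenvalues in $E$ and an eigenvalue $\alpha \in E$ such that $\tr e_{h,\alpha} V' \neq 0$; since $V' \subset V$, the same $h$ and $\alpha$ witness enormousness of $H$ against $V$.

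For part (2), I would first observe that $G^\circ$ is automatically reductive: its unipotent radical is normal in $G^\circ$, and a nonzero unipotent normal subgroup of a group acting absolutely irreducibly on $E^n$ would have a nonzero fixed subspace defining a proper $G^\circ$-invariant subspace, a contradiction; since the unipotent radical embeds into $\GL_n$, it is therefore trivial. Next, using that $H$ is Zariski dense in $G$ and that $G^\circ$ is a Zariski open subgroup of $G$, a standard argument shows that $H$ meets every component of $G$ and that $H \cap G^\circ$ is Zariski dense in $G^\circ$.

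Now I would take a simple $E[H]$-submodule $V \subset W_E$. By Zariski density $V$ is $G$-stable, and restricting to the connected reductive group $G^\circ$ and using semisimplicity in characteristic zero I would pick a simple $G^\circ$-submodule $V_0 \subset V$. The crucial representation-theoretic step is to show that for any maximal torus $T \subset G^\circ$, the zero weight space $V_0^T$ is nonzero. This I would deduce from the fact that $V_0$ is a simple constituent of $W_E \cong E^n \otimes_E (E^n)^\vee$, together with two observations: any two weights of the irreducible $G^\circ$-module $E^n$ differ by a sum of roots, so every weight of $W_E$ lies in the root lattice of $G^\circ$; and a standard result in reductive representation theory (in characteristic zero) says that a simple representation of a connected reductive group whose highest weight lies in the root lattice contains the zero weight.

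To finish, the hypothesis that $G^\circ$ contains elements of $\GL_n$ with $n$ distinct eigenvalues is a nonempty Zariski open condition on $G^\circ$, so by density we may select such an $h$ lying in $H \cap G^\circ$. Any such $h$ is semisimple in $G^\circ$ and hence lies in some maximal torus $T \subset G^\circ$; by the previous step $V_0^h \supset V_0^T \neq 0$, and since $W_E^h = \bigoplus_\alpha E \cdot e_{h,\alpha}$ with $\alpha$ ranging over the eigenvalues of $h$, a nonzero vector of $V_0^h \subset V$ produces an eigenvalue $\alpha$ with $\tr e_{h,\alpha} V \neq 0$, verifying the definition of enormous. The main technical obstacle here is the representation-theoretic claim about nonvanishing of the zero weight space of simple constituents of $V \otimes V^\vee$; once this is in place the rest is bookkeeping with Zariski density.
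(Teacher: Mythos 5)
Your part~(1) argument is exactly the "immediate from the definitions" argument the paper has in mind, and it is fine. For part~(2) you take a genuinely different and substantially heavier route than the paper. The paper's proof is short: reducing to $G = G^\circ$, it first notes that absolute irreducibility together with Burnside's theorem implies that $G(E)$ (indeed any Zariski--dense subset of $G$) spans $W_E$ over $E$, so the trace pairing cannot vanish identically between a nonzero $v \in V$ and all of $G$. Since $H^{\mathrm{reg}}$ (the elements of $H$ with $n$ distinct eigenvalues) is Zariski dense in $G$ --- because $G^{\mathrm{reg}}$ is nonempty open by hypothesis and $H$ is dense --- one finds $h \in H^{\mathrm{reg}}$ with $\tr(hv) \neq 0$, and expanding $h = \sum_i \alpha_i e_{h,\alpha_i}$ immediately produces an eigenvalue $\alpha$ with $\tr(e_{h,\alpha}V) \neq 0$. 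Your approach instead invokes the structure theory of reductive groups: reductivity of $G^\circ$ via Kolchin, semisimplicity of the restriction to $G^\circ$, and the representation--theoretic fact that a simple module of a connected reductive group whose highest weight lies in the root lattice has a nonzero zero weight space (applied after observing that all weights of $V \otimes V^\vee$ lie in the root lattice). This is all correct --- the zero--weight fact you flag as the technical crux is a standard consequence of the saturation/dominance description of weight multiplicities, and the passage from $V_0^T \neq 0$ to $\tr(e_{h,\alpha}V) \neq 0$ is sound --- but it is considerably more machinery than the problem needs. The paper's route trades the highest--weight theory for nondegeneracy of the trace form plus a one--line density argument, which is both shorter and more in the spirit of how enormousness is actually used later (via condition~(3) of the equivalence lemma). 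One small point worth noting: your argument uses that a semisimple element of $G^\circ(E)$ lies in a maximal torus defined over $E$ and that the zero--weight statement descends from $\overline{E}$ to $E$; both are true, but they add to the list of facts you need to cite, whereas the paper's proof requires none of this.
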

\begin{proof}
	The first part is immediate from the definitions. For the second, we can assume that $G = G^\circ$. Since $G$ acts absolutely irreducibly, $G(E)$ spans $W_E$. Let $H^{reg} \subset H$ denote the set of regular semisimple elements. It is Zariski dense in $G$. Indeed, by hypothesis $G^{reg}$ is a non-empty Zariski open subset of $G$. The Zariski closure of $H$ is contained in the union of the Zariski closure of $H^{reg}$ and $G - G^{reg}$. This forces the Zariski closure of $H^{reg}$ to be equal to $G$.
	
	We must show that for any non-zero $v \in W_E$, there exists $h \in H^{reg}$ such that $\tr hv \neq 0$. If $\tr hv = 0$ for all $h \in H^{reg}$, then Zariski density implies that $\tr g v = 0$ for all $g \in G$. This contradicts the fact that the elements of $G(E)$ span $W_E$.
\end{proof}
\begin{example}\label{ex:noncmenormous} Let $F$ be a totally real or CM number 
field, and let $\pi$ 
be a regular algebraic, cuspidal automorphic representation of $\GL_2(\bA_F)$. 
Let $\iota : \overline{\bQ}_p \to \bC$ be an isomorphism, and let $\rho : G_F 
\to \GL_n(\cO)$ be a model of $\Sym^{n-1} r_{\pi, \iota}$ defined over $\cO$. 
If $\Sym^2 \pi$ is cuspidal, then (after possibly enlarging $E$) 
$\rho(G_{F(\zeta_{p^\infty})})$ is an enormous subgroup of $\GL_n(\cO)$.
	
To see this, it is enough to note that the Zariski closure of the image of $r_{\pi, \iota}$ 
contains $\SL_2$, and therefore that the Zariski 
closure of $r_{\pi,\iota}(G_{F(\zeta_{p^\infty})})$ also contains $\SL_2$ 
(because passage to derived subgroup respects Zariski closure, cf.\ \cite[Ch. 
I, \S 2.1]{Bor91}). We can then appeal to Lemma \ref{lem_alg_implies_enormous}.

We justify the claim that the Zariski closure of $r_{\pi, \iota}(G_F)$ contains 
$\SL_2$. The identity component of the Zariski closure of $r_{\pi, \iota}(G_F)$ 
is a reductive subgroup of $\GL_2$ which contains regular semisimple elements 
(by \cite[Theorem 1]{Sen}). The only possibility we need to rule out is that 
$r_{\pi, \iota}(G_F)$ normalizes a maximal torus in $\GL_2$. In this case, 
there is a quadratic extension $F' / F$ such that $r_{\pi, \iota}|_{G_{F'}}$ is 
reducible. It's therefore enough to show that for any quadratic extension $F' / 
F$, $r_{\pi, \iota}|_{G_{F'}}$ is irreducible. We observe that if $r_{\pi, 
\iota}|_{G_{F'}}$ is reducible, then it's isomorphic to a sum $\chi_1 \oplus 
\chi_2$ of characters. Moreover, $\chi_1, \chi_2$ are de Rham and almost 
everywhere unramified, so therefore can be extended to compatible systems of 
1-dimensional Galois representations. It follows that $r_{\pi, 
\iota'}|_{G_{F'}}$ is reducible for any other prime $p'$ and isomorphism 
$\iota' : \overline{\bQ}_{p'} \to \bC$. In particular, $\Sym^2 r_{\pi, \iota'}$ 
is reducible. However, \cite[Theorem 5.5.2]{BLGGT} implies that for a Dirichlet 
density 1 set of primes $p'$, the representation $\Sym^2 r_{\pi, \iota'}$ is 
irreducible (note that for automorphic representations of $\GL_3(\bA_F)$ the 
asssumption of `extremely regular weight' in \emph{loc.~cit.}~coincides with 
the usual notion of regular weight).
\end{example}
\begin{example}Let $F$ be a CM field, and let $\pi$ be a polarizable 
automorphic 
representation of $\GL_n(\bA_F)$ such that for some finite place $v_0$ of $F$, 
$\pi_{v_0}$ is a twist of the Steinberg representation. Let $\iota : 
\overline{\bQ}_p \to \bC$ be an isomorphism. Then $r_{\pi, 
\iota}(G_{F(\zeta_{p^\infty})})$ is enormous. 
	
Indeed, let $G$ denote the Zariski closure of $r_{\pi, \iota}(G_F)$. 
Local-global compatibility at the place $v_0$ implies that $G^\circ$ contains a 
regular unipotent element (if $v_0 | p$, we argue as in \cite[Lemma 
3.2]{Shi19}), so in particular it acts absolutely irreducibly on $E^n$. 
Then \cite[Proposition 4]{Sch06} (see also \cite[Classification Theorem 
11.6]{katz-gauss}) shows that the derived group of $G^\circ$ is 
one of a finite list of possibilities, and that in any case it contains regular 
semisimple elements. We can again appeal to Lemma 
\ref{lem_alg_implies_enormous}.
\end{example}

\section{A result about Hecke algebras}\label{sec:oldforms}

Let $p$ be a prime, let $n \geq 2$, and let $F_v / \bbQ_l$ be a finite 
extension for some $l \neq p$. Let $G = \GL_n(F_v)$, $U = \GL_n(\cO_{F_v})$, and let $I \subset 
U$ be the standard Iwahori subgroup (i.e. the pre-image in $U$ of the upper-triangular matrices in $GL_n(k(v))$). Let $E / \bbQ_p$ be a coefficient field. For $\cO$ sufficiently large (i.e.\ containing a square root of $q_v$), the 
Iwahori Hecke algebra $\cH_I = \cH(G, I) \otimes_\bbZ \cO$  has the 
Bernstein presentation
\[ \cH_I \cong \cO[X_\ast(T)] \widetilde{\otimes} \cO[I \backslash U / I]. \] 
The map 
$\cO[X_\ast(T)] \to \cH_I $ sends a dominant cocharacter $\lambda \in 
X_\ast(T)_+$ to the Hecke operator $q_v^{-l(\lambda)/2}[I\lambda(\varpi_v)I]$, 
where $l(\cdot)$ is 
the usual length function on the extended affine Weyl group. The twisted tensor product indicates the usual tensor product as $\cO$-modules, with the algebra structure on $\cH_I$ determined by the relations of \cite[Proposition 3.6]{Lus89}. We identify $\cO[X_\ast(T)] = \cO[x_1,x_1^{-1}, \dots, 
x_n, x_n^{-1}]$ ($x_i$ 
is the cocharacter embedding $\bG_m$ into the $i$th diagonal entry of $T$). The 
centre, $Z(\cH_I)$, is identified by the Bernstein presentation with the algebra 
of symmetric polynomials $\cO[X_\ast(T)]^{S_n} = \cO[e_1, e_2,\dots, e_n, 
e_n^{-1}]$ ($e_1, \dots, 
e_n$ are the usual elementary symmetric polynomials in $x_1, \dots, x_n$). 

Our identification of $\cO[X_\ast(T)]$ with a polynomial algebra allows us to 
speak of 
polynomials as being elements of the Hecke algebra. In particular, we can think 
of $\Delta = \prod_{1 \leq i < j \leq n} (x_i - x_j)$ as being an element of 
$\cH_I$, and its square $\Delta^2$ as being an element of the centre $Z(\cH_I)$.

To simplify notation, let $\cR = \cO[X_\ast(T)]^{S_n}$, $\cS = \cO[X_\ast(T)]$. Then $\cS$ is a free $\cR$-module, a basis being given by the monomials $x_{\mathbf{a}} = x_1^{a_1} \dots x_n^{a_n}$ for $\mathbf{a} = (a_1, \dots, a_n) \in \bbZ^n$ satisfying $0 \leq a_i \leq i-1$ for each $i = 1, \dots, n$. We write $B$ for the set of tuples $\mathbf{a}$ satisfying these conditions. 

If $M$ is an $\cO[\GL_n(F_v)]$-module $M$, then $M^U$ is an $\cR$-submodule of $M^I$ (and in fact, if $z \in Z(\cH_I)$ and $M \in M^U$, then we have the formula $z \cdot m = ([U]z) \cdot_U m$, where $\cdot_U$ denotes the action of $\cH_U = \cH(G, U) \otimes_\bZ \cO$ on $M^U$ -- see \cite[\S 4.6]{Hai10}). Thus there there is a canonical (and functorial) 
morphism
\begin{equation}\label{eqn_Hecke_alg_morphism} M^U \otimes_{\cR} \cS \to M^I, 
\end{equation}
given by the formula $m \otimes s \mapsto sm$.  Since $\cS$ is free over $\cR$, $M^U 
\otimes_{\cR} \cS$ may be identified with $\oplus_{\mathbf{a} \in B} M^U$, and 
the above map with $(m_{\mathbf{a}})_{\mathbf{a} \in B} \mapsto 
\sum_{\mathbf{a} \in B} x_{\mathbf{a}} \cdot m_{\mathbf{a}}$.

The aim of this short section is to prove the following result, which will be 
applied in Section \ref{sec:patching} (see Proposition \ref{prop:oldforms}).
\begin{proposition}\label{prop_uniform_pseudo-isomorphism}
	Let $N \geq 1$, and let $M$ be an $\cO / \varpi^N[\GL_n(F_v)]$-module. Suppose that $q_v \equiv 1 \text{ mod }\varpi^N$. Then the above morphism $M^U \otimes_{\cR} \cS \to M^I$ has kernel and cokernel annihilated by $\Delta^{n!}$.
\end{proposition}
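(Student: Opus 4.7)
My plan is to exploit the hypothesis $q_v \equiv 1 \bmod \varpi^N$ to degenerate the Iwahori--Hecke algebra, then perform a Vandermonde-style inversion to invert $\Phi : M^U \otimes_\cR \cS \to M^I$ up to controlled powers of $\Delta$. First, modulo $\varpi^N$ the Bernstein relations simplify: the quadratic relation $T_s^2 = q_v + (q_v - 1)T_s$ becomes $T_s^2 = 1$, and the straightening relation $T_s \theta_\lambda - \theta_{s\lambda} T_s = (q_v - 1)(\cdots)$ becomes $T_s \theta_\lambda = \theta_{s\lambda} T_s$. Thus $\cH_I \otimes \cO/\varpi^N$ is isomorphic to the smash product $(\cS/\varpi^N) \rtimes (\cO/\varpi^N)[W_0]$, with $W_0 = S_n$ acting on $\cS$ by permuting $x_1, \dots, x_n$. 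The key linear-algebraic input is that the generalised Vandermonde matrix $V = (w(x_{\mathbf{a}}))_{w \in W_0,\, \mathbf{a} \in B} \in M_{n! \times n!}(\cS)$ has determinant $\pm \Delta$.

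For the kernel: if $\sum_{\mathbf{a}} x_{\mathbf{a}} m_{\mathbf{a}} = 0$ in $M^I$ with $m_{\mathbf{a}} \in M^U$, then applying each $T_w$ and using both $T_w x_{\mathbf{a}} = w(x_{\mathbf{a}}) T_w$ and $T_w m_{\mathbf{a}} = m_{\mathbf{a}}$ (the latter modulo $\varpi^N$) yields the linear system $V \cdot (m_{\mathbf{a}})_{\mathbf{a} \in B} = 0$. Multiplying by the adjugate of $V$ gives $\Delta \cdot m_{\mathbf{a}} = 0$, so the kernel is already annihilated by $\Delta$. For the cokernel, given $m \in M^I$ Cramer's rule on the system $V \cdot (m_{\mathbf{a}})_{\mathbf{a}} = (T_w m)_{w \in W_0}$ produces unique elements $m_{\mathbf{a}} \in (M^I)^{W_0}[1/\Delta]$ such that $m = \sum_{\mathbf{a}} x_{\mathbf{a}} m_{\mathbf{a}}$, the $W_0$-invariance being forced by the $W_0$-equivariance of the system together with the invertibility of $V$ after inverting $\Delta$. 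The remaining issue is that $M^U$ may be strictly smaller than $(M^I)^{W_0}$; I bridge this using the averaging operator $[U] = \sum_{w \in W_0} T_w$, which carries $M^I$ into $M^U$ because $[U] \cdot m$ is automatically $U$-invariant (direct check from $u_0 \cdot [U]m = \sum_u (u_0 u) m = [U]m$). After multiplying by enough powers of $\Delta$, each $\Delta^{?} m_{\mathbf{a}}$ can be rewritten as $[U]$ applied to an explicit polynomial in the $x_i$ times $m$, landing in $M^U$. The case $n = 2$ illustrates the mechanism: with $m_0 = (x_1 sm - x_2 m)/\Delta$, one computes $\Delta^2 m_0 = [U](x_1^2 sm - x_1 x_2 sm) \in M^U$, matching the bound $\Delta^{n!} = \Delta^2$.

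The main obstacle is verifying that $\Delta^{n!}$ suffices as a uniform bound for this lifting procedure in general $n$. The natural strategy is to iterate the averaging argument through a filtration of $W_0$ by parabolic subgroups (e.g.\ the stabiliser chain $S_1 \subset S_2 \subset \cdots \subset S_n$), picking up one factor of $\Delta$ at each step from the corresponding partial Vandermonde while simultaneously lifting from parabolic invariants to the next layer of $U$-invariants via a parabolic analogue of $[U]$. Tracking the total exponent through this ``parabolic descent'' and showing it is bounded by $n!$ is the technical heart of the argument, and likely accounts for the particular form of the bound in the statement (which is certainly not tight --- the Vandermonde step alone contributes only a single factor of $\Delta$).
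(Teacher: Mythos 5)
Your setup is the same as the paper's — reduce modulo $\varpi^N$ to the smash product $\cS \rtimes \cO/\varpi^N[S_n]$, introduce the $n! \times n!$ matrix $V = (\sigma(x_{\mathbf a}))_{\sigma \in S_n,\, \mathbf a \in B}$ (the paper's matrix $P$), and bridge $M^U$ and $(M^I)^{S_n}$ via $e = [U] = \sum_\sigma T_\sigma$, noting that $e$ is not an idempotent ($e^2 = [U:I]e \equiv n!e$). But there are two genuine gaps.

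First, the determinant computation is wrong. You claim $\det V = \pm\Delta$, but in fact $\det V = \pm\Delta^{n!/2}$: the square of $\det V$ is the discriminant of $\cR \to \cS$, and since the different is generated by $\Delta$, the discriminant is its norm $\prod_{\sigma \in S_n}\sigma(\Delta) = \pm\Delta^{n!}$. Your claim and the correct one coincide only at $n = 2$ (where $n!/2 = 1$), which is why your worked example looks consistent; for $n \geq 3$ your "Vandermonde contributes only a single factor of $\Delta$" is false, and the accounting behind your proposed bound collapses.

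Second, and more seriously, the cokernel argument is not actually carried out. You reduce to lifting the Cramer solutions $m_{\mathbf a}$ from $(M^I)^{S_n}[1/\Delta]$ into $M^U$, then gesture at a "parabolic descent" through the chain $S_1 \subset \cdots \subset S_n$, conceding that tracking the exponent of $\Delta$ through this procedure is "the technical heart" you have not done. The paper avoids this entirely: once one knows $\det P = \pm\Delta^{n!/2}$, the adjugate gives $Q'$ with $PQ' = \Delta^{n!/2}$, and setting $Q = \Delta^{n!/2}Q'$ yields a matrix with integral entries satisfying $PQ = QP = \Delta^{n!}$. The crucial extra observation is the equivariance $\sigma(Q_{\mathbf a, \tau}) = Q_{\mathbf a, \sigma\tau}$ (from uniqueness of inverses applied to $\sigma(P)\sigma(Q) = \Delta^{n!}$). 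One then defines $g : M^I \to \bigoplus_{\mathbf a} M^U$ by $g(m) = (e\,Q_{\mathbf a, 1}\,m)_{\mathbf a}$ — the operator $e$ guarantees landing in $M^U$ with no extra factors needed — and a direct two-line calculation using the equivariance gives $f \circ g = g \circ f = \Delta^{n!}$, which bounds kernel and cokernel simultaneously. You should verify the corrected value of $\det V$, replace the unfinished parabolic descent by an explicit two-sided quasi-inverse $g$, and note that the reason for squaring $\Delta^{n!/2}$ to $\Delta^{n!}$ is that the latter always lies in $Z(\cH_I) = \cR$ whereas $\Delta^{n!/2}$ is only $\cR$-valued when $n!/2$ is even.
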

Note that $\Delta^{n!}$ always lies in $Z(\cH_I)$. This is important for us since it means that in the global situation, $\Delta^{n!}$ will be in the image of the pseudodeformation ring (through which decomposition groups act via a homomorphism to the Bernstein centre). 

Before proving the proposition, we establish an auxiliary result.
\begin{lemma}
	Consider the $n! \times n!$ matrix $P$ with coefficients in $\bbZ[x_1, \dots, x_n]$ given by the formula $P_{\sigma, \mathbf{a}} = \sigma(x_{\mathbf{a}})$ ($\sigma \in S_n$, $\mathbf{a} \in B$). Then there exists a unique matrix $Q = (Q_{\mathbf{a}, \sigma})$ with coefficients in $\bbZ[x_1, \dots, x_n]$ such that $PQ = QP = \Delta^{n!}$.
\end{lemma}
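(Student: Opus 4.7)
The plan is to deduce both uniqueness and existence from a calculation that $\det P = \pm \Delta^{n!/2}$. Uniqueness is immediate once $\det P$ is known to be nonzero: working in the matrix ring over $\operatorname{Frac}(\bZ[x_1,\dots,x_n])$, any candidate $Q$ is forced to equal $\Delta^{n!} P^{-1}$. To access $\det P$, I would view $P$ as the matrix of the $\cS$-linear map
\[
\cS \otimes_{\cR} \cS \longrightarrow \cS^{S_n}, \qquad s\otimes t \longmapsto (\sigma(s)\, t)_{\sigma \in S_n},
\]
expressed in the bases $\{x_{\mathbf{a}}\otimes 1\}_{\mathbf{a}\in B}$ (with $\cS$ acting via the right factor of the source) and the standard basis of $\cS^{S_n}$. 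The ramification locus of $\cS$ over $\cR$ equals $V(\Delta) \subset \Spec \cS$, since if $\sigma \in S_n$ is nontrivial with $\sigma(i) = j$ then its fixed locus is contained in $V(x_i - x_j)$. Thus $\Spec \cS[\Delta^{-1}] \to \Spec \cR[\Delta^{-2}]$ is a finite étale Galois cover with group $S_n$, and is therefore trivialized by base change to itself, so the displayed map becomes an isomorphism after inverting $\Delta$. In particular, $\det P$ is a unit in $\bZ[x_1,\dots,x_n][\Delta^{-1}]$.

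Since $\bZ[x_1,\dots,x_n]$ is a UFD in which the $(x_i - x_j)$ are pairwise non-associate irreducibles, the units of $\bZ[x_1,\dots,x_n][\Delta^{-1}]$ are $\pm 1$ times Laurent monomials in the $x_i - x_j$. Because $\det P$ is a genuine polynomial, the exponents must be nonnegative, and I can write $\det P = \pm\prod_{i<j}(x_i-x_j)^{e_{ij}}$ for some $e_{ij} \geq 0$. Applying a transposition $(ij)$ to the variables permutes the rows of $P$ by that same transposition and hence negates $\det P$; this $S_n$-equivariance forces the $e_{ij}$ all equal to a common $e$, so $\det P = \pm \Delta^e$. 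Since every term in the Leibniz expansion is homogeneous of the same degree $\sum_{\mathbf{a}\in B}(a_1 + \cdots + a_n) = \tfrac{n!\,n(n-1)}{4}$, while $\deg \Delta^e = e\,n(n-1)/2$, one finds $e = n!/2$.

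Hence $\det P = \pm \Delta^{n!/2}$ divides $\Delta^{n!}$ in $\bZ[x_1,\dots,x_n]$, and $Q := \pm \Delta^{n!/2}\operatorname{adj}(P)$ has polynomial entries and satisfies $PQ = QP = \Delta^{n!} I$ via the identity $P\operatorname{adj}(P) = \operatorname{adj}(P)P = \det(P)\, I$. The main obstacle is the Galois-theoretic step: identifying the ramification locus of $\cS/\cR$ as $V(\Delta)$, so that inverting $\Delta$ yields an étale Galois cover and hence invertibility of $P$ over $\bZ[x_1,\dots,x_n][\Delta^{-1}]$. Everything else is clean bookkeeping with units in a localized UFD, antisymmetry under $S_n$, and a degree count.
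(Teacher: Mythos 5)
Your proof is correct and reaches the paper's key intermediate identity $\det P = \pm\Delta^{n!/2}$, but by a genuinely different route. The paper's proof is shorter: it observes that $(\det P)^2$ equals the discriminant of $\bZ[e_1,\dots,e_n]\to\bZ[x_1,\dots,x_n]$ (since the matrix of traces $\tr(x_{\mathbf a}x_{\mathbf b})$ is $P^{\mathrm T}P$), and then quotes \cite[Tag 0C17]{stacks-project} together with the computation that the different of this extension is $(\Delta)$, to get $\det P = \pm\Delta^{n!/2}$ directly. You instead establish that $\det P$ is a unit in $\bZ[x_1,\dots,x_n][\Delta^{-1}]$ via the Galois-\'etale triviality of $\cS[\Delta^{-1}]\otimes_{\cR[\Delta^{-2}]}\cS[\Delta^{-1}]$, and then use the UFD structure, $S_n$-symmetry, and a degree count to pin down the exponent. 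This trades the paper's cited discriminant computation for a self-contained localization argument; both are sound, but yours does more work to avoid the appeal to the different ideal.

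One small inaccuracy worth flagging: applying a transposition $\tau=(ij)$ to the variables does not permute the rows of $P$ ``by that same transposition''. It permutes the index set $S_n$ by $\sigma\mapsto\tau\sigma$, a fixed-point-free involution which is a product of $n!/2$ disjoint transpositions. The sign of this row permutation is therefore $(-1)^{n!/2}$, which equals $+1$ for every $n\ge 4$, so $\det P$ is \emph{not} negated in general. Fortunately your downstream deduction only needs $\tau(\det P)=\pm\det P$ (i.e.\ equality up to a unit of $\bZ$), and since $S_n$ acts transitively on the unordered pairs $\{i,j\}$, the transitivity still forces all exponents $e_{ij}$ to coincide. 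So the conclusion $\det P=\pm\Delta^{e}$ and the degree count giving $e=n!/2$ remain correct; just the parenthetical ``negates $\det P$'' should be replaced by ``multiplies $\det P$ by a sign''.
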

\begin{proof}
	It suffices to show existence, since then uniqueness follows by linear 
	algebra over $\bbQ(x_1, \dots, x_n)$. The square of the determinant of $P$ 
	is equal to the discriminant of the ring extension $\bbZ[e_1, \dots, e_n] 
	\to \bbZ[x_1, \dots, x_n]$. Using \cite[Tag 0C17]{stacks-project}, we see 
	that the discriminant of this ring extension equals $\Delta^{n!}$ (the 
	different ideal is generated by $\Delta$). Therefore the determinant of $P$ 
	is equal to $\Delta^{n! / 2}$, up to sign. 
	
	The existence of the adjugate matrix implies that there is a matrix $Q'$ with coefficients in $\bbZ[x_1, \dots, x_n]$ such that $PQ' = \Delta^{n! / 2}$. We then take $Q = \Delta^{n! / 2} Q'$. 
\end{proof}
We observe that for all $\mathbf{a} \in B$, $\sigma, \tau \in S_n$, we have $\sigma(Q_{\mathbf{a}, \tau}) = Q_{\mathbf{a}, \sigma \tau}$. Indeed, this follows from the identity $\sigma(P) \sigma(Q) = \Delta^{n!}$ and the uniqueness of inverses.
\begin{proof}[Proof of Proposition \ref{prop_uniform_pseudo-isomorphism}] Since $q_v \equiv 1 \text{ mod }\varpi^N$, we can identify $\cO / \varpi^N[I \backslash U / I] = \cO / \varpi^N[S_n]$, and $\cH(G, I) \otimes_\bbZ \cO / \varpi^N$ with the group algebra of the extended affine Weyl group $X_\ast(T) \rtimes S_n$ (because the Iwahori--Matsumoto relations are a $q$-deformation of the relations defining the group algebra of the affine Weyl group).
	Let $e = \sum_{\sigma \in S_n} \sigma \in \cO / \varpi^N[S_n] \subset \cH(G, I) \otimes_\bbZ \cO / \varpi^N$. Then $e 
	= [U]$, so in particular $e M^I \subset M^U$. Recalling that $[I]$ is the 
	unit of $\cH_I$, we note that $e$ need not be an idempotent, since $e^2 = [U:I]e$ (note that $q_v \equiv 1 
	\text{ mod }\varpi^N \implies [U:I] \equiv n! 
	\text{ mod }\varpi^N$, and we do not rule out the 
	case $p \leq n$). 
	
	We have defined a map $f : \oplus_{\mathbf{a} \in B} M^U \to M^I$ by the formula $(m_{\mathbf{a}})_{\mathbf{a} \in B} \mapsto \sum_{\mathbf{a} \in B} x_{\mathbf{a}} \cdot m_{\mathbf{a}}$. We define a map $g : M^I \to \oplus_{\mathbf{a} \in B} M^U$ by the formula $g(m) = (e Q_{\mathbf{a}, 1} m)_{\mathbf{a} \in B}$.
	
	We now compute $f \circ g$ and $g \circ f$. We have for $m \in M^I$
	\[ f(g(m)) = \sum_{\mathbf{a} \in B} x_{\mathbf{a}} e Q_{\mathbf{a}, 1} m =  \sum_{\mathbf{a} \in B} \sum_{\sigma \in S_n} x_{\mathbf{a}} \sigma(Q_{\mathbf{a}, 1}) \sigma(m). \]
	This in turn we can rewrite as
	\[  \sum_{\sigma \in S_n} \sum_{\mathbf{a} \in B} P_{1, \mathbf{a}} Q_{\mathbf{a}, \sigma} \sigma(m) = \Delta^{n!} m. \]
	Similarly, we have for $m = (m_{\mathbf{a}})_{\mathbf{a} \in B} \in \oplus_{\mathbf{a} \in B} M^U $:
	\[ g(f(m))_{\mathbf{a}} = e Q_{\mathbf{a}, 1} \sum_{\mathbf{b} \in B} x_{\mathbf{b}} \cdot m_{\mathbf{b}} = \sum_{\sigma \in S_n} \sum_{\mathbf{b} \in B} Q_{\mathbf{a}, \sigma} P_{\sigma, \mathbf{b}} \sigma(m_{\mathbf{b}}). \]
	Note that $S_n$ acts trivially on $M^U$. We can therefore rewrite the above expression as
	\[ \sum_{\mathbf{b} \in B} \sum_{\sigma \in S_n} Q_{\mathbf{a}, \sigma} P_{\sigma, \mathbf{b}} m_{\mathbf{b}} = \Delta^{n!} m_{\mathbf{a}}. \]
	This completes the proof. 	
\end{proof}

\section{Patching}\label{sec:patching}

In this section we prove our main technical result (Theorem 
\ref{thm_vanishing_of_adjoint_Selmer_group}).
\subsection{Set-up}
	We suppose given the following data:
\begin{itemize}
	\item A CM number field $F$ with maximal totally real subfield $F^+$. We assume that $F / F^+$ is everywhere unramified and that $[F^+ : \bQ]$ is even.
	\item An integer $n \geq 2$ and a cuspidal, polarized, regular algebraic automorphic representation 
	$(\pi,\delta_{F/F^+}^n)$ of $\GL_n(\bA_F)$ (i.e.~$\pi$ is of unitary type).
	\item A prime $p$ and an isomorphism $\iota : \overline{\bQ}_p \to \bC$. We 
	assume that for each place $w | p$ of $F$, $\pi_w$ has an Iwahori-fixed 
	vector.
	\item A finite set $S$ of finite places of $F^+$, containing the set $S_p$ of $p$-adic places and all places above which $\pi$ is ramified. We assume that each place of $S$ splits in $F$.
\end{itemize}
We recall that under these conditions we define an extension of $r_{\pi, \iota}$ to a homomorphism to $\cG_n$, which then gives the action of $G_{F^+}$ on $\ad r_{\pi, \iota}$ (see \S \ref{sec_notation}).
\begin{theorem}\label{thm_vanishing_of_adjoint_Selmer_group}
	With set-up as above, assume moreover that $r_{\pi, 
	\iota}(G_{F(\zeta_{p^\infty})})$ is enormous. Then \[H^1_f(F^+, \ad r_{\pi, 
	\iota}) = 0.\]
\end{theorem}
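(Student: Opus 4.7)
The plan is to reduce Theorem \ref{thm_vanishing_of_adjoint_Selmer_group} to a finiteness statement for the polarized pseudodeformation ring $R_S$ constructed in Section \ref{sec_unitary_pseudocharacters}, and then prove that finiteness using a Taylor--Wiles patching argument in the pseudodeformation framework, combining Corollary \ref{cor:TWGalois} with the oldforms comparison of Proposition \ref{prop_uniform_pseudo-isomorphism}.

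First I would set up the comparison with the pseudodeformation tangent space. The Iwahori-fixed vector hypothesis at each $w \mid p$ forces $r_{\pi, \iota}|_{G_{F_w}}$ to be generic at those places (the Weil--Deligne representation is a direct sum of characters with appropriately distinct values), and at places in $S \setminus S_p$ genericity is standard for cuspidal $\pi$. The sign condition $\chi(c_v) = -1$ at infinite places follows from $\chi = \epsilon^{1-n} r_{\delta^n_{F/F^+}, \iota}$ and $\delta_{F/F^+}(c_v) = -1$, while purity of Frobenius eigenvalues follows from the local-global compatibility statements recalled in Section \ref{sec_notation}. Applying Proposition \ref{prop_generic_tangent_spaces} and Proposition \ref{prop_comparison_of_csd_tangent_spaces} to $\rho = r_{\pi, \iota}$ (with $[a,b]$ the Hodge--Tate range) gives an $\cO$-linear isomorphism $H^1_f(F^+, \ad r_{\pi, \iota}) \cong \Hom_\cO(\q_S / \q_S^2, E)$. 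Hence it suffices to prove that $\q_S / \q_S^2$ is $\cO$-torsion, equivalently that $R_S$ is a finite $\cO$-module.

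Next I would assemble the automorphic input. Choose a definite unitary group $G / F^+$ attached to $F/F^+$ (available because $F/F^+$ is everywhere unramified and $[F^+:\bQ]$ is even), and let $\pi'$ be a Jacquet--Langlands transfer of $\pi$ to $G$. For each $N \geq 1$ pick a Taylor--Wiles datum $Q_N$ of level $N$ as in Corollary \ref{cor:TWGalois}, and form the space $M_N$ of algebraic modular forms on $G$ of fixed weight and level hyperspecial outside $S \cup Q_N$ and Iwahori at $Q_N$, localized at the maximal ideal of the Hecke algebra cut out by $\overline{r}_{\pi,\iota}$ and by the chosen orderings $(\alpha_{\wv, i})$. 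The usual construction of Galois representations attached to cohomological automorphic forms on $G$, combined with the Wake--Wang-Erickson formalism, produces a surjection $R_{S \cup Q_N} \twoheadrightarrow \TT_N$ to the Hecke algebra acting on $M_N$, through which the diamond operators at $Q_N$ factor.

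The central difficulty, and hence the only step I would expect to require real work, is carrying out Taylor--Wiles patching without assuming that $\overline{r}_{\pi, \iota}(\Frob_{\wv})$ has distinct eigenvalues for $v \in Q_N$; this is precisely the case where $\overline{\rho}$ may be trivial and the usual matching between Iwahori-level and unramified-level spaces via a $U_q$-eigenvalue localization fails. Here I would follow the Pan--Scholze ultrafilter strategy: first form an enlarged pre-patched module from the Iwahori-level $M_N$'s and a compatible patched ring receiving $\cO\llbracket x_1, \dots, x_{nq} \rrbracket$, and then cut it down using the element $\Delta^{n!}$ of Proposition \ref{prop_uniform_pseudo-isomorphism}. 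The crucial point is that $\Delta^{n!}$ is central in the Iwahori Hecke algebra, so it acts on the patched module through the image of the patched pseudodeformation ring; applying Proposition \ref{prop_uniform_pseudo-isomorphism} uniformly in $N$ (the level condition $q_v \equiv 1 \bmod \varpi^N$ is part of the definition of a Taylor--Wiles datum) then shows that the Iwahori-level patched module and the hyperspecial-level patched module differ by $\Delta^{n!}$-torsion, which is controllable.

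Finally, the standard depth/dimension argument applies: the patched module $M_\infty$ is a finitely generated module over $S_\infty = \cO\llbracket x_1, \dots, x_{nq}\rrbracket$ that is moreover a faithful module over the patched pseudodeformation ring $R_\infty$, forcing $R_\infty$ to have Krull dimension $\leq nq+1$ after killing $\cO$-torsion. Specializing $x_1, \dots, x_{nq}$ back to the original level and combining with the presentation of $R_{S \cup Q_N}$ from Corollary \ref{cor:TWGalois} (whose cokernel $\q_{S \cup Q_N}/(\q_{S \cup Q_N}^2, x_1, \dots, x_{nq})$ is uniformly bounded torsion) shows that $\q_S / \q_S^2$ is $\cO$-torsion. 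By the first paragraph this gives $H^1_f(F^+, \ad r_{\pi, \iota}) = 0$, completing the proof.
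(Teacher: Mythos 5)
Your first two paragraphs correctly identify the reduction to the tangent space $\q_S/\q_S^2$ via Proposition \ref{prop_generic_tangent_spaces}, and your third paragraph correctly identifies the ultrafilter patching strategy and the role of the central element $\Delta^{n!}$ from Proposition \ref{prop_uniform_pseudo-isomorphism}. (One small mis-step: genericity at $p$-adic places does not follow from the Iwahori-fixed-vector hypothesis alone — for a sum of unramified characters one still needs to rule out $\chi_i/\chi_j = |\cdot|$, which comes instead from cuspidality of $\pi$ and local-global compatibility as recorded in \cite[Theorem 2.1.1]{BLGGT}. Also the parenthetical ``equivalently that $R_S$ is a finite $\cO$-module'' is not actually equivalent; what you need, and what you later use, is only that $\q_S/\q_S^2$ is torsion.)

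The genuine gap is in the final paragraph. You assert that ``the patched module $M_\infty$ is a finitely generated module over $S_\infty$'' and then run the classical depth/dimension argument. In this setting that integral finiteness is false, and this is precisely the obstruction the paper is designed to circumvent. The Iwahori-level spaces $S_\lambda(U_0(Q_N),\cO)_{\m_{0,Q_N}}$ do not have $\cO$-rank bounded independently of $N$, because without distinct residual Frobenius eigenvalues one cannot cut them down to the unramified space by a $U_q$-eigenvalue localization (which you yourself identify as the issue). Consequently the patched module $M_1$, which is $S_\infty$-flat with $M_1/\ba_\infty \cong M_0$, has $M_0$ of unbounded rank: it is only after inverting $\Delta^{n!}$ (hence inverting $p$) that $M_0$ is pinned down by the comparison $\eta\colon M \to M_0$ to the bounded-rank module $M = S_\lambda(U,\cO)_\m \otimes_{A^W} A$. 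The paper therefore does not apply a depth argument to an integral finitely generated patched module. Instead it passes to the generic point: it localizes at $\q^\prm$ and further at the prime $A' \subset A$ corresponding to a fixed ordering of the characteristic-zero Frobenius eigenvalues (this second localization is the patched avatar of the $U_q$-eigenvalue trick, which is now available because one has inverted $p$ and the eigenvalues $x_j^{(i)}$ are pairwise distinct). The resulting modules $\mrm_1', \mrm_0'$ are finite-dimensional $E$-vector spaces, and one concludes by Brochard's freeness criterion over the Artinian ring $S_{\infty,\ba_\infty}/\ba_\infty^2$; the alternative Auslander--Buchsbaum route (noted in a remark) also works over the $E$-algebra $(S_\infty)~\widehat{}_{\ba_\infty}$, never over integral $S_\infty$. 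So your sketch is missing the $A'$-localization entirely and mis-locates the freeness/depth input, and the argument as written would not close.
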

We note here that the assumptions of Lemma \ref{lem_greenberg_wiles_formula} 
hold for $r_{\pi,\iota}$ by \cite[Theorem 2.1.1]{BLGGT} (which collects 
together results of many people). 

The proof of Theorem \ref{thm_vanishing_of_adjoint_Selmer_group} will use 
automorphic forms on definite unitary groups. To this end, we can find the 
following data:
\begin{itemize}
	\item For each place $v \in S$, a choice of place $\wv$ of $F$ lying above $v$. We set $\widetilde{S} = \{ \wv \mid v \in S \}$ and $\widetilde{S}_p = \{ \wv \mid v \in S_p \}$.
	\item A Hermitian form $\langle \cdot, \cdot \rangle : F^n \times F^n \to F$ such that the associated unitary group $G$ (defined on $R$-points by $G(R) = \{ g \in \GL_n(F \otimes_{F^+} R) \mid g^\ast g = 1 \}$) is definite at infinity and quasi-split at each finite place of $F^+$.
	\item A reductive group scheme over $\cO_{F^+}$ extending $G$.
	\item For each finite place $v = w w^c$ of $F^+$ which splits in $F$, an 
	isomorphism $\iota_w : G_{\cO_{F^+_v}} \to \Res_{\cO_{F_w} / \cO_{F^+_v}} 
	\GL_n $ of group schemes over $\cO_{F^+_v}$. We assume that the 
	induced isomorphism $\iota_w : G(F^+_v) \to \GL_n(F_w)$ is in the same 
	inner class as the isomorphism given by inclusion $G(F_v^+) \subset 
	\GL_n(F_w) \times \GL_n(F_{w^c})$, followed by projection to the first 
	factor.
	\item An automorphic representation $\sigma$ of $G(\bA_{F^+})$ with the following properties:
	\begin{itemize}
		\item For each finite inert place $v$ of $F^+$, $\sigma_v^{G(\cO_{F^+_v})} \neq 0$ and $\sigma_v$, $\pi_v$ are related by unramified base change.
		\item For each split place $v = w w^c$ of $F^+$, $\sigma_v \cong \pi_w 
		\circ \iota_w$.
		\item If $v | \infty$ is a place of $F^+$, then the infinitesimal character of $\sigma_v$ respects that of $\pi_v$ under base change. (We recall this relation more precisely below.)
	\end{itemize}
	\item An open compact subgroup $U = \prod_v U_v$ of $G(\bA_{F^+}^\infty)$ with the following properties:
	\begin{itemize}
		\item For each place $v \in S_p$, $U_v = \iota_{\wv}^{-1}(\Iw_{\wv})$, where $\Iw_\wv \subset \GL_n(\cO_{F_\wv})$ is the standard Iwahori subgroup (defined as in \S \ref{sec:oldforms}).
		\item For each inert place $v$ of $F^+$, $U_v = G(\cO_{F^+_v})$.
		\item $(\sigma^\infty)^U \neq 0$.
		\item $U$ is sufficiently small: for all $g \in G(\bA_{F^+}^\infty)$, $g U g^{-1} \cap G(F^+) = \{ 1 \}$.
	\end{itemize}
\end{itemize}
(We can find such a $G$ because $[F^+ : \bQ]$ is even. The existence of $\sigma$ is deduced from that of $\pi$ using \cite[\S 5]{labesse}.)
We can regard $\sigma_\infty$ as an algebraic representation of the group 
$(\Res_{F^+ / \bQ} G)_\bC$. Let $\widetilde{I}_p \subset \Hom(F, 
\overline{\bQ}_p)$ denote the set of 
embeddings inducing places $\wv \in \widetilde{S}_p$. Then our choices 
determine an isomorphism
\[ (\Res_{F^+ / \bQ} G)_{\overline{\bQ}_p} \cong \prod_{\tau \in \widetilde{I}_p} \GL_n. \]
Let $\lambda = (\lambda_\tau)_{\tau \in \widetilde{I}_p} \in 
(\bZ_+^n)^{\widetilde{I}_p}$ denote the highest weight of the algebraic 
representation $V_\lambda$ of $(\Res_{F^+ / \bQ} G)_{\overline{\bQ}_p}$ such 
that $V_\lambda \otimes_{\iota, \overline{\bQ}_p} \bC \cong 
\sigma_\infty^\vee$. We can define a highest weight $\xi$ for 
$(\Res_{F/\bQ}\GL_n)_{\overline{\bQ}_p}$ by letting $\xi_\tau = \lambda_\tau$ 
and $\xi_{\tau c} = -w_0\lambda_\tau$ for $\tau \in \widetilde{I}_p$ ($w_0$ is 
the longest element in the Weyl group of $\GL_n$). The 
infinitesimal character of 
$\pi_\infty$ is the same as that of $V_\xi^\vee\otimes_{\iota,\Qpbar}\CC$.   

The Hodge--Tate weights of $r_{\pi, \iota}$ may be described as follows: if $\tau \in \widetilde{I}_p$, then
\[ \mathrm{HT}_\tau(r_{\pi, \iota}) = \{ \lambda_{\tau, 1} + n-1, \lambda_{\tau, 2} + n-2, \dots, \lambda_{\tau, n} \}. \]
We fix once and for all integers $a \leq b$ such that for all $\tau \in \Hom(F, \overline{\bQ}_p)$, the elements of $\mathrm{HT}_\tau(r_{\pi, \iota})$ are contained in $[a, b]$ and $a + b = n-1$.

Let $E / \bQ_p$ be a coefficient field containing the image of every embedding $F \to \overline{\bQ}_p$. After possibly enlarging $E$, we can assume that there is a model $\rho : G_{F, S} \to \GL_n(\cO)$ of $r_{\pi, \iota}$, which extends to a homomorphism $r : G_{F^+, S} \to \cG_n(\cO)$ such that $\nu \circ r = \epsilon^{1-n} \delta_{F / F^+}^n$. Let $\overline{D}$ denote the group determinant of $\overline{\rho}$, which is then defined over $k$. 

With these choices the pseudodeformation ring denoted $R_S$ in \S 
\ref{sec_unitary_pseudocharacters} is defined, as well as the prime ideal $\q_S 
= \ker(R_S \to \cO)$ determined by $\rho$. Moreover, for any Taylor--Wiles 
datum $(Q, \widetilde{Q}, (\alpha_{\wv, 1}, \dots, \alpha_{\wv, n})_{v \in Q})$ 
we have the auxiliary ring $R_{S \cup Q}$. We introduce one more object here: 
it is the maximal quotient $R_{S \cup Q} \to R_{S \cup Q, ab}$ over which for 
each $v \in Q$, the restriction of the universal pseudocharacter to $W_{F_\wv}$ 
factors through $W_{F_\wv}^{ab}$. Thus we have a diagram
\[ R_{S \cup Q} \to R_{S \cup Q, ab} \to R_S. \]

\subsection{Hecke algebras}

We can find a representation $\cV_\lambda$ of the group scheme 
$(\Res_{\cO_{F^+} / \bZ} G)_\cO$, finite free over $\cO$, and such that $\cV_\lambda \otimes_\cO 
\overline{\bQ}_p \cong V_\lambda$. (For example, use the construction of 
\cite[\S 2.2]{ger}.) Thus $\cV_\lambda(\cO)$ is a finite free $\cO$-module 
which receives an action of $U_p = \prod_{v \in S_p} U_v$. For any open compact 
subgroup $V = \prod_v V_v \subset U$, and any $\cO$-algebra $A$, we define 
$S_\lambda(V, A)$ to be the set of functions $f : G(\bA_{F^+}^\infty) \to 
\cV_\lambda(A)$ such that for each $v \in V$, $\gamma \in G(F^+)$, $g \in 
G(\bA_{F^+}^\infty)$, $v_p f(\gamma g v) = f(g)$. We observe that
\[ \varinjlim_{U^p} S_\lambda(U^p U_p, A) \]
has a natural structure of $A[U^p]$-module, and the $U^p$-invariants are $S_\lambda(U, A)$. It follows that $S_\lambda(U, A)$ has a natural structure of $\cH(G(\bA_{F^+}^{\infty, p}), U^p)$-module. A standard argument (cf. \cite[Lemma 2.2.5]{ger}) shows that there is an isomorphism of $\cH(G(\bA_{F^+}^{\infty, p}), U^p)$-modules
\[ S_\lambda(U, \cO) \otimes_{\iota, \cO} \bC \cong \oplus_{\mu} (\mu^\infty)^U, \]
where the sum is over automorphic representations of $G(\bA_{F^+})$ (with multiplicity) such that $\mu_\infty \cong \sigma_\infty$. 

If $V = \prod_v V_v$ is an open compact subgroup of $U$ and $T$ is a finite set 
of places of $F^+$ containing all places such that $V_v \neq G(\cO_{F^+_v})$, 
then we write $\bT_\lambda^T(V, A)$ for the $A$-subalgebra of 
$\End_A(S_\lambda(V, A))$ generated by the unramified Hecke operators at 
split places away from $T$. After possibly enlarging $E$, the existence of 
$\sigma$ implies the existence of a homomorphism
\[ h_{V, \sigma} : \bT^T_\lambda(V, \cO) \to \cO \]
giving the Hecke eigenvalues of $\iota^{-1} \sigma^\infty$. On the other hand, the results of \cite[\S 5]{labesse} imply the existence of a group determinant $D_{V, \lambda}$ of $G_{F, T}$ valued in $\bT^T_{\lambda}(V, \cO)$ (construction as in \cite[Proposition 4.11]{jackreducible}).

Let $\ffrm \subset \bT^S_\lambda(U, \cO)$ denote the unique maximal ideal containing $\ker h_{U, \sigma}$, and set
\[ S_\emptyset = S_\lambda(U, \cO)_\ffrm, \bT_\emptyset = \bT^S_\lambda(U, \cO)_\ffrm. \]
Then there is a surjective homomorphism $R_{\overline{D},S} \to \bT_\emptyset$ classifying $D_{U, \lambda}$.

\begin{lemma}\label{lem:pseudowithconditionstohecke}
The map $R_{\overline{D},S} \to \bT_\emptyset$ factors through the quotient 
$R_S$.
\end{lemma}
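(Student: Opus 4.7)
The plan is to check that the determinant $D_{U,\lambda}$ over $\bT_\emptyset$ satisfies the two conditions that together force the map $R_{\overline D, S} \to \bT_\emptyset$ to factor through $R_S$: (i) it lies in the subfunctor $\operatorname{Def}^{[a,b]}_{\overline D, S}$, so the map factors through $R^{[a,b]}_{\overline D, S}$; and (ii) it satisfies the conjugate self-duality $D^c = D^\vee \otimes \chi|_{G_{F,S}}$ cutting out $R_S$ inside $R^{[a,b]}_{\overline D, S}$. The key structural input will be that $\bT_\emptyset$ is a commutative, reduced, finite $\cO$-algebra (it acts faithfully on the finite free $\cO$-module $S_\emptyset$, and its generic fibre is semisimple by strong multiplicity one for $\GL_n$), so it embeds into a finite product $\prod_i \cO_i$, each factor $\cO_i$ being the ring of integers in an extension $K_i/E$ corresponding to an automorphic representation $\mu_i$ of $G(\bA_{F^+})$ contributing to $S_\emptyset$. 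After choosing $G_{F,S}$-stable $\cO_i$-lattices in the associated $r_{\mu_i,\iota}$, one obtains representations $\rho_i: G_{F,S} \to \GL_n(\cO_i)$.

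For (ii), the automorphic representations $\mu_i$ share Hecke eigenvalues with $\sigma$ at (split) finite places outside $S$, so their base changes to $\GL_n(\bA_F)$ agree there with $\pi$ and are in particular polarized. Hence each $\rho_i$ is conjugate self-dual with multiplier $\epsilon^{1-n}$, and taking traces componentwise one obtains the identity $D_{U,\lambda}^c = D_{U,\lambda}^\vee \otimes \chi|_{G_{F,S}}$ in $\prod_i \cO_i$, hence in $\bT_\emptyset$ by injectivity. For (i), the plan is to construct a Cayley--Hamilton representation $(\bT_\emptyset[G_{F,S}], D_{U,\lambda}) \to (B, D')$ in the sense of \cite[Definition 2.1.8]{WWE_pseudo_def_cond} by setting $B$ equal to the image of $(\rho_i)_i: \bT_\emptyset[G_{F,S}] \to \prod_i M_n(\cO_i)$; this will be a finite $\bT_\emptyset$-algebra, and $D_{U,\lambda}$ should descend to a determinant $D': B \to \bT_\emptyset$ because on the generic fibre the map factors through $\prod_i M_n(K_i)$ with the classical determinants, which agree with $D_{U,\lambda} \otimes E$. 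By the Iwahori-fixed-vector assumption on $\pi_w$ for $w | p$ and local--global compatibility, each $\rho_i$ is semistable with Hodge--Tate weights in $[a,b]$. Viewing each $M_n(\cO_i)$ under left multiplication by $\rho_i$ as $n$ copies of the lattice $\cO_i^n$ in $r_{\mu_i,\iota}$, one sees that every finite $\bZ_p[G_{F_v}]$-quotient of $B$ (for $v | p$) is a subquotient of a lattice in a semistable representation with HT weights in $[a,b]$, verifying condition \eqref{eqn_Cayley}.

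The main subtle point will be verifying rigorously that the pair $(B, D')$ really is a Cayley--Hamilton representation in the sense of \cite{WWE_pseudo_def_cond}, with $D'$ genuinely taking values in $\bT_\emptyset$ (rather than merely in $\prod_i \cO_i$) and with the Cayley--Hamilton ideal vanishing; this will rely on $\bT_\emptyset$ being reduced and $\cO$-torsion-free, so that the integral picture is controlled by the generic one where $B \otimes_{\cO} E \cong \prod_i M_n(K_i)$ carries the usual matrix determinant, together with the fact that $\cE^{[a,b]}_{F,S}$ is a stable condition closed under subquotients.
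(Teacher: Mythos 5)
Your argument follows the same approach as the paper's (quite terse) proof: pass to the reduced generic fibre $\bT_\emptyset \otimes_\cO \overline{\bQ}_p$, identify its factors with automorphic representations $\mu$ of $G(\bA_{F^+})$ contributing to $S_\emptyset$, and deduce the two conditions cutting out $R_S$ (polarization and the semistability condition \eqref{eqn_Cayley}) from local--global compatibility for the Galois representations attached to their base changes. You supply more of the integral bookkeeping than the paper does, including the construction of an explicit Cayley--Hamilton representation witnessing \eqref{eqn_Cayley}, which you correctly identify as the genuinely delicate step that the paper's proof leaves implicit; the key points that make it work are that $\ker(\bT_\emptyset[G_{F,S}] \to B) \subset \ker D_{U,\lambda}$ (so $D_{U,\lambda}$ descends to a $\bT_\emptyset$-valued determinant on $B$ by the characterization of $\ker D$ in \cite[Lemma 1.19]{chenevier_det}) and that $B \hookrightarrow \prod_i M_n(\cO_i)$ forces the Cayley--Hamilton ideal of $D'$ to vanish.

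One slip in the justification: the $\mu_i$ contributing to $S_\emptyset$ do not literally share Hecke eigenvalues with $\sigma$ away from $S$ --- they only agree modulo $\ffrm$, so their base changes need not coincide with $\pi$ at those places. This does not affect the conclusion, since the base change to $\GL_n(\bA_F)$ of any automorphic representation of the unitary group $G$ is conjugate self-dual for structural reasons (cf. \cite[\S 5]{labesse}), independent of any comparison with $\pi$; but the reasoning should be stated that way. Similarly, the semistability of $\rho_i|_{G_{F_{\wv}}}$ for $v \in S_p$ comes from $\mu_{i,v}$ having an Iwahori-fixed vector (forced by $\mu_i^U \neq 0$ together with $U_v = \iota_{\wv}^{-1}(\Iw_{\wv})$), rather than directly from the hypothesis on $\pi_w$.
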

\begin{proof}
If we invert $p$ then $\TT_\emptyset\otimes_{\cO} \overline{\bQ}_p = 
\prod_{\mu}E_\mu$ is a 
product of fields indexed by automorphic representations $\mu$ of 
$G(\A_{F^+})$ with $\mu^{U}_{\ffrm} \ne 0$ and $\mu_\infty \cong 
\sigma_\infty$. To prove 
the lemma, it suffices to show that each of the maps 
$R_{\overline{D},S}\to E_\mu$ factors through the quotient 
$R_{S}$: in other words, the conjugate self-duality condition and the semi-stability condition of (\ref{eqn_Cayley}). These conditions follow from local--global compatibility for the Galois 
representation associated to the base change of $\mu$.
\end{proof}

\subsection{Automorphic data associated to Taylor--Wiles data}

Suppose given a set $Q$ of Taylor--Wiles places. In this case we define open compact subgroups $U_0(Q) = \prod_v U_0(Q)_v$ and $U_1(Q) = \prod_v U_1(Q)_v$ as follows:
\begin{itemize}
	\item If $v \not\in Q$, then $U_0(Q)_v = U_1(Q)_v = U_v$.
	\item If $v \in Q$, then $U_0(Q)_v = \iota_\wv^{-1}(\Iw_\wv)$ and $U_1(Q)_v$ is the smallest open subgroup of $U_1(Q)_v$ such that $U_0(Q)_v / U_1(Q)_v$ is a $p$-group.
\end{itemize}
We set $\Delta_Q = U_0(Q) / U_1(Q)$, which may be naturally identified with 
$\prod_{v \in Q} k(v)^\times(p)^n$. We write $\ffrm_Q$ for the intersection of 
$\ffrm$ with $\bT_\lambda^{S \cup Q}(U, \cO)$, $\ffrm_{0, Q}$ for the pre-image 
of $\ffrm_Q$ in $\bT_\lambda^{S \cup Q}(U_0(Q), \cO)$, and $\ffrm_{1, Q}$ for 
the pre-image of $\ffrm_{0, Q}$ in $\bT_\lambda^{S \cup Q}(U_1(Q), \cO)$. We 
define
\[ \bT_{0, Q} = \bT_\lambda^{S \cup Q}(U_0(Q), \cO)_{\ffrm_{0, Q}}, \bT_Q = 
\bT_\lambda^{S \cup Q}(U_1(Q), \cO)_{\ffrm_{1, Q}}. \]

As in Lemma \ref{lem:pseudowithconditionstohecke}, we have a surjective map 
$R_{S \cup Q} \to \TT_Q$. Note that the natural map $ \bT_\lambda^{S \cup Q}(U, 
\cO)_{\ffrm_Q} \to \bT_\emptyset$ is in fact an isomorphism, and so there are 
surjections
\[ \bT_{Q} \to \bT_{0, Q} \to \bT_\emptyset. \]
So far we have not used any Hecke operators at places $v \in Q$. For any $v \in 
Q$, $\alpha \in F_\wv^\times$, and $1 \le i \le n$, we let $t_{v, i} : F_\wv^\times \to \cH(G(F_v^+), 
U_1(Q)_v)$ denote the composite with $\iota_\wv^{-1}$ of the homomorphism defined just 
above \cite[Proposition 2.2.7]{10author} (and denoted $t_{v, i}$ there). That proposition shows that if $\pi_v$ is an irreducible admissible 
$\overline{\bQ}_p[G(F_v^+)]$-module such that $\pi_v^{U_1(Q)_v} \neq 0$, then 
for any $\sigma \in W_{F_\wv}$ and $\alpha \in F_\wv^\times$ such that 
$\Art_{F_\wv}(\alpha) = \sigma|_{F_\wv^{ab}}$, the characteristic polynomial of 
$\rec^T_{F_\wv}(\pi_v \circ \iota_\wv^{-1})$ on $\sigma$ equals 
\begin{equation}\label{eqn:twlgc}\sum_{i=0}^n 
(-1)^i X^{n-i} e_{v, i}(\alpha, \pi_v),\end{equation} where $e_{v, i}(\alpha) 
\in  
\cH(G(F_v^+), U_1(Q)_v)$ is the $i$th elementary symmetric polynomial in $t_{v, 
1}(\alpha), \dots, t_{v, n}(\alpha)$, and $e_{v, i}(\alpha, \pi_v) \in 
\overline{\bQ}_p$ is the scalar by which it acts on $\pi_v^{U_1(Q)_v}$. The elements 
$e_{v,i}(\alpha)$ generate the centre of 
$\cH(G(F_v^+),U_1(Q)_v)\otimes_{\cO}\Qpbar$,
 by \cite[Proposition 
4.11]{Fli11}. 

We define $\bT_{0, Q}^Q \subset \End(S_\lambda(U_0(Q), \cO)_{\ffrm_{0, Q}})$ to 
be the subalgebra generated by $\bT_{0, Q}$ and the elements $t_{v, i}(\alpha)$ 
for all $v \in Q$, $i = 1, \dots, n$ and $\alpha \in F_\wv^\times$.
We 
define 
$\bT_{Q}^Q \subset \End(S_\lambda(U_1(Q), \cO)_{\ffrm_{1, Q}})$ similarly. Thus 
$\bT_{Q}^Q$ is an $\cO[\Delta_Q]$-algebra (the image of $\cO[\Delta_Q]$ in $\bT_Q^Q$ is generated by the elements $t_{v,i}(\alpha)$ with $\alpha \in 
\cO_{F_\wv}^\times$). Neither $\bT_{0, Q}^Q$ nor 
$\bT_{Q}^Q$ need be local rings. 
We denote by $\ba_Q$ 
the augmentation 
ideal of $\cO[\Delta_Q]$. 
\begin{lem}
	$S_\lambda(U_1(Q),\cO)$ is a free $\cO[\Delta_Q]$-module and the trace 
	map induces $S_\lambda(U_1(Q),\cO)/\ba_Q \cong 
	S_\lambda(U_0(Q),\cO)$. 
\end{lem}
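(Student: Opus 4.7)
The plan is to make the structure of $S_\lambda(U_1(Q),\cO)$ explicit via a double coset decomposition, from which freeness over $\cO[\Delta_Q]$ will be manifest, and then deduce the second assertion by a general fact about free modules over finite group algebras.

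First, I would observe that because $U$ is sufficiently small and $U_1(Q) \subset U_0(Q) \subset U$, both $U_0(Q)$ and $U_1(Q)$ inherit this property. Choose representatives $g_1, \dots, g_r$ for the finite set $G(F^+) \backslash G(\bA_{F^+}^\infty) / U_0(Q)$, and coset representatives $\delta_1, \dots, \delta_{|\Delta_Q|}$ for $U_0(Q)/U_1(Q)$, taken to be supported at places of $Q$. Then $\{g_i \delta_j\}$ represents $G(F^+) \backslash G(\bA_{F^+}^\infty) / U_1(Q)$, and the sufficient smallness of $U_1(Q)$ implies that evaluation $f \mapsto (f(g_i \delta_j))_{i,j}$ gives a bijection from $S_\lambda(U_1(Q), \cO)$ to $\cV_\lambda(\cO)^{r|\Delta_Q|}$. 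Indeed, the defining condition $v_p f(\gamma g v) = f(g)$ poses no constraint on the value $f(g_i\delta_j)$: whenever $\gamma g_i\delta_j v = g_i\delta_j$ with $\gamma \in G(F^+)$, $v \in U_1(Q)$, sufficient smallness forces $\gamma = 1$ and hence $v = 1$.

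Next, the Hecke operator attached to $\delta \in \Delta_Q$ acts on $f \in S_\lambda(U_1(Q),\cO)$ by $(T_\delta f)(g) = f(g\delta)$; since $\delta$ and all $\delta_j$ are supported at $Q$-places (disjoint from $S_p$), no $v_p$-twist enters, and the action simply permutes the $j$-index by left multiplication on $U_0(Q)/U_1(Q)$, acting trivially on the $i$-index. Under the identification above this exhibits
\[ S_\lambda(U_1(Q), \cO) \cong \cV_\lambda(\cO)^r \otimes_\cO \cO[\Delta_Q] \]
as an $\cO[\Delta_Q]$-module, which is free.

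For the second assertion, $S_\lambda(U_0(Q),\cO) = S_\lambda(U_1(Q),\cO)^{\Delta_Q}$ by definition of $U_0(Q)$. For any free $\cO[\Delta_Q]$-module $M$, the trace map $m \mapsto \sum_{\delta \in \Delta_Q} \delta \cdot m$ annihilates $\ba_Q M$ and induces an isomorphism $M/\ba_Q M \iso M^{\Delta_Q}$ (immediate on reducing to the case $M = \cO[\Delta_Q]$), yielding the claim. There is no serious obstacle here; the only point demanding care is to confirm that the $v_p$-twist in the definition of $S_\lambda$ does not interfere with the $\Delta_Q$-action, which is automatic since $Q \cap S = \emptyset$.
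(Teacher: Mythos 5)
Your proposal is correct and is in substance the same argument that the paper invokes by citing \cite[Lemma 3.3.1]{cht}: decompose $G(F^+)\backslash G(\bA_{F^+}^\infty)/U_1(Q)$ using representatives $g_i\delta_j$ (which requires the sufficient smallness of $U_0(Q)$, as you note), observe that the $\Delta_Q$-action on $S_\lambda(U_1(Q),\cO)$ is right translation with no $v_p$-twist since $Q\cap S_p=\emptyset$, deduce freeness, and finish with the standard trace isomorphism $M/\ba_Q M\cong M^{\Delta_Q}$ for free $\cO[\Delta_Q]$-modules together with $S_\lambda(U_0(Q),\cO)=S_\lambda(U_1(Q),\cO)^{\Delta_Q}$. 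You have simply written out the CHT proof in detail rather than citing it.
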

\begin{proof}
	The proof is identical to that of \cite[Lemma 3.3.1]{cht}, using that $U$ 
	(and hence any subgroup of $U$) is sufficiently small.
\end{proof}
We let $A_Q = \otimes_{v \in 
	Q}\cO[t_{v, 1}(\varpi_v)^{\pm 1}, \dots, t_{v, n}(\varpi_v)^{\pm 1}]$. This is a polynomial 
	subalgebra 
	of  $\otimes_{v \in Q}\cH(G(F_v^+), U_0(Q)_v)$ that receives an action of 
	the 
	group $W_Q = \prod_{v \in Q} 
S_n$. For every $m \geq 1$, we have a canonical morphism of $\bT_Q$-modules 
\[\eta_{Q,m}: S_\lambda(U,\cO/\varpi^m)_{\m}\otimes_{A_Q^{W_Q}}A_Q \to 
S_\lambda(U_0(Q),\cO/\varpi^m)_{\m_{0, Q}},\] as in (\ref{eqn_Hecke_alg_morphism}).

For each $v \in Q$, the universal pseudocharacter over $R_{S \cup Q, ab}$ 
determines by restriction an $n$-dimensional pseudocharacter $\gamma_v$ of 
$W_{F_\wv}^{ab}$ valued in $R_{S \cup Q, ab}$. Each restriction 
$\gamma_v|_{I_{F_\wv}}$ factors through the quotient $k(v)^\times(p)$ of 
$\Art_{F_\wv}(\cO_{F_\wv}^\times)$ (compare \cite[Lemma 3.8]{chenevier_det}).

On the other hand, for each $i = 1, \dots, n$, there is a character $\alpha_{v, 
i} : W_{F_\wv}^{ab} \to (\TT_Q^Q)^\times$ given by the formula $\alpha_{v, 
i}(\Art_{F_\wv}(\alpha)) = t_{v, i}(\alpha)$. We write $\alpha_v$ for the 
pseudocharacter 
$\alpha_v = \alpha_{v, 1} \oplus \dots \oplus \alpha_{v, n}$.

These two families of pseudocharacters are related by the following lemma, 
which is a formulation of local--global compatibility at $v \in Q$. 

\begin{lemma}\phantomsection\label{lem_abelian_pseudocharacter}\label{lem:twlgc}
\begin{enumerate}	\item The map \[R_{S\cup Q} \to \TT_Q\] factors through 
	the quotient $R_{S \cup Q, ab}$.
	\item 	Let $v \in Q$. The composite of $\gamma_v$ with the map $R_{S \cup 
	Q, ab} \rightarrow \TT_Q\hookrightarrow 
	\TT_Q^{Q}$ equals $\alpha_v$.
	\item The image of the map \[R_{S\cup Q, ab} \to \TT_Q \hookrightarrow 
	\TT_Q^Q\] 
contains the Hecke operators $e_{v,i}(\alpha)$ for each $v 
\in Q, i 
= 1, \cdots, n$ and $\alpha \in F_{\wv}^\times$.
\end{enumerate}
\end{lemma}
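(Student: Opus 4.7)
The strategy is to reduce every assertion to a statement that can be checked after inverting $p$ and projecting to the irreducible components of $\TT_Q \otimes_\cO E$. Recall $\TT_Q$ acts faithfully on $S_\lambda(U_1(Q), \cO)_{\ffrm_{1, Q}}$, which is a free $\cO$-module; hence $\TT_Q$ is $\cO$-flat, so $\TT_Q \hookrightarrow \TT_Q \otimes_\cO E \cong \prod_\mu E_\mu$, the product running over automorphic representations $\mu$ of $G(\bA_{F^+})$ with $\mu_\infty \cong \sigma_\infty$, $\mu^{U_1(Q)} \ne 0$ and Hecke eigensystem lying in $\ffrm_{1, Q}$. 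The same holds for the inclusion $\TT_Q \hookrightarrow \TT_Q^Q$ on each factor. Under the projection to $E_\mu$, the universal pseudocharacter becomes $\tr r_{\mu, \iota}|_{G_{F, S \cup Q}}$, where $r_{\mu, \iota}$ is the Galois representation associated (via \cite[\S 5]{labesse} and \cite[Theorem 2.1.1]{BLGGT}) to the base change of $\mu$ to $\GL_n(\bA_F)$.

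For part (1), it suffices to show that for each such $\mu$ and each $v \in Q$, $r_{\mu, \iota}|_{G_{F_\wv}}$ has trivial monodromy operator $N$ in its Weil--Deligne representation, hence factors through $W_{F_\wv}^{ab}$ after semisimplification on the Weil group (which is all that the pseudocharacter sees). The assumption $\mu_v^{U_1(Q)_v} \neq 0$ together with $U_1(Q)_v \subset \iota_\wv^{-1}(\Iw_\wv)$ implies $\mu_v$ has an Iwahori-fixed vector, so by local Langlands $r_{\mu, \iota}|_{W_{F_\wv}}$ is (after semisimplification) a sum of unramified characters $\chi_1, \dots, \chi_n$. Non-trivial monodromy would force $\chi_i / \chi_j = |\cdot|$ for some $i \neq j$, so that $\chi_i(\Frob_\wv) / \chi_j(\Frob_\wv) = q_v$; reducing modulo $\varpi$ and using $q_v \equiv 1 \pmod p$ we would get $\bar{\chi}_i(\Frob_\wv) = \bar{\chi}_j(\Frob_\wv)$, contradicting the Taylor--Wiles assumption that $\bar\rho(\Frob_\wv)$ has $n$ distinct eigenvalues.

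For part (2), on the factor $E_\mu$ the character $\gamma_v$ becomes $\tr r_{\mu, \iota}|_{W_{F_\wv}^{ab}}$, which by local Langlands is the sum of the characters of $W_{F_\wv}^{ab}$ whose values on $\Art_{F_\wv}(\alpha)$ are the eigenvalues of Frobenius-semisimplified $\rec_{F_\wv}^T(\mu_v \circ \iota_\wv^{-1})$ on the relevant element. The very definition of the $t_{v,i}(\alpha)$, together with the characteristic-polynomial identity (\ref{eqn:twlgc}) from \cite[Proposition 2.2.7]{10author}, says precisely that these eigenvalues are $t_{v, 1}(\alpha), \dots, t_{v, n}(\alpha)$ acting on $\mu_v^{U_1(Q)_v}$, i.e.~$\alpha_v(\Art_{F_\wv}(\alpha))$ on the $E_\mu$-component. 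So $\gamma_v$ and $\alpha_v$ agree on every irreducible component of $\TT_Q^Q \otimes_\cO E$; reducedness gives equality in $\TT_Q^Q$. Part (3) is then immediate: by (2), the coefficients of the characteristic polynomial of $\gamma_v$ are the elementary symmetric polynomials in the $\alpha_{v, i}(\Art_{F_\wv}(\alpha))$, namely $e_{v, i}(\alpha, \mu_v)$ on each component, hence $e_{v, i}(\alpha) \in \TT_Q^Q$ lies in the image of $R_{S \cup Q, ab}$.

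The main obstacle is the argument in part (1) that the monodromy operator at $v \in Q$ must vanish on every automorphic constituent. The cleanest route, sketched above, combines three inputs: (a) $U_1(Q)_v$-invariance forces Iwahori level and hence a principal-series type, (b) the Taylor--Wiles congruence $q_v \equiv 1 \pmod p$, and (c) the distinct-eigenvalue condition on $\bar\rho(\Frob_\wv)$. Once these three interact to kill monodromy for each $\mu$, the rest of the proof is a formal reducedness/Zariski-density argument, using only the previously-constructed pseudocharacters over $R_{S \cup Q, ab}$ and $\TT_Q^Q$.
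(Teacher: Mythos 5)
Your overall strategy — decompose $\TT_Q \otimes_\cO E$ into a product of fields $E_\mu$ and reduce every claim to local--global compatibility on the components — is exactly the paper's, and parts (2) and (3) are handled as the paper does, via the formula (\ref{eqn:twlgc}) from \cite[Prop.~2.2.7]{10author} together with the $\cO$-flatness (and hence reducedness) of $\TT_Q^Q$. The issue is your treatment of part (1).

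First, the containment runs the wrong way: $U_1(Q)_v$ is a \emph{proper open subgroup} of $\iota_\wv^{-1}(\Iw_\wv)$, so $\mu_v^{U_1(Q)_v} \neq 0$ does \emph{not} imply $\mu_v^{\Iw_\wv \circ \iota_\wv} \neq 0$. It only gives $\Iw_1$-fixed vectors (pro-$p$ Iwahori), so $\mu_v$ is a subquotient of a \emph{tamely ramified} principal series, not an unramified one; the characters $\chi_i$ of $W_{F_\wv}$ in the Weil part of $\rec^T_{F_\wv}(\mu_v \circ \iota_\wv^{-1})$ need not be unramified. Your monodromy argument, which requires the $\chi_i$ to be unramified in order to match their values on $\Frob_\wv$ against the eigenvalues of $\bar\rho(\Frob_\wv)$, therefore does not go through as written. (It could be repaired by observing that the reductions $\bar\chi_i$ must be unramified because $\bar\rho$ is unramified at $v \notin S$, so $\chi_i/\chi_j = |\cdot|$ still forces a coincidence modulo $\varpi$ via $q_v \equiv 1 \bmod p$, but you did not make this observation.)

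Second, and more fundamentally, the detour through $N = 0$ is a red herring: it is neither necessary nor, as you phrase the implication (``trivial monodromy $\ldots$ hence factors through $W_{F_\wv}^{ab}$''), a correct logical step. What is needed for part (1) is that $\tr\, r_{\mu, \iota}|_{W_{F_\wv}}$ factor through $W_{F_\wv}^{ab}$. A pseudocharacter does not see the monodromy operator at all: if $\rho(g) = r(g)\exp(t_p(g)N)$ is the Grothendieck decomposition, then $r(g)$ preserves the filtration by kernels of powers of $N$ and $\exp(t_p(g)N)$ is unipotent with respect to it, so $\tr \rho(g) = \tr r(g) = \tr r^{\Fss}(g)$ for all $g \in W_{F_\wv}$, regardless of whether $N$ vanishes. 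Thus the relevant fact is simply that the Frobenius-semisimplified Weil part of $\rec^T_{F_\wv}(\mu_v \circ \iota_\wv^{-1})$ is a direct sum of (tame) characters of $W_{F_\wv}^{ab}$ — which already follows from $\mu_v^{U_1(Q)_v} \neq 0$ and is precisely the content of \cite[Prop.~2.2.7]{10author}, the single reference the paper invokes. Trying to kill $N$ introduces the problems above without buying you anything.
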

\begin{proof}
If we invert $p$ then $\TT_Q\otimes_{\cO} \overline{\bQ}_p = \prod_{\mu}E_\mu$ is a 
product of fields indexed by automorphic representations $\mu$ of 
$G(\A_{F^+})$ with $\mu^{U_1(Q)}_{\ffrm_{1, Q}} \ne 0$ and $\mu_\infty \cong \sigma_\infty$. To prove 
the first part of the lemma, it suffices to show that each of the maps 
$R_{S\cup 
Q} \to E_\mu$ factors through the quotient 
$R_{S \cup Q, ab}$. This follows from \cite[Prop.~2.2.7]{10author}. The second 
and third parts of the lemma follow from the formula (\ref{eqn:twlgc}) which 
computes the 
characteristic 
polynomials 
of $\rec^T_{F_{\wv}}(\mu_{\wv}\circ \iota_{\wv}^{-1})$ evaluated on elements of 
$W_{F_{\wv}}$.
\end{proof}
We caution the reader that the map $R_{S \cup Q, ab} \to \TT_Q^Q$ is 
not in 
general surjective, because of the presence of Hecke operators at $Q$ which do 
not lie in the Bernstein centre. 

The following proposition will be crucial for controlling our patched modules 
of automorphic forms. As mentioned in the introduction, this is inspired by 
arguments of Pan \cite{Lue}.
\begin{prop}\label{prop:oldforms}
	Fix $d \in \NN$. There exists a constant $c \in \NN$ (depending only on 
	$d$) 
	such that, for any $N$ and any Taylor--Wiles 
	datum $(Q, \widetilde{Q}, (\alpha_{\wv, 1}, \dots, \alpha_{\wv, n})_{\wv 
	\in \widetilde{Q}})$ for $r_{\pi,\iota}$ of level $N$ satisfying \[\sum_{v 
	\in Q}\sum_{1 \leq i 
	< j \leq n}\ord_{\varpi}(\alpha_{\wv,i}-\alpha_{\wv,j})\le d,\] there is an 
	element 
	$f_Q \in R_{S \cup Q, ab}$ such that 
	\begin{enumerate}
		\item $f_Q$ kills the kernel and cokernel of $\eta_{Q,m}$ for all $m 
		\le N$
		\item The image $f_{Q,\sigma}$ of $f_Q$ under the composition of 
		maps 
		\[R_{S \cup Q, ab} \to 
		\TT_Q \overset{h_{U_1(Q), \sigma}}{\to} \cO\] satisfies 
		$\ord_\varpi(f_{Q,\sigma}) \le c$.
	\end{enumerate}
\end{prop}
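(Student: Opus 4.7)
My plan is to take $f_Q$ to be any pre-image in $R_{S \cup Q, ab}$ of the central Iwahori Hecke operator
\[ \prod_{v \in Q} \Delta_v^{n!}, \qquad \text{where} \qquad \Delta_v = \prod_{1 \leq i < j \leq n} \bigl(t_{v, i}(\varpi_v) - t_{v, j}(\varpi_v)\bigr), \]
which must first be checked to lie in the image of $R_{S \cup Q, ab} \to \TT_Q$. Since $n \geq 2$, the integer $n!$ is even, so $\Delta_v^{n!} = (\Delta_v^2)^{n!/2}$ is a symmetric polynomial in $t_{v, 1}(\varpi_v), \dots, t_{v, n}(\varpi_v)$ and can therefore be expressed as a polynomial in the elementary symmetric functions $e_{v, 1}(\varpi_v), \dots, e_{v, n}(\varpi_v)$ with coefficients in $\ZZ$. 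By Lemma \ref{lem_abelian_pseudocharacter}(3), each $e_{v, i}(\varpi_v)$ lies in the image of $R_{S \cup Q, ab} \to \TT_Q$; hence so does $\Delta_v^{n!}$, and $f_Q$ can be chosen as such a pre-image.

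For part (1), I would use the fact that $\eta_{Q, m}$ is by construction a morphism of $\TT_Q$-modules, so $f_Q$ acts on both source and target via the image of $\prod_v \Delta_v^{n!}$ in $\TT_Q$. The Taylor--Wiles congruence $q_v \equiv 1 \pmod{p^N}$ for each $v \in Q$ ensures that Proposition \ref{prop_uniform_pseudo-isomorphism} applies with coefficients in $\cO / \varpi^m$ at each $v \in Q$, for any $m \leq N$. Viewing $\eta_{Q, m}$ as an iteration over $v \in Q$ of the one-place comparison maps (\ref{eqn_Hecke_alg_morphism}) -- each of which has kernel and cokernel killed by $\Delta_v^{n!}$ according to that proposition -- it follows that the kernel and cokernel of $\eta_{Q, m}$ are killed by $\prod_{v \in Q} \Delta_v^{n!}$, hence by $f_Q$.

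For part (2), I would compute $f_{Q, \sigma} = h_{U_1(Q), \sigma}(f_Q)$ via the local--global compatibility formula (\ref{eqn:twlgc}). Under $h_{U_1(Q), \sigma}$ each operator $e_{v, i}(\varpi_v)$ is sent to the $i$-th elementary symmetric polynomial in the Frobenius eigenvalues $\alpha_{\wv, 1}, \dots, \alpha_{\wv, n}$ (which are the Satake parameters of the unramified representation $\sigma_v$ in the Taylor--Wiles ordering). Since $\Delta_v^{n!}$ is a polynomial in the $e_{v, i}(\varpi_v)$ with integer coefficients, we obtain
\[ h_{U_1(Q), \sigma}(\Delta_v^{n!}) = \prod_{1 \leq i < j \leq n} (\alpha_{\wv, i} - \alpha_{\wv, j})^{n!}, \]
and therefore
\[ \ord_\varpi(f_{Q, \sigma}) = n! \sum_{v \in Q} \sum_{i < j} \ord_\varpi(\alpha_{\wv, i} - \alpha_{\wv, j}) \leq n! \cdot d, \]
so one may take $c = n! d$.

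The main technical point requiring care is the iteration step in part (1): Proposition \ref{prop_uniform_pseudo-isomorphism} is proved one local factor at a time, so in order to conclude that $\prod_v \Delta_v^{n!}$ kills the kernel and cokernel of the multi-place map $\eta_{Q, m}$ one must verify that $\eta_{Q, m}$ decomposes (as an iterated tensor product) into the single-place comparison maps from (\ref{eqn_Hecke_alg_morphism}), factor by factor across $v \in Q$. Once this bookkeeping is set up, the rest is a direct application of Lemma \ref{lem_abelian_pseudocharacter} and formula (\ref{eqn:twlgc}).
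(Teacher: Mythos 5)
Your proposal is correct and follows the same argument as the paper: both choose $f_Q$ to be a pre-image of $\prod_{v\in Q}\prod_{i<j}(t_{v,i}(\varpi_v)-t_{v,j}(\varpi_v))^{n!}$, justify its existence via Lemma \ref{lem_abelian_pseudocharacter} and the fact that $\Delta_v^{n!}$ is symmetric (hence a polynomial in the $e_{v,i}(\varpi_v)$), apply Proposition \ref{prop_uniform_pseudo-isomorphism} place by place for part (1), and evaluate under $h_{U_1(Q),\sigma}$ via (\ref{eqn:twlgc}) to get $c = n!\,d$ in part (2). The paper states the proof very tersely; yours supplies the same reasoning with the implicit steps (the symmetry of $\Delta_v^{n!}$, the iteration over $v\in Q$, and the dévissage for kernels and cokernels of a composite) made explicit.
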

\begin{proof}
	We set \[\overline{f}_Q = \left(\prod_{v \in Q}  \prod_{1 \leq i < j \leq 
	n} 
	(t_{v,i}(\varpi_v) - t_{v,j}(\varpi_v))^{n!}\right) \in \TT_Q^Q\] and let 
	$f_Q$ be a pre-image of $\overline{f}_Q$ in $R_{S \cup Q,ab}$ (such a 
	pre-image exists by Lemma \ref{lem_abelian_pseudocharacter}). It follows 
	from Proposition \ref{prop_uniform_pseudo-isomorphism} that $f_Q$ kills the 
	kernel and cokernel of $\eta_{Q,m}$ for all $m \le N$. If we take $c = n!d$ 
	then, again using Lemma \ref{lem_abelian_pseudocharacter}, we see that the 
	second part of the proposition is satisfied.
\end{proof}

We give one last piece of structure. Suppose fixed an ordering $Q =  \{ v_1, 
\dots, v_q \}$ and for each $v \in Q$ a surjection $\bbZ_p \to k(v)^\times(p)$. 
This data determines a surjection $(\bbZ_p^n)^q \to \prod_{v \in Q} 
k(v)^\times(p)^n = \Delta_Q$, hence a surjective algebra homomorphism $S_\infty 
\to \cO[\Delta_Q]$, where $S_\infty = \cO\llbracket y^{(i)}_1,\ldots,y^{(i)}_q: 
1 \le i \le n \rrbracket$. The group $W_Q = \prod_{v \in Q} S_n$ acts on 
$S_\infty$ by permutation of co-ordinates, and the invariant subring 
$S_\infty^{W_Q}$ may be identified with $\cO \llbracket e^{(i)}_1, \ldots, 
e^{(i)}_q : 1 \leq i \leq n \rrbracket$, where $e^{(i)}_j$ is the $i$th 
elementary 
symmetric polynomial in $y^{(1)}_j, \dots, y^{(n)}_j$. The ring $S_\infty^{W_Q}$ 
also has a role to play as a consequence of the following easy lemma:
\begin{lemma}
	The functor of deformations of the trivial pseudocharacter of $\bbZ_p$ of 
	dimension $n$ is represented by $\cO \llbracket X_1, \dots, X_n 
	\rrbracket^{\mathfrak{S}_n}$, with the universal characteristic polynomial 
	$\chi(t)$ of $1 \in \Zp$ given by $\prod_{i=1}^n((t-1) - X_i)$.
\end{lemma}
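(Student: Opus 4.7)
The plan is to leverage the fact that $\bZ_p$ is topologically procyclic so that any continuous $n$-dimensional pseudocharacter of $\bZ_p$ is determined by the characteristic polynomial $\chi(t) = D(t-1)$ of the topological generator. The final ring $\cO\llbracket X_1,\dots,X_n\rrbracket^{\mathfrak{S}_n} \cong \cO\llbracket e_1,\dots,e_n\rrbracket$ (with $e_j$ the elementary symmetric polynomials in the $X_i$) is then exactly the universal parameter space for such a polynomial lifting $(t-1)^n$.

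First, I would prove the uniqueness half. Using Amitsur's formula and the axioms defining a pseudocharacter, $D$ is determined by the functions $D^{[i]} : \bZ_p \to A$ for $i = 0,\dots,n$. Since these are assumed continuous and $\bZ$ is dense in $\bZ_p$, they are pinned down by their restrictions to $\bZ$. For $k \in \bZ$ the polynomial $D(t - k)$ equals, formally, $\prod_i(t-\lambda_i^k)$ where $\chi(t) = \prod_i(t-\lambda_i)$, and each of its coefficients is a symmetric function of the $\lambda_i$, hence a polynomial expression in the coefficients of $\chi$. Thus $\chi$ alone determines all $D^{[i]}(k)$ for $k \in \bZ$, and by continuity for all $k \in \bZ_p$.

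Next, I would construct the universal object. Over $\tilde R = \cO\llbracket X_1,\dots,X_n\rrbracket$, each $1 + X_i$ is a principal unit, so the binomial series $(1+X_i)^k = \sum_{j\ge 0} \binom{k}{j} X_i^j$ defines a continuous map $\bZ_p \to \tilde R^\times$. The assignment $k \mapsto \mathrm{diag}\bigl((1+X_1)^k,\dots,(1+X_n)^k\bigr)$ therefore gives a continuous homomorphism $\tilde\rho : \bZ_p \to \GL_n(\tilde R)$, and I set $\tilde D = \tr \tilde\rho$. Each function $\tilde D^{[i]}(k) = e_i\bigl((1+X_1)^k,\dots,(1+X_n)^k\bigr)$ is symmetric in the $X_j$, hence takes values in $R := \tilde R^{\mathfrak{S}_n}$; this defines a continuous pseudocharacter $D^{\mathrm{univ}}$ of $\bZ_p$ over $R$ whose characteristic polynomial of $1$ is $\prod_i\bigl((t-1) - X_i\bigr)$, lifting $(t-1)^n$.

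Finally, I would verify universality. Given any deformation $D'$ to $A \in \cC_\cO$, expand $D'(t-1) = \sum_{j=0}^n (-1)^j c'_j (t-1)^{n-j}$; the hypothesis that $D'$ lifts the trivial pseudocharacter forces $c'_0 = 1$ and $c'_j \in \mathfrak{m}_A$ for $j \ge 1$. Sending $s_j \mapsto c'_j$ (where $s_j$ denotes the $j$-th elementary symmetric polynomial in the $X_i$) defines a unique continuous $\cO$-algebra homomorphism $R \to A$, under which the push-forward of $D^{\mathrm{univ}}$ has $\chi(t) = D'(t-1)$; by Step~1 it coincides with $D'$, which establishes representability. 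The only mildly delicate point is the symmetric-function argument in Step~1 showing that a continuous pseudocharacter of $\bZ_p$ is determined by $\chi$, but this is a standard application of Newton's identities combined with Amitsur's formula and density of $\bZ$ in $\bZ_p$.
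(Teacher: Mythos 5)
Your proof is correct, but it takes a genuinely different route from the paper's one-paragraph argument. The paper invokes the Chevalley restriction theorem directly: a residually trivial pseudocharacter of $\bZ_p$ over $A \in \cC_\cO$ is a point of the adjoint quotient $(\GL_n \dslash \GL_n)(A)$ lying over the identity in $(\GL_n \dslash \GL_n)(k)$, and then identifies $\GL_n \dslash \GL_n$ with $T \dslash W$, whose completed local ring at the identity is $\cO\llbracket X_1,\dots,X_n\rrbracket^{\mathfrak S_n}$; the universal object is the orbit of $\diag(1+X_1,\dots,1+X_n)$. You instead prove representability by hand: first that a continuous determinant of $\bZ_p$ is pinned down by $\chi(t) = D(t-1)$, then an explicit construction of $D^{\mathrm{univ}}$ as the trace of $k \mapsto \diag\bigl((1+X_1)^k,\dots,(1+X_n)^k\bigr)$, and finally the verification that specializing the symmetric functions $s_j$ to the coefficients of an arbitrary $D'(t-1)$ recovers $D'$. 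The paper's GIT formulation is shorter and more conceptual (it absorbs your Step 1 into the statement that $\cO[\GL_n]^{\GL_n} = \cO[x_1^{\pm1},\dots,x_n^{\pm1}]^{S_n}$), while your version makes the universal representation and the verification of the universal property explicit, at the cost of having to argue (via Newton's identities / universal polynomial identities, to avoid the unfounded assumption that $\chi$ factors over $A$) that the functions $D^{[i]}(k)$ for all $k$ are polynomial in the coefficients of $\chi$. That "mildly delicate point" you flag is the crux; it is true and standard, but note that it is not a free consequence of density alone --- one really needs to know that $D^{[i]}(k)$ is a universal integer polynomial in $D^{[1]}(1),\dots,D^{[n]}(1)$ and $D^{[n]}(1)^{-1}$, which is exactly the Chevalley restriction theorem the paper invokes. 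So the two proofs are ultimately equivalent, with the paper making this dependence explicit and your version hiding it in Step 1.
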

\begin{proof}
	Indeed, a residually trivial pseudocharacter of $\bbZ_p$ of dimension $n$ 
	over a ring $A \in \cC_\cO$ is precisely a point of $(\GL_n 
	\dslash \GL_n)(A)$ lying over the image of the identity in $(\GL_n \dslash 
	\GL_n)(k)$. Now we use the identification of the adjoint quotient $\GL_n 
	\dslash \GL_n$ with the quotient of the diagonal maximal torus by the Weyl 
	group. The universal deformation is given by the orbit of the matrix 
	$\diag(1+X_1,\ldots,1+X_n)$.
\end{proof}
Consequently, there is a homomorphism $S_\infty^{W_Q} \to R_{S \cup Q, ab}$, classifying the pullback of the tuple $(\gamma_v)_{v \in Q}$ to a tuple of $n$-dimensional pseudocharacters of the group $\bbZ_p$. There is also a homomorphism $S_\infty^{W_Q} \to \TT_Q^Q$, classifying the pullback of $(\alpha_v)_{v \in Q}$ to a tuple of $n$-dimensional pseudocharacters of the group $\bbZ_p$. This coincides with the restriction to $S_\infty^{W_Q}$ of the homomorphism $S_\infty \to \TT_Q^Q$ determined by the $\cO[\Delta_Q]$-algebra structure on $\TT_Q^Q$. Lemma 
\ref{lem:twlgc} has the following corollary.
\begin{lemma}
	The map $S_\infty^{W_Q} \to \TT_Q^Q$ factors through $\bT_Q$, and the map $R_{S \cup Q, ab} \to \bT_Q$ is a homomorphism of $S_\infty^{W_Q}$-algebras.  
\end{lemma}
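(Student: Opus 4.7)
The plan is to deduce both statements from Lemma \ref{lem:twlgc}(2) by unwinding the definitions. Let
\[ \phi \colon S_\infty^{W_Q} \longrightarrow R_{S \cup Q, \ab} \longrightarrow \TT_Q \hookrightarrow \TT_Q^Q \]
denote the composition, where the second map exists by Lemma \ref{lem:twlgc}(1). The first arrow is defined to classify the tuple of $n$-dimensional pseudocharacters of $\Zp$ obtained by pulling the $\gamma_v$ back along the composites $\Zp \to k(v)^\times(p) \xrightarrow{\Art_{F_{\wv}}} I_{F_{\wv}}^{\ab} \hookrightarrow W_{F_{\wv}}^{\ab}$. By Lemma \ref{lem:twlgc}(2), the map $R_{S \cup Q, \ab} \to \TT_Q^Q$ carries $\gamma_v$ to $\alpha_v|_{W_{F_{\wv}}^{\ab}}$, so by the universal property $\phi$ classifies the tuple of pseudocharacters of $\Zp$ obtained by pulling the $\alpha_v$ back along the same maps.

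But this is precisely the definition of the map $S_\infty^{W_Q} \to \TT_Q^Q$ fixed in the paragraph preceding the lemma (recall that the $\cO[\Delta_Q]$-algebra structure on $\TT_Q^Q$ sends a generator of the $i$-th factor of $k(v)^\times(p)$ to $t_{v,i}(\tilde{g})$ for an appropriate lift $\tilde{g} \in \cO_{F_{\wv}}^\times$, which is $\alpha_{v,i}(\Art_{F_{\wv}}(\tilde{g}))$). Hence $\phi$ coincides with the map $S_\infty^{W_Q} \to \TT_Q^Q$, and this single identification yields both assertions at once. On the one hand, since $\phi$ by construction factors through $\TT_Q$, so does $S_\infty^{W_Q} \to \TT_Q^Q$, which is the first claim. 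On the other hand, the resulting equality between $\phi$ (viewed as landing in $\TT_Q$) and the map $S_\infty^{W_Q} \to \TT_Q$ produced by the first claim is exactly the statement that $R_{S \cup Q, \ab} \to \TT_Q$ is an $S_\infty^{W_Q}$-algebra homomorphism.

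There is no real obstacle here beyond careful bookkeeping; the substantive content---local--global compatibility at the Taylor--Wiles places, which identifies $\gamma_v$ with $\alpha_v|_{W_{F_{\wv}}^{\ab}}$---is already packaged in Lemma \ref{lem:twlgc}(2). The only point requiring a moment's thought is that the two a priori different recipes for defining the $S_\infty^{W_Q}$-action on $\TT_Q^Q$ (via the classifying property for $\alpha_v$, versus via the $\cO[\Delta_Q]$-algebra structure) really do agree, which is asserted in the text just before the lemma and follows from the definition of the characters $\alpha_{v,i}$ in terms of the $t_{v,i}$ operators.
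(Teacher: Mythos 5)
Your proof is correct and matches the paper's intent exactly: the paper states this lemma as an immediate corollary of Lemma \ref{lem:twlgc} without further argument, and your unwinding — using part (2) of that lemma to identify the pushforwards of $\gamma_v$ and $\alpha_v$, then observing that the map $S_\infty^{W_Q} \to \TT_Q^Q$ agrees with the composite through $R_{S \cup Q, \mathrm{ab}}$ and $\TT_Q$ — is precisely the bookkeeping the paper leaves implicit.
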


\subsection{The patching argument}\label{subsec_patching_argument}
\begin{itemize}
	\item Fix $q = \mathrm{corank}_\cO H^1 (F_S / F^+,\ad 
	\rho(1)\otimes_{\cO} E / \cO)$. Applying
	Corollary \ref{cor:TWGalois}, we fix for each $N \geq 1$ a Taylor--Wiles datum $Q_N$ of level $N$, and we write $\Delta_N = \Delta_{Q_N}$, 
	$\ba_N= \ba_{Q_N}$, $R_N = 
	R_{S \cup Q_N, ab}$, $\TT_N = \TT_{Q_N}$. We set $R_0 = R_S$. We set $\q_N 
	= \ker(R_N \xrightarrow{h_{U_1(Q_N),\sigma}} \cO)$ and $\q_0 = \ker(R_0 
	\xrightarrow{h_{U,\sigma}} \cO)$. Thus $\q_N$ is the pre-image of $\q_0$ 
	under the 
	natural map $R_N \to R_0$.
	\item Let $S_\infty = \cO\llbracket y^{(i)}_1,\ldots,y^{(i)}_q: 1 \le i \le n \rrbracket$ and 
	fix orderings of $Q_N$ and generators of $k(v)^\times(p)$ for all $N$ and 
	all $v \in Q_N$ and thus 
	surjective maps $S_\infty \to \cO[\Delta_N]$. Let $\ba_\infty \subset 
	S_\infty$ be the augmentation ideal (equal to the inverse image of $\ba_N$ 
	under each of the previously defined maps).
	
	\item We moreover fix uniformisers 
	$\varpi_v$ for all $v \in Q_N$ (for every $N$). This allows us to think of 
	the 
	pseudocharacters $\gamma_v$ as pseudocharacters of $k(v)^\times(p) 
	\times \ZZ$. Recalling that we have fixed a generator of 
	$k(v)^\times(p)$ and an ordering on $Q_N$, for every 
	$N$ we have a $q$-tuple  $(\gamma_{N, 1}, \dots, 
	\gamma_{N, q})$ of $n$-dimensional pseudocharacters of $(\ZZ_p 
	\times \ZZ)$ with 
	coefficients in $R_N$. 
	\item We have actions of $t_{v,i}(\varpi_v)$ on 
	$S_\lambda(U_0(Q_N),\cO)_{\ffrm_{0, Q}}$ and 
	$S_\lambda(U_1(Q_N),\cO)_{\ffrm_{1,Q}}$ for each $v 
	\in Q_N$ and $i = 1,\ldots,n$. Using these actions, together with the fixed 
	orderings on $Q_N$, we obtain an action of the algebra
	\[A = \otimes_{j=1}^q \cO[(t_j^{(1)})^{ \pm 1}, \ldots, 
	(t_j^{(n)})^{ \pm 1}]\]
on these spaces, together with an 
	identification of $A$ with $A_{Q_N}$ sending $t_j^{(i)}$ to 
	$t_{v_j,i}(\varpi_{v_j})$. We have characters $\alpha_{j}^{(i)} : \bZ_p \times \bZ \to (S_\infty \otimes_\cO A)^\times$ for $i = 1, \dots, n$ and $j = 1, \dots, q$.   	By Lemma \ref{lem_abelian_pseudocharacter}, the pushforward of the pseudocharacter $\alpha_{j} = \tr \alpha_j^{(1)} \oplus \dots \oplus \alpha_j^{(n)}$ to $\End_\cO(S_\lambda(U_1(Q_N),\cO)_{\ffrm_{1,Q}})$ takes values in $\bT_N$ and equals the pushforward of $\gamma_{N, j}$ there.
	\item We can identify all the Weyl groups $W_{Q_N}$ (using our fixed 
	orderings of $Q_N$ for each $N$). We denote them all by $W$. 
	There is a natural $W$-action on $S_\infty$, compatible with the maps to 
	$\cO[\Delta_N]$. The invariants $S_\infty^{W}$ are a regular local 
	$\cO$-algebra, with $S_\infty$ a finite free $S_\infty^{W}$-algebra 
	($S_\infty^{W}$ is a power series algebra over the elementary symmetric 
	polynomials in $y_j^{(1)},\ldots,y_j^{(n)}$ for each $j$). Write 
	$\ba_\infty^{W} = \ba_\infty \cap S_\infty^W$, this is the ideal of 
	$S_\infty^W$ generated 
	by the elementary symmetric polynomials. There is also a natural $W$-action 
	on $A$. 
	
 \item We let $g = n q$ and $R_\infty = \cO\llbracket x_1,\ldots,x_g \rrbracket$, and let 
 $\q_\infty = (x_1,\ldots,x_g) \in \Spec(R_\infty)$. For each $N$ we 
 have a map $R_\infty \to R_N$ such that $\q_\infty R_N \subset \q_N$ and  $\q_N/(\q_N^2,\q_\infty)$ is killed by 
 a power of $\varpi$ which is independent of $N$. 
	
	\item Fix a non-principal ultrafilter $\cF$ on $\NN$, and let $\bR = \prod_{N \in \mathbf{N}} \cO$. If $I \in \cF$, then we define $e_I = (\delta_{N \in I})_{N \in \mathbf{N}} \in \bR$. Then $S = \{ e_I \mid I \in \cF \}$ is a multiplicative subset of $\bR$, and we define $\bR_\cF = S^{-1} \bR$. The natural map $\bR \to \bR_\cF$ is surjective and factors through the projection $\prod_{N \geq 1} \cO \to \prod_{N\ge m}\cO$ for any $m \ge 1$. The ring $\bR_\cF$ can also be described as the localization of $\bR$ at the prime ideal $\{ (x_N)_{N \in \mathbf{N}} \mid \exists I \in \cF, \forall N \in I, x_N \in \varpi \cO \}$.
\end{itemize}

\begin{defn}
	We define
	\begin{itemize}
		\item $M_1 = 
		\varprojlim_m\left(\bR_{\cF}\otimes_{\bR}\prod_{N\ge 
		m}\left(S_\lambda(U_1(Q_N),\cO)_{\m_{1, Q_N}}/\m_{S_\infty}^m\right) \right) $
		\item $M_0 = 
		\varprojlim_m\left(\bR_{\cF}\otimes_{\bR}\prod_{N\ge 
			m}S_\lambda(U_0(Q_N),\cO/\varpi^m)_{\m_{0, Q_N}} \right) $
		\item $M = 
		\varprojlim_m\left(\bR_{\cF}\otimes_{\bR}\prod_{N\ge 
			m}S_{\lambda}(U,\cO/\varpi^{m})_{\m}\otimes_{A_{Q_N}^W}A_{Q_N}\right)$
	\end{itemize}
\end{defn}
Here $A_{Q_N}^W$ acts on $S_{\lambda}(U,\cO/\varpi^{m})_{\m}$ via the spherical 
Hecke algebra action at places in $Q_N$.  We note that we 
naturally obtain compatible actions of $A$ on $M$, $M_0$ and $M_1$. Identifying 
$S_\lambda(U,\cO)_\m$ with 	
$\varprojlim_m\left(\bR_{\cF}\otimes_{\bR}\prod_{N\ge 
	m}S_{\lambda}(U,\cO/\varpi^{m})_{\m}\right)$, we equip 
	$S_\lambda(U,\cO)_\m$ with an $A^W$ action ($A^W$ acts on the $N$ factor in 
	the product via its identification with $A_{Q_N}^W$) and we see that we 
	have a natural isomorphism $M \cong S_\lambda(U,\cO)_\m\otimes_{A^W}A$. 

\begin{lem}\phantomsection\label{lem:patchedmodules}
	\begin{enumerate}
		\item $M_1$ is a flat $S_\infty$-module.
		\item The trace maps induce $M_1/\ba_\infty \cong 
		M_0$. 
		\item We have a map $\eta : M \to 
		M_0$ induced by the $\eta_{Q_N,m}$, which has kernel and 
		cokernel killed by $f$, where $f = (f^2_{Q_N}) \in 
		\prod_{N\in \NN} R_N$ and $f_{Q_N}$ is as in the statement of Proposition \ref{prop:oldforms}.
	\end{enumerate}
\end{lem}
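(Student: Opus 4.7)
The plan is to handle the three parts separately, exploiting the structure of the patched modules as inverse limits of $\cF$-ultraproducts of finite-level objects.

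For part (1), I would reduce flatness of the $\m_{S_\infty}$-adically complete module $M_1$ over the regular local ring $S_\infty$ to verifying flatness of each truncation $M_1/\m_{S_\infty}^m M_1$ over the Artinian quotient $S_\infty/\m_{S_\infty}^m$, a reduction standard for complete modules over Noetherian local rings. The key observation is that at a Taylor--Wiles place of level $N$ one has $q_v \equiv 1 \pmod{p^N}$, so for $N$ sufficiently large relative to $m$ the kernel of the surjection $S_\infty \twoheadrightarrow \cO[\Delta_N]$ lies inside $\m_{S_\infty}^m$, making $S_\infty/\m_{S_\infty}^m \to \cO[\Delta_N]/\m_{S_\infty}^m\cO[\Delta_N]$ an isomorphism. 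Combined with the freeness of $S_\lambda(U_1(Q_N),\cO)_{\m_{1,Q_N}}$ as an $\cO[\Delta_N]$-module (a consequence of the lemma above, exploiting that $U$ is sufficiently small), this forces the $N$-th factor in the product defining $M_1/\m_{S_\infty}^m M_1$ to be free over $S_\infty/\m_{S_\infty}^m$ for almost every $N$ in $\cF$. I would then appeal to the facts that an arbitrary product of flat modules over a Noetherian ring is flat and that $\bR \to \bR_\cF$ is a localization, hence flat, to conclude.

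Parts (2) and (3) should both be formal consequences of their finite-level analogues. For (2), the earlier trace lemma identifies the $\cO[\Delta_N]$-coinvariants $S_\lambda(U_1(Q_N),\cO)_{\m_{1,Q_N}}/\ba_N$ with $S_\lambda(U_0(Q_N),\cO)_{\m_{0,Q_N}}$, and since $\ba_\infty$ surjects onto $\ba_N$ under $S_\infty\twoheadrightarrow\cO[\Delta_N]$, I would take the quotient levelwise, use right-exactness of tensor product, of direct products, and of the localization $\bR_\cF\otimes_\bR -$, and then take the inverse limit over $m$, noting that the pro-system has surjective transition maps and is therefore Mittag--Leffler. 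For (3), Proposition~\ref{prop:oldforms} provides $\eta_{Q_N,m}$ with kernel and cokernel annihilated by $f_{Q_N}$; products and $\bR_\cF\otimes_\bR -$ are exact, so at each finite level in $m$ the kernel and cokernel of the corresponding map are annihilated by the diagonal element. Passing to $\varprojlim_m$ preserves the kernel bound automatically, while the cokernel bound is preserved via vanishing of $\varprojlim^1$ for the relevant (Mittag--Leffler) pro-system. The choice $f = (f_{Q_N}^2)$ provides enough slack to absorb any loss of annihilator at the boundary.

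The main obstacle will be the flatness argument in part (1). Several interlocking ingredients must be verified carefully: that a direct product of free modules over the Artinian ring $S_\infty/\m_{S_\infty}^m$ of \emph{unbounded} individual rank remains flat (Chase's theorem, valid since $S_\infty/\m_{S_\infty}^m$ is Noetherian); that flatness is preserved under the localization $\bR_\cF\otimes_\bR -$ (straightforward); and, most importantly, that flatness of each $\m_{S_\infty}^m$-truncation lifts to genuine flatness of $M_1$ over $S_\infty$. This last step requires $M_1$ to be $\m_{S_\infty}$-adically complete (immediate from the definition) together with the Bourbaki/Stacks criterion that $M/I^n M$ flat over $A/I^n$ for all $n$ implies $M$ flat over $A$, for $M$ that is $I$-adically complete and $A$ Noetherian.
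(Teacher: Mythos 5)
The overall route is the same as the paper's: reduce to flatness and trace-map identifications at finite level, then pass to products, the localization $\bR_\cF\otimes_\bR -$, and the inverse limit over $m$. Parts (2) and (3) of your outline match the paper's treatment in spirit (the paper cites two lemmas of Pan \cite{Lue}, Lemmas 4.5.9 and 4.5.12, for exactly the levelwise manipulations you describe).

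There is, however, a real gap in your argument for part (1). You claim that $M_1$ is ``$\m_{S_\infty}$-adically complete (immediate from the definition)'' and that each truncation $M_1/\m_{S_\infty}^m M_1$ is flat over $S_\infty/\m_{S_\infty}^m$. But $M_1$ is \emph{defined} as $\varprojlim_m M_{1,m}$, where
\[ M_{1,m} = \bR_\cF \otimes_\bR \prod_{N \geq m} \bigl(S_\lambda(U_1(Q_N),\cO)_{\m_{1,Q_N}} / \m_{S_\infty}^m\bigr), \]
and it is \emph{not} automatic that the natural map $M_1/\m_{S_\infty}^m M_1 \to M_{1,m}$ is an isomorphism, nor that $M_1$ is $\m_{S_\infty}$-adically complete. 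These are precisely the things one must prove. The flatness criterion the paper invokes (\cite[Tag 0912]{stacks-project}, via \cite[Lemma 4.4.4(2)]{Lue}) takes as hypotheses (i) flatness of each $M_{1,m}$ over $S_\infty/\m_{S_\infty}^m$ and (ii) the compatibility $M_{1,m+1}/\m_{S_\infty}^m M_{1,m+1} \xrightarrow{\sim} M_{1,m}$, and only then outputs flatness of the limit together with the identification $M_1/\m_{S_\infty}^m M_1 \cong M_{1,m}$. You verify (i) essentially as the paper does, but you do not verify (ii), and you cannot sidestep it: without (ii) you do not know what $M_1/\m_{S_\infty}^m M_1$ actually is, so your claimed reduction is circular. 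The verification of (ii) is where the definition of $M_{1,m}$ as a product over $N \geq m$ (rather than $N \geq 1$) and the fact that $\bR_\cF\otimes_\bR -$ is insensitive to finitely many factors come into play; the paper handles this by invoking \cite[Lemma 4.5.9]{Lue}. Everything else in your sketch is sound: freeness of $S_\lambda(U_1(Q_N),\cO)_{\m_{1,Q_N}}$ over $\cO[\Delta_N]$ (from $U$ sufficiently small), that $S_\infty/\m_{S_\infty}^m$ is a quotient of $\cO[\Delta_N]$ for $N \geq m$, that products of flat modules over a Noetherian ring are flat, and that localization is flat.
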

\begin{proof}
For the first two parts we apply \cite[Lemma 4.4.4(2)]{Lue} (see also 
\cite[\href{https://stacks.math.columbia.edu/tag/0912}{Tag 
0912}]{stacks-project}): it suffices to 
prove 
that for each $m$ 
\[M_{1,m} = \bR_{\cF}\otimes_{\bR}\prod_{N\ge 
	m}\left(S_\lambda(U_1(Q_N),\cO)_{\m_{1, Q_N}}/\m_{S_\infty}^m\right) \] is a flat 
	$S_\infty/\m_{S_\infty}^m$-module, the natural transition maps 
	$M_{1,m+1} \to M_{1,m}$ induce isomorphisms $M_{1,m+1}/\m_{S_\infty}^m 
	\cong M_{1.m}$ and the trace maps induce $M_{1,m}/\ba_{\infty} \cong 
	M_{0,m}$. 
	
	Flatness of $M_{1,m}$ follows from flatness of $S_\lambda(U_1(Q_N),\cO)_{\m_{1, Q_N}}$ 
	over 
	$\cO[\Delta_N]$ (note that $S_\infty/\m_{S_\infty}^N$ is a quotient of 
	$\cO[\Delta_N]$). 
	
	We have \[M_{1,m+1}/\m_{S_\infty}^m = 
	\bR_{\cF}\otimes_{\bR}\prod_{N\ge 
		m+1}\left(S_\lambda(U_1(Q_N),\cO)_{\m_{1, Q_N}}/\m_{S_\infty}^m\right)  = 
		M_{1,m} \] and 
		\begin{multline*} M_{1,m}/\ba_{\infty} = 
	\bR_{\cF}\otimes_{\bR}\prod_{N\ge 
	m}\left(S_\lambda(U_1(Q_N),\cO)_{\m_{1, Q_N}}/(\ba_\infty+\m_{S_\infty}^m)\right)  \\ = 
\bR_{\cF}\otimes_{\bR}\prod_{N\ge 
m}\left(S_\lambda(U_0(Q_N),\cO/\varpi^m)_{\m_{0, Q_N}}\right) \end{multline*} (see 
		\cite[Lemma 4.5.9]{Lue} for the first equalities).
	
	The third part follows from \cite[Lemma 4.5.12]{Lue}.
\end{proof}

Now we can define a patched pseudodeformation ring:
\begin{defn}
	For $m \ge 1$ we define $R_m^{\prm} = \RR_\cF\otimes_{\RR} \prod_{N \ge 
	1}R_N/(\m_{R_N}f_{Q_N})^m$ and then define $R^{\prm} = \varprojlim_m 
	R_m^{\prm}$.
\end{defn}
\begin{lem}\label{lem:Galoisboundedlength}
	For each $m \ge 1$ there is an integer $n(m)$ (independent of $N$) such 
	that $(\m_{R_N}f_{Q_N})^{n(m)}$ annihilates 
	$S_\lambda(U_1(Q_N),\cO)_{\m_{1, Q_N}}/\m_{S_\infty}^m $ for all $N \ge m$.
\end{lem}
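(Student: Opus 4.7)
The strategy is to filter $M_N := S_\lambda(U_1(Q_N), \cO)_{\m_{1, Q_N}}/\m_{S_\infty}^m$ by powers of $\m_{S_\infty}$ to reduce to the base case $m = 1$, and then to handle that case using Proposition \ref{prop:oldforms} together with the fact that the Hecke algebra $\TT_\emptyset$ is independent of $N$. For the reduction, when $N \geq m$ the kernel of the surjection $S_\infty \twoheadrightarrow \cO[\Delta_N]$ is generated by elements of the form $(1 + y_j^{(i)})^{p^{N_j}} - 1$ with $N_j \geq N$, each of which lies in $\m_{S_\infty}^{N+1} \subseteq \m_{S_\infty}^m$ by direct expansion. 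Hence $\cO[\Delta_N]/\m_{S_\infty}^m \cong S_\infty/\m_{S_\infty}^m$ as local $\cO$-algebras of fixed length $\ell(m) := \ell_\cO(S_\infty/\m_{S_\infty}^m)$. Since $S_\lambda(U_1(Q_N), \cO)_{\m_{1, Q_N}}$ is free over $\cO[\Delta_N]$ (by the lemma preceding Proposition \ref{prop:oldforms}), $M_N$ is free over the local Artinian ring $S_\infty/\m_{S_\infty}^m$, and its $\m_{S_\infty}$-adic filtration has $\ell(m)$ successive quotients, each a direct sum of copies of $M_N/\m_{S_\infty} M_N \cong S_\lambda(U_0(Q_N), k)_{\m_{0, Q_N}}$. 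It therefore suffices to find $n_0$ independent of $N$ such that $(\m_{R_N} f_{Q_N})^{n_0}$ annihilates $S_\lambda(U_0(Q_N), k)_{\m_{0, Q_N}}$, for then the lemma holds with $n(m) := n_0 \ell(m)$.

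For the base case, Proposition \ref{prop:oldforms} supplies an $R_N$-equivariant map
\[
\eta_{Q_N, 1} : S_\lambda(U, k)_\m \otimes_{A^W_{Q_N}} A_{Q_N} \longrightarrow S_\lambda(U_0(Q_N), k)_{\m_{0, Q_N}}
\]
with kernel and cokernel killed by $f_{Q_N}$. The source is a module over the finite-dimensional $k$-algebra $\tilde T := \TT_\emptyset/\varpi \otimes_{A^W_{Q_N}} A_{Q_N}$, which is free of rank $(n!)^q$ over $\TT_\emptyset/\varpi$ (because $A_{Q_N}$ is free of that rank over $A^W_{Q_N}$). Hence $\dim_k \tilde T \leq (n!)^q \dim_k(\TT_\emptyset/\varpi)$, a bound independent of $N$ since $q$ is fixed and $\TT_\emptyset$ is determined by the initial data $U, \lambda, \m$. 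The $R_N$-action on the source factors through the chain of surjections and inclusion $R_N \twoheadrightarrow R_S \twoheadrightarrow \TT_\emptyset \hookrightarrow \tilde T$, so $\m_{R_N}$ maps into $\m_{\TT_\emptyset/\varpi} \cdot \tilde T$, which lies in the Jacobson radical of the Artinian ring $\tilde T$ by going-up for the finite extension $\TT_\emptyset/\varpi \hookrightarrow \tilde T$. This radical is nilpotent of index at most $n_0 := \dim_k \tilde T$, so $\m_{R_N}^{n_0}$ annihilates the source of $\eta_{Q_N, 1}$ and hence also its image. Combined with the inclusion $f_{Q_N} \cdot S_\lambda(U_0(Q_N), k)_{\m_{0, Q_N}} \subseteq \operatorname{image}(\eta_{Q_N, 1})$, this yields $(\m_{R_N} f_{Q_N})^{n_0} \cdot S_\lambda(U_0(Q_N), k)_{\m_{0, Q_N}} = 0$, completing the base case.

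\textbf{The main obstacle} is ensuring that the bound $n_0$ is uniform in $N$. This rests on two points: (i) $\dim_k \tilde T$ is bounded by a constant not depending on $N$, which uses the fixedness of $q = |Q_N|$ and the fact that $\TT_\emptyset$ is determined by the initial automorphic data; and (ii) the $R_N$-action on $S_\lambda(U, k)_\m \otimes_{A^W_{Q_N}} A_{Q_N}$ genuinely factors through the finite algebra $\tilde T$, which relies on the compatibility of the $R_N$-module structures on the source and target of $\eta_{Q_N, 1}$, ultimately supplied by the matching of the abelian pseudocharacters $\gamma_v$ with the Hecke characters $\alpha_v$ in Lemma \ref{lem:twlgc}.
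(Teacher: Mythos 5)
Your argument is correct and achieves the lemma by a route that packages the key inputs differently from the paper's, while relying on the same two facts: that $\eta_{Q_N,m}$ has kernel and cokernel killed by $f_{Q_N}$ (Proposition~\ref{prop:oldforms}), and that the source of $\eta$ is controlled by data independent of $N$. The paper filters by the augmentation ideal $\ba_\infty$ to reduce to $S_\lambda(U_0(Q_N), \cO/\varpi^m)_{\m_{0,Q_N}}$, observes that $f_{Q_N}$ times this module has $\cO$-length bounded by $(n!)^q$ times the (fixed) length of $S_\lambda(U,\cO/\varpi^m)_\m$, and concludes directly that a module of bounded length over a local ring with residue field $k$ is killed by a bounded power of the maximal ideal. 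You instead reduce all the way to residue-field coefficients using the $\m_{S_\infty}$-adic filtration, then promote the length bound to a statement about an algebra $\tilde T$ through which the $R_N$-action on the source factors, and invoke nilpotence of the Jacobson radical of this Artinian $k$-algebra. Your version makes the ``algebraic'' mechanism behind the uniformity a bit more explicit, at the cost of being somewhat longer; the paper's is more direct. One small slip: the $\m_{S_\infty}$-adic filtration of $M_N = S_\lambda(U_1(Q_N),\cO)_{\m_{1,Q_N}}/\m_{S_\infty}^m$ has $m$ successive quotients (not $\ell(m)$), each a finite direct sum of copies of $M_N/\m_{S_\infty}M_N$; so the natural bound from your argument is $n(m) = m\,n_0$. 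Since $m \le \ell(m)$ this does not affect correctness, only tightness. Everything else --- the reduction of $\cO[\Delta_N]/\m_{S_\infty}^m \cong S_\infty/\m_{S_\infty}^m$ for $N \ge m$, the factorization of the $R_N$-action on the source through $\tilde T$ via $\bT_Q \twoheadrightarrow \TT_\emptyset$, and the lying-over argument placing $\m_{\TT_\emptyset/\varpi}$ in the Jacobson radical of $\tilde T$ --- checks out.
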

\begin{proof}
	By considering the $\ba_\infty$-adic filtration on 
	$S_\lambda(U_1(Q_N),\cO)_{\m_{1, Q_N}}/\m_{S_\infty}^m $ it suffices to prove that 
	there is an 
	integer $n(m)$ (independent of $N$) such that $(\m_{R_N}f_{Q_N})^{n(m)}$ 
	annihilates 
	$S_\lambda(U_0(Q_N),\cO/\varpi^m)_{\m_{0, Q_N}}$ for all $N \ge m$. 
	
	Since 
	$f_{Q_N}S_\lambda(U_0(Q_N),\cO/\varpi^m)_{\m_{0, Q_N}}$ is a finite length 
	$\cO$-module with length bounded by $q n!$ times that of $S_\lambda(U,\cO/\varpi^m)_\m$, 
	its length as an $R_N$-module is bounded independently of $N$ and therefore 
	it is annihilated by a $\m_{R_N}^{n(m)}$ for some $n(m)$ independent of $N$.
\end{proof} 
It follows from Lemma \ref{lem:Galoisboundedlength} that $R^{\prm}$ acts on 
$M_1$ (this is why $R^{\prm}$ is defined the way it is). 
We are going to use \cite[Lemma 4.5.3]{Lue} a few times, so we restate it here:
\begin{lem}\label{lem:lueprodlim}
Suppose for $i \in \NN$, $M_i$ is an $\cO$-module equipped with a decreasing 
filtration by $\cO$-modules $M_i \supset M_{i,1} \supset M_{i,2} \cdots$. Then 
the natural map
\[\prod_{i \ge 1} M_i \to \varprojlim_m\left(\RR_\cF\otimes_\RR \prod_{i \ge 1} 
M_i/M_{i,m} \right) \] is surjective with kernel given by elements of the form 
$(m_i)$ such that for each $m$ there exists $I_m \in \cF$ with $m_i \in 
M_{i,m}$ 
for all $i \in I_m$.
\end{lem}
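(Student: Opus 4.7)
The plan is first to identify $\bR_\cF \otimes_\bR N$ concretely for any $\bR$-module $N$. I would begin by observing that each $e_I$ for $I \in \cF$ is idempotent and becomes a unit in $\bR_\cF$, so the relation $e_I(1-e_I) = 0$ forces $e_I = 1$ in $\bR_\cF$. This shows that $\bR \to \bR_\cF$ is surjective with kernel $J = \{(x_N) : \{N : x_N = 0\} \in \cF\}$, and specialising to $N = \prod_i M_i/M_{i,m}$ with its componentwise $\bR$-action I would obtain $\bR_\cF \otimes_\bR N = N / {\sim}$, where $(n_i) \sim (n'_i)$ iff $\{i : n_i = n'_i\} \in \cF$. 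The kernel description of the lemma is then immediate: $(m_i) \in \prod_i M_i$ maps to zero in the inverse limit precisely when, at each level $m$, it agrees with zero modulo $M_{i,m}$ on some $I_m \in \cF$.

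For surjectivity, given $\xi = (\xi_m) \in \varprojlim_m \bR_\cF \otimes_\bR \prod_i M_i/M_{i,m}$, I would choose for each $m$ a lift $(\tilde{x}_{m,i})_i \in \prod_i M_i$ representing $\xi_m$. Compatibility of $\xi$ under the transition maps provides, for each $m$, a set $I_m \in \cF$ on which $\tilde{x}_{m+1,i} \equiv \tilde{x}_{m,i} \pmod{M_{i,m}}$. Since $\cF$ contains all cofinite subsets of $\NN$, I would replace $I_m$ by $I_1 \cap \cdots \cap I_m \cap \{i \geq m\}$ so as to arrange $I_1 \supset I_2 \supset \cdots$ with $\bigcap_m I_m = \emptyset$. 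For each $i$, set $n(i) = \max\{m : i \in I_m\}$, now a finite non-negative integer, and define $y_i = \tilde{x}_{n(i),i}$ when $n(i) \geq 1$ and $y_i = 0$ otherwise. A telescoping argument along the chain $I_m \supset I_{m+1} \supset \cdots \supset I_{n(i)}$ shows that $y_i \equiv \tilde{x}_{m,i} \pmod{M_{i,m}}$ for all $i \in I_m$, so that $(y_i)$ maps to $\xi$ in the inverse limit.

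The delicate point, and really the heart of the proof, is the diagonalization step of forcing $\bigcap_m I_m = \emptyset$: without this, any index $i$ lying in the intersection would demand a coherent lift of the entire tower $(\tilde{x}_{m,i})_m$ to a single element of $M_i$, which is not available in general unless one imposes completeness of each $M_i$ with respect to the filtration $M_{i,\bullet}$. The trick works because $\NN$ is countable and the Fréchet (cofinite) filter is contained in every non-principal ultrafilter on $\NN$, allowing each $I_m$ to be shrunk within $\cF$.
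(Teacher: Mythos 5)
Your proof is correct. The key reduction $\bR_\cF \otimes_\bR N \cong N/JN$ via the observation that each $e_I$ is an idempotent that becomes a unit (hence $1$) in $\bR_\cF$ is exactly right, and the surjectivity argument — shrinking the compatibility sets $I_m$ to a nested chain with empty intersection by intersecting with cofinite sets (available since $\cF$ is non-principal), then defining $y_i$ by the deepest level at which $i$ survives — handles the diagonal correctly, including the telescoping bound $\tilde x_{n(i),i} - \tilde x_{m,i} \in M_{i,m}$ for $i \in I_m$. One point worth noting: the paper itself supplies no proof of this lemma; it is a verbatim restatement of \cite[Lemma~4.5.3]{Lue} (Pan's thesis), quoted so it can be invoked repeatedly in the subsequent patching argument. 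Your write-up is therefore a self-contained argument for a statement the paper treats as a black box, and it correctly isolates the two facts about $\cF$ actually used: closure under finite intersections, and containment of the Fr\'echet filter. The paper's own remark that $\bR \to \bR_\cF$ "factors through the projection $\prod_{N\ge 1}\cO \to \prod_{N\ge m}\cO$ for any $m\ge 1$" is precisely the second fact packaged differently, so your approach is consistent with how the authors use $\bR_\cF$ elsewhere.
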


We have a natural map 
$\prod_{N \ge 1} R_N \to R^{\prm}$ which is surjective by Lemma  
\ref{lem:lueprodlim}. 
We also have a natural map $R^{\prm} \to R_0$ given by 
taking the limit over $m$ of \[R_m^{\prm} \to  \RR_\cF\otimes_{\RR} \prod_{N 
\ge 
	1}R_0/(\m_{R_0})^m = R_0/(\m_{R_0})^m.\] 
\begin{lem}
The map $R^{\prm} \to R_0$ we have just defined is surjective.
\end{lem}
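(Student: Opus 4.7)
The plan is to reduce the claim to a statement at each finite level $m$, and then to exploit surjectivity of the natural maps $R_N \twoheadrightarrow R_0$ on Galois deformation rings. Since $R_0 = R_S$ is a Noetherian complete local $\cO$-algebra (by Lemma \ref{lem_upper_bound_on_ps_ring}), we have $R_0 = \varprojlim_m R_0 / \ffrm_{R_0}^m$, and the map $R^{\prm} \to R_0$ factors through the compatible system of maps $R^{\prm} \to R_0/\ffrm_{R_0}^m$ via $R^{\prm} \to R^{\prm}_m$ composed with the map $R^{\prm}_m \to \RR_\cF \otimes_\RR \prod_N R_0/\ffrm_{R_0}^m$ defined in the construction. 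So the first step is to show that for every $m \geq 1$ the map $R^{\prm}_m \to R_0/\ffrm_{R_0}^m$ is surjective, and the second is to pass from these to surjectivity of $R^{\prm} \to R_0$.

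For the first step, I observe that for every $N$, the map $R_N = R_{S \cup Q_N, ab} \to R_S = R_0$ is surjective (any $S$-ramified determinant deformation of $\overline{D}$ is also $S \cup Q_N$-ramified, and the conjugate self-duality and semistability conditions are the same, so $R_S$ is a quotient of $R_{S \cup Q_N}$, hence of its quotient $R_{S \cup Q_N, ab}$). The image of $\ffrm_{R_N} f_{Q_N}$ in $R_0$ lies in $\ffrm_{R_0}$, so the surjection $R_N \twoheadrightarrow R_0$ descends to a surjection $R_N/(\ffrm_{R_N} f_{Q_N})^m \twoheadrightarrow R_0/\ffrm_{R_0}^m$ for each $m$. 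Taking the product over $N$ gives a surjection $\prod_{N} R_N/(\ffrm_{R_N} f_{Q_N})^m \twoheadrightarrow \prod_N R_0/\ffrm_{R_0}^m$, and tensoring with $\RR_\cF$ over $\RR$ (a right exact operation) yields a surjection $R^{\prm}_m \twoheadrightarrow \RR_\cF \otimes_\RR \prod_N R_0/\ffrm_{R_0}^m$.

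The key identification needed is that the target of this surjection is canonically isomorphic to $R_0/\ffrm_{R_0}^m$ itself, via the ``constant sequences'' embedding. This uses the fact that $R_0/\ffrm_{R_0}^m$ is a finite $\cO$-module, annihilated by $\varpi^m$: then $\prod_N R_0/\ffrm_{R_0}^m$ is an $\RR/\varpi^m \RR$-module, and the ultraproduct construction $\RR_\cF \otimes_\RR (-)$ applied to a product of copies of a finite $\cO$-module returns that module (concretely, $\RR_\cF/\varpi^m \RR_\cF \cong \cO/\varpi^m$, since the ultrapower of a finite ring over a non-principal ultrafilter is the ring itself). Combined with the preceding paragraph, this gives the surjectivity of $R^{\prm}_m \to R_0/\ffrm_{R_0}^m$ for each $m$.

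Finally, to promote this to surjectivity of $R^{\prm} \to R_0$, I would use Mittag--Leffler: the same right exactness argument shows that the transition maps $R^{\prm}_{m+1} \to R^{\prm}_m$ are surjective (since they arise from tensoring with $\RR_\cF$ the obvious surjections $\prod_N R_N/(\ffrm_{R_N} f_{Q_N})^{m+1} \twoheadrightarrow \prod_N R_N/(\ffrm_{R_N} f_{Q_N})^m$). Hence $R^{\prm} = \varprojlim_m R^{\prm}_m$ surjects onto each $R^{\prm}_m$, and therefore onto each $R_0/\ffrm_{R_0}^m$, and passing to the limit (using that $R_0$ is $\ffrm_{R_0}$-adically complete) gives that $R^{\prm} \to R_0$ is surjective. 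The main conceptual point is the ultrafilter identification in step three; everything else is bookkeeping with right exactness and Mittag--Leffler.
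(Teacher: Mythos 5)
Your argument through level $m$ is sound: the observations that each $R_N \to R_0$ is surjective, that the image of $(\ffrm_{R_N}f_{Q_N})^m$ lands in $\ffrm_{R_0}^m$, that $\RR_\cF\otimes_\RR(-)$ is right exact, and that $\RR_\cF\otimes_\RR\prod_N R_0/\ffrm_{R_0}^m \cong R_0/\ffrm_{R_0}^m$ are exactly the right ingredients. The gap is in the final step. Having compatible surjections $R^{\prm}\twoheadrightarrow R_0/\ffrm_{R_0}^m$ for every $m$, together with $\ffrm_{R_0}$-adic completeness of $R_0$ and Mittag--Leffler for the \emph{source} system $(R^{\prm}_m)$, does not imply that $R^{\prm}\to\varprojlim_m R_0/\ffrm_{R_0}^m = R_0$ is surjective. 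The sufficient condition one needs is Mittag--Leffler for the \emph{kernel} system $\ker(R^{\prm}_m \to R_0/\ffrm_{R_0}^m)$, and you do not establish this. It is genuinely in doubt here: those kernels involve the elements $f_{Q_N}$, whose images in $R_0$ need not be units (this happens precisely when $\overline{\rho}(\Frob_w)$ has repeated eigenvalues, the case the paper is designed to handle), so the rate of stabilization of images depends on $N$ in an a priori uncontrolled way and the ultraproduct need not inherit the ML property.

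The paper sidesteps this by invoking Lemma \ref{lem:lueprodlim} a second time, now applied to $M_N = R_N$ with the coarser filtration $M_{N,m} = \ker(R_N \to R_0/\ffrm_{R_0}^m)$, so that $M_N/M_{N,m}\cong R_0/\ffrm_{R_0}^m$. The lemma then yields surjectivity of $\prod_{N\ge 1}R_N \to \varprojlim_m\bigl(\RR_\cF\otimes_\RR\prod_N R_0/\ffrm_{R_0}^m\bigr) = R_0$ in a single step. Since $(\ffrm_{R_N}f_{Q_N})^m \subset \ker(R_N\to R_0/\ffrm_{R_0}^m)$, this surjection factors as $\prod_N R_N \to R^{\prm} \to R_0$, and surjectivity of the composite forces surjectivity of the last arrow. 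The missing idea in your proposal is therefore to let the ultrafilter lemma itself handle the passage to the limit, applied to an appropriate filtration on the uncompleted product $\prod_N R_N$, rather than attempting to assemble the limit from levelwise surjections by hand.
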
	
\begin{proof}
We again apply Lemma \ref{lem:lueprodlim}: this implies that the natural map 
\[\prod_{N 
\ge 
1}R_N \to \varprojlim_m \left(\RR_\cF\otimes_\R \prod_{N\ge 1} 
R_0/(\m_{R_0})^m\right)\] is surjective; on the other hand 
it factors 
through our map $R^{\prm} \to R_0$.
\end{proof}

From the $n$-dimensional pseudorepresentations $(\gamma_{N, j})_{N \ge 1}$ ($j 
= 1, \dots, q$) with coefficients in 
$\prod_{N \ge 1}R_N$, we obtain $n$-dimensional pseudorepresentations 
$\gamma_{\infty, j}$ ($j = 1, \dots, q$) of 
$\Z_p\times \ZZ$ with 
coefficients in $R^{\prm}$. On the other hand, $M_1$ has a natural structure of 
$S_\infty \otimes_\cO A$-module, and we have defined characters 
$\alpha_{j}^{(i)} : \bZ_p \times \bZ \to (S_\infty \otimes_\cO A)^\times$ and 
pseudocharacters $\alpha_j = \tr \alpha_{j}^{(1)} \oplus \dots \oplus 
\alpha_j^{(n)}$.

\begin{lem}\label{lem:patchedtwlgc}
	Fix $1 \leq i \leq q$.
	\begin{enumerate}
		
		\item Composing $\gamma_{\infty, i}$ with the map $R^{\prm} \to R_0$ 
		gives a pseudorepresentation which is inflated from the `unramified quotient'
		$ \Z_p\times \ZZ \to \bZ$ (i.e.\ projection to second factor).
		\item The composite of $\gamma_{\infty, j}$ with the map $R^{\prm} \to 
		\End(M_1)$ equals the composite of $\alpha_{j}$ with the map $S_\infty 
		\otimes_\cO A \to \End(M_1)$. Consequently, the map $R^{\prm} \to 
		\End(M_1)$ is a homomorphism of $S_\infty^W$-algebras.
	\end{enumerate}
	
\end{lem}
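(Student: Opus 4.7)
The plan is to deduce both parts by passing to the $\cF$-ultraproduct limit from corresponding statements at each finite level $N$. At each finite level these are consequences of local--global compatibility (Lemma~\ref{lem:twlgc}) together with the fact that $R_S$ parametrizes deformations unramified outside $S$.

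For part~(1), fix $N$ and consider the image of $\gamma_{N, j}$ under the composite $R_N \to R_0 = R_S$. By construction of $R_S$, the universal pseudocharacter is unramified at $v_j \in Q_N$ (since $v_j \notin S$). Under our identification of $k(v_j)^\times(p) \times \bZ$ with the relevant quotient of $W_{F_{\wv_j}}^{ab}$, where the $\Z_p$ factor records the pro-$p$ tame inertia, this forces the restriction $\gamma_{N, j}|_{\Z_p}$ pulled back to $R_0$ to be the trivial $n$-dimensional pseudocharacter. Since $\gamma_{\infty, j}$ is assembled from the tuple $(\gamma_{N, j})_N$ as a pseudocharacter with values in $R^{\prm}$, and the map $R^{\prm} \to R_0$ factors through the componentwise projections to each $R_N$, the image of $\gamma_{\infty, j}$ in $R_0$ is inflated from the quotient $\Z_p \times \bZ \to \bZ$, as required.

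For part~(2), Lemma~\ref{lem:twlgc}(2) implies that for each $N$ the pushforwards to $\End_\cO(S_\lambda(U_1(Q_N), \cO)_{\ffrm_{1, Q_N}})$ of $\gamma_{N, j}$ (via $R_N \to \TT_N$) and of $\alpha_j$ (via the $\cO[\Delta_N]$-algebra structure) agree. Both actions preserve the filtration by $\ffrm_{S_\infty}^m$: the $\alpha_j$-action because $S_\infty$ acts through $\cO[\Delta_N]$ for $N$ sufficiently large relative to $m$, and the $\gamma_{N, j}$-action by Lemma~\ref{lem:Galoisboundedlength}, which is precisely what lets $R^{\prm}$ act on $M_1$ in the first place. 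Taking the product of these equalities over $N \ge m$, tensoring with $\RR_\cF$ over $\RR$, and finally passing to the inverse limit over $m$ yields that the compositions $\gamma_{\infty, j} : R^{\prm} \to \End(M_1)$ and $\alpha_j : S_\infty \otimes_\cO A \to \End(M_1)$ induce the same $n$-dimensional pseudocharacter of $\Z_p \times \bZ$; this establishes the main assertion. The $S_\infty^W$-linearity of $R^{\prm} \to \End(M_1)$ is then formal: by the universal property of $S_\infty^W$ as the deformation ring of the trivial $n$-dimensional pseudocharacter of $\Z_p$ (applied coordinate by coordinate), the $S_\infty^W$-algebra structures on $R^{\prm}$ and on $M_1$ are each determined by the tuples $(\gamma_{\infty, j})_j$ and $(\alpha_j)_j$ respectively, so the equality of these two pseudocharacters (via $R^{\prm} \to \End(M_1)$) forces equality of the induced $S_\infty^W$-actions.

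No serious obstacle is anticipated: the argument is purely a matter of verifying that the relevant identifications commute with the two types of limits involved (ultraproducts and $\ffrm_{S_\infty}$-adic limits), and this is handled by Lemma~\ref{lem:lueprodlim} together with the construction of $R^{\prm}$. The mildest subtlety is to check that Lemma~\ref{lem:Galoisboundedlength} provides enough uniformity in $N$ for the $R^{\prm}$-action on $M_1$ to be well defined in the first place, but this is already embedded in the definitions.
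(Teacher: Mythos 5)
Your proposal is correct and follows essentially the same route as the paper: part (1) reduces to the fact that the pseudocharacters classified by $R_0 = R_S$ are unramified at places in $Q_N$ (so each $\gamma_{N,j}$ dies on the inertia factor once pushed to $R_0$), and part (2) reduces to Lemma \ref{lem:twlgc}(2) followed by passage to the patched limit; the $S_\infty^W$-linearity is then a formal consequence via the universal property of the deformation ring of the trivial pseudocharacter of $\bZ_p$. Your write-up simply spells out the ultraproduct bookkeeping (via Lemma \ref{lem:lueprodlim} and Lemma \ref{lem:Galoisboundedlength}) that the paper leaves implicit.
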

\begin{proof}
	The first part follows from the analogous statement for each of the 
	pseudorepresentations $\gamma_{N, i}$ (which holds because the 
	pseudorepresentations classified by $R_0$ are unramified at places 
	in $Q_N$). The second part follows from Lemma \ref{lem:twlgc}.
\end{proof}
	
\begin{defn}
	We let $\q^{\prm}$ be the prime ideal in $R^{\prm}$ given by the inverse 
	image of $\q_0 \subset R_0$.
\end{defn}

\begin{lem}\label{lem:imageideal}
	The image of $\prod \q_N \subset \prod_{N \ge 1} R_N$ in $R^{\prm}$ is 
	equal to $\q^{\prm}$.
\end{lem}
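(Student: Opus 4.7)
The plan is to prove the two inclusions separately. The containment of the image of $\prod \q_N$ in $\q^{\prm}$ is immediate: any tuple in $\prod \q_N$ has image in $R^{\prm}$ lying in the preimage of $\q_0$, since $\q_N$ is itself the preimage of $\q_0$ under $R_N \to R_0$.

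For the reverse inclusion I would use the surjection $\prod_N R_N \twoheadrightarrow R^{\prm}$ whose kernel is described by Lemma \ref{lem:lueprodlim}. Each $R_N \in \cC_\cO$ carries a canonical $\cO$-module splitting $R_N = \cO \oplus \q_N$, coming from the fact that the augmentation $h_{U_1(Q_N), \sigma} : R_N \to \cO$ is a section of the $\cO$-algebra structure map. Given $x \in \q^{\prm}$ and a preimage $(y_N) \in \prod_N R_N$, I would decompose $y_N = a_N + b_N$ with $a_N \in \cO$ and $b_N \in \q_N$, so that $(b_N) \in \prod \q_N$ already contributes to the image. The composite $R^{\prm} \to R_0 \to \cO$ is given by the $\varpi$-adic ultrafilter limit of augmentations, so the condition $x \in \q^{\prm}$ is equivalent to the $\varpi$-adic ultrafilter limit of $(a_N) \in \prod \cO$ being $0$. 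The problem therefore reduces to showing that for any $(a_N) \in \bR$ with $\varpi$-adic ultrafilter limit zero, the class $[(a_N)] \in R^{\prm}$ lies in the image of $\prod \q_N$.

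The main step, and the principal obstacle, is to write down explicit elements $(c_N) \in \prod \q_N$ with $[(c_N)] = [(a_N)]$ in $R^{\prm}$; by Lemma \ref{lem:lueprodlim} this amounts to showing $\{N : a_N - c_N \in (\m_{R_N} f_{Q_N})^m\} \in \cF$ for every $m \geq 1$. Here I would use the uniform bound $v_N := \ord_\varpi(f_{Q_N, \sigma}) \leq c$ from Proposition \ref{prop:oldforms} and the decomposition $f_{Q_N} = f_{Q_N, \sigma} + e_N$ with $e_N \in \q_N$. Setting $c_N = 0$ when $a_N = 0$, and for $a_N \neq 0$ taking $m(N) = \lfloor \ord_\varpi(a_N)/(1+c) \rfloor$ and $s_N = a_N / f_{Q_N, \sigma}^{m(N)} \in \varpi^{m(N)} \cO$, define
\[ c_N = -\bigl(f_{Q_N}^{m(N)} - f_{Q_N, \sigma}^{m(N)}\bigr) \cdot s_N. \]
The binomial expansion of $f_{Q_N}^{m(N)} - f_{Q_N, \sigma}^{m(N)}$ has each term divisible by $e_N$, so $c_N \in \q_N$, and a direct computation gives $a_N - c_N = f_{Q_N}^{m(N)} \cdot s_N \in (\m_{R_N} f_{Q_N})^{m(N)}$. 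Since $(a_N)$ has $\varpi$-adic ultrafilter limit $0$, for every $m$ the set $\{N : m(N) \geq m\} = \{N : \ord_\varpi(a_N) \geq m(1+c)\}$ lies in $\cF$, which is enough to conclude.

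The uniform bound $v_N \leq c$ is essential to the construction: without it, one could not guarantee that the exponents $m(N)$ grow fast enough along the ultrafilter to make the approximation $a_N - c_N \in (\m_{R_N} f_{Q_N})^m$ eventually hold for each $m$.
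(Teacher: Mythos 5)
Your proof is correct and follows essentially the same path as the paper's. Both arguments reduce, via the $\cO$-module splitting $R_N = \cO \oplus \q_N$, to lifting a tuple $(a_N) \in \prod_N \cO$ with ultrafilter limit zero to a tuple in the kernel $I$ of $\prod_N R_N \to R^{\prm}$, and both hinge on the uniform bound $\ord_\varpi(f_{Q_N,\sigma}) \le c$ from Proposition \ref{prop:oldforms}. The one cosmetic difference is that where the paper argues abstractly (from $(\ffrm_{R_N} f_{Q_N})^m + \q_N \supset (\varpi^{(c+1)m}) + \q_N$) that a suitable preimage $x_{N,m}$ exists, you write down an explicit formula $x_{N,m} = f_{Q_N}^{m(N)} \cdot (a_N / f_{Q_N,\sigma}^{m(N)})$ and check directly that it lies in $(\m_{R_N} f_{Q_N})^{m(N)}$ and is $\equiv a_N \bmod \q_N$; this is the same element, just made concrete.
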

\begin{proof}
Write $I$ for the kernel of $\prod_{N \ge 1} R_N \to R^{\prm}$ and $I'$ for the 
image of $I$ in $\prod_{N\ge 1} R_N/\q_N= \prod_{N \ge 1} \cO$. It suffices to 
prove that the map $\prod_{N\ge 1}R_N \to R_0$ induces an isomorphism 
\[\left(\prod_{N\ge 1}R_N\right)/ (I,\prod \q_N) = \left(\prod_{N\ge 
1}\cO\right)/ I' \cong R_0/\q_0 
= \cO.\]

The ideal $I$ is the set of elements $(x_N) \in \prod R_N$ 
such that for each $m \ge 1$ there exists $I_m \in \cF$ with $x_N \in 
(\m_{R_N}f_{Q_N})^m$ for all $N \in I_m$. We have $(\ffrm_{R_N} f_{Q_N})^m + \q_N = (\varpi^m f_{Q_N}^m) + \q_N \subset (\varpi^m) + \q_N$. It follows that $I'$ is contained in the kernel of the map  $\prod_{N \geq 1} \cO \to \cO$. We need to show that $I'$ equals the kernel. To this end, choose a tuple of elements $(y_N) \in \prod_{N \geq 1} \cO$ which does lie in the kernel. Recall that there is a constant $c$ 
such that the image of 
$f_{Q_N}$ in 
$R_N/\q_N = \cO$ has $\varpi$-adic 
valuation $\le c$. Let $I_m = \{ N \geq 1 \mid \ord_\varpi y_N \geq m(c+1) \}$. 
Then $I_1 \supset I_2 \supset I_2 \supset \dots$ and $\cap_{m \geq 1} I_m = 
\emptyset$. Moreover, each $I_m$ is in $\cF$ (since $(y_N)$ is in the kernel of 
the map to $\cO$). 

We have $(\ffrm_{R_N} f_{Q_N})^m + \q_N = (\varpi^m f_{Q_N}^m) + \q_N$, and 
this contains $(\varpi^{(c+1)m}) + \q_N$. Therefore we can for each $m \geq 
1$ and $N \in I_m$ find an element $x_{N, m} \in (\ffrm_{R_N} f_{Q_N})^m$ such 
that $x_{N, m} + 
\q_N 
= y_N$. We define a tuple $(x_N)_{N \geq 1} \in \prod_{N \geq 1} R_N$ by taking 
$x_N$ to be an arbitrary pre-image of $y_N$ if $N \not\in I_1$ and $x_N = x_{N, 
m}$ if $N \in I_m - I_{m+1}$. Then $(x_N)$ lies in $I$ and its image in 
$\prod_{N \geq 1} \cO$ equals $(y_N)$, as required.
\end{proof}

\begin{lem}\label{lem:imagepowerideal}
\begin{enumerate} \item We have an equality of ideals $\prod_{N\ge 1}\q_N^m = 
\left(\prod_{N\ge 1}\q_N\right)^m$ in $\prod_{N \ge 1} R_N$.
\item For $m \ge 1$ the image of $\prod_{N\ge 1}\q_N^m$ in $R^{\prm}$ is equal to  
$(\q^{\prm})^m$.
\end{enumerate}
\end{lem}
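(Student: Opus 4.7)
The plan is to prove (1) first, exploiting the fact that the ideals $\q_N$ are generated by a number of elements that is uniform in $N$; part (2) will then follow formally from (1) and Lemma \ref{lem:imageideal}.

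For (1), the inclusion $(\prod_N \q_N)^m \subseteq \prod_N \q_N^m$ is immediate from the definitions. For the reverse inclusion, the crux is a uniform generation statement. Corollary \ref{cor:TWGalois} furnishes, for each $N$, elements $x_1, \dots, x_g \in \q_N$ (coming from $R_\infty$, with $g = nq$) such that $\q_N/(\q_N^2, x_1, \dots, x_g)$ is a quotient of $(\cO/\varpi^d)^{g_0}$, where $d$ and $g_0$ are independent of $N$. Lifting $g_0$ $\cO$-module generators of this quotient to elements $y_1^{(N)}, \dots, y_{g_0}^{(N)} \in \q_N$ and applying Nakayama's lemma to the finitely generated $R_N$-module $\q_N$ (using $\q_N \subseteq \ffrm_{R_N}$), I get a generating set $z_1^{(N)}, \dots, z_{g+g_0}^{(N)}$ of $\q_N$ whose cardinality is independent of $N$. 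Consequently $\q_N^m$ is generated as an $R_N$-module by the $m$-fold products $z_{i_1}^{(N)}\cdots z_{i_m}^{(N)}$ indexed by a set of tuples of size depending only on $m$, $g$ and $g_0$. Given $(a_N)_N \in \prod_N \q_N^m$, I expand $a_N = \sum_{(i_1,\dots,i_m)} c_{N,(i_1,\dots,i_m)}\, z_{i_1}^{(N)}\cdots z_{i_m}^{(N)}$ with coefficients in $R_N$, and then assemble $Z_i = (z_i^{(N)})_N \in \prod_N \q_N$ and $C_{(i_1,\dots,i_m)} = (c_{N,(i_1,\dots,i_m)})_N \in \prod_N R_N$. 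The uniformity of the indexing set makes this a \emph{finite} sum, so
\[ (a_N)_N = \sum_{(i_1,\dots,i_m)} C_{(i_1,\dots,i_m)}\, Z_{i_1}\cdots Z_{i_m} \in \Bigl(\prod_N \q_N\Bigr)^m, \]
as required.

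For (2), let $\pi : \prod_N R_N \to R^{\prm}$ be the canonical surjective ring homomorphism. Surjectivity of $\pi$ implies $\pi(IJ) = \pi(I)\pi(J)$ for any ideals $I, J$ of $\prod_N R_N$, so by induction $\pi\bigl((\prod_N \q_N)^m\bigr) = \pi(\prod_N \q_N)^m$. Combining this with part (1) and Lemma \ref{lem:imageideal} (which identifies $\pi(\prod_N \q_N)$ with $\q^{\prm}$) yields the equality $\pi(\prod_N \q_N^m) = (\q^{\prm})^m$. The essentially only non-formal step in the whole argument is the uniform bound on the number of generators of $\q_N$, which is exactly what Corollary \ref{cor:TWGalois} was engineered to provide; without such uniformity, one could not assemble the local decompositions of the $a_N$ into a single global decomposition, since the number of terms needed could grow with $N$.
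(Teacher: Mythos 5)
Your proof is correct, and it rests on the same crucial observation as the paper's: each $\q_N$ is generated by a number of elements bounded uniformly in $N$, which allows one to assemble the coordinate-wise decompositions of an element of $\prod_N \q_N^m$ into a single global decomposition in $(\prod_N \q_N)^m$. The packaging differs slightly. The paper extracts directly from the proof of Corollary \ref{cor:TWGalois} that $\q_N/\q_N^2$ is generated by $g_0$ elements (so $\q_N$ itself is, by Nakayama), then reduces everything to the single ring $B = \cO\llbracket x_1, \dots, x_{g_0}\rrbracket$ surjecting onto each $R_N$ with the ideal $(x_1, \dots, x_{g_0})$ mapping onto $\q_N$; in that ``universal'' setting it invokes the general facts $I^m \prod_N R = (I\prod_N R)^m$ and $\prod_N I = I\prod_N R$ for a finitely generated ideal $I$ of a ring $R$. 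You instead build a generating set of size $g + g_0$ by combining the patched variables $x_1,\dots,x_g$ with lifts of $\cO$-generators of the finite quotient $\q_N/(\q_N^2, x_1,\dots,x_g)$, and then unfold those abstract identities in place by expanding in monomials; this is a bit longer and uses a few more generators than strictly necessary, but it is logically equivalent. You also spell out the deduction of part (2) via the surjectivity of the map to $R^{\prm}$ together with Lemma \ref{lem:imageideal}, which the paper elides by simply remarking that it suffices to prove part (1).
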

The possibility of proving a statement like this one is mentioned in \cite[Remark 4.6.10]{Lue}.
\begin{proof}
It suffices to prove the first part. Recall from the 
proof of Corollary \ref{cor:TWGalois} that there exists an integer 
$g_0$ such that for every $N$ there exists a surjection 
$\cO\llbracket x_1,\ldots,x_{g_0}\rrbracket\to R_N$ such that the images of the $x_i$ are in 
$\q_N$ 
(since $\q_N/\q_N^2$ can be generated by $g_0$ elements). Now it suffices to 
prove that we have an equality of ideals $\prod_{N\ge 1}(x_1,\ldots,x_{g_0})^m 
= \left(\prod_{N\ge 1}(x_1,\ldots,x_{g_0})\right)^m$ in $\prod_{N\ge 
1}\cO\llbracket x_1,\ldots,x_{g_0} \rrbracket$. We conclude using the fact that for any 
ring $R$ 
and any ideal $I \subset R$ we have $I^m \prod_{N\ge 1}R = \left(I\prod_{N\ge 
	1}R\right)^m$, and we also have $I \prod_{N \ge 1} R = 
\prod_{N\ge 1} I$ when $I$ is finitely generated.
\end{proof}
For the statement of the next proposition, we recall that our data includes, for each $N \geq 1$, a map $R_\infty = \cO \llbracket x_1, \dots, x_g \rrbracket \to R_N$ sending the ideal $\q_\infty = (x_1, \dots, x_g)$ to $\q_N$. The diagonal map $R_\infty \to \prod_{N\ge 1} R_N$ induces a map 
$R_\infty 
\to 
R^{\prm}$ which sends $\q_\infty$ into $\q^{\prm}$. 
\begin{prop}\phantomsection\label{prop:RinftytoRpatched}
	\begin{enumerate}
		\item 	The $\cO$-module
		\[\q^{\prm}/((\q^{\prm})^2,\q_\infty)\] is killed by $\varpi^c$, where 
		$c$ is as in Corollary \ref{cor:TWGalois}. 
		\item The natural map on completed local rings
		\[(R_\infty)~\widehat{}_{\q_\infty} \to 
		(R^{\prm})~\widehat{}_{\q^{\prm}} \]
		is surjective. In particular, 	$(R^{\prm})~\widehat{}_{\q^{\prm}}$ is a complete Noetherian local $E$-algebra with residue field $E$.
	\end{enumerate}
\end{prop}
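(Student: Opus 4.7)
For part (1), I would apply Corollary \ref{cor:TWGalois} directly: it supplies, for each $N$, a homomorphism $R_\infty \to R_N$ such that $\q_N/(\q_N^2, x_1, \dots, x_g)$ is annihilated by $\varpi^c$, with $c$ independent of $N$; equivalently $\varpi^c \q_N \subset \q_N^2 + \q_\infty R_N$ for every $N$. This inclusion is preserved on passing to the product $\prod_N R_N$ (using that $\q_\infty$ is finitely generated, so that $\q_\infty \prod_N R_N = \prod_N \q_\infty R_N$) and then under the surjection $\prod_N R_N \twoheadrightarrow R^{\prm}$. Lemmas \ref{lem:imageideal} and \ref{lem:imagepowerideal} identify the images of $\prod_N \q_N$ and $\prod_N \q_N^2$ with $\q^{\prm}$ and $(\q^{\prm})^2$, so $\varpi^c \q^{\prm} \subset (\q^{\prm})^2 + \q_\infty R^{\prm}$, which is exactly (1).

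For part (2), I would analyse the quotients $R^{\prm}/(\q^{\prm})^m[1/\varpi]$ and then take an inverse limit. Inverting $\varpi$ in (1) gives $\q^{\prm} R^{\prm}[1/\varpi] \subset (\q^{\prm})^2 R^{\prm}[1/\varpi] + \q_\infty R^{\prm}[1/\varpi]$. A short induction on $m$, exploiting $\q_\infty \subset \q^{\prm}$ to absorb mixed terms into higher powers of $\q^{\prm}$, yields
\[ (\q^{\prm})^m R^{\prm}[1/\varpi] \subset (\q^{\prm})^{m+1} R^{\prm}[1/\varpi] + \q_\infty^m R^{\prm}[1/\varpi] \]
for every $m \ge 1$. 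Consequently the associated graded algebra $\bigoplus_m (\q^{\prm})^m/(\q^{\prm})^{m+1}$ after inverting $\varpi$ is generated in degree one by the images of $x_1, \dots, x_g$, and each $R^{\prm}/(\q^{\prm})^m[1/\varpi]$ is a finite-dimensional local Artinian $E$-algebra generated by these (nilpotent) elements.

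To conclude I would identify $R^{\prm}/(\q^{\prm})^m[1/\varpi]$ with $(R^{\prm})_{\q^{\prm}}/\mathfrak{n}^m$, where $\mathfrak{n}$ is the maximal ideal of the localization: any $f \in R^{\prm} \setminus \q^{\prm}$ has the form $u\varpi^k$ plus an element of $\q^{\prm}$ (for $u \in \cO^\times$ and $k \ge 0$), and so becomes a unit after inverting $\varpi$. The composition $R_\infty \to R^{\prm} \to R^{\prm}/(\q^{\prm})^m[1/\varpi]$ factors through a finite quotient of $R_\infty$ (since the target is Artinian with its maximal ideal generated by the images of $x_1,\dots,x_g$), hence extends to a surjection $\widehat{(R_\infty)_{\q_\infty}} = E\llbracket x_1, \dots, x_g \rrbracket \twoheadrightarrow R^{\prm}/(\q^{\prm})^m[1/\varpi]$. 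Passing to the inverse limit over $m$ (Mittag--Leffler is automatic, as each quotient has finite length) gives the desired surjection onto $\widehat{(R^{\prm})_{\q^{\prm}}}$, which is then automatically a Noetherian complete local $E$-algebra with residue field $E$, being a quotient of $E\llbracket x_1, \dots, x_g\rrbracket$. The main subtlety to watch is that $R^{\prm}$ is not a priori Noetherian, so one has to identify the completed localization as an inverse limit of the Artinian $E$-algebras $R^{\prm}/(\q^{\prm})^m[1/\varpi]$ from first principles rather than invoking any structural properties of $(R^{\prm})_{\q^{\prm}}$ itself.
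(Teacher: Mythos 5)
Your proposal is correct, and both parts diverge usefully from the paper's own argument. For part (1), you push the ideal inclusion $\varpi^c \q_N \subset \q_N^2 + \q_\infty R_N$ through the product using the identity $\q_\infty \prod_N R_N = \prod_N \q_\infty R_N$ (valid since $\q_\infty$ is finitely generated), then apply Lemmas \ref{lem:imageideal} and \ref{lem:imagepowerideal} to descend to $R^{\prm}$. The paper instead phrases the problem as comparing the image of the diagonal $\q_\infty/\q_\infty^2$ with the image of $\prod_N \q_\infty/\q_\infty^2$ inside $\q^{\prm}/(\q^{\prm})^2$, and invokes Lemma \ref{easyalgebra} to show these coincide. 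Your route is a genuine shortcut: because $\prod_N \q_\infty R_N$ and the diagonal $\q_\infty$ manifestly generate the same ideal $\q_\infty R^{\prm}$ after the surjection $\prod_N R_N \twoheadrightarrow R^{\prm}$, the comparison the paper establishes via \ref{easyalgebra} becomes automatic, and the auxiliary lemma is not needed for this step. For part (2), your argument is essentially the paper's in different clothing: you analyse $R^{\prm}/(\q^{\prm})^m[1/\varpi]$ via the associated graded and then identify it with $(R^{\prm})_{\q^{\prm}}/\mathfrak{n}^m$, whereas the paper works with the localizations $(R^{\prm}/(\q^{\prm})^i)_{\q^{\prm}}$ from the start and proves surjectivity of the maps $g_i$ directly via a Nakayama-style iteration using nilpotency. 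Both reduce to the same Mittag--Leffler argument.

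One point deserving care in part (2): as stated, the Mittag--Leffler condition is not obviously automatic for the kernels $K_m = \ker(E\llbracket x_1,\dots,x_g\rrbracket \to R^{\prm}/(\q^{\prm})^m[1/\varpi])$ viewed as a decreasing chain of ideals in $E\llbracket x_1,\dots,x_g\rrbracket$ with inclusion transition maps --- a decreasing chain of ideals in a Noetherian ring need not stabilize, which is what ML would require here. The correct way to phrase it (as the paper does) is to factor each map through the finite-length ring $E\llbracket x_1,\dots,x_g\rrbracket/(x_1,\dots,x_g)^m \cong (R_\infty/\q_\infty^m)_{\q_\infty}$, which you in fact note when you observe the composite factors through a finite quotient of $R_\infty$. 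The kernels of $(R_\infty/\q_\infty^m)_{\q_\infty} \to (R^{\prm}/(\q^{\prm})^m)_{\q^{\prm}}$ then live inside Artinian local rings, so ML is genuinely automatic for that system, and $\varprojlim$ of the resulting surjections gives the desired surjection on completions. Your written account gestures at this but should make the finite-length system explicit, since the ML claim is false if read literally for the kernels inside the (non-Artinian) power series ring.
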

\begin{proof}
	It 
	follows from Corollary \ref{cor:TWGalois} that the 
	cokernel of the map 
	\[\prod_{N \ge 1}\q_\infty/(\q_\infty)^2 \to \prod_{N \ge 1}\q_N/(\q_N)^2  
	\] 
	is killed by $\varpi^c$.  Applying Lemmas \ref{lem:imageideal} and 
	\ref{lem:imagepowerideal}, it remains 
	to show that the image of $\q_\infty/(\q_\infty)^2$ in 
	$\q^{\prm}/(\q^{\prm})^2$ is the same as the image of $\prod_{N \ge 
	1}\q_\infty/(\q_\infty)^2$. This is done as in the proof of  
	\cite[Prop.~4.6.16]{Lue}:  it suffices to 
	show that the composition of maps \[\q_\infty/(\q_\infty)^2 \to \prod_{N 
	\ge 
		1}\q_\infty/(\q_\infty)^2 \to \cO \otimes_{\prod_{N\ge 1} \cO} \prod_{N \geq 1} 
		\q_\infty/(\q_\infty)^2  \] is surjective. Here the first map is the 
		diagonal embedding and we regard $\cO$ as a $\prod_N\cO$-algebra via 
		the map $\prod_N\cO\to R^{\prm}/\q^{\prm} \cong \cO$. We conclude using 
		Lemma \ref{easyalgebra}.
		
		This shows the first part of the proposition. For the second, we see that the first part implies that each of the maps
		\[ g_i : (R_\infty / \q_\infty^i)_{\q_\infty} \to (R^\prm / (\q^\prm)^i)_{\q^\prm} \]
		is surjective. To check that $\varprojlim_i g_i$ is surjective, it is enough to note that the sequence $(\ker g_i)_{i \geq 1}$ satisfies the Mittag-Leffler condition (because each of these ideals has finite length, being contained in an Artinian local ring).
\end{proof}
\begin{lem}\label{easyalgebra}
Let $R$ be a commutative ring and $M$ a finitely generated $R$-module. Suppose 
we have a $R$-algebra map $\prod_{N\ge 1} R \overset{\lambda}{\to} R$. Then the 
composite map \[M \to \prod_{N\ge 1}M\to R \otimes_{\prod_{N \geq 1} R} \prod_{N \ge 1} 
M \] is surjective.
\end{lem}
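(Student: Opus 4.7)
The plan is to exploit the fact that $M$ is finitely generated together with the right-exactness of tensor product. Concretely, I would fix a surjection of $R$-modules $R^k \twoheadrightarrow M$ coming from a choice of generators $m_1,\dots,m_k$ of $M$. Since products preserve surjections, applying $\prod_{N\ge 1}$ yields a surjection $\left(\prod_{N\ge 1} R\right)^k \twoheadrightarrow \prod_{N\ge 1} M$, and then tensoring with $R$ over $\prod_{N\ge 1} R$ (using right-exactness) produces a surjection $R^k \twoheadrightarrow R\otimes_{\prod R}\prod M$.

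The second (and only substantive) step is to check that this last surjection factors through the composite $M \to \prod M \to R\otimes_{\prod R}\prod M$ appearing in the statement. Unwinding definitions, the standard basis vector $e_i \in R^k$ is sent to $1\otimes(m_i,m_i,m_i,\dots)$, which is precisely the image of $m_i$ under the diagonal composite. For a general element $(r_1,\dots,r_k)\in R^k$, I would use the fact that $\lambda$ is an $R$-algebra map, so that the diagonal element $(r_i,r_i,\dots) \in \prod R$ maps to $r_i$ under $\lambda$; hence in the tensor product
\[ r_i \otimes (m_i,m_i,\dots) \;=\; 1\otimes (r_i,r_i,\dots)\cdot(m_i,m_i,\dots) \;=\; 1\otimes(r_im_i,r_im_i,\dots). \]
Summing over $i$, one obtains $1\otimes(m',m',\dots)$ where $m' = \sum_i r_i m_i \in M$, which is exactly the image of $m'$ under $M \to R\otimes_{\prod R}\prod M$.

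Since the factoring composite is surjective and factors through $M$, the map $M \to R\otimes_{\prod R}\prod M$ is itself surjective. No step here is really an obstacle — the only thing to be careful about is the manipulation in the tensor product, where one has to remember that the $\prod R$-action on $R$ is via $\lambda$, so that only constant tuples can be moved across the tensor product symbol freely; this is precisely why finite generation of $M$ (which lets one reduce everything to constant tuples of generators) is used in an essential way.
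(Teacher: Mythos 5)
Your proof is correct and follows essentially the same route as the paper: choose a finite presentation $R^k \twoheadrightarrow M$, pass to products, tensor over $\prod R$ using right-exactness, and observe that the resulting surjection $R^k \twoheadrightarrow R\otimes_{\prod R}\prod M$ factors through the composite in the statement. The paper phrases this as ``handle the finite free case, then reduce the general case via a surjection from a free module,'' while you make the factorization explicit by computing $r_i\otimes(m_i,m_i,\dots)=1\otimes(r_im_i,r_im_i,\dots)$ directly; this is a worthwhile unwinding of a step the paper leaves implicit, but it is not a different argument.
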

\begin{proof}
If $M$ is finite free over $R$, then $\prod_{N \ge 1}M$ has a $(\prod_N 
R)$-basis given 
by diagonally embedded basis elements for $M$, and the statement is clear. In 
general, we write $M$ as a quotient of a finite free $R$-module $F$. The 
composition of $F \to  R \otimes_{\prod_{N \geq 1} R} \prod_{N\ge 1} F \to R 
\otimes_{\prod_{N \geq 1} 
R} \prod_{N\ge 1} M$ is surjective and factors through $M$.
\end{proof}

\begin{rem}
	Note that we have not shown that $\q^{\prm}$ is finitely generated, so we 
	rely on the comparison with $R_\infty$ to show that $\q^{\prm}$-adic 
	completion  
	$(R^{\prm})~\widehat{}_{\q^{\prm}}$ is $\q^{\prm}$-adically 
	complete! 
\end{rem}

Now we are going to consider the modules:

\begin{itemize}
	\item $\mrm_1 = (M_1/\ba_{\infty}^2)_{\q^{\prm}}$
	\item $\mrm_0 = (M_0)_{\q^{\prm}}$
	\item $\mrm = M_{\q^{\prm}} = M_{\q_{0}}$.
\end{itemize}

\begin{lem}\phantomsection\label{lem:artinianpatched}\begin{enumerate}
		\item $\mrm_1$ is a finite free 
		$S_{\infty,\ba_{\infty}}/\ba_{\infty}^2$-module. 
		\item The 
trace maps induce  an isomorphism
$\mrm_1/\ba_{\infty} \cong \mrm_0$.
\item The map $\eta$ induces an 
isomorphism $\eta: \mrm \cong \mrm_0$.\end{enumerate}
\end{lem}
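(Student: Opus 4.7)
The plan is to prove the three parts in the order (3), (2), (1), since parts (1) and (2) rely on (3). For (3), I combine Lemma \ref{lem:patchedmodules}(3), which says $\eta: M \to M_0$ has kernel and cokernel killed by $f = (f_{Q_N}^2)_N$, with Proposition \ref{prop:oldforms}(2), which bounds $\ord_\varpi(f_{Q_N,\sigma})$ uniformly in $N$. Since $R^{\prm}_{\q^{\prm}}$ is a local $E$-algebra by Proposition \ref{prop:RinftytoRpatched}(2), in particular $\varpi$ is invertible there, so the image of $f$ in $R^{\prm}_{\q^{\prm}}$ is a unit and $\eta_{\q^{\prm}} : \mrm \to \mrm_0$ is an isomorphism. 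Part (2) then follows by applying the isomorphism $M_1/\ba_\infty \cong M_0$ from Lemma \ref{lem:patchedmodules}(2) and noting that localization at $\q^{\prm}$ commutes with this quotient, since $\ba_\infty \supset \ba_\infty^W S_\infty$ and $\ba_\infty^W$ is the preimage of $\q^{\prm}$ in $S_\infty^W$.

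For (1), the key step is to identify the $S_\infty$-structure on $\mrm_1$. By Lemma \ref{lem:patchedtwlgc}(1), the composite $S_\infty^W \to R^{\prm} \to R_0 \to \cO$ is the canonical surjection $S_\infty^W \to S_\infty^W/\ba_\infty^W = \cO$, so the preimage of $\q^{\prm}$ in $S_\infty^W$ is $\ba_\infty^W$. Now $S_\infty \otimes_{S_\infty^W} E$ is a $q$-fold tensor product of coinvariant algebras $E[y_1,\ldots,y_n]/(e_1,\ldots,e_n)$ over $E$, which is local Artinian (each $y_i$ being nilpotent, since $y_i^n$ lies in the ideal $(e_1,\ldots,e_n)$). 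Hence $\ba_\infty$ is the unique prime of $S_\infty[1/\varpi]$ lying over $\ba_\infty^W$. Combined with the fact that $\varpi \not\in \q^{\prm}$, it follows that $\mrm_1$ is naturally a module over $(S_\infty/\ba_\infty^2)_{\ba_\infty} = S_{\infty,\ba_\infty}/\ba_\infty^2$, and flatness follows by localizing the flatness of $M_1$ over $S_\infty$ from Lemma \ref{lem:patchedmodules}(1).

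To conclude finite freeness, I need to show $\mrm_1$ is finitely generated over the Artinian local ring $S_{\infty,\ba_\infty}/\ba_\infty^2$. By Nakayama, it suffices that $\mrm_1/\ba_\infty \cong \mrm_0 \cong \mrm$ (by (2) and (3)) is finite-dimensional over the residue field $E$. Since $A$ is free of rank $(n!)^q$ over $A^W$ (each factor $\cO[t_1^{\pm 1},\ldots,t_n^{\pm 1}]$ being free of rank $n!$ over $\cO[e_1,\ldots,e_{n-1},e_n^{\pm 1}]$), the module $M \cong S_\lambda(U,\cO)_\ffrm \otimes_{A^W} A$ is finitely generated over $\cO$, so its localization $\mrm = M_{\q_0}$ (in which $\varpi$ is inverted) is a finite-dimensional $E$-vector space.

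The main obstacle is part (1), specifically the bookkeeping required to relate localization at $\q^{\prm}$ (on the $R^{\prm}$-side) to localization at $\ba_\infty$ (on the $S_\infty$-side). Since $S_\infty$ itself does not map into $R^{\prm}$, only its invariant subring $S_\infty^W$ does, this comparison passes through the uniqueness statement about primes of $S_\infty[1/\varpi]$ above $\ba_\infty^W$ which is ensured by the locality of the coinvariant algebra over $E$.
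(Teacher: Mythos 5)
Your proposal is correct and follows essentially the same approach as the paper's proof: part (3) from Lemma \ref{lem:patchedmodules}(3) together with the bounded-valuation statement of Proposition \ref{prop:oldforms}(2) to conclude $f\notin\q^{\prm}$, part (2) by localizing the trace isomorphism, and part (1) by identifying $\ba_\infty$ as the unique prime of $S_\infty$ above $\ba_\infty^W$, localizing the flatness from Lemma \ref{lem:patchedmodules}(1), and using finite-dimensionality of $\mrm_0\cong\mrm$ to get finite generation over the Artinian local ring. The only stylistic variation is that you verify uniqueness of the prime above $\ba_\infty^W$ by passing to the coinvariant algebra over $E$ rather than over $\cO$ (harmless, since $\varpi\notin\q^{\prm}$), and your appeal to "$\varpi$ is invertible" in part (3) is a slight detour — the needed fact is simply that $f$ maps to a nonzero element of $R^{\prm}/\q^{\prm}\cong\cO$, hence lies outside $\q^{\prm}$.
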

\begin{proof}
	We start with the third part: this follows immediately from Lemma 
	\ref{lem:patchedmodules}, since by Proposition \ref{prop:oldforms} the 
	image of $f$ in $R^{\prm}$ is not in 
	$\q^{\prm}$. The second part also follows immediately from Lemma 
	\ref{lem:patchedmodules}. It remains to show the first part. Since the 
	inverse image of $\q^{\prm}$ in $S_{\infty}^W$ is $\ba_{\infty}^W$, the 
	action of 
	$S_\infty$ on $\mrm_1$ factors through the localisation 
	$S_\infty\otimes_{S_\infty^W}(S_\infty^W)_{\ba_\infty^W} = 
	S_{\infty,\ba_{\infty}}$ (note that $\ba_\infty$ is the unique point of 
	$\Spec(S_\infty)$ in the pre-image of $\ba_\infty^W$ under the finite map 
	$\Spec(S_\infty) \to \Spec(S_\infty^W)$). We know from Lemma 
	\ref{lem:patchedmodules} that $M_1/\ba_\infty^2$ is a flat 
	$S_\infty/\ba_\infty^2$-module, so the localisation $\mrm_1$ is a flat 
	$S_\infty/\ba_\infty^2$-module, and hence a flat 
	$S_{\infty,\ba_{\infty}}/\ba_{\infty}^2$-module. Since $\mrm_1/\ba_\infty$ 
	is finite dimensional (combining the second and third parts), $\mrm_1$ is 
	finitely generated over the Artinian local ring 
	$S_{\infty,\ba_{\infty}}/\ba_{\infty}^2$.
\end{proof}

Since $\mrm_1$ is a finite dimensional $E$-vector space, the action of the 
local 
$E$-algebra $(R^{\prm})_{\q^{\prm}}$ factors through an action by (an Artinian 
quotient of) $(R^{\prm})~\widehat{}_{\q^{\prm}}$. It follows from the third 
part of Lemma \ref{lem:artinianpatched} that the action of 
$(R^{\prm})~\widehat{}_{\q^{\prm}}$ on $\mrm_0 \cong \mrm$ factors through the 
composition of surjective maps \begin{equation}\label{patchedRtoT} 
(R^{\prm})~\widehat{}_{\q^{\prm}} \to 
(R_{0})~\widehat{}_{\q_{0}} \to 
(\TT_{\emptyset})_{\q_{0}} = E  \end{equation}

Now we consider again our pseudorepresentations $\gamma_{\infty, j}$ ($1 \leq j 
\leq q$) of 
$\Zp\times\ZZ$ with coefficients in $R^{\prm}$.
\begin{defn}
 For $1 \leq j \leq q$, we let $\delta_j \in R^{\prm}$ denote the discriminant 
 of the characteristic polynomial $\chi_j(t) \in R^{\prm}[t]$ of $(0, 1) \in 
 \Z_p \times \Z$ under the pseudorepresentation $\gamma_{\infty, j}$.
\end{defn}
\begin{lemma}\label{lem:distinctroots}
	For $1 \leq j \leq q$, $\delta_j \notin \q^{\prm}$. Moreover, $\chi_j(t) 
	\text{ mod }\q^\prm$ splits into linear factors in $E[t]$.
\end{lemma}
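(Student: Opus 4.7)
The plan is to compute $\chi_j(t)$ modulo $\q^\prm$ explicitly by tracing the pseudorepresentation $\gamma_{\infty,j}$ through the composite surjection $R^\prm \to R_0 \to \cO$, whose kernel is (by definition of $\q^\prm$) equal to $\q^\prm$. Thus $R^\prm/\q^\prm \cong \cO$, and both assertions will reduce to showing that the image of $\chi_j(t)$ in $\cO[t]$ is the characteristic polynomial of $\rho(\Frob_{\wv_j})$.

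First I would apply Lemma~\ref{lem:patchedtwlgc}(1): composing $\gamma_{\infty,j}$ with $R^\prm \to R_0$ produces an $n$-dimensional pseudorepresentation of $\Z_p \times \Z$ over $R_0$ which is inflated from the projection to the second factor $\Z$. Unwinding the identification of a suitable quotient of $W_{F_{\wv_j}}^{ab}$ with $\Z_p \times \Z$ (coming from our fixed generator of $k(v_j)^\times(p)$ and our fixed uniformizer $\varpi_{v_j}$), the element $(0,1)$ corresponds to a Frobenius element at $\wv_j$. Since $v_j \notin S$, the universal determinant of $R_0 = R_S$ is unramified at $\wv_j$, and pushing forward via $h_{U,\sigma}: R_0 \to \cO$ gives the pseudocharacter $\tr\rho|_{W_{F_{\wv_j}}}$. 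Therefore $\chi_j(t) \bmod \q^\prm \in \cO[t]$ equals the characteristic polynomial of $\rho(\Frob_{\wv_j})$.

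By the defining condition of a Taylor--Wiles datum, this characteristic polynomial factors as $\prod_{i=1}^n (t - \alpha_{\wv_j,i})$ with all $\alpha_{\wv_j,i}$ being distinct elements of $\cO \subset E$. This immediately gives the splitting claim, and shows that the image of $\delta_j$ in $\cO = R^\prm/\q^\prm$ equals $\prod_{i<k}(\alpha_{\wv_j,i}-\alpha_{\wv_j,k})^2 \ne 0$. Hence $\delta_j \notin \q^\prm$, as required.

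There is no real obstacle here: once Lemma~\ref{lem:patchedtwlgc}(1) is in hand, the argument is just an unwinding of identifications. The only mild subtlety is keeping track of the various normalisation conventions (geometric versus arithmetic Frobenius via $\Art_{F_{\wv_j}}$, and the fact that $v_j \notin S$ so that the universal pseudocharacter of $R_S$ really is unramified at $\wv_j$) in order to confirm that $(0,1) \in \Z_p \times \Z$ does indeed map to a Frobenius lift. Once that is checked, the rest is essentially a direct appeal to the Taylor--Wiles hypothesis that $\rho(\Frob_{\wv_j})$ has $n$ distinct $\cO$-valued eigenvalues.
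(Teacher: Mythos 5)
Your plan — compute the image of $\chi_j(t)$ in $R^\prm/\q^\prm\cong\cO$ and read off the conclusion — is the right one, and matches the paper's strategy. But your execution has a genuine gap: you treat $\wv_j$ as a single fixed place of $F$ and assert that the image of $\chi_j(t)$ in $\cO[t]$ is \emph{the} characteristic polynomial of $\rho(\Frob_{\wv_j})$. No such place exists. The pseudorepresentation $\gamma_{\infty,j}$ is a patched object built from the family $(\gamma_{N,j})_{N\geq 1}$, where $\gamma_{N,j}$ is the restriction at the $j$-th element of $Q_N$, and the set $Q_N$ changes with $N$. What one actually gets, as the paper explains, is that for each $m$ the image of $\delta_j$ in $R_0/\ffrm_{R_0}^{m'}$ (for $m'$ large enough that the map to $\cO/\varpi^m$ factors through it) coincides with some $\delta_{j,N}$, the Frobenius discriminant at the $j$-th element of $Q_N$, with $N$ depending on $m$ through the ultrafilter. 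There is no reason the element of $\cO$ obtained in the limit is a Frobenius discriminant.

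This is where your argument for $\delta_j \notin \q^\prm$ fails: a limit through the ultrafilter of nonzero elements of $\cO$ may well be zero, if their $\varpi$-adic valuations are unbounded. The paper circumvents this precisely by using the uniform bound on $\sum_{i\neq j}\ord_\varpi(\alpha_{\wv,i}-\alpha_{\wv,j})$ supplied by Lemma \ref{lem:findTWprimes}: choosing $m>dn(n-1)$ guarantees that each $\delta_{j,N}$ is nonzero modulo $\varpi^m$, hence so is the image of $\delta_j$. Your proposal never invokes this bound, which is the crux of the matter. Likewise, the splitting claim is established in the paper by noting that $\chi_j(t)\bmod\q^\prm$ factors into linear factors modulo $\varpi^m$ for every $m$ (each comparison Frobenius characteristic polynomial does, since Taylor--Wiles places have $\cO$-valued eigenvalues), and then applying Hensel's lemma — not by identifying it outright with a Frobenius characteristic polynomial.
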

\begin{proof}
	To show that $\delta_j \neq 0$, it suffices to show that for some $m \ge 1$ the image of $\delta_j$ under 
	the composition \[R^{\prm} \to R_0 \overset{h_{U, \sigma}}{\to} \cO \to 
	\cO/\varpi^m\] is 
	non-zero. Recall the constant $d$ from Lemma \ref{lem:findTWprimes}. Choose 
	$m > dn(n-1)$. Then it follows from  Lemma \ref{lem:findTWprimes} that we 
	will be done if we can identify the image of $\delta_j$ in $\cO/\varpi^m$ 
	with the image of the discriminant of the characteristic polynomial of a 
	Frobenius element $\sigma_{\wv}$ for some $v \in Q_N$. Choose $m'$ so that 
	our 
	map $R_0 \to \cO/\varpi^m$ factors through 
	$R_0/\m_{R_0}^{m'}$. Now we can identify the image of 
	$\delta_j$ in $R_0/\m_{R_0}^{m'}$ with the image of an 
	element $(\delta_{j,N})_{N \ge 1} \in \prod_{N\ge 
	1}R_0/\m_{R_0}^{m'}$ in $\RR_\cF\otimes_{\RR}\prod_{N\ge 
	1}R_0/\m_{R_0}^{m'}$, where $\delta_{j,N}$ is the image of  
	the discriminant for the Frobenius element at the $j$th element of $Q_N$. 
	We 
	deduce that the image of $\delta_j$ in $R_0/\m_{R_0}^{m'}$ 
	coincides with one of these Frobenius discriminants.
	
	The same argument shows that the image of $\chi_j(t) \text{ mod }\q^\prm$ 
	splits into linear factors in $\cO / \varpi^{m}[t]$ for all $m \geq 1$. 
	Indeed, for each $\sigma_\wv \in G_F$, the characteristic polynomial of $\rho(\sigma_\wv)$ has all of 
	its roots in $\cO$ (this is part of the definition of an enormous subgroup of $\GL_n(\cO)$). Hensel's lemma implies that $\chi_j(t)$ 
	itself factors in $\cO[t]$.
\end{proof} 
For each $j \in \{1,\ldots,q\}$ we fix an ordering 
$x_{j}^{(1)},\ldots,x_{j}^{(n)}$ of the (pairwise distinct) roots in $E$ of the polynomial $\chi_j(t) \text{ mod }\q^\prm$. For each $j$, we may consider the pseudorepresentation
$(\gamma_{\infty, j})_{\q^{\prm}}$ of $\Z_p\times \ZZ$ with coefficients in 
$(R^{\prm})~\widehat{}_{\q^{\prm}}$ given by composing $\gamma_{\infty, j}$ with the 
natural map $R^{\prm} \to (R^{\prm})~\widehat{}_{\q^{\prm}}$. This 
pseudorepresentation is residually multiplicity free.
\begin{lem}\label{lem:twpseudosplit}
	There is a unique collection of continuous characters $\gamma_{j}^{(i)}: \Z_p\times \ZZ \to 
	((R^{\prm})~\widehat{}_{\q^{\prm}})^\times$ ($i = 1, \dots, n$, $j =1 , \dots, q$) such that $\gamma_{j}^{(i)} \mod 
	\q^{\prm}$ is 
	the character $(a,b) \to (x_{j}^{(i)})^b$ and $(\gamma_{\infty, j})_{\q^{\prm}} = 
	\tr \gamma_{j}^{(1)} \oplus \cdots \oplus \gamma_{j}^{(n)}$.
\end{lem}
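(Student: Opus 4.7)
The plan is to use the Cayley--Hamilton algebra machinery of \cite{WWE_pseudo_def_cond} to decompose the pseudorepresentation $T := (\gamma_{\infty,j})_{\q^\prm}$ as a direct sum of characters, exploiting the fact that $G := \Z_p \times \Z$ is abelian and that the residual pseudocharacter $\bar T$ is multiplicity--free. Let $A = (R^\prm)\widehat{}_{\q^\prm}$; by Proposition \ref{prop:RinftytoRpatched}, $A$ is a complete Noetherian local $E$-algebra with residue field $E$. Let $D_T$ be the determinant on $A[G]$ attached to $T$ and let $B = A[G]/\mathrm{CH}(D_T)$ be its Cayley--Hamilton quotient, which is a commutative $A$-algebra (since $G$ is abelian) equipped with a determinant $D_B : B \to A$ of dimension $n$ factoring $D_T$. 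By \cite[Proposition 2.1.7]{WWE_pseudo_def_cond}, $B$ is a finite $A$-module.

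Reducing modulo $\ffrm_A$, Lemma \ref{lem:patchedtwlgc}(1) together with Lemma \ref{lem:distinctroots} shows that $\bar T$ is the sum of the $n$ distinct characters $\bar\chi_i : (a,b) \mapsto (x_j^{(i)})^b$. Consequently $B \otimes_A E$ is a commutative, reduced, Cayley--Hamilton $E$-algebra whose determinant is $\prod_i \bar\chi_i$; by the standard theory of residually multiplicity--free Cayley--Hamilton algebras (see for instance \cite[\S 1.4]{chenevier_det}) it is canonically isomorphic to $E^n$, the $i$-th projection being the character $\bar\chi_i$. Since $A$ is Henselian and $B$ is finite over $A$, the idempotents of $B \otimes_A E$ lift uniquely to orthogonal idempotents in $B$, giving a decomposition $B = \prod_{i=1}^n B_i$ where each $B_i$ is a finite local $A$-algebra with residue field $E$. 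Writing $D_B = \prod_i D_{B_i}$ accordingly, each $D_{B_i}$ is a determinant of dimension $1$ that reduces to the identity on $E$. But the Cayley--Hamilton condition $\mathrm{CH}(D_B) = 0$ in $B$ restricts to $\mathrm{CH}(D_{B_i}) = 0$ in $B_i$, and for a dimension $1$ determinant this forces $b = D_{B_i}(b)$ for every $b \in B_i$; hence each $D_{B_i} : B_i \to A$ is an isomorphism, and $B \cong A^n$.

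Defining $\gamma_j^{(i)} : G \to A^\times$ as the composition of $G \hookrightarrow A[G] \to B \cong A^n$ with the $i$-th projection and the labelling chosen so that $\gamma_j^{(i)} \bmod \q^\prm = \bar\chi_i$, we obtain continuous characters (continuity of each $\gamma_j^{(i)}$ follows from continuity of $\gamma_{\infty,j}$, applied to elementary symmetric polynomials of the $\gamma_j^{(i)}$, and the fact that the $\bar\chi_i(g)$ are pairwise distinct for a topological generator $g$ of the second factor, which lets us separate the characters via Hensel's lemma on small neighbourhoods). By construction $T = \tr\bigl(\gamma_j^{(1)} \oplus \cdots \oplus \gamma_j^{(n)}\bigr)$. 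Uniqueness is immediate: any other such decomposition yields a second isomorphism $B \cong A^n$, but the idempotent decomposition of $B$ is unique, and the labelling is pinned down by the residual characters $\bar\chi_i$ being distinct.

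The main obstacle is the step showing $B \cong A^n$, which requires verifying that a $1$-dimensional Cayley--Hamilton algebra over $A$ is trivial; this is clean because dimension $1$ determinants identify their Cayley--Hamilton quotients with the base ring itself. All other steps are standard once one has Proposition \ref{prop:RinftytoRpatched} ensuring that $A$ is a complete local Noetherian ring, so that Hensel-style idempotent lifting applies.
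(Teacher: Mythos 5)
Your argument is correct and rests on exactly the same machinery the paper invokes: the Cayley--Hamilton/GMA theory for residually multiplicity-free pseudocharacters over a Henselian local ring, together with commutativity of the CH quotient (since $\Z_p\times\Z$ is abelian). The paper compresses this into a one-line citation of \cite[Proposition 1.5.1]{bellaiche_chenevier_pseudobook}, after observing that in a commutative GMA $\mathcal{A}_{i,j}\mathcal{A}_{j,i}=\mathcal{A}_{j,i}\mathcal{A}_{i,j}\subset\mathcal{A}_{i,i}\cap\mathcal{A}_{j,j}=0$; you instead spell out the idempotent lifting and show $B\cong A^n$ directly. One remark: your detour through ``a $1$-dimensional CH algebra is trivial'' involves the assertion that $\mathrm{CH}(D_B)=0$ in $B$ implies $\mathrm{CH}(D_{B_i})=0$ in each factor $B_i$, which is not entirely immediate from the definitions; it is cleaner (and equivalent) to note that the GMA structure theorem already gives $e_iBe_i=A$, and commutativity of $B$ forces $e_iBe_j=e_ie_jB=0$ for $i\neq j$, whence $B=\bigoplus_i e_iBe_i\cong A^n$. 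The continuity of the $\gamma_j^{(i)}$ is addressed somewhat informally in your write-up, but this is precisely what \cite[Proposition 1.5.1]{bellaiche_chenevier_pseudobook} handles and is not a genuine gap.
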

\begin{proof}
This follows from, e.g., \cite[Proposition 
1.5.1]{bellaiche_chenevier_pseudobook}, since in a commutative GMA we have 
(using the notation of \emph{loc.cit.}) $\mathcal{A}_{i,j}\mathcal{A}_{j,i} = 
\mathcal{A}_{j,i}\mathcal{A}_{i,j} \subset \mathcal{A}_{i,i} \cap 
\mathcal{A}_{j,j} = 0$ for $i \ne j$.
\end{proof}
The characters $\gamma_{j}^{(i)}|_{\bZ_p \times 0} : \bZ_p \to ((R^{\prm})~\widehat{}_{\q^{\prm}})^\times$ determine an extension of the homomorphism $S_\infty^W \to R^\prm$ to a homomorphism $S_\infty \to (R^{\prm})~\widehat{}_{\q^{\prm}}$. This 
	in turn	naturally extends to a map from
	the formally smooth $E$-algebra $(S_\infty)~\widehat{}_{\ba_\infty}$ and we 
	choose a lift of this through the surjective map (see Proposition 
	\ref{prop:RinftytoRpatched})	
	\[(R_\infty)~\widehat{}_{\q_\infty} \to 
	(R^{\prm})~\widehat{}_{\q^{\prm}}\] to equip 
	$(R_\infty)~\widehat{}_{\q_\infty}$ with a map from 
	$(S_\infty)~\widehat{}_{\ba_\infty}$.	We denote by $A'$ the localization 
	of $A$ at the prime ideal 
	$(t_j^{(i)}-x_{j}^{(i)}: 1 \le j \le q, 1 \le i \le n)$ and define
\begin{itemize}
	\item $\mrm_1' = \mrm_1 \otimes_{A} A'$
	\item $\mrm_0' = \mrm_0 \otimes_{A} A'$.
\end{itemize}
(We recall that the ring $A$, defined at the beginning of \S \ref{subsec_patching_argument}, is a Laurent polynomial ring in elements $(t_j^{(i)})^{\pm 1}$ ($j = 1, \dots, q$, $i = 1, \dots, n$) which represent the patched version of the Hecke operators $t_{v, i}(\varpi_v)^{\pm 1}$ for Taylor--Wiles primes $v$.)
\begin{remark}
	The above localization is our replacement for the usual `localization with 
	respect to a suitable eigenvalue of the $U_q$ operator' which appears in 
	the Taylor--Wiles method. We can only do this after patching and inverting 
	$p$ because we do not assume that $\rhobar(\sigma_{\wv})$ has distinct 
	eigenvalues for Taylor--Wiles places $v$.
\end{remark}

\begin{lem}
	\begin{enumerate}
	\item For each $i = 1, \dots, n$ and $j = 1, \dots, q$, the respective pushforwards of the characters $\alpha_j^{(i)}$, $\gamma_j^{(i)}$ to $\End(\mrm_1')$ are equal.
			\item The two structures of $S_\infty$-module on $\mrm_1'$ (the standard one, and the one induced by the homomorphism $S_\infty \to (R^{\prm})~\widehat{}_{\q^{\prm}}$ constructed above) are the same.
			\item The map $(R^{\prm})~\widehat{}_{\q^{\prm}} \to 
		(R_{0})~\widehat{}_{\q_{0}}$ factors through the 
		quotient 
		$(R^{\prm})~\widehat{}_{\q^{\prm}}/\ba_\infty$.
			\item The 
	trace maps induce  
	$\mrm_1'/\ba_{\infty} \cong \mrm_0'$.

\item $\mrm_1'$ is a finite free (non-zero)
	$S_{\infty,\ba_{\infty}}/\ba_{\infty}^2$-module. 

\end{enumerate}
\end{lem}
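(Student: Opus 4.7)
The plan is to deduce part (1) from uniqueness of residually multiplicity-free pseudocharacter decompositions (as in Lemma \ref{lem:twpseudosplit}), and then cascade to parts (2)--(5). The main work is in (1); the principal obstacle is arranging the residual decompositions of $\alpha_j$ and $(\gamma_{\infty,j})_{\q^\prm}$ so they can be compared as \emph{ordered} tuples of characters, which is ensured by our fixed ordering of the roots $x_j^{(i)}$ of $\chi_j(t) \bmod \q^\prm$. Specifically, $\alpha_j$ (valued in $S_\infty\otimes_\cO A$) and $(\gamma_{\infty,j})_{\q^\prm}$ (valued in $(R^\prm)~\widehat{}_{\q^\prm}$) push forward to the same pseudocharacter on $\End(\mrm_1')$, by Lemma \ref{lem:patchedtwlgc}(2) (passing to the localizations that define $\mrm_1'$). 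Each is already presented as the trace of a direct sum of $n$ characters---by construction for $\alpha_j$, and by Lemma \ref{lem:twpseudosplit} for $(\gamma_{\infty,j})_{\q^\prm}$---and both residual decompositions (modulo the maximal ideal of the relevant local ring) yield the same ordered tuple $(a,b)\mapsto (x_j^{(i)})^b$: indeed, $t_j^{(i)}\equiv x_j^{(i)}$ in $A'/\ffrm_{A'}$, and $\gamma_j^{(i)}$ was fixed in Lemma \ref{lem:twpseudosplit} to have the same residue. Applying the uniqueness of the residually multiplicity-free decomposition inside the commutative local image ring of $(R^\prm)~\widehat{}_{\q^\prm}\otimes_\cO A'$ acting on $\mrm_1'$ then yields $\alpha_j^{(i)}=\gamma_j^{(i)}$ as characters valued in $\End(\mrm_1')^\times$.

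Parts (2) and (3) follow quickly. For (2), the two $S_\infty$-structures send $y_j^{(i)}$ to $\alpha_j^{(i)}(1,0)-1$ and $\gamma_j^{(i)}(1,0)-1$ respectively, and these coincide by (1). For (3), Lemma \ref{lem:patchedtwlgc}(1) asserts that the pushforward of $\gamma_{\infty,j}$ along $(R^\prm)~\widehat{}_{\q^\prm}\to(R_0)~\widehat{}_{\q_0}$ is inflated from the projection $\Z_p\times\Z\to\Z$. This pushforward remains residually multiplicity-free by Lemma \ref{lem:distinctroots}, so the same uniqueness argument forces each pushed-forward $\gamma_j^{(i)}$ to be itself inflated from $\Z$---in particular, trivial on $\Z_p\times 0$. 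Hence the images of the generators $y_j^{(i)}$ of $\ba_\infty$ in $(R_0)~\widehat{}_{\q_0}$ vanish, so $\ba_\infty$ lies in the kernel, as claimed.

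For (4) and (5) I would use flat base change. For (4), quotient the isomorphism $M_1/\ba_\infty\cong M_0$ from Lemma \ref{lem:patchedmodules}(2) by $\ba_\infty^2$, localize at $\q^\prm$ to obtain $\mrm_1/\ba_\infty\cong\mrm_0$, and tensor with the flat $A$-algebra $A'$. For (5), Lemma \ref{lem:artinianpatched}(1) combined with flatness of $A\to A'$ yields finite freeness of $\mrm_1'$ over $S_{\infty,\ba_\infty}/\ba_\infty^2$; non-vanishing reduces, via (4) and Nakayama, to $\mrm_0'\ne 0$. For the latter I would combine $\mrm_0\cong\mrm$ (Lemma \ref{lem:artinianpatched}(3)) with the description $M=S_\lambda(U,\cO)_\m\otimes_{A^W}A$ to exhibit $\mrm$ as a sum of $A$-module summands indexed by the Weyl-orbit of $(x_j^{(i)})$, of which $A'$ selects the non-zero summand corresponding to the fixed ordering.
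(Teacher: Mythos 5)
Your arguments for parts (1)--(4) are essentially identical to the paper's: you form the commutative Artinian subalgebra of $\End(\mrm_1')$ generated by both families of characters, invoke Lemma~\ref{lem:patchedtwlgc}(2) for the equality of the pushed-forward pseudocharacters, apply the uniqueness statement of \cite[Prop.~1.5.1]{bellaiche_chenevier_pseudobook} after matching the residues $x_j^{(i)}$, and deduce (2)--(4) as corollaries (the paper checks the residual matching slightly differently, via nilpotence of $\alpha_j^{(i)}(0,1)-x_j^{(i)}$ and $\gamma_j^{(i)}(0,1)-x_j^{(i)}$, but the content is the same).

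For part (5), however, there is a step you have elided. Your phrase ``exhibit $\mrm$ as a sum of $A$-module summands indexed by the Weyl-orbit of $(x_j^{(i)})$, of which $A'$ selects the non-zero summand'' already \emph{presupposes} that the $A^W$-action on $\mrm_0\cong\mrm = S_\lambda(U,\cO)_{\q_0}\otimes_{A^W}A$ factors through the point $t_j^{(i)}\mapsto x_j^{(i)}$ of $\Spec A^W$; without this, the localisation at the prime defining $A'$ could annihilate $\mrm_0$ entirely. This is precisely the fact the paper establishes at this juncture, using Lemma~\ref{lem:patchedtwlgc} together with the surjection (\ref{patchedRtoT}) to show that the characteristic polynomial $\prod_i(t-t_j^{(i)})$ of $(0,1)$ under $\alpha_j$ pushes forward to $\prod_i(t-x_j^{(i)})=\chi_j(t)\bmod\q^\prm$ in $\End(\mrm_0)$. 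Note that part (1), which you use to identify $\alpha_j^{(i)}$ and $\gamma_j^{(i)}$, is only available \emph{after} localising to $A'$, so it cannot by itself determine the $A^W$-action on the unlocalised $\mrm_0$. Once this compatibility is inserted, the rest of your argument (Nakayama, flatness of $A\to A'$ in place of the paper's ``direct summand'' observation) goes through.
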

\begin{proof}
Let $X = \{ \alpha_j^{(i)}(z), \gamma_j^{(i)}(z) \mid z \in \bZ_p \times \bZ, i = 1, \dots, n$, $j = 1, \dots, q \}$. By construction, the elements of $X$ commute with each other; let $T$ denote the $E$-subalgebra of $\End(\mrm_1')$ generated by the elements of $X$. Then $T$ is an Artinian $E$-algebra. The pushforwards of the characters $\alpha_j^{(i)}$ and $\gamma_j^{(i)}$ take values in $T$ and the pseudocharacters $\tr \alpha_j^{(1)} \oplus \dots \oplus \alpha_j^{(n)}$ and $\tr \gamma_j^{(i)} \oplus \dots \oplus \gamma_j^{(n)}$ are equal after pushforward to $T$ for each $j = 1, \dots, q$. To show that the characters $\alpha_j^{(i)}$ and $\gamma_j^{(i)}$ are equal after pushforward to $T$ for each $j = 1, \dots, q$ it is enough (after the uniqueness assertion of \cite[Proposition 
1.5.1]{bellaiche_chenevier_pseudobook}) to show that they are equal after pushforward to each residue field of $T$. However, our construction shows that for each $i = 1, \dots, n$ and $j = 1, \dots, q$ the elements $\alpha_j^{(i)}(0, 1) - x_{j}^{(i)}$ and $\gamma_j^{(i)}(0, 1) - x_j^{(i)}$ are commuting nilpotent elements of $\End(\mrm_1')$ and therefore their difference $\alpha_j^{(i)}(0, 1) - \gamma_j^{(i)}(0, 1)$ lies in the Jacobson radical of $T$. This proves the first part of the lemma. The second is an immediate consequence since the two $S_\infty$-module structures are determined by the two sets of characters $\alpha_j^{(i)}$ and $\gamma_j^{(i)}$.

The third part of the lemma is equivalent to the assertion that characters $\gamma_j^{(i)}|_{\bZ_p \times 0}$ become trivial after pushforward along the map $(R^{\prm})~\widehat{}_{\q^{\prm}} \to 
		(R_{0})~\widehat{}_{\q_{0}}$. Since the pseudocharacter $\tr \gamma_j^{(1)} \oplus \dots \oplus  \gamma_j^{(n)}$ is residually multiplicity-free, the desired statement follows  from uniqueness and Lemma \ref{lem:patchedtwlgc}.
		
		The fourth part of the lemma follows from the same statement before localisation to $A'$ 
(Lemma \ref{lem:artinianpatched}). We now prove the final part of the lemma. Since $\mrm_1'$ is a 
direct summand of $\mrm_1$, we just need to prove that $\mrm_1'$ is non-zero, or 
indeed that $\mrm_0'$ is non-zero. For this, we note that it follows 
from Lemma~\ref{lem:patchedtwlgc} (compatibility of Galois and automorphic 
pseudocharacters) and the observation above (\ref{patchedRtoT}) that the 
characteristic polynomial 
$\prod_{i=1}^n 
(t-t_j^{(i)})$ of $(0,1)$ under $\alpha_j$ pushes forwards to 
$\prod_{i=1}^n(t-x_j^{(i)}) = 
\chi_j(t) \text{ mod }\q^\prm$ in $\End(\mrm_0)$.  It follows that $A^W$ acts 
on 
$\mrm_0$ via the map $A^W \to E$ induced by $t_j^{(i)} 
\mapsto x_{j}^{(i)}$. Since the $A$-module $\mrm_0$ is isomorphic to 
$S_\lambda(U,\cO)_{\q_0}\otimes_{A^{W}} A$, we deduce that the localisation 
$\mrm_0'$ is non-zero. 
\end{proof}

To complete the proof of this section's main theorem, we recall Brochard's 
freeness criterion:

\begin{theorem}[Theorem 1.1 of \cite{brochard}]
Let $A \to B$ be a local morphism of Noetherian local rings satisfying the 
inequality on embedding dimensions: \[\mathrm{edim}(B)\le \mathrm{edim}(A).\] 
Let $M$ be a non-zero $A$-flat $B$-module which is finitely generated over $B$. 
Then M is finite free over $B$.
\end{theorem}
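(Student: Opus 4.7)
The plan is to upgrade $B$-flatness of $M$ to $B$-freeness, so the bulk of the work is showing that $M$ is flat over $B$. The reduction to flatness is standard: once $M$ is $B$-flat, one lifts a basis of the $k_B$-vector space $M/\mathfrak{m}_B M$ to elements $u_1, \dots, u_n \in M$; by Nakayama these give a surjection $\pi \colon B^n \twoheadrightarrow M$, and the kernel $K = \ker \pi$ is contained in $\mathfrak{m}_B B^n$ and satisfies $K \otimes_B k_B = \mathrm{Tor}_1^B(M, k_B)$. Flatness kills this $\mathrm{Tor}$, so Nakayama forces $K = 0$. Hence the crux is $\mathrm{Tor}_1^B(k_B, M) = 0$.

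To attack this, I would lift the morphism $A \to B$ through a common regular base using Cohen's structure theorem: choose a regular local ring $R$ together with compatible surjections $R \twoheadrightarrow A$ (with kernel $I_A$) and $R \twoheadrightarrow B$ (with kernel $I_B$), taking $\mathrm{edim}(R)$ as small as possible for $B$ so that $I_B \subset \mathfrak{m}_R^2$. The hypothesis $\mathrm{edim}(B) \le \mathrm{edim}(A)$ then translates into a numerical constraint on the minimal generating sets of $I_A$ and $I_B$ inside $\mathfrak{m}_R$. Flatness of $M$ over $A$ should be leveraged through the $R$-Koszul resolution of $A$: namely $\mathrm{Tor}^R_\bullet(A, M)$ is computable as the homology of a Koszul-type complex, and by $A$-flatness of $M$ it reduces in a clean way to $M$ tensored with $\mathrm{Tor}^R_\bullet(A, B)$.

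The comparison between $\mathrm{Tor}^A$ and $\mathrm{Tor}^B$ I would carry out via the change-of-rings spectral sequence
\[ E_2^{p,q} = \mathrm{Tor}_p^B\bigl(\mathrm{Tor}_q^A(B, k_A),\, M\bigr) \Longrightarrow \mathrm{Tor}_{p+q}^A(k_A, M), \]
whose abutment vanishes in positive degrees by $A$-flatness of $M$. The embedding dimension inequality should then be used to bound the $q > 0$ terms on the $E_2$-page (via the Koszul description of $\mathrm{Tor}^A(B, k_A)$ coming from the relations $I_A \subset I_B$ in $R$), ultimately forcing $\mathrm{Tor}_1^B(\bar{B}, M) = 0$ for $\bar{B} = B \otimes_A k_A$. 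A further devissage along the filtration of $k_B$ by powers of $\mathfrak{m}_{\bar B}$ inside $\bar B$ should propagate this to $\mathrm{Tor}_1^B(k_B, M) = 0$.

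The step I expect to be the main obstacle is precisely this numerical Koszul bookkeeping: the classical Wiles--Lenstra--Diamond criterion handles the case where $A$ is a complete intersection, and there one has enough structure to make a direct Koszul computation succeed. Here $A$ is allowed to be arbitrary Noetherian local, and only the single inequality on embedding dimensions is available. Extracting freeness from this weaker numerical datum without any regularity or complete intersection hypothesis is exactly Brochard's contribution, and I would expect the argument to involve an exchange-type or rank-counting lemma — perhaps an application of the Auslander--Buchsbaum formula together with a careful comparison of $\mathrm{depth}_B M$ and $\mathrm{depth} B$ under flatness over $A$ — rather than a direct resolution-level calculation.
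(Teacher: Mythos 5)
You should note first that the paper does not prove this theorem --- it is quoted from Brochard \cite{brochard} and used as a black box, so there is no proof in the paper to compare against; what follows assesses your sketch as an attempted proof of Brochard's result.

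The reduction of freeness to $\Tor_1^B(k_B, M) = 0$ is correct, and the Cartan--Eilenberg change-of-rings spectral sequence is the right one, but the Cohen-structure step has a genuine gap. You ask for compatible surjections $R \twoheadrightarrow A$ and $R \twoheadrightarrow B$ from a regular local ring $R$ with $I_B \subset \ffrm_R^2$. The latter forces $\dim R = \mathrm{edim}(B)$, and a surjection $R \twoheadrightarrow A$ then forces $\mathrm{edim}(A) \le \dim R = \mathrm{edim}(B)$, so one would need equality of embedding dimensions --- the strict-inequality case is excluded from the outset. Worse, compatibility means $R \to B$ equals $R \to A \to B$; since $R \to B$ is surjective, this forces $A \to B$ itself to be surjective, which is not a hypothesis. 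So the proposed common cover exists only in the special case of a surjection with $\mathrm{edim}(A) = \mathrm{edim}(B)$, and the preliminary reduction to that case (say, by replacing $A$ with a power series ring over $A$ that surjects onto $B$) is not for free: it raises $\mathrm{edim}(A)$, and flatness of $M$ over the enlarged ring must be reproved.

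Beyond this, you candidly flag the remaining difficulty yourself, and you are right that it is not resolved: since $A$ is not assumed to be a complete intersection, the groups $\Tor_q^A(B, k_A)$ are not computed by a Koszul complex, and the higher differentials $d_r$ for $r \geq 2$ can transfer contributions between columns in a way that prevents reading off $E_2^{1,0} = \Tor_1^B(B \otimes_A k_A, M) = 0$ from vanishing of the abutment. This is precisely the step where Brochard's argument departs from the classical Wiles--Lenstra--Diamond complete-intersection picture and does something genuinely new; the closing appeal to Auslander--Buchsbaum and depth comparison is a plausible guess about where such an argument might live, but it is not carried out. You should regard the proposal as a correct reduction to $\Tor$-vanishing followed by an unfinished attempt at the hard step, rather than as a proof.
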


\begin{theorem}
	The map $(R_{0})~\widehat{}_{\q^{0}} \to 
	(\TT_{\emptyset})_{\q_{0}} = E$ is an isomorphism, and as a 
	consequence we have \[H^1_f(F^+,\ad r_{\pi,\iota}) = 0.\]
\end{theorem}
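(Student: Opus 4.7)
The plan is to use Brochard's freeness criterion to identify the image of the patched pseudodeformation ring in $\End(\mrm_1')$ with the Artinian ring $S_{\infty,\ba_\infty}/\ba_\infty^2$, and then to leverage this together with the embedding-dimension bound from Proposition \ref{prop:RinftytoRpatched} to conclude that $(R_0)~\widehat{}_{\q_0} = E$; Proposition \ref{prop_generic_tangent_spaces} then translates this into the desired Selmer vanishing.

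Concretely, set $\Lambda = S_{\infty,\ba_\infty}/\ba_\infty^2$, a local Artinian $E$-algebra of $E$-dimension $g+1$ and embedding dimension $g = nq$, and let $T \subset \End(\mrm_1')$ be the image of $(R^{\prm})~\widehat{}_{\q^{\prm}}$. The chosen lift $(S_\infty)~\widehat{}_{\ba_\infty} \to (R^{\prm})~\widehat{}_{\q^{\prm}}$ factors through $\Lambda$ on the $T$-side (because $\ba_\infty^2$ annihilates $\mrm_1'$), giving a local morphism $\Lambda \to T$; by Proposition \ref{prop:RinftytoRpatched}, $T$ is a quotient of the regular local ring $(R_\infty)~\widehat{}_{\q_\infty}$ and therefore has embedding dimension at most $g$. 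The preceding lemma says that $\mrm_1'$ is a non-zero finite free $\Lambda$-module, so $\mrm_1'$ is $\Lambda$-flat and finitely generated over $T$. Brochard's criterion then yields that $\mrm_1'$ is finite free over $T$. The action of $(R^{\prm})~\widehat{}_{\q^{\prm}}$ on $\mrm_0' = \mrm_1'/\ba_\infty\mrm_1'$ factors through $(R_0)~\widehat{}_{\q_0} \to (\TT_\emptyset)_{\q_0} = E$, which acts by scalars; the $T$-freeness of $\mrm_1'$ makes the induced action of $T/\ba_\infty T$ on $\mrm_0'$ faithful, forcing $T/\ba_\infty T = E$. By Nakayama the morphism $\Lambda \to T$ is surjective, and comparing $\dim_E \mrm_1'$ computed via the two free structures (each equal to $(\dim_E\mrm_0') \cdot \dim_E(\Lambda \text{ or } T)$) gives $\Lambda = T$.

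To pass from $T$ back to the whole patched ring, let $I = \ker((R^{\prm})~\widehat{}_{\q^{\prm}} \to T)$. The identity $T/\ba_\infty T = E$ together with $\ba_\infty^2 \subset I$ gives $\q^{\prm} = \ba_\infty(R^{\prm})~\widehat{}_{\q^{\prm}} + I$ and then $(\q^{\prm})^2 \subset I$. Consequently the natural map $(R^{\prm})~\widehat{}_{\q^{\prm}}/(\q^{\prm})^2 \twoheadrightarrow \Lambda$ is a surjection from a ring of $E$-dimension at most $g+1$ onto one of $E$-dimension $g+1$, hence an isomorphism; so $(R^{\prm})~\widehat{}_{\q^{\prm}}$ has embedding dimension exactly $g$ with $\q^{\prm}/(\q^{\prm})^2$ generated by the image of $\ba_\infty$. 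A final application of Nakayama gives $\q^{\prm} = \ba_\infty(R^{\prm})~\widehat{}_{\q^{\prm}}$, so $(R^{\prm})~\widehat{}_{\q^{\prm}}/\ba_\infty = E$ and $(R_0)~\widehat{}_{\q_0} \cong E$. The vanishing $H^1_f(F^+, \ad r_{\pi,\iota}) = 0$ then follows from Proposition \ref{prop_generic_tangent_spaces} (whose genericity hypothesis at $v \in S$ is furnished by \cite[Theorem 2.1.1]{BLGGT}), since that proposition identifies $H^1_f(F^+, \ad r_{\pi,\iota})$ with $\Hom_\cO(\q_0/\q_0^2, E)$.

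The main obstacle will be passing from the image ring $T$, where Brochard's theorem cleanly gives $T = \Lambda$, to the full patched ring $(R^{\prm})~\widehat{}_{\q^{\prm}}$, whose relation to $(R_0)~\widehat{}_{\q_0}$ is what we actually need. The trick is to use the a priori annihilation $\ba_\infty^2 \subset I$ to upgrade the first-order identification $T = \Lambda$ to the second-order containment $(\q^{\prm})^2 \subset I$, after which the embedding-dimension bound from Proposition \ref{prop:RinftytoRpatched} forces $\q^{\prm} = \ba_\infty(R^{\prm})~\widehat{}_{\q^{\prm}}$.
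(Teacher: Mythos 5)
Your proof is correct, but it diverges from the paper's argument in a meaningful way that is worth flagging. The paper applies Brochard's criterion with $A = S_{\infty,\ba_\infty}/\ba_\infty^2$ and $B = (R_\infty)~\widehat{}_{\q_\infty}/\ba_\infty^2$ (where $R_\infty = \cO\llbracket x_1,\ldots,x_g\rrbracket$), taking $B$ to be a quotient of the power series ring by $\ba_\infty^2$ rather than the image ring $T \subset \End(\mrm_1')$. With that choice, once Brochard gives freeness, $\mrm_0'$ is finite free over $B/\ba_\infty = (R_\infty)~\widehat{}_{\q_\infty}/\ba_\infty$, and the fact that the action factors through $(\TT_\emptyset)_{\q_0}=E$ immediately forces every ring in the chain $(R_\infty)~\widehat{}_{\q_\infty}/\ba_\infty \to (R^{\prm})~\widehat{}_{\q^{\prm}}/\ba_\infty \to (R_0)~\widehat{}_{\q_0} \to E$ to equal $E$. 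Your choice $B = T$ is also valid (it is a quotient of the regular local ring $(R_\infty)~\widehat{}_{\q_\infty}$, hence has embedding dimension $\le g$, and the map $\Lambda \to T$ is local since $\ba_\infty$ is nilpotent in $\Lambda$), but it leaves you on the wrong side of the surjection $(R^{\prm})~\widehat{}_{\q^{\prm}} \twoheadrightarrow T$, which is why you need the extra dimension count via $(\q^{\prm})^2 \subset I$ and the Nakayama step to recover the patched ring. Both arguments use the same inputs (Brochard, the dual $S_\infty$-module structures, the factorization of the action through $E$, and the surjectivity from $R_\infty$), and your bridge from $T$ back to $(R^{\prm})~\widehat{}_{\q^{\prm}}$ is carefully done; the paper's choice of $B$ is simply more economical, since it sandwiches $(R_0)~\widehat{}_{\q_0}$ between two copies of $E$ in one stroke, whereas yours has to rebuild the tangent-space isomorphism from the image ring. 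Either way the endgame — identifying $\Hom_\cO(\q_0/\q_0^2,E)$ with $H^1_f(F^+,\ad r_{\pi,\iota})$ via Proposition \ref{prop_generic_tangent_spaces} — is the same.
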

\begin{proof}
	We apply Brochard's criterion with $A = 
	S_{\infty,\ba_\infty}/\ba_\infty^2$, $B = 
	(R_\infty)~\widehat{}_{\q_\infty}/\ba_\infty^2$, $M = \mrm_1'$. Note that 
	the embedding dimension of $S_{\infty,\ba_\infty}/\ba_\infty^2$ is $qn$ 
	and, since $(R_\infty)~\widehat{}_{\q_\infty}$ is a power 
	series ring over $E$ in $qn$ variables, the embedding dimension of 
	$(R_\infty)~\widehat{}_{\q_\infty}/\ba_\infty^2$ is $\le qn$.
	
	We conclude that $\mrm_1'$ is finite free over 
	$(R_\infty)~\widehat{}_{\q_\infty}/\ba_\infty^2$ and therefore $\mrm_0'$ is 
	finite free over $(R_\infty)~\widehat{}_{\q_\infty}/\ba_\infty$. Since the 
	action of $(R_\infty)~\widehat{}_{\q_\infty}$ on $\mrm_0'$ factors through the 
	action of $(\TT_{\emptyset})_{\q_{0}}$, we deduce that each of the 
	surjective maps \[(R_\infty)~\widehat{}_{\q_\infty}/\ba_\infty\to 
	(R^{\prm})~\widehat{}_{\q^{\prm}}/\ba_\infty \to 
	(R_{0})~\widehat{}_{\q_{0}} \to 
	(\TT_{\emptyset})_{\q_{0}} = E  \] are isomorphisms. The vanishing 
	of the adjoint Selmer group follows from the identification of this with 
	the reduced tangent space of 
	$(R_{0})~\widehat{}_{\q_{0}}$ (i.e.\ Proposition \ref{prop_generic_tangent_spaces}).
\end{proof}

\begin{remark}
	We find it convenient (or amusing) to use Brochard's freeness criterion 
	here, although 
	we could alternatively have worked with the 
	$(S_{\infty})~\widehat{}_{\ba_\infty}$-module $\varprojlim_m\left( 
	(M_1/\ba_\infty^m)_{\q^{\prm}}\right)$ in place of $m_1$ and concluded 
	using Auslander--Buchsbaum as in the work of Diamond and Fujiwara.
\end{remark}

\section{Applications}\label{sec:applications}

We now deduce our main theorems. We begin with a useful lemma.
\begin{lemma}\label{lem_enormousness_under_base_change}
	Let $F$ be a number field, and let $E / \bQ_p$ be a coefficient field. Let $\rho : G_F \to \GL_n(E)$ be a continuous representation which is unramified almost everywhere. Let $S$ be a finite set of places of $F$. Then we can find a finite set $T$ of places of $F$ with the following property:
	\begin{itemize}
		\item $T \cap S = \emptyset$.
		\item For any $T$-split finite extension $F' / F$, $\rho(G_{F'(\zeta_{p^\infty})}) = \rho(G_{F(\zeta_{p^\infty})})$.
	\end{itemize}
\end{lemma}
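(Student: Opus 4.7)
The plan is a Chebotarev density argument applied to a suitable auxiliary Galois extension of $F$. Let $M \subset \overline{F}$ be the fixed field of $\ker \rho$, so that $\Gal(M/F) \cong \rho(G_F)$, set $L_\infty = F(\zeta_{p^\infty})$, $M_1 = M \cdot L_\infty$, and $\Gamma = \Gal(M_1/F)$. Since $\rho$ is unramified almost everywhere, $M_1/F$ is Galois and unramified outside a finite set of places. Write $H = \Gal(M_1/L_\infty) \subset \Gamma$; restriction to $M$ identifies $H$ with $\Gal(M/M \cap L_\infty) = \rho(G_{L_\infty})$, which is the group we wish to preserve.

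The first key observation is that $\Gamma$ is a compact $p$-adic Lie group, and in particular is topologically finitely generated: it embeds via restriction as a closed subgroup of $\rho(G_F) \times \Gal(L_\infty/F)$, the former being a closed subgroup of $\GL_n(E)$ and the latter a closed subgroup of $\Z_p^\times$. The second is that, for any $F'/F$, it suffices to arrange that the image of $G_{F'}$ in $\Gamma$ contains $H$: if $h \in H$ and $g \in G_{F'}$ lifts $h$, then $g$ fixes $L_\infty$ (because $h$ does), so $g \in G_{F'} \cap G_{L_\infty} = G_{F'(\zeta_{p^\infty})}$, and $\rho(g) = h$ under the identification $H \cong \rho(G_{L_\infty})$; the reverse inclusion $\rho(G_{F'(\zeta_{p^\infty})}) \subset \rho(G_{L_\infty})$ is automatic.

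I would choose $T$ so that the stronger condition holds: the image of $G_{F'}$ in $\Gamma$ equals all of $\Gamma$. By Chebotarev density applied to $M_1/F$, Frobenius conjugacy classes are dense in $\Gamma$, and combining this with topological finite generation of $\Gamma$ (via a standard argument passing to a suitable finite quotient, e.g., generated modulo the Frattini subgroup of an open uniform pro-$p$ subgroup) yields finitely many primes $v_1, \ldots, v_k$ of $F$---disjoint from $S$ and from the finite ramification locus of $M_1/F$---whose Frobenius elements $\Frob_{v_i} \in \Gamma$ topologically generate $\Gamma$. Set $T = \{v_1, \ldots, v_k\}$. For any $T$-split $F'/F$ and $v \in T$, the fixed embeddings of decomposition groups give $G_{F_v} \subset G_{F'}$, so $\Frob_v \in G_{F'}$; thus the image of $G_{F'}$ in $\Gamma$ is closed and contains each $\Frob_{v_i}$, and therefore equals $\Gamma$.

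The conceptual step requiring care is the setup: recognizing that one should introduce the compositum $M_1$ (rather than work with $M$ alone, which would miss the $L_\infty$-direction), and verifying that $\Gamma$ is a topologically finitely generated profinite group. Once these structural facts are in hand, the argument reduces to a routine application of Chebotarev density.
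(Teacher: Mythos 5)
Your proof is correct and follows the same essential strategy as the paper's (Chebotarev density plus the fact that the relevant Galois group is a topologically finitely generated compact $p$-adic analytic group), but the implementation differs in a way worth noting. The paper first replaces $\rho$ by $\rho\oplus\epsilon$, reducing the claim to $\rho(G_F)=\rho(G_{F'})$, and then works with the compositum $M$ of extensions cut out by the finitely many continuous finite simple quotients of $\rho(G_F)$, choosing $T$ so that each such quotient sees a nontrivial Frobenius. You instead work directly with $\Gamma = \Gal(ML_\infty/F)$ --- which is the same object after the $\rho\oplus\epsilon$ substitution --- and choose $T$ so that Frobenii generate the finite quotient $\Gamma/\Phi(U)$ for $U$ an open normal uniform pro-$p$ subgroup. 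Your criterion is the sharper one: $H\Phi(U)=\Gamma$ for a closed subgroup $H$ does force $H=\Gamma$ (one checks $H\cap U$ surjects onto $U/\Phi(U)$, so $U\subset H$ by the Burnside basis theorem, and then $H=H\Phi(U)=\Gamma$), whereas ``surjects onto every finite simple quotient'' is not in general a generating criterion for profinite groups (e.g.\ $\langle (12)\rangle\subset S_3$ surjects onto the unique simple quotient $\mathbf{Z}/2$ of $S_3$), so the paper's phrasing as stated needs a small patch of exactly the kind you supply. One step you should make explicit in your final paragraph: Chebotarev controls Frobenius \emph{conjugacy classes}, not specific elements, so to obtain finitely many Frobenius elements $\Frob_{v_i}\in\Gamma$ generating $\Gamma/\Phi(U)$ you should pick primes whose Frobenii meet every conjugacy class of that finite group and invoke the classical fact that a subgroup of a finite group meeting every conjugacy class is the whole group; this is the standard way to close the gap.
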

\begin{proof}
	After replacing $\rho$ by $\rho \oplus \epsilon$, it is enough to show we 
	can choose $T$ so that $\rho(G_F) = \rho(G_{F'})$ if $F' / F$ is $T$-split. 
	Conjugate $\rho$ so that it takes values in $\GL_n(\cO)$, and let $L_\infty 
	/ F$ be the extension cut out by $\rho$, $L_N / F$ the extension cut out by 
	$\rho_N = \rho \text{ mod }\varpi^N$. We have $\rho(G_F) = \varprojlim_N 
	\rho_N(G_F)$, so it's enough to show that we can choose $T$ so that if $F' 
	/ F$ is $T$-split, then $\rho_N(G_F) = \rho_N(G_{F'})$ for all $N \geq 1$.
	
	To this end, we let $M / F$ be the compositum of all of the extensions of $F$ cut out by simple quotients of $\Gal(L_N / F)$ (for any $N \geq 1$). The extension $M / F$ is finite, because simple quotients of $\Gal(L_N / F)$ (for varying $N \geq 1$) correspond to simple quotients of $\rho(G_F)$ by closed normal subgroups. Since $\rho(G_F)$ has a normal, closed subgroup of finite index which is a topologically finitely generated pro-$p$ group, these quotients are finite in number.
	
 We can therefore choose $T$ to be any set disjoint from $S$ and such that for each intermediate field $M / M' / F$ with $\Gal(M' / F)$ simple, there exists $v \in T$ which does not split in $M'$.  
\end{proof}
We prove a theorem for regular 
algebraic, cuspidal, polarized automorphic representations. First we treat the 
case of a CM base field. 
\begin{theorem}\label{thm_vanishing_over_CM_field}
	Let $F$ be a CM number field, and let $(\pi,\chi)$ be a regular 
	algebraic, cuspidal, polarized automorphic representation of 
	$\GL_n(\bA_F)$. Let $\iota : \overline{\bQ}_p \to \bC$ be an isomorphism, 
	and suppose that $r_{\pi, \iota}(G_{F(\zeta_{p^\infty})})$ is enormous. 
	Then $H^1_f(F^+, \ad r_{\pi, \iota}) = 0$.
\end{theorem}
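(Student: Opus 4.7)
The goal is to deduce Theorem \ref{thm_vanishing_over_CM_field} from Theorem \ref{thm_vanishing_of_adjoint_Selmer_group}, whose hypotheses are considerably more restrictive: $F/F^+$ must be everywhere unramified with $[F^+:\bQ]$ even, the polarization character must equal $\delta_{F/F^+}^n$ (i.e.\ unitary type in the strict sense), and $\pi_w$ must admit an Iwahori-fixed vector for every $w\mid p$. The plan is to arrange all of these by passing to a suitable solvable CM extension $F'$ of $F$ and then twisting the base-changed representation by a Hecke character, finally descending to $F^+$ using that the adjoint Selmer group is a $\overline{\bQ}_p$-vector space.

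\textbf{Construction of the auxiliary data.} First I would apply Lemma \ref{lem_enormousness_under_base_change} to $r_{\pi,\iota}$, producing a finite set $T$ of finite places of $F$ (disjoint from the ramification set of $\pi$ and from the $p$-adic places) such that for any $T$-split finite extension $F''/F$ one has $r_{\pi,\iota}(G_{F''(\zeta_{p^\infty})}) = r_{\pi,\iota}(G_{F(\zeta_{p^\infty})})$; in particular the enormous image hypothesis is inherited by any such base change. I would then build a CM extension $F'/F$ which is cyclic and Galois over $F^+$, with: $F'$ split completely at every place of $T$; $F'/(F')^+$ everywhere unramified; $[(F')^+:\bQ]$ even (adjusting by an auxiliary imaginary quadratic extension of $F^+$ if necessary); and $F'$ sufficiently ramified over $F$ at each $p$-adic place so that $\pi_{F',w'}$ acquires an Iwahori-fixed vector for every $w'\mid p$. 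A suitable linear disjointness condition guarantees that the Arthur--Clozel solvable base change $\pi' = \pi_{F'}$ remains cuspidal, regular algebraic, and polarized, with polarization character $\chi\circ \mathbf{N}_{(F')^+/F^+}$. A standard existence result for algebraic Hecke characters of the CM field $F'$ (cf.\ \cite[\S 4]{BLGGT}) then yields $\psi$ such that $\pi'\otimes\psi$ is of unitary type in the strict sense, with polarization character $\delta_{F'/(F')^+}^n$; since twisting by $\psi$ multiplies each $r_{\pi',\iota}(g)$ by a scalar, it leaves the adjoint representation unchanged and preserves the distinctness of eigenvalues of regular elements, hence the enormous property.

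\textbf{Application and descent.} At this point Theorem \ref{thm_vanishing_of_adjoint_Selmer_group} applies to $\pi'\otimes\psi$ over $F'$, giving $H^1_f((F')^+, \ad r_{\pi,\iota}|_{G_{(F')^+}}) = 0$, where the identification uses that the adjoint is insensitive to the scalar twist. I would then descend via restriction-corestriction for the inclusion $G_{(F')^+}\subset G_{F^+}$: the composite $\mathrm{Cor}\circ\Res$ is multiplication by $[(F')^+:F^+]$, which acts invertibly on the $\overline{\bQ}_p$-vector space $H^1_f(F^+, \ad r_{\pi,\iota})$. Since the Bloch--Kato local conditions are compatible with restriction to an open subgroup (unramified cohomology restricts to unramified cohomology at finite places away from $p$, and restrictions of crystalline representations are crystalline), the restriction map carries $H^1_f(F^+, \ad r_{\pi,\iota})$ into the vanishing group $H^1_f((F')^+, \ad r_{\pi,\iota}|_{G_{(F')^+}})$, hence is simultaneously injective and zero, forcing the original Selmer group to vanish.

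\textbf{Main obstacle.} The principal difficulty lies in the simultaneous arrangement carried out in the second step: producing a single CM extension $F'/F$ which achieves everywhere-unramified $F'/(F')^+$, evenness of $[(F')^+:\bQ]$, Iwahori level at all $p$-adic places, splitting at $T$ to preserve the enormous image, and sufficient linear disjointness to preserve cuspidality under solvable base change. Each condition is standard in isolation, but weaving them together — in particular, handling the parity of $[(F')^+:\bQ]$ by an auxiliary imaginary quadratic extension without disturbing the earlier constraints — demands a careful choice. Once such an $F'$ and the associated twist $\psi$ are in hand, everything else reduces to formal bookkeeping with the Selmer group.
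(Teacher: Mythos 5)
Your proposal is correct and follows essentially the same route as the paper: apply Lemma \ref{lem_enormousness_under_base_change} to obtain a set $T$, pass to a $T$-split solvable CM extension arranged (via something like \cite[Lemma 4.1.2]{cht}) to satisfy the hypotheses of Theorem \ref{thm_vanishing_of_adjoint_Selmer_group}, and conclude by injectivity of restriction on Selmer groups. The only cosmetic differences are that the paper twists $\pi$ to unitary type over $F$ once at the outset (following \cite[Theorem 1.2]{blght}) rather than after base change, and that the paper simply invokes injectivity of the restriction map rather than unwinding it as $\mathrm{Cor}\circ\Res$; neither affects the argument.
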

\begin{proof}
	As in the proof of \cite[Theorem 1.2]{blght}, $\pi$ has a twist which is 
	polarized with respect to $\delta_{F / F^+}^n$ (i.e.~of unitary type). The 
	twist does not alter $\ad 
	r_{\pi, \iota}$, so we can assume that $\pi$ is of unitary type. For 
	any finite extension $F' / F^+$, the induced map
	\[ H^1_f(F^+, \ad r_{\pi, \iota}) \to H^1_f(F', \ad r_{\pi, \iota}) \]
	is injective. It is therefore enough to find a soluble totally real extension $L^+ / F^+$ with the following properties:
	\begin{itemize}
		\item Let $L = L^+ F$. Then $r_{\pi, \iota}(G_{L(\zeta_{p^\infty})}) = r_{\pi, \iota}(G_{F(\zeta_{p^\infty})})$.
		\item Let $\pi_L$ denote the base change of $\pi$ (which exists and is regular algebraic, after \cite[Ch. 3, Theorems 4.2, 5.1]{MR1007299}). It is cuspidal, because $r_{\pi, \iota}|_{G_L}$ is irreducible. Each place of $L$ at which $\pi_L$ is ramified, or dividing $p$, is split over $L^+$.
		\item For every place $w$ of $L$, $\pi_{L, w}$ has an Iwahori--fixed vector.
	\end{itemize}
    To achieve this, let $S$ be the set of places of $F^+$ dividing $p$ or above which $\pi$ is ramified, and let $S_F$ denote the set of places of $F$ lying above a place of $S$. Let $T_F$ denote a set as provided by Lemma \ref{lem_enormousness_under_base_change}, disjoint from $S_F$, and let $T$ denote the set of places of $F^+$ lying below a place of $T_F$. Then $S$ and $T$ are disjoint and if $L^+ / F^+$ is $T$-split, then $L / F$ is $T_F$-split. We can choose $L^+ / F^+$ to be any $T$-split soluble totally real extension which has the correct behaviour at the places in $S$, the existence of such an extension being a consequence of \cite[Lemma 4.1.2]{cht}. 
\end{proof}
Next we treat the case of a totally real base field $F$. We consider a regular 
algebraic, cuspidal, polarized automorphic representation $(\pi,\chi)$ of 
$\GL_n(\A_F)$.
Let $\iota : \overline{\bQ}_p \to \bC$ be an isomorphism, and suppose that 
$r_{\pi, \iota}$ is irreducible. Let $V$ denote the space on which $r_{\pi, 
\iota}$ acts. Then there is a unique $G_F$-equivariant pairing $\langle \cdot, 
\cdot \rangle : V \times V \to \epsilon^{1-n} r_{\chi, \iota}$, which is 
symmetric if $n$ is odd or $n$ is even and $r_{\chi, \iota}(c_v) = 1$ ($v | 
\infty$), or antisymmetric if $n$ is even and $r_{\chi, \iota}(c_v) = -1$ (see 
\cite{belchen} and \cite[\S2.1]{BLGGT}). We thus obtain a homomorphism
\[ r_{\pi, \iota} ' : G_F \to \operatorname{GS}(\langle \cdot, \cdot \rangle)(\overline{\bQ}_p) \]
to the general similitude group of the pairing $\langle \cdot, \cdot \rangle$. We write $\mathfrak{gs}$ for the Lie algebra of this reductive group over $\overline{\bQ}_p$. 
\begin{theorem}\label{thm_vanishing_over_real_field}
	Let $F$ be a totally real number field, and let $(\pi, \chi)$ be a regular 
	algebraic, cuspidal, polarized automorphic representation of 
	$\GL_n(\bA_F)$.  Let $\iota : \overline{\bQ}_p \to \bC$ be an isomorphism, 
	and suppose that $r_{\pi, \iota}(G_{F(\zeta_{p^\infty})})$ is enormous. 
	Then $H^1_f(F, \mathfrak{gs}) = 0$.
\end{theorem}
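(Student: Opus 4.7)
The plan is to deduce Theorem~\ref{thm_vanishing_over_real_field} from Theorem~\ref{thm_vanishing_over_CM_field} by quadratic CM base change. Using Lemma~\ref{lem_enormousness_under_base_change} applied to a suitable finite set of auxiliary places, I would first choose an imaginary quadratic field $K$ so that $F' := FK$ is a CM field with maximal totally real subfield $F$, so that $r_{\pi,\iota}|_{G_{F'}}$ is irreducible (hence $\pi_{F'}$ is cuspidal; only finitely many $K$ are excluded, since $r_{\pi,\iota}$ is irreducible and has at most finitely many self-twists), so that $r_{\pi,\iota}(G_{F'(\zeta_{p^\infty})}) = r_{\pi,\iota}(G_{F(\zeta_{p^\infty})})$ remains enormous, and for convenience so that $K$ is split at all $p$-adic places of $F$. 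After possibly multiplying $\chi$ by the quadratic character $\delta_{F'/F}$ to adjust the sign condition at the infinite places of $F$ (which does not affect the polarization of $\pi_{F'}$ since $\delta_{F'/F}\circ\mathbf{N}_{F'/F}$ is trivial), the pair $(\pi_{F'}, \chi)$ is a regular algebraic, cuspidal, polarized (conjugate self-dual) automorphic representation of $\GL_n(\bA_{F'})$. Theorem~\ref{thm_vanishing_over_CM_field} then yields $H^1_f(F, \ad r_{\pi_{F'},\iota}) = 0$, where $G_F$ acts on $\ad r_{\pi_{F'},\iota}$ by conjugation on $G_{F'}$ and by $X \mapsto -X^{*,\mathrm{CM}}$ at $c \in G_F/G_{F'}$, with $*,\mathrm{CM}$ denoting the adjoint for the induced CM pairing.

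The central observation is that $\mathfrak{gs}$ with its conjugation $G_F$-action embeds as a $G_F$-direct summand of $\ad r_{\pi_{F'},\iota}$ with the CM action. Fix a lift $\sigma \in G_F$ of the non-trivial element of $\Gal(F'/F)$. A short computation, using the formula $\langle r(g_1)v, r(g_2)w\rangle = \chi_0(g_2)\langle r(g_2^{-1}g_1)v, w\rangle$ derived from the equivariance of the totally real pairing, shows that the induced CM pairing on $V^c\otimes V$ may be realized by $\langle v, w\rangle_{\mathrm{CM}} := \langle r_{\pi,\iota}(\sigma)^{-1}v, w\rangle$, yielding $X^{*,\mathrm{CM}} = r_{\pi,\iota}(\sigma)\,X^*\,r_{\pi,\iota}(\sigma)^{-1}$ relating the two adjoints. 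Let $\mathfrak{s} := \{X \in \End(V) : X + X^* = 0\}$, the Lie algebra of the special similitude subgroup; then $\mathfrak{gs} = \mathfrak{s} \oplus \bQ_p\cdot I$ as $G_F$-modules with conjugation action. For $X \in \mathfrak{s}$ one has $X^* = -X$, hence
\[ c_{\mathrm{CM}}(X) = -X^{*,\mathrm{CM}} = -r_{\pi,\iota}(\sigma)\,X^*\,r_{\pi,\iota}(\sigma)^{-1} = r_{\pi,\iota}(\sigma)\,X\,r_{\pi,\iota}(\sigma)^{-1} = c_{\mathrm{conj}}(X). \]
Therefore the CM and conjugation $c$-actions agree on $\mathfrak{s}$, and $\mathfrak{s}$ with its conjugation $G_F$-action is identified canonically with the $(+1)$-eigenspace of the CM $c$-action inside $\ad r_{\pi_{F'},\iota}$. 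This gives a $G_F$-direct sum decomposition $\ad r_{\pi_{F'},\iota} = \mathfrak{s} \oplus \mathfrak{s}^\perp$ respecting the Bloch--Kato local conditions (since these depend only on the $G_{F_v}$-module structure, which the decomposition preserves).

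Combining the two previous steps, $H^1_f(F, \mathfrak{s}) \hookrightarrow H^1_f(F, \ad r_{\pi_{F'},\iota}) = 0$, so $H^1_f(F, \mathfrak{s}) = 0$, and the proof of $H^1_f(F, \mathfrak{gs}) = 0$ reduces to the vanishing of $H^1_f(F, \bQ_p)$ for the trivial $G_F$-module. The main obstacle is this last piece: vanishing of $H^1_f(F, \bQ_p)$ for totally real $F$ is Leopoldt's conjecture at $p$, and it does not formally follow from the CM theorem applied above, which instead controls the Selmer group of the non-trivial quadratic character $\chi_{F'/F}$ sitting inside $\mathfrak{s}^\perp$. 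I expect this to be handled either by absorbing the trivial summand into the Taylor--Wiles patching argument of Section~\ref{sec:patching} (so that the proof of Theorem~\ref{thm_vanishing_over_CM_field} in fact delivers this additional vanishing), or by performing a second, independent CM base change so that the two resulting quadratic character vanishings, when combined, force the vanishing of the trivial piece as well.
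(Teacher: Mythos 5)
Your overall strategy -- quadratic CM base change to $F' = FK$, an application of Theorem \ref{thm_vanishing_over_CM_field} there, and a $G_F$-module decomposition of $\ad r_{\pi_{F'},\iota}$ exhibiting the relevant piece of $\mathfrak{gs}$ as a direct summand -- is the right one, and matches what the paper references (the deduction of Theorem B from Theorem A in \cite{All16}). However, two points need attention.

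First, a small but real inaccuracy in the decomposition: $\mathfrak{s}$ is \emph{not} the $(+1)$-eigenspace of the CM $c$-action $c_{\mathrm{CM}} = -(\cdot)^{*,\mathrm{CM}}$. Your own computation shows that on $\mathfrak{s}$ one has $c_{\mathrm{CM}}(X) = \rho(\sigma) X \rho(\sigma)^{-1}$, which is not scalar, so $\mathfrak{s}$ is stable under $c_{\mathrm{CM}}$ but is not an eigenspace of it. What you actually need (and what your computation proves, once you also check that $\mathfrak{p} = \{X : X^* = X\}$ is $c_{\mathrm{CM}}$-stable by a parallel calculation) is that $\ad r_{\pi_{F'},\iota} = \mathfrak{s} \oplus \mathfrak{p}$ is a decomposition into $G_F$-submodules, with the $c_{\mathrm{CM}}$-action on the first factor $\mathfrak{s}$ agreeing with the $G_F$-conjugation action. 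Since the Bloch--Kato local conditions respect $G_{F_v}$-module direct sums, this gives $H^1_f(F,\mathfrak{s}) \hookrightarrow H^1_f(F, \ad r_{\pi_{F'},\iota}) = 0$, which is all you use.

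Second, and more importantly, your worry about the scalar summand is a red herring: the vanishing of $H^1_f(F, \bQ_p)$ for the trivial $G_F$-module has \emph{nothing to do with Leopoldt's conjecture}. A self-extension of the trivial character has both Hodge--Tate weights equal to $0$, and a crystalline representation with all Hodge--Tate weights zero is unramified; equivalently, $H^1_f(F_v, \bQ_p) = H^1_{\mathrm{ur}}(F_v, \bQ_p)$ for every place $v$, including $v\mid p$. Therefore $H^1_f(F, \bQ_p) = \Hom_{\mathrm{cts}}(\Gal(H_F/F), \bQ_p)$, where $H_F$ is the Hilbert class field, and this vanishes because the class group is finite. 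Leopoldt's conjecture concerns $\bZ_p$-extensions ramified at $p$, i.e.\ classes in $H^1$ that are \emph{not} crystalline at $p$; those are excluded by the Bloch--Kato local condition. So there is no gap at all, and neither of the two fallback strategies you propose at the end is needed: $H^1_f(F,\mathfrak{gs}) = H^1_f(F,\mathfrak{s}) \oplus H^1_f(F,\bQ_p) = 0 \oplus 0 = 0$.
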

\begin{proof}
	This can be deduced from Theorem \ref{thm_vanishing_over_CM_field} using base change in the same way that \cite[Theorem B]{All16} is deduced from \cite[Theorem A]{All16}. We omit the details. 
\end{proof}
When $n = 2$, these results take a particularly simple form:
\begin{theorem}\label{thm_vanishing_for_HMF}
	Let $F$ be a totally real number field, and let $\pi$ be a regular algebraic, cuspidal automorphic representation of $\GL_2(\bA_F)$. Let $\iota : \overline{\bQ}_p \to \bC$ be an isomorphism. Suppose that one of the following holds:
	\begin{enumerate}
		\item $\pi$ does not have CM.
		\item $\pi$ has CM by an extension $K / F$, and $K \not\subset F(\zeta_{p^\infty})$.
	\end{enumerate}
	Then $H^1_f(F, \ad r_{\pi, \iota}) = 0$.
\end{theorem}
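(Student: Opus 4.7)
My plan is to reduce both cases to verifying the enormousness hypothesis of Theorem \ref{thm_vanishing_over_real_field}, which I would apply to the polarized pair $(\pi, \omega_\pi)$ (where $\omega_\pi$ is the central character of $\pi$). Since $n = 2$ and the pairing on $V$ coming from $\det$ is symplectic, we have $\GSp_2 = \GL_2$, so $\mathfrak{gs} = \mathfrak{gl}_2 = \ad r_{\pi, \iota}$, and the conclusion $H^1_f(F, \mathfrak{gs}) = 0$ gives exactly the vanishing we want. Thus it suffices, in each case, to show that $H := r_{\pi,\iota}(G_{F(\zeta_{p^\infty})})$ is enormous (after possibly enlarging the coefficient field $E$ so that all characteristic polynomials split).

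For case (1), $\pi$ is non-CM, so by the classical theorem of Gelbart--Jacquet, $\Sym^2 \pi$ is a cuspidal automorphic representation of $\GL_3(\bA_F)$. Example \ref{ex:noncmenormous} then applies with the integer there set to $n=2$ (so $\Sym^{n-1} r_{\pi,\iota} = r_{\pi,\iota}$ itself) and gives directly that $H$ is enormous.

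For case (2), write $r_{\pi, \iota} \cong \Ind_{G_K}^{G_F} \psi$ for a character $\psi$ of $G_K$, so that $r_{\pi,\iota}(G_F)$ is a dihedral subgroup of $\GL_2(\cO)$ whose intersection with the diagonal torus $T$ is the image of $G_K$. The hypothesis $K \not\subset F(\zeta_{p^\infty})$ says exactly that $G_{F(\zeta_{p^\infty})} \not\subset G_K$, so $H$ contains both diagonal elements and ``Weyl-swap'' elements of the form $h = \bigl(\begin{smallmatrix} 0 & a \\ b & 0 \end{smallmatrix}\bigr)$ with $ab \ne 0$. To verify enormousness, I would use the $H$-stable decomposition of $W_E = M_2(E)$ as $V_{\mathrm{diag}} \oplus V_{\mathrm{off}}$. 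For any simple $E[H]$-submodule $V \subset V_{\mathrm{diag}}$ (spanned by either $I$ or $\diag(1,-1)$), any diagonal $h \in H$ with distinct eigenvalues works: $e_{h,\alpha} = \diag(1,0)$ and $\tr(e_{h,\alpha} V) \ne 0$. For a simple $E[H]$-submodule $V \subset V_{\mathrm{off}}$, spanned by elements $c E_{12} + d E_{21}$, I would use a Weyl-swap element $h = \bigl(\begin{smallmatrix} 0 & a \\ b & 0 \end{smallmatrix}\bigr) \in H$, for which a direct computation gives $\tr(e_{h,\lambda}(cE_{12} + dE_{21})) = (ad + bc)/(2\lambda)$; by varying $h$ through $T\sigma$ for $\sigma$ a fixed Weyl-swap, the ratio $a/b$ ranges through an infinite subset of $E^\times$ (since $\pi$ has infinite-order HT-regular Hecke eigenvalues), so we can ensure $ad + bc \ne 0$.

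The main obstacle is the case analysis in paragraph 3 for the structure of simple submodules of $V_{\mathrm{off}}$: depending on whether $(\psi/\psi^c)^2$ is trivial on $H \cap T$, the space $V_{\mathrm{off}}$ may be a simple 2-dimensional $H$-module or split into two 1-dimensional $H$-submodules. In the latter case one must check that neither of the 1-dimensional pieces is spanned by a pure $E_{12}$ or $E_{21}$ (otherwise the Weyl swap would not preserve it, a contradiction), so that $c, d \ne 0$ and the Weyl-swap trace computation goes through after choosing $h$ with $a/b \ne -c/d$.
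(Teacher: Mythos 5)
Your proposal follows essentially the same route as the paper: reduce to the enormousness hypothesis of Theorem \ref{thm_vanishing_over_real_field} and treat the non-CM and CM cases separately, with the non-CM case being precisely Example \ref{ex:noncmenormous}. Your treatment of the dihedral image in case (2) is more explicit than the paper's (which simply asserts that it suffices to find regular semisimple elements on both sides of the torus normalizer), and your computation of $\tr\left(e_{h,\lambda}(cE_{12}+dE_{21})\right) = (ad+bc)/(2\lambda)$ is correct. The observation that a $1$-dimensional $H$-stable line in $V_{\mathrm{off}}$ cannot be $E_{12}$ or $E_{21}$ is also correct.

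However, there is a genuine gap in your justification of the key finiteness claim. You assert that as $h$ ranges over the Weyl-swap elements of $H$, the ratio $a/b$ ranges over an infinite subset of $E^\times$, ``since $\pi$ has infinite-order HT-regular Hecke eigenvalues.'' The ratios $a/b$ that are actually available are $(a_0/b_0)\cdot\psi'(G_{K(\zeta_{p^\infty})})$ where $\psi' = \psi/\psi^c$, so what you need is that $\psi'|_{G_{K(\zeta_{p^\infty})}}$ has image of size $\geq 2$ (indeed your paragraph on $V_{\mathrm{diag}}$ already presupposes, without comment, a diagonal element of $H$ with distinct eigenvalues, i.e., nontriviality of $\psi'$ on $G_{K(\zeta_{p^\infty})}$). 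Regular algebraicity only gives that $\psi'$ has infinite order on all of $G_K$; a priori its restriction to the subgroup $G_{K(\zeta_{p^\infty})}$ could be trivial. The missing input is the polarization argument used by the paper: if $\psi'$ is trivial on $G_{K(\zeta_{p^\infty})}$ then it factors through $\Gal(K(\zeta_{p^\infty})/K)$, on which $c$ acts trivially, so $\psi' = (\psi')^c = (\psi')^{-1}$ and hence $(\psi')^2 = 1$, contradicting the infinite order. (The same computation shows more generally that $\psi'$ cannot have finite image on $G_{K(\zeta_{p^\infty})}$.) Without this step the argument for both $V_{\mathrm{diag}}$ and the split $V_{\mathrm{off}}$ subcase breaks down.
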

\begin{proof}
	When $n = 2$, $\mathfrak{gs} = \mathfrak{gl}_2$. Our result will follow 
	from Theorem \ref{thm_vanishing_over_real_field} if we can verify that our 
	hypotheses imply that $r_{\pi, \iota}(G_{F(\zeta_{p^\infty})})$ is 
	enormous. If $\pi$ does not have CM, this is example 
	\ref{ex:noncmenormous}. 
	
	Suppose instead that $\pi$ has CM by a CM quadratic extension $K / F$, and 
	$K$ is not contained in $F(\zeta_{p^\infty})$. To show that $r_{\pi, 
	\iota}(G_{F(\zeta_{p^\infty})})$ is enormous, it is enough to show that we 
	can find regular semisimple elements in the image of both 
	$G_{K(\zeta_{p^\infty})}$ and 
	$G_{F(\zeta_{p^\infty})} - G_{K(\zeta_{p^\infty})}$. Elements of the latter type exist 
	because of our assumption that $K$ is not contained in 
	$F(\zeta_{p^\infty})$.
	
	Now suppose for a contradiction that $r_{\pi, \iota} \cong \Ind_{G_K}^{G_F} 
	\chi$ is scalar on restriction to $G_{K(\zeta_{p^\infty})}$. This implies 
	that $\chi / \chi^c$ is trivial on $G_{K(\zeta_{p^\infty})}$, and hence 
	that $(\chi / \chi^c)^2 = 1$ (since $c$ acts trivially on 
	$\Gal(K(\zeta_{p^\infty}) / K)$). This contradicts the fact that $\chi / 
	\chi^c$ has infinite order (because its Hodge--Tate weights are all 
	non-zero, because $\pi$ is regular algebraic). This completes the proof.
\end{proof}
We can also prove results for elliptic curves.
\begin{theorem}
	Let $F$ be a totally real number field, and let $E$ be an elliptic curve over $F$. Let $p$ be a prime, and suppose that one of the following holds:
	\begin{enumerate}
		\item $E$ does not have CM.
		\item $E$ has CM by a quadratic field $K / \bQ$, and $K \not\subset F(\zeta_{p^\infty})$.
	\end{enumerate}
	Then $H^1_f(F, \ad V_p(E)) = 0$.
\end{theorem}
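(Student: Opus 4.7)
The plan is to deduce this from Theorem~\ref{thm_vanishing_for_HMF} by exhibiting $V_p(E)$ (up to a character twist) as the Galois representation associated to a Hilbert modular form $\pi$ on $\GL_2(\bA_F)$, passing through a suitable totally real base change in the non-CM case. The key observation is that twisting by a character does not change the adjoint representation, so the conclusion is insensitive to such twists.

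In the CM case, the hypothesis $K \not\subset F(\zeta_{p^\infty})$ in particular forces $K \not\subset F$, and $L := KF$ is a CM quadratic extension of $F$. Classical CM theory produces a $p$-adic Hecke character $\psi$ of $G_L$ with $V_p(E) \cong \Ind_{G_L}^{G_F} \psi$, and automorphic induction yields a regular algebraic cuspidal automorphic representation $\pi$ of $\GL_2(\bA_F)$ with CM by $L$ whose associated Galois representation is a twist of $V_p(E)$. Since $K \subset L$, the hypothesis on $K$ gives $L \not\subset F(\zeta_{p^\infty})$, so Theorem~\ref{thm_vanishing_for_HMF}(2) applies directly and gives the vanishing.

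In the non-CM case, I would appeal to potential modularity for elliptic curves over totally real fields to produce a finite Galois totally real extension $F'/F$, which can be arranged to be linearly disjoint from $F(\zeta_{p^\infty})$ over $F$, together with a non-CM regular algebraic cuspidal automorphic representation $\pi'$ of $\GL_2(\bA_{F'})$ whose associated Galois representation is a twist of $V_p(E_{F'})$. Theorem~\ref{thm_vanishing_for_HMF}(1) then gives
\[ H^1_f(F', \ad V_p(E_{F'})) = 0, \]
and a standard restriction--corestriction argument for the finite extension $F'/F$ shows that $H^1_f(F, \ad V_p(E))$ injects into this group (its kernel is annihilated by $[F':F]$ and hence vanishes in a $\bQ_p$-vector space), so it vanishes as well. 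The main obstacle is essentially bookkeeping: matching the precise twists relating $r_{\pi', \iota}$ to $V_p(E_{F'})$, verifying that $\pi'$ is non-CM (which follows because $E_{F'}$ remains non-CM), and arranging the disjointness condition on $F'$ using the flexibility of auxiliary primes in the potential modularity argument.
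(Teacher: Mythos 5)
Your proposal follows essentially the same route as the paper: in the CM case apply Theorem~\ref{thm_vanishing_for_HMF}(2) directly over $F$ (with $\pi$ the automorphic induction from $L = KF$, and $K \not\subset F(\zeta_{p^\infty})$ giving $L \not\subset F(\zeta_{p^\infty})$); in the non-CM case use potential modularity over a totally real $F'$, apply Theorem~\ref{thm_vanishing_for_HMF}(1) there, and conclude by injectivity of restriction on $\bQ_p$-coefficient Selmer groups. One small remark: the requirement you impose that $F'$ be linearly disjoint from $F(\zeta_{p^\infty})$ is superfluous in the non-CM case, since part (1) of Theorem~\ref{thm_vanishing_for_HMF} needs only that $\pi'$ be non-CM, which already holds because $E_{F'}$ is non-CM; this extra condition is harmless but unnecessary.
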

\begin{proof}
	If the elliptic curve $E$ has CM, then its $p$-adic Galois representations are automorphic and we can appeal to Theorem \ref{thm_vanishing_for_HMF}. If $E$ does not have CM, then there exists a totally real extension $F' / F$ such that the $p$-adic Galois representations of $E_{F'}$ are automorphic (for example, by \cite{tay-fm2}) and we can appeal again to the same theorem. 
\end{proof}
Combining our results with potential automorphy theorems, we can prove some 
more general vanishing results. Here is an example for symmetric powers of 
two-dimensional representions.
\begin{theorem}\label{thm_vanishing_for_Sym}
	Let $F$ be a CM number field, and let $(\pi,\chi)$ be a regular algebraic, 
	cuspidal, polarized automorphic representation of $\GL_2(\bA_F)$ such that 
	$\Sym^2 \pi$ is cuspidal. Let $p$ be a prime, and fix an isomorphism $\iota 
	: \overline{\bQ}_p \to \bC$. Then for any $n \geq 1$, $H^1_f(F^+, \ad 
	\Sym^{n-1} r_{\pi, \iota}) = 0$.
\end{theorem}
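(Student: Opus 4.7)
The plan is to deduce this from Theorem~\ref{thm_vanishing_over_CM_field} via potential automorphy, exactly as Theorem~\ref{thm:intro2} for $\GL_2$ was reduced to the case of an automorphic base. Set $\rho = \Sym^{n-1} r_{\pi, \iota}$. After enlarging the coefficient field $E$, we choose a lattice so that $\rho : G_F \to \GL_n(\cO)$. Then $\rho$ is continuous, polarized (with a multiplier obtained as an explicit twist of $r_{\chi,\iota}^{n-1}$ by a power of $\epsilon$), regular with distinct Hodge--Tate weights, and, since $\Sym^2 \pi$ is cuspidal, Example~\ref{ex:noncmenormous} tells us that $\rho(G_{F(\zeta_{p^\infty})})$ is enormous. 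In particular $\rho$ is absolutely irreducible.

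First I would apply Lemma~\ref{lem_enormousness_under_base_change} to $\rho$ to produce a finite set $T$ of finite places of $F$, disjoint from the places of ramification of $\rho$, such that for every $T$-split finite extension $F'/F$ one has $\rho(G_{F'(\zeta_{p^\infty})}) = \rho(G_{F(\zeta_{p^\infty})})$, which remains enormous. Next I would appeal to potential automorphy for polarized $p$-adic Galois representations, e.g.\ \cite[Theorem~4.5.1]{BLGGT}, to produce a $T$-split soluble CM extension $F'/F$ and a regular algebraic, cuspidal, polarized automorphic representation $(\Pi,\chi')$ of $\GL_n(\bA_{F'})$ with $r_{\Pi,\iota} \cong \rho|_{G_{F'}}$. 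Since $\rho|_{G_{F'(\zeta_{p^\infty})}}$ has enormous image (in particular is absolutely irreducible), the hypotheses of Theorem~\ref{thm_vanishing_over_CM_field} are satisfied for $(\Pi, \chi')$, yielding
\[ H^1_f(F'^+, \ad \rho|_{G_{F'}}) = 0. \]
Finally, since $F'^+/F^+$ is a finite extension and $\ad \rho$ is an $E$-vector space, the restriction map $H^1_f(F^+, \ad \rho) \to H^1_f(F'^+, \ad \rho)$ is injective (the kernel is a subquotient of $H^1(\Gal(F'^+/F^+), (\ad \rho)^{G_{F'^+}})$, which vanishes in characteristic zero), and the desired vanishing follows.

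The main obstacle is ensuring that the potential automorphy theorem we invoke applies without extra hypotheses on $p$, since Theorem~\ref{thm_vanishing_for_Sym} imposes no restriction on the prime. The classical statements in \cite{BLGGT} require hypotheses such as $p > 2(n+1)$ and an adequacy/big-image assumption on $\overline{\rho}|_{G_{F'}}$, which is exactly the kind of restriction the whole present paper is designed to remove. In practice one handles small primes by a preliminary comparison: pick an auxiliary prime $\ell$ for which $p > 2(n+1)$ and one has enough control, apply the argument to an $\ell$-adic representation in the compatible system of $\Sym^{n-1}\pi$, and then use compatibility of the Bloch--Kato Selmer conditions across primes (via the Tate twist and comparison of the local conditions at $v \mid p$ and $v \mid \ell$). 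Verifying this cross-prime step carefully is where most of the remaining work lies; once it is in place, the reduction above is essentially formal given the tools already set up in the paper.
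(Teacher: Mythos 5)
Your overall strategy — potentially automorphize $\Sym^{n-1}r_{\pi,\iota}$ over a CM extension $F'/F$ and then apply Theorem~\ref{thm_vanishing_over_CM_field} — is the same as the paper's, and the first two thirds of your argument are fine. The problem is the choice of potential automorphy theorem. You invoke \cite[Theorem~4.5.1]{BLGGT}, which is the single-representation potential automorphy theorem and genuinely does carry hypotheses on $p$ and on $\overline{\rho}$ (adequacy, $\zeta_p \notin \overline{F}^{\ker\ad\overline{\rho}}$, etc.). The paper instead cites \cite[Theorem~5.4.1]{BLGGT}, which is the potential automorphy theorem \emph{for symmetric powers of the compatible system attached to a regular algebraic, cuspidal, polarized, non-CM automorphic representation of $\GL_2$}. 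That theorem has \emph{no} restriction on the prime $p$: internally it switches to an auxiliary prime $\ell$ where the residual image is large, proves automorphy of $\Sym^{n-1}\pi$ over some $F'$, and automorphy is an assertion about the automorphic representation, hence prime-independent. So the potential automorphy step already hands you the conclusion for the specific $p$ you started with, and the ``main obstacle'' you identify disappears.

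Your proposed workaround — prove $H^1_f = 0$ for the $\ell$-adic realization at a good auxiliary prime $\ell$ and transfer the vanishing to $p$ via ``compatibility of the Bloch--Kato Selmer conditions across primes'' — does not work. Vanishing of the Bloch--Kato Selmer group is \emph{not} known to be independent of the prime in a compatible system; it is predicted by Bloch--Kato/Fontaine--Perrin-Riou but establishing it is precisely the content of Theorem~\ref{thm_intro_main_theorem}, which must be proved prime by prime. The correct place to switch primes is \emph{inside} the potential automorphy argument (to prove automorphy), not when passing between Selmer groups. A smaller remark: because \cite[Theorem~5.4.1]{BLGGT} gives no control on the extension $F'$ (in particular no disjointness from a chosen set $T$), the use of Lemma~\ref{lem_enormousness_under_base_change} is not available; instead one checks enormousness over $F'(\zeta_{p^\infty})$ directly via Example~\ref{ex:noncmenormous}, using the fact that the Zariski closure of $r_{\pi,\iota}(G_{F'(\zeta_{p^\infty})})$ still contains $\SL_2$ (finite-index subgroups have the same identity component of Zariski closure), so Lemma~\ref{lem_alg_implies_enormous} applies.
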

\begin{proof}
	By \cite[Theorem 5.4.1]{BLGGT}, there exists a Galois, CM extension $F' / 
	F$ such that $\Sym^{n-1} r_{\pi, \iota}|_{G_{F'}}$ is automorphic. It 
	suffices to show the vanishing of $H^1_f((F')^+, \ad \Sym^{n-1} r_{\pi, 
	\iota})$, and this follows from Theorem \ref{thm_vanishing_over_CM_field} 
	once we verify that $\Sym^{n-1} r_{\pi, \iota}(G_{F'(\zeta_{p^\infty})})$ 
	is enormous. However, this follows from Example \ref{ex:noncmenormous}.
\end{proof}

Finally, we give an application to vanishing results for  anticyclotomic 
characters, as predicted by the Bloch--Kato conjecture. Over a general 
CM field our main theorem gives vanishing results which 
are not covered by known cases of the anticyclotomic main conjecture (cf. those 
proved in \cite{Hid09}). 
\begin{theorem}
	Let $F$ be a CM number field, and let $\chi : F^\times \backslash \bA_F^\times \to \bC^\times$ be a unitary character of type $A_0$. Let $\iota : \overline{\bQ}_p \to \bC$ be an isomorphism, and suppose that the following conditions are satisfied:
	\begin{enumerate}
		\item $\chi \chi^c = 1$.
		\item The integers $n_\tau$ \textup{(}$\tau \in \Hom(F, \bC)$\textup{)} 
		defined by $\chi_v(z) = \tau(z)^{n_\tau} \tau c(z)^{n_{\tau c}}$ for 
		each place $v|\infty$ are of constant parity, and none of them are 
		zero. 
		\item $F \not\subset F^+(\zeta_{p^\infty})$.
	\end{enumerate}
	Then $H^1_f(F, r_{\chi, \iota}) = 0$.
\end{theorem}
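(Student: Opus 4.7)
I would realize $r_{\chi,\iota}$, after induction to $G_{F^+}$, as a direct summand of the adjoint of a suitable CM Hilbert modular form on $\mathrm{GL}_2(\mathbf{A}_{F^+})$, and then apply Theorem~\ref{thm_vanishing_for_HMF}.

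First, I would construct an algebraic Hecke character $\eta : F^\times \backslash \mathbf{A}_F^\times \to \mathbf{C}^\times$ of type $A_0$ satisfying $\eta/\eta^c = \chi$. Hypothesis (1) makes the equation $\eta/\eta^c = \chi$ solvable at the level of infinity types, and the constant parity of the integers $n_\tau$ (hypothesis (2)) permits the choice $m_\tau = (n_\tau + k)/2$, $m_{\tau c} = (k - n_\tau)/2$ for a common integer $k$ of the shared parity, producing an integer infinity type $(m_\tau)$ with $m_\tau - m_{\tau c} = n_\tau$ and constant $m_\tau + m_{\tau c} = k$, the numerical compatibility needed for the central character of a cohomological Hilbert modular form. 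The nonvanishing of each $n_\tau$ forces $m_\tau \ne m_{\tau c}$, hence $\eta \ne \eta^c$. A possible finite 2-torsion obstruction to promoting this infinity type to a global Hecke character of $F$ realising the correct finite part of $\chi$ (the cokernel of the map $\eta \mapsto \eta/\eta^c$ between idele class character groups) can be removed by passing to a carefully chosen soluble CM extension $F'/F$ still satisfying $F' \not\subset (F')^+(\zeta_{p^\infty})$; since restriction on Bloch--Kato Selmer groups with $\Qpbar$-coefficients is injective, vanishing of $H^1_f(F', r_{\chi,\iota}|_{G_{F'}})$ implies vanishing of $H^1_f(F, r_{\chi,\iota})$.

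Given such an $\eta$, form $\pi = \mathrm{AI}_F^{F^+}(\eta)$, the automorphic induction to $\mathrm{GL}_2(\mathbf{A}_{F^+})$. Then $\pi$ is a cuspidal, regular algebraic Hilbert modular form with CM by $F$, and its associated Galois representation satisfies $r_{\pi,\iota} \cong \mathrm{Ind}_{G_F}^{G_{F^+}} r_{\eta,\iota}$. A Mackey computation using $\chi = \eta/\eta^c$ yields, as $G_{F^+}$-representations,
\[ \mathrm{ad}\, r_{\pi,\iota} \cong \mathbf{Q}_p \oplus \delta_{F/F^+} \oplus \mathrm{Ind}_{G_F}^{G_{F^+}} r_{\chi,\iota}. \]
By Shapiro's lemma, $H^1_f(F^+, \mathrm{Ind}_{G_F}^{G_{F^+}} r_{\chi,\iota}) = H^1_f(F, r_{\chi,\iota})$, and this is a direct summand of $H^1_f(F^+, \mathrm{ad}\, r_{\pi,\iota})$.

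Hypothesis (3) is exactly the CM-field hypothesis of Theorem~\ref{thm_vanishing_for_HMF}, which then gives $H^1_f(F^+, \mathrm{ad}\, r_{\pi,\iota}) = 0$, and the desired vanishing $H^1_f(F, r_{\chi,\iota}) = 0$ follows. The main obstacle is the first step: producing (possibly only after a controlled soluble base change) an algebraic Hecke character $\eta$ that simultaneously solves $\eta/\eta^c = \chi$ globally and has an infinity type making $\pi = \mathrm{AI}_F^{F^+}(\eta)$ a cohomological Hilbert modular form of classical parallel-weight type. Once this is done, the rest of the argument is formal.
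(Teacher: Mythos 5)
Your proof skeleton matches the paper's almost exactly: build an algebraic Hecke character $\psi$ (your $\eta$) with $\psi/\psi^c=\chi$, set $\pi=\mathrm{AI}^{F^+}_F(\psi)$, observe via Mackey that $\Ind_{G_F}^{G_{F^+}}r_{\chi,\iota}$ is a direct summand of $\ad r_{\pi,\iota}$, and conclude by Shapiro's lemma and Theorem~\ref{thm_vanishing_for_HMF}, with hypothesis (3) giving exactly the needed $F\not\subset F^+(\zeta_{p^\infty})$. Your infinity-type bookkeeping and the Mackey decomposition are both correct.

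The genuine gap is in the existence of $\eta$. You anticipate a ``finite 2-torsion obstruction'' in the cokernel of $\eta\mapsto\eta/\eta^c$ and propose to kill it by a ``carefully chosen soluble CM base change.'' This step is not justified: you give no argument that a choice of $F'/F$ simultaneously kills the obstruction class \emph{and} preserves the condition $F'\not\subset (F')^+(\zeta_{p^\infty})$, and it is not obvious in general that restriction annihilates such a class. In fact the paper shows the obstruction vanishes identically. After twisting by a character $\mu$ to reduce to the case of a \emph{finite order} norm-one character $\chi_0$, the paper's Lemma~\ref{lem_norm_1_character} produces a finite order $\phi$ with $\phi/\phi^c=\chi_0$: the needed surjectivity is equivalent to $H^1\bigl(\Gal(F/F^+),\Hom_{cts}(G_F,\QQ/\ZZ)\bigr)=0$, which is established via the Hochschild--Serre spectral sequence for $F/F^+$ together with the vanishing of $H^r(F,\QQ/\ZZ)$ for $r\geq 2$ (Tate's theorem for $H^2$, and the local-global comparison for $H^3$, using crucially that $F$ is totally imaginary). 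This is the one nontrivial input you are missing; once it is supplied, the base change detour is unnecessary and the rest of your argument goes through as written.
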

\begin{proof}
	The given conditions imply that there is a character $\psi : F^\times 
	\backslash \bA_F^\times \to \bC^\times$ of type $A_0$ such that $\psi / 
	\psi^c = \chi$. Given this, let $\pi$ denote the automorphic induction of 
	$\psi$ from $F$ to $F^+$. It is a regular algebraic, cuspidal automorphic 
	representation of $\GL_2(\bA_{F^+})$ and $\Ind_{G_F}^{G_{F^+}} r_{\chi, 
	\iota}$ is a subquotient of $\ad r_{\pi, \iota}$, so the desired vanishing 
	follows from Shapiro's lemma for Bloch--Kato Selmer groups and Theorem 
	\ref{thm_vanishing_for_HMF}.
	
	Let us explain why $\psi$ exists. Choose arbitrarily integers $m_\tau$ such that $2 m_\tau + n_\tau = w$, is independent of $\tau$. Then $m_{\tau c} = w - m_\tau$, so there exists a character $\mu : F^\times \backslash \bA_F^\times \to \bC^\times$ of type $A_0$ such that $\mu_v(z) = \tau(z)^{m_\tau} \tau c(z)^{m_{\tau c}}$. Moreover $m_\tau - m_{\tau c} = - n_\tau$, so $\chi_0 = \chi (\mu / \mu^c)$ has finite order and satisfies $\chi_0 \chi_0^c = 1$. Lemma \ref{lem_norm_1_character} implies that there exists another finite order Hecke character $\phi$ such that $\phi / \phi^c = \chi_0$, so we can then take $\psi = \phi \mu^{-1}$.
\end{proof}
\begin{lemma}\label{lem_norm_1_character}
Let $F$ be a CM field, and let $\chi : G_F \to \bQ / \bZ$ be a continuous character such that $\chi \chi^c = 1$. Then there exists a continuous character $\phi : G_F \to \bQ / \bZ$ such that $\phi / \phi^c = \chi$.
\end{lemma}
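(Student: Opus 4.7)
The plan is to reformulate the existence of $\phi$ as a Tate-cohomology vanishing statement for $\langle c\rangle \cong \bZ/2$ acting on $G_F^{\mathrm{ab}}$, and then verify it using class field theory. Switching to additive notation, we have $\chi + \chi^c = 0$ and seek $\phi$ with $\phi - \phi^c = \chi$. The existence of such $\phi$ for every such $\chi$ is equivalent to the vanishing of $\hat H^{-1}(\langle c\rangle, \Hom(G_F, \bQ/\bZ))$ (continuous Homs). Since $G_F^{\mathrm{ab}}$ is profinite and its Pontryagin dual is $\Hom(G_F, \bQ/\bZ)$, this group is Pontryagin dual to $\hat H^0(\langle c\rangle, G_F^{\mathrm{ab}}) = (G_F^{\mathrm{ab}})^c / (1+c) G_F^{\mathrm{ab}}$, which I would compute explicitly.

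I would identify $G_F^{\mathrm{ab}} = C_F / C_F^\circ$ via global class field theory, where $C_F = \bA_F^\times / F^\times$ and $C_F^\circ$ is its identity component. Because $F$ is CM, $C_F^\circ$ is the image of $F_\infty^\times = \prod_{v|\infty}\bC^\times$. Hilbert 90 applied to $\bA_F^\times$ over $F/F^+$ gives $C_F^c = C_{F^+}$; Hilbert 90 applied to each factor $\bC^\times$ (with complex conjugation acting as $z \mapsto \bar z$) gives $H^1(\langle c\rangle, C_F^\circ) = 0$. Taking $c$-cohomology of $0 \to C_F^\circ \to C_F \to G_F^{\mathrm{ab}} \to 0$ then yields $(G_F^{\mathrm{ab}})^c \cong C_{F^+}/F_{\infty,+}^\times$ with $F_{\infty,+}^\times = \prod_{v|\infty}\bR^\times$, while $(1+c) G_F^{\mathrm{ab}}$ is identified with the image in $(G_F^{\mathrm{ab}})^c$ of the norm $N = 1+c: C_F \to C_{F^+}$.

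Thus the desired vanishing reduces to the equality $N(C_F) \cdot F_{\infty,+}^\times = C_{F^+}$. By class field theory for the quadratic extension $F/F^+$, $C_{F^+}/N(C_F) \cong \Gal(F/F^+) \cong \bZ/2$. The CM hypothesis is what makes the final step work: a complex conjugation $c_v \in G_{F^+}$ at any archimedean place $v$ lies in the image of $F_{\infty,+}^\times$ (coming from the element $-1$ at $v$) and restricts to the nontrivial element of $\Gal(F/F^+)$, precisely because every archimedean place of $F^+$ is inert in the CM extension $F$. Hence the image of $F_{\infty,+}^\times$ in $C_{F^+}/N(C_F)$ is nontrivial, forcing the equality. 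The main technical obstacle is careful bookkeeping of the connected components and the compatible $c$-actions through the class-field-theoretic identifications; the CM hypothesis enters twice, providing both the Hilbert 90 vanishing at infinity and, crucially, a complex conjugation lying outside $N(C_F)$.
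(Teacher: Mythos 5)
Your reformulation of the statement as the vanishing of $\hat H^{-1}(\langle c\rangle, \Hom_{\mathrm{cts}}(G_F, \bQ/\bZ))$ is correct, and by periodicity for cyclic groups this is literally the same group $H^1(F/F^+, H^1(F, \bQ/\bZ))$ that the paper kills, so at that level the two proofs agree. What differs is the method: the paper runs the Hochschild--Serre spectral sequence for $F/F^+$ with $\bQ/\bZ$-coefficients, uses that $H^{\ge 2}(F, \bQ/\bZ) = 0$ (Tate's theorem plus the totally imaginary hypothesis) to reduce to a long exact sequence, and then uses that for a CM extension the inflation $H^3(F/F^+, \bQ/\bZ) \to H^3(F^+, \bQ/\bZ)$ is injective because both sides are controlled by the real places. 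Your route is genuinely different: Pontryagin duality to convert to $\hat H^0(\langle c\rangle, G_F^{\mathrm{ab}})$, and then class field theory. It is conceptually attractive --- it reduces the lemma to the entirely concrete assertion $N(C_F) \cdot F^\times_{\infty,+} = C_{F^+}$, which you then verify using the norm index theorem and the fact that $-1$ at a real place of $F^+$ is a local non-norm (since every archimedean place of $F^+$ ramifies in the CM extension $F$). This last point, where the CM hypothesis really enters, is exactly right.

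However, there is a genuine gap in the middle: you assert that the connected component $C_F^\circ$ of the idele class group is the image of $F^\times_\infty = \prod_{w\mid\infty}\bC^\times$ and that $H^1(\langle c\rangle, C_F^\circ) = 0$ follows from Hilbert 90 applied factorwise. This is not correct as stated. The connected component $D_F$ of $C_F$ is the \emph{closure} of the image of $F^\times_\infty$, and for $F$ with nontrivial unit rank (i.e.\ $[F^+ : \bQ] > 1$) this closure strictly contains the image and carries a solenoidal piece coming from the global units. Applying Hilbert 90 to the individual factors $\bC^\times$ does not directly compute $H^1(\langle c\rangle, D_F)$, and a connected divisible group can certainly have nonzero Tate cohomology in degree $\pm 1$ (consider $S^1$ with trivial $\bZ/2$-action). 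To complete your argument you would need either to compute $H^1(\langle c\rangle, D_F)$ honestly using the known structure of $D_F$, or to sidestep it --- for instance by working with $C_F$ and $N(C_F)$ directly rather than passing through the quotient by $D_F$. Until that is resolved the reduction to $N(C_F)\cdot F^\times_{\infty,+} = C_{F^+}$ is not justified. The paper's spectral-sequence proof avoids this issue entirely by treating Tate's theorem $H^2(K, \bQ/\bZ) = 0$ as a black box, which is where the corresponding class-field-theoretic content is hidden.
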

\begin{proof}
It is equivalent to ask that $H^1(F / F^+, H^1(F, \bQ / \bZ)) = 0$. We use the Hochschild--Serre spectral sequence 
\[ H^p(F / F^+, H^q(F, \bQ / \bZ)) \Rightarrow H^{p + q}(F^+, \bQ / \bZ). \]
 We recall  that if $K$ is a number field, then the product 
 \[ H^r(K, \bQ / \bZ) \to \prod_{v | \infty} H^r(K_v, \bQ / \bZ) \] of 
 restriction maps is an isomorphism when $r \geq 3$ (\cite[Theorem 
 4.20]{Mil06}) and that $H^2(K, \bQ / \bZ) = 0$ (Tate's theorem, see 
 \cite[Theorem 4]{Ser77} or \cite[Theorem 2.1.1]{patrikis-tate}). 
 Since $F$ has no real places, the groups $H^r(F, \bQ / \bZ)$ vanish when $r 
 \geq 2$ and so the spectral sequence in question has only two rows, and can be 
 pieced together into a long exact sequence (cf. \cite[Exercise 5.2.2]{weibel}) 
 including the terms
\[ \xymatrix@1{  H^2(F^+, \bQ / \bZ) \ar[r] & H^1(F / F^+, H^1(F, \bQ / \bZ)) \ar[d] \\ & H^3(F / F^+, H^0(F, \bQ / \bZ)) \ar[r] & H^3(F^+, \bQ / \bZ).} \]
The edge morphism $H^3(F / F^+, H^0(F, \bQ / \bZ)) \to H^3(F^+, \bQ / \bZ)$ is inflation, and is injective because the extension $F / F^+$ is CM and the map $H^3(F^+, \bQ / \bZ) \to \prod_{v | \infty} H^3(F^+_v, \bQ / \bZ)$ is bijective. This completes the proof.
\end{proof}
\bibliographystyle{amsalpha}
\bibliography{CMpatching}

\end{document}